\documentclass[a4paper,twoside,11pt,reqno]{amsart}
\usepackage[margin=1in]{geometry}
\usepackage[english]{babel}
\usepackage[utf8]{inputenc}
\usepackage{amsmath}
\usepackage{amssymb}
\usepackage{amsfonts}
\usepackage{amsthm}
\usepackage{bm}
\usepackage{mathrsfs}
\usepackage{hyperref}
\usepackage{comment}
\hypersetup{
	colorlinks=true,
	linkcolor=blue,
	filecolor=blue,
	citecolor=purple,      
	urlcolor=cyan,
}
\usepackage{xcolor}
\usepackage{dsfont}
\usepackage{physics}
\usepackage{stmaryrd}
\usepackage{enumitem}
\DeclareMathAlphabet{\mathpzc}{OT1}{pzc}{m}{it}
\usepackage{tikz-cd}
\usepackage{tikz}
\usepackage{tikzsymbols}
\usepackage{mathtools}
\usetikzlibrary{matrix}

\newcommand{\motpi}{\pi^{\mathbb{A}^1}}
\newcommand{\A}{\mathbb{A}}
\newcommand{\G}{\mathbf{G}}
\newcommand{\C}{\mathbb{C}}
\newcommand{\Z}{\mathbb{Z}}

\theoremstyle{plain}
\newtheorem{theorem}[subsubsection]{Theorem}

\newtheorem{lemma}[subsubsection]{Lemma}
\newtheorem{proposition}[subsubsection]{Proposition}

\newtheorem{corollary}[subsubsection]{Corollary}
\theoremstyle{definition}
\newtheorem{definition}[subsubsection]{Definition}
\newtheorem{definition-proposition}[subsubsection]{Definition/Proposition}
\theoremstyle{remark}
\newtheorem{remark}[subsubsection]{Remark}

\numberwithin{equation}{section}

\title{Adjacent non-stable layers in the $\A^1$-homotopy of the special linear tower}
\author{Haoyang Liu and Fan Yang}
\date{}
\address{University of California, Santa Barbara}
\email{haoyangliu@ucsb.edu}
\address{University of Southern California}
\email{fyang399@usc.edu, fanyang399@gmail.com}

\begin{document}

\begin{abstract}
We determine the two cross-stage morphisms arising from adjacent Stiefel fiber
sequences in the special linear tower.  Over characteristic-zero fields, the
first is the Wood morphism at even stages and vanishes at odd stages; the
second is induced stably by the motivic Hopf element at even stages and is zero
at odd stages. Complex
realization consequently classifies oriented rank-\((n-2)\) vector bundles on
the split quadric \(Q_{2n-1}\) for \(n\geq5\). We then apply these calculations to give exact corank-two splitting and efficient generation criteria for projective modules.
\end{abstract}

\maketitle
\tableofcontents

\section{Introduction}

The non-stable $\A^1$-homotopy sheaves of the classical algebraic groups
connect algebraic $K$-theory, quadratic-form invariants, and the non-stable
topology of the corresponding Lie groups; see
\cite{MV99,MorelA1} for the foundations of $\A^1$-homotopy theory.
In the stable range one has
\[
\pi_i^{\A^1}(SL_n)\cong \mathbf K_{i+1}^Q
\]
\cite[Lemma~3.1 and Theorem~3.2]{AF14}, whereas immediately beyond this
range additional Milnor, Milnor--Witt, and Grothendieck--Witt terms
appear.  These non-stable terms also have geometric significance: after
contraction, they occur as coefficient sheaves in obstruction problems
for algebraic vector bundles and projective modules
\cite{AF15b,AOSS26}.

The first non-stable layer of the linear groups was computed by
Asok--Fasel \cite{AF14}.  Their calculation describes the first failure of
stabilization, gives the classification of critical-rank oriented bundles
on split odd quadrics, and explains how the factorial and Witt-theoretic
contributions are detected by complex and real realization.  The aim of
this paper is to continue this analysis through the next adjacent stages
of the special linear tower.

\subsection*{Main results}

The results fall into four groups.  The first identifies the cross-stage
map in the second non-stable layer.  We use throughout the Euler/Gysin
orientation of \cite[Lemma~3.5]{AF14}; with this convention, the signs below
are part of the statement.

\medskip\noindent
\textbf{Theorem A.}
Let $k$ be a field of characteristic zero.  For $n\geq4$, the cross-stage
morphism $d_{n,1}$ vanishes when $n$ is odd.  When $n$ is even, its
Grothendieck--Witt component is the Wood morphism given by multiplication by
$\eta$.  The infinite-loop adjoints of the unit
$\mathds1\to\mathbf{KO}$ give a normalization for which the comparison is
strict, and the comparison with the printed Suslin matrices is a uniquely
determined four-periodic unit.

This combines Theorem~\ref{thm:main-diagrams},
Proposition~\ref{prop:oriented-unit-wood}, and
Theorem~\ref{prop:explicit-suslin-rank}.  The second result treats the
next cross-stage map and its deepest nonzero contractions.

\medskip\noindent
\textbf{Theorem B.}
Let $k$ be a field of characteristic zero and $n\geq5$.  After identifying
the relevant punctured affine spaces with motivic spheres, the morphism
$d_{n,2}$ is induced by the action of $\eta$ when $n$ is even and is zero
when $n$ is odd.  Its residual cyclotomic boundary
$\boldsymbol\mu_{24}\to\mathbf K_2^M/2$ vanishes over every such field, and
the next two contracted cross-stage maps are zero.

The precise statements are Proposition~\ref{prop:dn2-stable-eta} and
Theorems~\ref{thm:dn2-contracted-zero}
and~\ref{thm:epsilon-global-zero}.  The next result determines the realization
of the two adjacent non-stable sheaves and converts it into a classification
on split quadrics.

\medskip\noindent
\textbf{Theorem C.}
For $m\geq2$, complex realization induces an isomorphism
\[
 \pi_{2m,2m+2}^{\A^1}(SL_{2m})(\C)
 \cong \mathbb Z/2\oplus\mathbb Z/(2m+1)!,
\]
and, for $m\geq1$, it induces an isomorphism
\[
 \pi_{2m+1,2m+3}^{\A^1}(SL_{2m+1})(\C)
 \cong \mathbb Z/((2m+2)!/2).
\]
Consequently, for every \(n\geq5\), complex realization classifies the
oriented vector bundles of rank \(n-2\) on the split odd quadric
\(Q_{2n-1}\).

See Theorems~\ref{thm:realization-SLn}
and~\ref{thm:comparison-quadric} and
Corollary~\ref{cor:rank-n-2-explicit}.
Finally, the secondary coefficient calculation and its parameterized
stable comparison have geometric consequences.

\medskip\noindent
\textbf{Theorem D.}
Let $V\to S$ be a vector bundle of rank $t\geq2$ on a smooth affine
scheme of pure dimension $s$ over a characteristic-zero field, put
$X=\operatorname{Tot}(V)$, and assume $d=s+t\geq4$.  A vector bundle of
rank $d-2$ on $X$ splits off a trivial line bundle if and only if its Euler class
vanishes.  If $t\geq3$, every such bundle splits.  The same vanishing gives
the corresponding efficient-generation statement for projective modules.

This is Theorem~\ref{thm:corank-two-vector-bundle-total-space} and
Corollary~\ref{cor:generation-vector-bundle-total-space}.  The secondary
calculation used in its proof separates the three-primary coefficient
complex from a two-primary hermitian quotient and a residual Milnor layer.
On the stable three-primary side the first invariant is the first motivic
reduced power.  Section~\ref{sec:secondary} records the parameterized
comparison that identifies its Thom conjugate with the geometric change of
Euler nullhomotopy.

Our main point is that the individual non-stable sheaves are not the only
relevant objects.  One must also understand the morphisms relating the
punctured affine-space terms at consecutive ranks.  The standard
inclusions $SL_n\hookrightarrow SL_{n+1}$ give $\A^1$-fiber sequences
\[
\A^{n+1}\setminus\{0\}
\longrightarrow
BSL_n
\longrightarrow
BSL_{n+1};
\]
the relevant representability and homogeneous-space results may be found
in \cite{AffineRep2}.  Comparing these sequences for adjacent values of
$n$ produces cross-stage morphisms between the homotopy sheaves of
consecutive punctured affine spaces.  Here the word ``differential''
refers to the cross-composite arising from the linear exact couple; see
\cite[Proposition~2.1.1]{AF17Spheres}.  Its two defining arrows come from
adjacent fiber sequences and are not consecutive arrows in a single long
exact sequence.  Accordingly, the two long exact sequences must be kept
separate.

\subsection*{The second non-stable layer}

The first cross-stage morphism considered in this paper is
\[
d_{n,1}:
\pi_{n+1}^{\A^1}(\A^{n+1}\setminus\{0\})
\longrightarrow
\pi_n^{\A^1}(\A^n\setminus\{0\}).
\]
It is obtained by composing the boundary map from the fiber sequence at
rank $n+1$ with the quotient map arising at rank $n$.  Equivalently, it
is the connecting homomorphism of the two-frame Stiefel fiber sequence
\[
\A^n\setminus\{0\}
\longrightarrow
\operatorname{St}_2(n+1)
\longrightarrow
\A^{n+1}\setminus\{0\};
\]
compare \cite[Definition~12]{AOSS26}.

For $n\geq4$, the Asok--Bachmann--Hopkins calculation
\cite[Theorem~7.2.1]{ABH} places the source and target of $d_{n,1}$ in
exact sequences whose kernels are Milnor $K$-theory sheaves modulo $24$
and whose right-hand terms are higher Grothendieck--Witt sheaves.  We
identify the morphism between these exact sequences.

When $n$ is odd, the projection
\[
\operatorname{St}_2(n+1)
\longrightarrow
\A^{n+1}\setminus\{0\}
\]
admits a symplectic $\A^1$-homotopy section; see the argument of
\cite[Lemma~18]{AOSS26}.  Consequently every connecting homomorphism of
this Stiefel fiber sequence vanishes, and in particular
\[
d_{n,1}=0.
\]

When $n$ is even, the Thom--Euler transgression identifies the induced
Grothendieck--Witt morphism with multiplication by the motivic Hopf class
$\eta$, initially up to a unit in $\mathbf{GW}_0^0(k)$.  The relevant
comparison between obstruction-theoretic and Thom--Euler classes, together
with the explicit Suslin $KO$-degree construction, is developed in
\cite{AF16Euler,AF17}; the corresponding fundamental exact sequence in
hermitian $K$-theory is treated in \cite{Sch17}.  More precisely, for
arbitrarily chosen geometric representatives of the stable-unit generator
there is a unit
$c_n\in\mathbf{GW}_0^0(k)^\times$ such that
\[
(\delta_n^u)_*\circ d_{n,1}
=
c_n\eta\circ(\delta_{n+1}^u)_*.
\]
By choosing the Suslin generators compatibly in adjacent ranks, these units
can be absorbed.  In the stable range there is also an intrinsic spectrum-level
construction: fix the Euler/Gysin sphere orientation used in
\cite[Lemma~3.5]{AF14} and coherent Bott coordinates, and take the
infinite-loop adjoints of the unit $\mathds1\to\mathbf{KO}$.  Relative to
these data the resulting Wood diagram is strictly commutative:
\[
(\delta_n)_*\circ d_{n,1}
=
\eta\circ(\delta_{n+1})_*.
\]
The morphism on the displayed Milnor $K$-theory kernels is zero.  This
strictness is relative to the stated orientation data; module-linearity does
not itself choose those data.  Once those data are imposed, the unit
ambiguity in \cite[Theorem~4.2.2]{AF17} can also be removed for the explicit
matrices.  The contraction compatibility of the classical Suslin class fixes
its algebraic $K$-theory sign.  The Gysin formulas of
\cite[Lemmas~3.5.2--3.5.3]{AF17}, applied after explicit coordinate
permutations, then compute the Witt image of the comparison unit.  Rank and
Witt image together detect an element of $GW(k)$ and give the four-periodic
formula of Theorem~\ref{prop:explicit-suslin-rank}.  In particular, for the
matrix convention of \cite[Definition~3.3.2]{AF17}, the even-stage strict
formula contains $-\eta$ rather than $\eta$; see
Corollary~\ref{cor:explicit-suslin-wood}.  In every normalization the kernels,
cokernels, and the applications developed below are unchanged.

This gives parity-dependent structural descriptions of the contracted
cokernels entering the second non-stable special-linear sheaves.  In odd
stages the cross-stage morphism itself vanishes.  In even stages the
remaining structure is controlled by the Wood sequence together with an
explicit snake-lemma connecting morphism.  Further simplification is
limited by the sheaf-theoretic Suslin--Hurewicz image problem
\cite{AFW20,Rondigs23}.

\subsection*{A contracted part of the next non-stable layer}

One simplicial degree higher, the same construction gives a second
cross-composite
\[
d_{n,2}:
\pi_{n+2}^{\A^1}(\A^{n+1}\setminus\{0\})
\longrightarrow
\pi_{n+1}^{\A^1}(\A^n\setminus\{0\}).
\]
Set
\[
\mathcal P_n:=
\pi_{n+1}^{\A^1}(\A^n\setminus\{0\}).
\]
Then
\[
d_{n,2}:\mathcal P_{n+1}\longrightarrow\mathcal P_n.
\]
As above, its two defining arrows belong to two adjacent long exact
sequences.  The same symplectic Stiefel section
\cite[Lemma~18]{AOSS26} shows that
\[
d_{n,2}=0
\]
before contraction whenever $n$ is odd.

For arbitrary parity, the uncontracted source and target involve the
second stable stem of the motivic sphere.  For $n\geq5$, the stabilization
theorem of \cite{ABH} gives a canonical identification
\[
\mathcal P_n
\cong
\boldsymbol\pi^s_{2-n\alpha}(\mathds1),
\]
and the calculation of the motivic $2$-line in
\cite[Theorem~1.2]{RSO21} gives an exact sequence
\[
0\longrightarrow
\mathbf H^{n+1,n+2}/24
\oplus
\mathbf K^M_{n+4}/2
\longrightarrow
\mathcal P_n
\longrightarrow
\boldsymbol\pi^s_{2-n\alpha}(\mathbf{kq}).
\]
A closely related formulation is recorded in
\cite[Example~6.6]{Gant_Williams_2025}.  Thus the known $2$-line of the
motivic sphere provides a concrete description of the part detected by
very effective hermitian $K$-theory.

More generally, after $r\geq n-2$ contractions, the comparison
$\mathbf{kq}\to\mathbf{KQ}$ in the range calculated in
\cite[Theorem~2.38]{RSO21} allows the periodic $KO$-compatibility to
descend to a $\mathbf{kq}$-visible diagram.  On the visible quotient, the
induced morphism is multiplication by $\eta$ in even stages and zero in
odd stages.  The remaining ambiguity is confined to explicit kernels of
the form
\[
\mathbf H^{a,b}/24
\oplus
\mathbf K^M_c/2.
\]
In particular, the four contractions immediately preceding the boundary
weight give morphisms
\[
\mathcal Q_3\longrightarrow\mathcal Q_2,\qquad
\mathcal Q_2\longrightarrow\mathcal Q_1,\qquad
\mathcal Q_1\longrightarrow\mathcal Q_0,\qquad
\mathcal Q_0\longrightarrow\mathcal Q_{-1},
\]
with horizontal map $\eta$ in even stages and zero in odd stages.

In fact, the visible calculation is the shadow of a sphere-level statement.
Writing $X_{n+1}$ as a simplicial suspension, Morel's simplicial
Freudenthal theorem allows the Stiefel boundary to be restricted to its
first desuspension in both degrees used here.  The bottom homotopy sheaf of
that desuspension is the Euler boundary computed in
\cite[Lemma~3.5]{AF14}.  It follows that, relative to the standard sphere
and purity coordinates used there, the full morphism $d_{n,2}$ is
multiplication by $\eta$ when $n$ is even and is zero when $n$ is odd.  A
change of sphere coordinates can multiply the even-stage map by a unit of
$GW(k)$; this harmless ambiguity disappears after the boundary contraction
because every such unit acts trivially on Milnor \(K\)-theory modulo \(2\).

After sufficiently many contractions, the calculation becomes
particularly simple.  For $n\geq5$, there are canonical identifications
\[
\begin{aligned}
(\mathcal P_n)_{-(n+2)}
&\cong \mathbf K^M_2/2,\\
(\mathcal P_n)_{-(n+3)}
&\cong \mathbf K^M_1/2,\\
(\mathcal P_n)_{-(n+4)}
&\cong \mathbf K^M_0/2.
\end{aligned}
\]
These identifications follow from the motivic $2$-line calculation
\cite[Theorem~1.2]{RSO21}.  The first is a boundary phenomenon.  At this
weight the relevant very effective hermitian $K$-theory sheaf is
\[
\boldsymbol\pi^s_{4,2}(\mathbf{kq})
\cong
\mathbf K^M_0,
\]
and is therefore nonzero
\cite[Theorem~2.38]{RSO21}.  What vanishes is the unit-induced map to this
torsion-free sheaf, since its source is torsion.  Only after one and two
further contractions do the corresponding $\mathbf{kq}$-terms themselves
vanish.

Degree considerations imply
\[
(d_{n,2})_{-(n+3)}=0
\qquad\text{and}\qquad
(d_{n,2})_{-(n+4)}=0.
\]
At the preceding contraction, the source is an extension
\[
0\longrightarrow
\mathbf K^M_3/2
\longrightarrow
(\mathcal P_{n+1})_{-(n+2)}
\longrightarrow
\boldsymbol\mu_{24}
\longrightarrow0,
\]
where the extension is supplied by the bidegree $(3,1)$ case of the
motivic $2$-line calculation \cite[Theorem~1.2]{RSO21}, whereas the target
is $\mathbf K^M_2/2$.  The restriction of
$(d_{n,2})_{-(n+2)}$ to $\mathbf K^M_3/2$ vanishes, so the entire remaining
even-stage ambiguity is reduced to one explicit morphism
\[
\epsilon_n:
\boldsymbol\mu_{24}
\longrightarrow
\mathbf K^M_2/2.
\]
This morphism factors through
$\boldsymbol\mu_{24}/2\boldsymbol\mu_{24}$ and vanishes when $n$ is odd.
At the boundary weight the $\mathbf{kq}$-compatibility square reduces to
$0=0$, so that square alone cannot compute $\epsilon_n$.  The sphere-level
identification instead shows that $\epsilon_n$ is the operation induced by
$\eta$.  Its only possible universal values occur over the fourth and
eighth cyclotomic fields.  At every odd finite place, $2$-complete
Henselian rigidity reduces those values to finite fields, where the target
Milnor $K$-group vanishes.  Since each of the two cyclotomic fields has a
unique place above $2$ and no real place, Brauer--Hasse--Noether reciprocity
forces the last local invariant to vanish as well.  Hence
$\epsilon_n=0$ over every characteristic-zero field.

\subsection*{Realization and vector bundles on split quadrics}

Complex realization extends to a symmetric monoidal functor from the
pointed $\A^1$-homotopy category to the classical pointed homotopy
category \cite[\S3]{MV99}, and gives homomorphisms
\[
\pi_{i,j}^{\A^1}(SL_n)(\C)
\longrightarrow
\pi_{i+j}(SU(n)).
\]
Combining the sheaf-level calculation with classical information about
the two-frame Stiefel boundary, we prove that complex realization is an
isomorphism on the relevant second non-stable special-linear group in
both parities.

More precisely, for $m\geq2$,
\[
\pi^{\A^1}_{2m,\,2m+2}(SL_{2m})(\C)
\cong
\pi_{4m+2}(SU(2m))
\cong
\mathbb Z/2\oplus\mathbb Z/(2m+1)!,
\]
whereas for $m\geq1$,
\[
\pi^{\A^1}_{2m+1,\,2m+3}(SL_{2m+1})(\C)
\cong
\pi_{4m+4}(SU(2m+1))
\cong
\mathbb Z/\big((2m+2)!/2\big).
\]
The corresponding classical non-stable unitary homotopy groups were
calculated by Bott and Kervaire \cite{Bott57,Kervaire60}.  The odd-rank
argument is not a formal five-lemma consequence of the Wood diagram.  Its
additional input is Kervaire's calculation of the two-frame Stiefel
boundary \cite{Kervaire60}, which identifies the deeply contracted
adjacent even-stage differential with the nonzero map
\[
\mathbb Z/2\longrightarrow\mathbb Z/2.
\]
This also gives an independent topological consistency check of the
sheaf-level calculation.

The motivic-sphere realization theorem of
\cite{gant2026motivichomotopygroupsspheres} applies directly to the three
highly contracted groups above.  For $n\geq5$, complex realization
identifies them with
\[
\pi^S_4=0,\qquad
\pi^S_5=0,\qquad
\pi^S_6\cong\mathbb Z/2.
\]
Under this comparison, the terminal class in
$\mathbf K^M_0/2$ realizes to the nonzero class
$\nu^2\in\pi^S_6$; compare also
\cite[Example~6.6]{Gant_Williams_2025}.

These results have applications to vector bundles on the split odd
quadric
\[
Q_{2n-1}
=
\left\{
\sum_{i=1}^n x_i y_i=1
\right\},
\]
which is $\A^1$-weakly equivalent to
$\A^n\setminus\{0\}$; see \cite{MorelA1,AF14}.  The ranks $n-1$, $n-2$,
and $n-3$ are governed successively by the first, second, and third
non-stable layers of the special linear tower.  The rank $n-1$ case is the
Asok--Fasel classification \cite{AF14}.

At the next rank, for $n\geq5$, complex realization induces an
isomorphism
\[
[Q_{2n-1},BSL_{n-2}]_{\A^1}
\xrightarrow{\ \sim\ }
[S^{2n-1},BSU(n-2)]_{\mathrm{top}}.
\]
Consequently,
\[
\mathrm{Vect}^{\mathrm{alg},o}_{n-2}(Q_{2n-1})
\cong
\begin{cases}
\mathbb Z/2\oplus\mathbb Z/(n-1)!,
& n\geq6\text{ even},\\[3pt]
\mathbb Z/\big((n-1)!/2\big),
& n\geq5\text{ odd}.
\end{cases}
\]

At rank $n-3$, the contracted third-layer calculation gives a splitting
theorem rather than a complete classification.  If
$\operatorname{char}(k)=0$, $k^\times=(k^\times)^2$, and $n\geq8$, then
stabilization induces a surjection
\[
\mathrm{Vect}^{\mathrm{alg},o}_{n-4}(Q_{2n-1})
\longrightarrow
\mathrm{Vect}^{\mathrm{alg},o}_{n-3}(Q_{2n-1}).
\]
Hence every oriented rank $n-3$ vector bundle on $Q_{2n-1}$ splits off a
trivial line bundle.

Real realization behaves differently because motivic weight collapses
after passing to real points.  We describe the resulting comparison for
the punctured affine-space terms and identify explicit classes on the
second stable stem, using the description in
\cite[Example~6.6]{Gant_Williams_2025}.  A direct real analogue of the
comparison theorem for $SL_n$, however, requires additional information
about the relevant special-linear groups.

\subsection*{Efficient generation of projective modules}

The same non-stable sheaves enter the Stiefel obstruction theory for
efficient generation of projective modules developed in \cite{AOSS26}.
Let
\[
X=\operatorname{Spec}R
\]
be a smooth affine scheme of dimension $d$, and let $M$ be a projective
$R$-module of rank $r$.  The fiber sequence
\[
\operatorname{St}_r(N)
\longrightarrow
Gr_r(N)
\longrightarrow
BGL_r
\]
encodes whether $M$ can be generated by $N$ elements
\cite{AOSS26}.

The first non-stable coefficients appear in the results of
Asok--Opie--Shin--Syed \cite{AOSS26} concerning the bounds
$r+d-1$ and $r+d-2$.  At the next stage, relevant to an $r+d-3$ bound,
the sheaf $\mathcal P_{d-2}$ maps directly into the corresponding Stiefel
coefficient, and the following boundary lands in the first non-stable
sheaf studied through $d_{n,1}$.  Thus the two adjacent non-stable layers
occur in one exact obstruction-theoretic segment.

For $d\geq7$, the top contraction satisfies
\[
(\mathcal P_{d-2})_{-d}
\cong
\mathbf K^M_2/2.
\]
It follows that the punctured-affine-space contribution to the top
obstruction for an $r+d-3$ generation bound is annihilated by $2$.  If the
ground field has characteristic $0$ and $2$-cohomological dimension at
most $1$, the norm-residue theorem
\cite{OVV07} forces the relevant codimension-$d$ Rost--Schmid terms to
vanish.

Under one additional, explicit secondary-cohomology vanishing hypothesis,
this yields a corank-two splitting criterion: a rank \(d-2\) vector bundle
splits off a trivial line bundle if and only if its Euler class vanishes.
Applied to complementary modules, it gives an \(r+d-3\) generation criterion for
projective modules of rank $r\geq4$ that are already generated by
$r+d-2$ elements.  The remaining secondary obstruction is expressed in
terms of a concrete Rost--Schmid complex; for the general theory of these
complexes and strictly $\A^1$-invariant sheaves, see \cite{MorelA1}.
Its $3$-primary summand is the usual Rost--Schmid complex for Milnor
$K$-theory modulo $3$.  The corresponding stable \(k\)-invariant is the
first reduced power \(P^1\).  Thom conjugation for the given vector bundle
adds the mod-\(3\) Wu class
\(\operatorname{red}_3(c_1^2-2c_2)\).  A parameterized comparison through
the first two relative stages proves that the resulting corrected operation governs the
change of the three-primary secondary obstruction and gives a canonical
cokernel class.  Surjectivity of this corrected reduced-power map then
removes the three-primary obstruction.  Its
\(2\)-primary summand is the Milnor complex
modulo \(8\), together with valuation parity and one explicit order-two
hermitian correction.  The hermitian quotient of its cohomology is the
cokernel of a twisted Steenrod square, and the remaining layer is controlled
by Milnor \(K\)-theory modulo \(8\).  This gives a second corank-two criterion
whose hypotheses are the surjectivity of \(Sq_L^2\) and of the
Thom-corrected reduced-power map, together with the vanishing of one Milnor
\(K\)-cohomology group modulo \(8\).  The same comparison proves
lift-independent additivity of the full secondary change.  Its
two-primary stable \(k\)-invariant is not yet expressed in standard
cohomology operations.
Independently of the cohomological-dimension hypothesis, the secondary
group vanishes when \(X\) is the total space of a vector bundle of
relative rank at least two over a smooth affine scheme.
This gives an unconditional family of the corank-two splitting criterion;
in relative rank at least three, the Euler group also vanishes and every
bundle under consideration splits.

\subsection*{Scope and organization}

The calculations using the Asok--Bachmann--Hopkins description
\cite{ABH} and the second stable stem
\cite{RSO21} are carried out over fields of characteristic $0$.  Results
recalled from \cite{AF14} are used under the hypotheses stated there,
usually over infinite perfect fields of characteristic different from
$2$.

Two issues delimit the applications developed here.  The first is the
even-stage morphism
\[
\epsilon_n:
\boldsymbol\mu_{24}
\longrightarrow
\mathbf K^M_2/2.
\]
Proposition~\ref{prop:dn2-stable-eta} and
Theorem~\ref{thm:epsilon-global-zero} compute it and prove that it vanishes.
The first-stem module relation and the Milnor--Witt residue product rule
determine the remaining $2$-primary hermitian extension part of the Gersten
differential governing the secondary obstruction in the corank-two
splitting problem.  Its only extra term is an order-two correction inside
the Milnor kernel modulo \(8\), while its $3$-primary part is ordinary
Milnor \(K\)-theory cohomology.  The associated sphere operation is
\(P^1\operatorname{red}_3\), and its Thom conjugate for \(E\) is
\(\mathcal P_E^1\operatorname{red}_3\).  Identifying the latter with
nullhomotopy dependence in the geometric lifting problem follows from the
augmentation-one stable section comparison below.  The hermitian quotient
of the two-primary cohomology is measured by the twisted Steenrod square
\(Sq_L^2\), leaving a Milnor \(K\)-theory layer modulo \(8\).  The secondary change is
lift-independent and additive, so its two-primary part has a
hermitian projection and, on its kernel, a residual Milnor component.  The
second issue is to compute these two action maps in standard cohomological
terms.
The target groups vanish on the vector-bundle total spaces just described.
For the explicit Suslin matrices, the full comparison unit is determined
below over every field after fixing the standard ordered Koszul orientation.
Its rank and Witt image are computed separately and combine to a
four-periodic element of \(GW(k)^\times\).

Section~2 fixes notation and recalls contraction, Milnor and
Milnor--Witt $K$-theory, higher Grothendieck--Witt theory, and the Wood
cofiber sequence.  Section~3 studies $d_{n,1}$ and the second non-stable
special-linear sheaves.  It proves the Suslin--Stiefel--Wood compatibility
up to a Grothendieck--Witt unit, constructs a compatible normalization for
which the Wood diagram is strict, separates the two parity outcomes, and
records the limitation imposed by the Suslin--Hurewicz image problem.
Section~4 additionally proves strict compatibility for the Euler-oriented
unit-adjoint system, computes the full comparison with the printed Suslin
matrices by combining their algebraic $K$-theory and Witt coordinates, and
introduces $d_{n,2}$.  It keeps the two defining long exact sequences
separate, records the input from the $2$-line of the motivic sphere, and
performs the contracted calculation.  It also determines the
$\mathbf{kq}$-visible quotient after $r\geq n-2$ contractions, identifies
the full stable cross-stage map by simplicial desuspension, and proves the
vanishing of the resulting boundary defect
\[
\boldsymbol\mu_{24}\longrightarrow\mathbf K^M_2/2.
\]
Section~5 treats complex and real realization, vector bundles on split
odd quadrics, and applications to efficient generation of projective
modules.  It also separates the secondary obstruction into a reduced-power
stable invariant and its Thom--Wu correction at \(3\), and a
twisted-Steenrod/Milnor exact sequence at \(2\).  It constructs the
parameterized comparison, proves additivity, and turns these coefficient
calculations into geometric orbit formulas.

\subsection*{Acknowledgment}

The second author would like to express his sincere gratitude to Professor
Aravind Asok for his invaluable guidance, constant encouragement, and
unwavering support throughout his Ph.D. studies and the completion of this
work.  The first author would also like to thank Morgan Opie, Keyao Peng, Biman Roy, Ben Williams and Shangjie Zhang for helpful
discussions.

\section{Preliminaries}
This section collects notation and foundational results used throughout the
paper.  We first fix conventions for simplicial Nisnevich sheaves on
\(\mathpzc{Sm}_k\) and recall the simplicial and \(\A^1\)-localized homotopy
categories.  We then review contraction and its relation to the bigrading on
\(\A^1\)-homotopy sheaves. Finally, we recall unramified Milnor and Milnor--Witt \(K\)-theory sheaves,
the fundamental ideal in the Witt ring, and higher Grothendieck--Witt theory.
The Wood cofiber sequence, equivalently Karoubi's fundamental exact sequence,
provides the canonical Grothendieck--Witt morphisms in our main diagrams.

\subsection{Basic notation}

Assume that \(k\) is a field.  We write \(\mathpzc{Sm}_k\) for the category
of smooth separated \(k\)-schemes of finite type.

We write \(\mathpzc{Spc}_k\), respectively
\(\mathpzc{Spc}_{k,\bullet}\), for the category of simplicial, respectively
pointed simplicial, Nisnevich sheaves on \(\mathpzc{Sm}_k\).  We call their
objects \(k\)-spaces, or simply spaces when the context is clear.

We write \(\mathpzc{H}_s(k)\), respectively
\(\mathpzc{H}_{s,\bullet}(k)\), for the associated simplicial homotopy
category, respectively its pointed version.  Here cofibrations are
monomorphisms and weak equivalences are stalkwise weak equivalences.

The Morel--Voevodsky \(\A^1\)-homotopy category is denoted by
\(\mathpzc{H}(k)\), and its pointed version by
\(\mathpzc{H}_\bullet(k)\) \cite{MV99}.  They are obtained by localizing the
corresponding simplicial homotopy categories at \(\A^1\)-weak equivalences.

For spaces \(\mathpzc{X}\) and \(\mathpzc{Y}\), set
\[
[\mathpzc{X},\mathpzc{Y}]_s := Hom_{\mathpzc{H}_s(k)}(\mathpzc{X},\mathpzc{Y})
\quad\text{and}\quad
[\mathpzc{X},\mathpzc{Y}]_{\A^1} := Hom_{\mathpzc{H}(k)}(\mathpzc{X},\mathpzc{Y}),
\]
and use analogous notation in the pointed categories.

We denote Milnor--Witt and Milnor \(K\)-theory by \(K^{MW}_*(k)\) and
\(K^M_*(k)\), respectively.  The graded ring \(K^{MW}_*(k)\) is generated by
symbols \([u]\), for \(u\in k^\times\), in degree \(1\), and by
\(\eta\) in degree \(-1\), subject to the usual relations
\cite[Definition~3.1]{MorelA1}.  There is a canonical isomorphism
\[
K^M_*(k)\cong K^{MW}_*(k)/(\eta),
\]
so Milnor \(K\)-theory is the quotient by the ideal generated by \(\eta\).

Let \(GW(k)\) and \(W(k)\) be the Grothendieck--Witt and Witt rings of
nondegenerate symmetric bilinear forms over \(k\).  Thus \(W(k)\) is the
quotient of \(GW(k)\) by the ideal generated by the hyperbolic form
\(h=\langle1\rangle\oplus\langle-1\rangle\).  Let
\(I(k)\subset W(k)\) be the fundamental ideal, the kernel of the rank map
to \(\mathbb Z/2\), and write \(I^n(k)\) for its powers.  For \(m\geq1\),
\(I^m(k)\) is additively generated by \(m\)-fold Pfister forms; by convention,
\(I^r(k)=W(k)\) for \(r\leq0\).

The relationships between these objects are as follows. There are natural graded ring homomorphisms
\[
K^{MW}_*(k)\longrightarrow K^{M}_*(k),
\qquad
K^{MW}_*(k)\longrightarrow I^*(k),
\]
where the first is the quotient map modulo the ideal generated by $\eta$, and the second sends a pure symbol
\[
[a_1]\cdots [a_n]
\]
to the $n$-fold Pfister form
\[
\langle\!\langle a_1,\dots,a_n\rangle\!\rangle;
\]
see, for example,~\cite[\S3]{MorelA1}.

Let
\[
k_*(k):=K^M_*(k)/2K^M_*(k).
\]
There is a canonical graded ring homomorphism
\[
K^M_*(k)\longrightarrow k_*(k),
\]
and a graded ring homomorphism
\[
K^M_*(k)\longrightarrow I^*(k)/I^{*+1}(k)
\]
sending a pure symbol $\{a_1,\dots,a_n\}$ to the Pfister form
\[
\langle\!\langle a_1,\dots,a_n\rangle\!\rangle.
\]
The Milnor conjecture on quadratic forms \cite[\S4]{OVV07} says that this
homomorphism factors through \(k_*(k)\) and induces an isomorphism
\[
k_*(k)\xrightarrow{\ \sim\ } I^*(k)/I^{*+1}(k).
\]
Composing the inverse of this isomorphism with the projection
\[
I^*(k)\twoheadrightarrow I^*(k)/I^{*+1}(k)
\]
yields a graded ring homomorphism
\[
I^*(k)\longrightarrow k_*(k).
\]
These maps fit into a cartesian square of graded rings; see~\cite[Theorem~5.3]{Morel04}:
\[
\begin{tikzcd}
K^{MW}_*(k) \ar[r] \ar[d] & K^{M}_*(k) \ar[d] \\
I^*(k) \ar[r] & k_*(k).
\end{tikzcd}
\]

The preceding square sheafifies to a cartesian square of unramified sheaves
of graded rings \cite{MorelA1,Morel05}:
\[
\begin{tikzcd}
\mathbf{K}^{MW}_* \ar[r] \ar[d] & \mathbf{K}^{M}_* \ar[d] \\
\mathbf{I}^* \ar[r] & \mathbf{K}^{M}_*/2.
\end{tikzcd}
\]

The following consequence is due to Morel
\cite[\S5]{Morel04}.
For negative degrees we use the standard graded convention
\(\mathbf K_n^M=0\) and \(\mathbf I^n=\mathbf W\) for \(n<0\).

\begin{proposition}\label{prop:MW-filtration}
For every $n\in \mathbb{Z}$ there are short exact sequences
\[
0 \longrightarrow \mathbf{I}^{n+1} \longrightarrow \mathbf{K}^{MW}_n \longrightarrow \mathbf{K}^M_n \longrightarrow 0
\]
and
\[
0 \longrightarrow 2\,\mathbf{K}^M_n \longrightarrow \mathbf{K}^{MW}_n \longrightarrow \mathbf{I}^{n} \longrightarrow 0.
\]
Moreover, multiplication by \(\eta\) factors as
\[
\mathbf{K}^{MW}_n \longrightarrow \mathbf{I}^{n} \longrightarrow \mathbf{K}^{MW}_{n-1},
\]
where the maps are those appearing in the short exact sequences above.
\end{proposition}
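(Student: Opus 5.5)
The plan is to derive both sequences from the cartesian square of unramified sheaves displayed just above, together with the corresponding field-level square of \cite[Theorem~5.3]{Morel04}. First I would prove the statement on sections over finitely generated field extensions \(F/k\). Then I would pass to sheaves: all sheaves involved are unramified, so a sequence of them is exact as soon as it is exact on the corresponding fields.

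For the first sequence, I would use that the square is cartesian. The kernel of \(K^{MW}_n(F)\to K^M_n(F)\) then maps isomorphically onto the kernel of \(I^n(F)\to k_n(F)\). By the Milnor conjecture isomorphism \(k_n(F)\cong I^n(F)/I^{n+1}(F)\) \cite{OVV07}, this kernel is \(I^{n+1}(F)\). Surjectivity of \(K^{MW}_n\to K^M_n\) holds because it is the quotient by \((\eta)\). For \(n<0\) the same argument applies with the conventions \(\mathbf K^M_n=0\), \(\mathbf I^n=\mathbf W\): here \(K^{MW}_n\cong W\) and the sequence reads \(0\to W\to W\to 0\to 0\).

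For the second sequence, I would again use cartesianness. The kernel of \(K^{MW}_n\to I^n\) is identified with the kernel of \(K^M_n\to k_n\), which is \(2K^M_n\). Surjectivity onto \(I^n\) holds because \(I^n\) is generated by Pfister forms, which are images of pure symbols (and, for \(n\le 0\), by the powers of \(\eta\) applied to \(\langle a\rangle\)). The only subtlety is the meaning of ``\(2\mathbf K^M_n\subset \mathbf K^{MW}_n\)''. Via the pullback this is the subgroup of pairs \((2x,0)\). Concretely, it is the image of \(h\cdot K^{MW}_n\), which is isomorphic to \(2K^M_n\) via the forgetful map, using \(\eta h=0\). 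I expect this identification to be the main point to write carefully.

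For the factorization of \(\eta\), consider the map \(\eta\colon K^{MW}_n\to K^{MW}_{n-1}\). It kills \(hK^{MW}_n\), which is the kernel of \(K^{MW}_n\to I^n\), so it factors through \(I^n\). Its image lies in \(\eta K^{MW}_{n-1}\), which is the kernel of \(K^{MW}_{n-1}\to K^M_{n-1}\), that is \(I^n\) by the first sequence. It remains to check that the induced map \(I^n\to I^n\subset K^{MW}_{n-1}\) is the identity, i.e.\ that the first sequence's inclusion sends the Pfister form \(\langle\!\langle a_1,\dots,a_n\rangle\!\rangle\) to \(\eta[a_1]\cdots[a_n]\). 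I would verify this on generators using the pullback description: \(\eta[a_1]\cdots[a_n]\) maps to \(0\) in \(K^M\) and to the Pfister form in \(I^{n}\) viewed in degree \(n-1\).

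Sheafification is formal from the sheafified square, since all four sheaves are strictly \(\A^1\)-invariant and unramified \cite{MorelA1}.
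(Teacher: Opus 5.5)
Your proposal is correct and follows the route the paper has in mind: the paper gives no separate argument but records the proposition as Morel's consequence of the cartesian square, and you extract both kernels from that fiber product (using the Milnor-conjecture description of \(I^n\to k_n\) and the relation \(\eta h=0\)), then pass to sheaves via strict \(\A^1\)-invariance. The only slip is an index: the image of \(\eta\) on \(K^{MW}_n\) is \(\eta K^{MW}_n\subset K^{MW}_{n-1}\), not \(\eta K^{MW}_{n-1}\).
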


\subsection{Contracted homotopy sheaves}\label{subsec:contraction}

If $\mathcal{M}$ is a strictly $\A^1$-invariant sheaf, its \emph{contraction} $\mathcal{M}_{-1}$ is defined by
\[
\mathcal{M}_{-1}(U)
:=\ker\!\Big(\mathcal{M}(\G_m\times U)\xrightarrow{(1 \times id)^*}\mathcal{M}(U)\Big),
\]
where $1:\operatorname{Spec}(k)\to \G_m$ is the unit map.

Some basic properties of contraction are recorded in the following result of
Morel \cite[Lemmas~2.32 and~7.33]{MorelA1}.

\begin{lemma}\label{lem:contraction-exact}
The assignment $\mathcal{M}\mapsto \mathcal{M}_{-1}$ defines an endofunctor on the category of
strictly (or strongly) $\A^1$-invariant sheaves which preserves exact sequences.
\end{lemma}

\begin{remark}\label{rem:contraction-iterated}
For $i\ge 1$, the $i$-fold contraction is defined inductively by
\[
\mathcal{M}_{-i}:=(\mathcal{M}_{-(i-1)})_{-1}.
\]
\end{remark}

In particular, Lemma~\ref{lem:contraction-exact} implies that contraction is an exact functor on
strongly $\A^1$-invariant sheaves. Consequently, if $\mathcal{M}$ is strictly $\A^1$-invariant,
then so is its contraction $\mathcal{M}_{-1}$ (and hence every iterated contraction
$\mathcal{M}_{-j}$ for $j\ge 0$).

We will also use Morel's compatibility between contraction and the bigrading
on $\A^1$-homotopy sheaves \cite[Theorem~6.13]{MorelA1}.

\begin{theorem}\label{thm:contraction-bigrading}
If $(\mathpzc{X},x)$ is an $\A^1$-connected pointed space, then for any integers $i\ge 1$ and $j\ge 0$ there are canonical isomorphisms
\[
{\pi}^{\A^1}_{i,j}(\mathpzc{X},x)\ \cong\ \big({\pi}^{\A^1}_{i}(\mathpzc{X},x)\big)_{-j}.
\]
\end{theorem}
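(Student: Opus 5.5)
The statement is Morel's comparison between bigraded homotopy sheaves and contractions, so the plan is to reduce both sides to homotopy of a single pointed mapping space. By definition, $\pi^{\A^1}_{i,j}(\mathpzc X,x)$ is the Nisnevich sheaf associated with
\[
U\longmapsto [S^i\wedge \G_m^{\wedge j}\wedge U_+,\ \mathpzc X]_{\A^1,\bullet}.
\]
Writing $\mathcal M=\pi^{\A^1}_i(\mathpzc X,x)$, I would argue by induction on $j$; the case $j=0$ is the definition. Since $i\geq 1$ and $\mathpzc X$ is $\A^1$-connected, $\mathcal M$ is strongly $\A^1$-invariant, and strictly so for $i\geq2$, by Morel's theorem. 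Hence $\mathcal M_{-1}$ makes sense by Lemma~\ref{lem:contraction-exact}, and by Remark~\ref{rem:contraction-iterated} it suffices to prove
\[
\big(\pi^{\A^1}_{i}(\mathpzc X)\big)_{-1}\cong\pi^{\A^1}_{i,1}(\mathpzc X)
\]
functorially in $\mathpzc X$.

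Next I would replace $\mathpzc X$ by a fibrant $\A^1$-local model $L\mathpzc X$ and use the pointed internal mapping space $\Omega_{\G_m}L\mathpzc X=\underline{\mathrm{Map}}_\bullet(\G_m,L\mathpzc X)$. This space is again $\A^1$-local. By adjunction, $\pi^{\A^1}_{i,j+1}(\mathpzc X)\cong\pi^{\A^1}_{i,j}(\Omega_{\G_m}L\mathpzc X)$, and this is what makes the induction run. The split cofiber sequence $U_+\to(\G_m\times U)_+\to \G_m\wedge U_+$, with splitting given by the unit $1\in\G_m$, yields a split fiber sequence of pointed simplicial sets
\[
\mathrm{Map}_\bullet(\G_m\wedge U_+,L\mathpzc X)\to \mathrm{Map}_\bullet((\G_m\times U)_+,L\mathpzc X)\to \mathrm{Map}_\bullet(U_+,L\mathpzc X).
\]
Applying $\pi_i$ and sheafifying identifies $\pi_i(\Omega_{\G_m}L\mathpzc X)$ with the kernel of $(1\times\mathrm{id})^*$ acting on $U\mapsto \pi_i(L\mathpzc X)(\G_m\times U)$. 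Because the sequence is split, the kernel is exact even for $i=1$.

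The main obstacle is the step just glossed over: a section of the sheaf $\mathcal M$ over $\G_m\times U$ is a priori different from a homotopy class of maps $\G_m\times U\to L\mathpzc X$. The latter is a class in $H^0$ of $\G_m\times U$ only up to higher obstructions, since homotopy classes are global while sheaf sections are local. To close this gap I would use Morel's results that for strongly $\A^1$-invariant $\mathcal M$ and essentially smooth local $U$, the restriction $\mathcal M(\G_m\times U)\to\mathcal M(\G_m\times\mathrm{Frac})$ is injective and the relevant cohomology $H^1_{Nis}(\G_m\times U,\mathcal M)$ vanishes. Then the descent (Postnikov) spectral sequence for $\G_m\times U$ shows that $\pi_0\mathrm{Map}(S^i\wedge(\G_m\times U)_+,L\mathpzc X)$ agrees with $\mathcal M(\G_m\times U)$ stalkwise. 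This is exactly where $\A^1$-connectedness and the hypothesis $i\geq1$ enter, through strong invariance of $\mathcal M$ and the Gersten-type vanishing on $\A^1$-connected test objects.

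Granting this step, the two sheaves agree on stalks, so the natural comparison map is an isomorphism. Iterating through the induction then gives the isomorphism for all $j\geq0$.
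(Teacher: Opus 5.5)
The paper gives no argument of its own for this statement. It quotes Morel's Theorem~6.13, which is proved by passing through the $\A^1$-Postnikov tower of $\mathpzc X$ and computing the $\G_m$-loop space of each Eilenberg--MacLane layer. Your stalkwise plan is a legitimate repackaging of that argument, since the descent spectral sequence is just the Postnikov tower of the mapping space. The plan consists of the split cofiber sequence, the kernel of $(1\times\mathrm{id})^*$, and a descent spectral sequence on $\G_m\times U$ for $U$ henselian local. However, the step you yourself flag as the main obstacle is not closed by the inputs you cite.

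Consider the spectral sequence
\[
E_2^{p,q}=H^p_{\mathrm{Nis}}\bigl(\G_m\times U;\pi_q^{\A^1}(\mathpzc X)\bigr)\Longrightarrow \pi_{q-p}\,\mathrm{Map}(\G_m\times U,L\mathpzc X).
\]
The identification $\pi_i\mathrm{Map}(\G_m\times U,L\mathpzc X)\cong\mathcal M(\G_m\times U)$ needs two things. First, $E_2^{p,i+p}=0$ for $p\geq1$. Second, every differential out of $E_2^{0,i}$ must vanish; these land in $H^r(\G_m\times U;\pi^{\A^1}_{i+r-1}\mathpzc X)$ for $r\geq2$. So you need all positive-degree cohomology of $\G_m\times U$ to vanish with coefficients in the higher homotopy sheaves $\pi_q^{\A^1}\mathpzc X$, $q>i$.

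The inputs you name do not give this. The group $H^1(\G_m\times U;\mathcal M)$ only affects $\pi_{i-1}$. Injectivity of $\mathcal M(\G_m\times U)\to\mathcal M(\G_m\times\operatorname{Frac})$ plays no role. Vanishing on local schemes does not apply directly, because $\G_m\times U$ is not local and has Nisnevich cohomological dimension up to $\dim U+1$.

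The missing ingredient is the $\G_m$-splitting of cohomology. For $N$ strictly $\A^1$-invariant and $Y$ smooth,
\[
H^p(\G_m\times Y;N)\cong H^p(Y;N)\oplus H^p(Y;N_{-1}).
\]
This comes from four facts: the localization sequence for $Y\times\{0\}\subset\A^1_Y$, homotopy invariance, the splitting given by the unit section, and purity $H^{p+1}_{Y\times 0}(\A^1_Y;N)\cong H^p(Y;N_{-1})$. For $U$ henselian local the right-hand side vanishes when $p\geq1$. The spectral sequence then collapses, and your split-kernel computation gives $\mathcal M_{-1}(U)$ on stalks. This is exactly the input Morel uses on the Eilenberg--MacLane layers, so once it is supplied your route and his coincide in substance.

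Your induction also has a gap. It applies the $j=1$ case to $\Omega_{\G_m}L\mathpzc X$, which requires that space to be $\A^1$-connected. That in turn needs $H^p(\G_m\times U;\pi_p^{\A^1}\mathpzc X)=0$ for $p\geq1$, including a non-abelian $H^1$ with $\pi_1$-coefficients; this is where an $H^1$ statement genuinely belongs.

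You can sidestep this by running the stalk computation directly on $\G_m^{\times j}\times U$. Iterating the splitting kills all positive-degree cohomology there, and the iterated split kernel is $\mathcal M_{-j}(U)$.

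Finally, $\A^1$-connectedness enters only to make $L\mathpzc X$ simplicially connected. That ensures the spectral sequence involves the homotopy sheaves at the single base point. It is not a vanishing statement on "$\A^1$-connected test objects".
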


We will repeatedly use the contraction behavior of Milnor and Milnor--Witt
$K$-theory sheaves; the following standard identifications are recalled from
\cite[Lemma~2.7]{AF14} and \cite[Proposition~2.9]{AF15b}.

\begin{theorem}\label{thm:contraction-K}
For any integers $i,j\ge 0$ and any integer $n>0$, there are canonical isomorphisms
\[
(\mathbf{K}^M_i)_{-j}\ \cong\
\begin{cases}
\mathbf{K}^M_{i-j}, & \text{if } j\le i,\\[2pt]
0, & \text{if } j> i,
\end{cases}
\qquad
(\mathbf{K}^Q_i)_{-j}\ \cong\
\begin{cases}
\mathbf{K}^Q_{i-j}, & \text{if } j\le i,\\[2pt]
0, & \text{if } j> i.
\end{cases}
\]
Moreover, for any $j\ge 0$ there are canonical isomorphisms
\[
(\mathbf{I}^n)_{-j}\ \cong\ \mathbf{I}^{n-j},
\qquad\text{and}\qquad
(\mathbf{K}^{MW}_n)_{-j}\ \cong\ \mathbf{K}^{MW}_{n-j}.
\]
\end{theorem}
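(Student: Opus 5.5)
The plan is to reduce everything to the single contraction $\mathcal M\mapsto\mathcal M_{-1}$ and then iterate using Remark~\ref{rem:contraction-iterated} and Lemma~\ref{lem:contraction-exact}. Induction on $j$ is legitimate because each contraction of these sheaves is again a sheaf of the same kind: for instance $(\mathbf K^M_{i})_{-1}\cong\mathbf K^M_{i-1}$, and $\mathbf K^M_{i-1}$ is itself strictly $\A^1$-invariant. So it suffices to treat the case $j=1$, together with the base case $(\mathbf K^M_0)_{-1}=0$, where $\mathbf K^M_0=\mathbb Z$ is the constant sheaf.

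For Milnor--Witt $K$-theory in the case $j=1$, I would use Morel's computation. By the homotopy purity (localization) sequence for $\G_m\subset\A^1$ and $\A^1$-invariance, $\mathcal M_{-1}(U)$ identifies with the cokernel of $\mathcal M(\A^1_U)\to\mathcal M(\G_{m,U})$. For unramified sheaves arising from a Rost--Schmid cycle module, this is the residue map at $0$. For $\mathbf K^{MW}_n$, the map $\alpha\mapsto[t]\alpha$ gives an inverse $\mathbf K^{MW}_{n-1}\to(\mathbf K^{MW}_n)_{-1}$, with the residue $\partial_0$ providing the other direction, since $\partial_0([t]\alpha)=\alpha$. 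Exactly the same argument, via the Milnor residue and $\alpha\mapsto\{t\}\alpha$, gives $(\mathbf K^M_i)_{-1}\cong\mathbf K^M_{i-1}$, which is $0$ when $i=0$ because constant sheaves have trivial contraction. Iterating gives the stated case split at $j\le i$ versus $j>i$.

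For $\mathbf I^n$, I would use the cartesian square and Proposition~\ref{prop:MW-filtration} rather than compute directly. Contraction is exact by Lemma~\ref{lem:contraction-exact}, so applying it to $0\to\mathbf I^{n+1}\to\mathbf K^{MW}_n\to\mathbf K^M_n\to0$ and using the two previous identifications identifies $(\mathbf I^{n+1})_{-1}$ with the kernel of $\mathbf K^{MW}_{n-1}\to\mathbf K^M_{n-1}$, which is $\mathbf I^n$. This needs some care: I must check that the contracted map is the canonical quotient, which follows from the residue maps commuting with the forgetful map $\mathbf K^{MW}\to\mathbf K^M$. The negative-degree convention $\mathbf I^r=\mathbf W$ is handled the same way, using $\mathbf K^{MW}_{r}\cong\mathbf W$ for $r<0$, which also gives $\mathbf W_{-1}\cong\mathbf W$.

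For Quillen $K$-theory, I would invoke the fundamental theorem (Bass--Quillen): for regular $U$ one has $K_i(U\times\G_m)\cong K_i(U)\oplus K_{i-1}(U)$, with the second summand given by cup product with $[t]\in K_1$. Sheafifying gives $(\mathbf K^Q_i)_{-1}\cong\mathbf K^Q_{i-1}$, and $(\mathbf K^Q_0)_{-1}=0$ because negative $K$-groups vanish on regular schemes. The main obstacle is canonicity and compatibility of these isomorphisms. I expect the hardest part to be verifying that the various identifications (residue versus cup product with $[t]$) are compatible across the Milnor, Milnor--Witt and Witt cases, so that the $\mathbf I^n$ argument goes through. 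My first approach is to check this on generators $[t][a_1]\cdots[a_{n-1}]$ at generic points and conclude by unramifiedness.
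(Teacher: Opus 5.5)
Your outline is correct and reconstructs the standard argument behind these citations. The paper gives no proof of its own; it simply recalls the statement from \cite[Lemma~2.7]{AF14} and \cite[Proposition~2.9]{AF15b}, whose standard inputs are Morel's computation $(\mathbf K^{MW}_n)_{-1}\cong\mathbf K^{MW}_{n-1}$ via $\alpha\mapsto[t]\alpha$ (with its Milnor analogue) and the fundamental theorem for $\mathbf K^Q$. Your deduction of the $\mathbf I^n$ case from Proposition~\ref{prop:MW-filtration} and exactness of contraction is a clean way to obtain the rest.

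Two calibrations. The compatibility you expect to be hardest is immediate: every identification is multiplication by $[t]$, and $\mathbf K^{MW}_*\to\mathbf K^M_*$ is a ring homomorphism sending $[t]$ to $\{t\}$. The step you pass over quickly, ``sheafifying'' the decomposition of $K_i(U\times\G_m)$, needs a line of justification, because $(\mathbf K^Q_i)_{-1}(U)$ involves sheaf sections over $\G_m\times U$ rather than $K_i(\G_m\times U)$. You should invoke Gersten/unramifiedness there, e.g.\ by checking that $[t]\cdot\colon\mathbf K^Q_{i-1}\to(\mathbf K^Q_i)_{-1}$ is an isomorphism on field sections.
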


\subsection{Higher Grothendieck--Witt theory}\label{sec:HGW}

\noindent
Assume that \(2\) is invertible in \(k\).  For a smooth \(k\)-scheme \(X\)
and a line bundle \(L\) on \(X\), we write \(GW_i^j(X,L)\) for
Schlichting's nonconnective higher Grothendieck--Witt groups
\cite{Sch2010b,Sch17}.  Both indices may be arbitrary integers; the upper
index records the shift of the duality and is \(4\)-periodic.  We write
\(GW_i^j(X)=GW_i^j(X,\mathcal O_X)\), and \(\mathbf{GW}_i^j\) for the
associated strictly \(\A^1\)-invariant Nisnevich sheaf.

Our indexing is fixed by
\[
\eta\in GW_{-1}^{-1}(k)\cong W(k).
\]
Multiplication by \(\eta\) gives natural maps
\[
\eta:GW_i^j(X,L)\longrightarrow GW_{i-1}^{j-1}(X,L).
\]
With these conventions, Karoubi's fundamental exact sequence is
\begin{equation}\label{eq:karoubi-convention}
\cdots\longrightarrow K_i(X)
\xrightarrow{H}GW_i^j(X,L)
\xrightarrow{\eta}GW_{i-1}^{j-1}(X,L)
\xrightarrow{f}K_{i-1}(X)
\xrightarrow{H}GW_{i-1}^j(X,L)
\longrightarrow\cdots ,
\end{equation}
where \(H\) and \(f\) are the hyperbolic and forgetful maps.  This is the
long exact sequence induced by the motivic Wood cofiber sequence.  Every
negative Grothendieck--Witt index below is interpreted using this
nonconnective convention.

\section{The second non-stable homotopy sheaves}

In this section we study the next non-stable layer in the
\(\A^1\)-homotopy sheaves of the special linear groups.  We first recall the
stable range and the first non-stable sheaves computed by Asok--Fasel
\cite{AF14}, and then turn to \(\pi^{\A^1}_{n}(SL_n)\).  As \(n\) varies, we
refer to these as the \emph{second non-stable} \(\A^1\)-homotopy sheaves of
\(SL_n\).

Our starting point is the $\A^1$-fiber sequence
\[
SL_{n+1}/SL_n \simeq_{\A^1} \A^{n+1}\setminus\{0\}\longrightarrow BSL_n\longrightarrow BSL_{n+1}.
\]
Expanding this sequence yields the following diagram:
\[
\begin{tikzcd}[column sep=small, row sep=small]
\Omega_s^{1}(SL_{n+1}/SL_n) \ar[d] & & & SL_{n+1}/SL_n \ar[d] \\
\Omega_s^{1}BSL_n \ar[r] & SL_n/SL_{n-1} \ar[r] & BSL_{n-1} \ar[r] & BSL_n \ar[d] \\
& & & BSL_{n+1}.
\end{tikzcd}
\]
Applying $\motpi_n$ to this diagram yields the composite morphism
\[
\begin{aligned}
d_{n,1}:\ \motpi_{n+1}(\A^{n+1}\setminus\{0\})
&\xrightarrow{\ \delta_{n,1}\ }\motpi_{n+1}(BSL_n)\\
&\xrightarrow{\ q_{n-1,1}\ }\motpi_n(\A^n\setminus\{0\}).
\end{aligned}
\]
We interpret $d_{n,1}$ as the relevant ``differential'' controlling the passage from the first
to the second non-stable range.  For odd \(n\), the Stiefel section determines
this differential.  For even \(n\), the Thom--Euler transgression proves the
  comparison, up to a Grothendieck--Witt unit, with the quotients of
  \cite[Theorem~7.2.1]{ABH}.  A compatible choice of quotient generators
  makes the resulting diagram strictly commutative.  We also treat the
  low-dimensional cases \(n=2,3\).

The resulting sheaf-level diagram uses this compatible normalization; its
kernel and cokernel statements are independent of the choice.
Section~5 determines the deeper contraction needed for complex realization
directly from Kervaire's calculation of the classical two-frame Stiefel
boundary, giving an independent check after evaluation on \(\mathbb C\).

\subsection{Computation in the stable range}\label{subsec:stable-range}

We briefly recall the description of the $\A^1$-homotopy sheaves of $GL_n$
and $SL_n$ in the stable range.  The next result summarizes the features from
\cite[Lemma~3.1 and Theorem~3.2]{AF14} that we will need.

\begin{theorem}\label{thm:stable-range}
Assume that \(k\) is a perfect field of characteristic different from \(2\).
\begin{enumerate}
\item For every $i\ge 0$ and $n\ge 1$, there are natural isomorphisms
\[
\pi^{\A^1}_i(GL_n)\ \cong\ \pi^{\A^1}_{i+1}(BGL_n),
\qquad
\pi^{\A^1}_i(SL_n)\ \cong\ \pi^{\A^1}_{i+1}(BSL_n).
\]
Moreover, the canonical morphism $BSL_n \to BGL_n$ induces isomorphisms
\[
\pi^{\A^1}_{i+1}(BSL_n)\ \xrightarrow{\ \sim\ }\ \pi^{\A^1}_{i+1}(BGL_n)
\qquad\text{for all } i\ge 1.
\]

\item Let \(i>0\) and \(n>1\).  The stabilization morphisms
\[
\pi^{\A^1}_i(SL_{n-1}) \longrightarrow \pi^{\A^1}_i(SL_n)
\]
are surjective for \(i\leq n-2\) and are isomorphisms for
\(i\leq n-3\).  In particular, for \(1\leq i\leq n-2\) there is a canonical
identification
\[
  \pi^{\A^1}_i(SL_n)\ \cong\ \mathbf{K}^Q_{i+1},
\]
where \(\mathbf K^Q_{i+1}\) denotes the corresponding Quillen
\(K\)-theory sheaf.
\end{enumerate}
\end{theorem}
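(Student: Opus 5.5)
The argument follows the one in \cite{AF14}: we use the $\A^1$-fiber sequences $SL_n\to SL_{n+1}\to \A^{n+1}\setminus\{0\}$ together with Morel's connectivity of punctured affine spaces, and then bring in the identification of the stable sheaves with Quillen $K$-theory.

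For part (1), we begin from the $\A^1$-fiber sequences $G\to EG\to BG$ with $G=GL_n$ or $SL_n$. These exist because $GL_n$ and $SL_n$ are isotropic reductive groups, so Nisnevich-locally trivial torsors are $\A^1$-local in the sense of \cite{AffineRep2}. Since $EG$ is contractible, the long exact sequence of $\A^1$-homotopy sheaves gives $\motpi_i(G)\cong\motpi_{i+1}(BG)$ for all $i\ge0$. Next, the determinant gives a fiber sequence $SL_n\to GL_n\to\G_m$, and delooping it gives $BSL_n\to BGL_n\to B\G_m$. The space $\G_m$ is $\A^1$-rigid, so $B\G_m$ is an $\A^1$-$K(\G_m,1)$ and its higher homotopy sheaves vanish. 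The long exact sequence then shows that $\motpi_{i+1}(BSL_n)\to\motpi_{i+1}(BGL_n)$ is an isomorphism for $i\ge1$.

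For part (2), we use the fiber sequence $SL_{n-1}\to SL_n\to SL_n/SL_{n-1}\simeq_{\A^1}\A^n\setminus\{0\}$. By Morel's unstable connectivity theorem, $\A^n\setminus\{0\}\simeq S^{n-1}\wedge\G_m^{\wedge n}$ is $\A^1$-$(n-2)$-connected, with first nonvanishing sheaf $\mathbf K^{MW}_n$ in degree $n-1$. The long exact sequence
\[
\motpi_{i+1}(\A^n\setminus\{0\})\to\motpi_i(SL_{n-1})\to\motpi_i(SL_n)\to\motpi_i(\A^n\setminus\{0\})
\]
then gives surjectivity for $i\le n-2$ and bijectivity for $i\le n-3$. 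Consequently $\motpi_i(SL_n)$ is the stable value $\operatorname{colim}_N\motpi_i(SL_N)$ whenever $i\le n-2$.

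The remaining step is to identify the stable value with $\mathbf K^Q_{i+1}$, and this is where the real input lies. By part (1) we have $\motpi_i(SL_\infty)\cong\motpi_{i+1}(BGL_\infty)$ for $i\ge1$. By Morel--Voevodsky, $\Z\times BGL_\infty$ represents algebraic $K$-theory in $\mathpzc H_\bullet(k)$, and its $\A^1$-homotopy sheaves are the Nisnevich sheafifications of Quillen's $K$-groups, so $\motpi_{i+1}(BGL_\infty)\cong\mathbf K^Q_{i+1}$. The point that needs care is passing from $SL_\infty$ to the homotopy sheaves of the finite-rank groups, which requires homotopy sheaves to commute with the filtered colimit $\operatorname{colim}_N SL_N$. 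This holds for Nisnevich sheaves on quasi-compact sites once we use that the $\A^1$-localization functor commutes with filtered colimits of simplicial sheaves, as in Morel's book. Combining this with the stabilization range gives $\motpi_i(SL_n)\cong\mathbf K^Q_{i+1}$ for $1\le i\le n-2$.

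The hypotheses on $k$, perfect of characteristic different from $2$, are exactly those needed to invoke Morel's connectivity theorem and the representability results.
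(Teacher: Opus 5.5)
Your argument is correct and is essentially the standard proof behind \cite[Lemma~3.1 and Theorem~3.2]{AF14}, which the paper cites rather than reproving. The fiber sequences $G\to EG\to BG$ and $SL_{n-1}\to SL_n\to\A^n\setminus\{0\}$, together with Morel's connectivity, give part (1) and the stabilization range; Morel--Voevodsky representability of $K$-theory and the commutation of $\A^1$-homotopy sheaves with filtered colimits then identify the stable value with $\mathbf K^Q_{i+1}$. (The one step stated loosely is that $BSL_n\to BGL_n\to B\G_m$ is an $\A^1$-fiber sequence. You can bypass it: the isomorphism of schemes $GL_n\cong SL_n\times\G_m$ gives $\motpi_i(SL_n)\cong\motpi_i(GL_n)$ for $i\geq1$ directly.)
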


\subsection{\texorpdfstring{Computation of
\(\pi^{\A^1}_{n-2}(SL_{n-1})\)}
{Computation of the first adjacent non-stable group}}
\label{subsec:pi-n-2-SLn-1}

To analyze ${\pi}^{\A^1}_{n-2}(SL_{n-1})$, we begin with the $\A^1$-fiber sequence
\[
SL_n/SL_{n-1}\longrightarrow BSL_{n-1}\longrightarrow BSL_n
\]
This sequence is recalled in \cite{MorelA1}; its long exact sequence in
\(\A^1\)-homotopy sheaves
contains the segment
\begin{equation}\label{eq:les-segment}
{\pi}^{\A^1}_{n}(BSL_n)\longrightarrow
{\pi}^{\A^1}_{n-1}(SL_n/SL_{n-1})\longrightarrow
{\pi}^{\A^1}_{n-1}(BSL_{n-1})\longrightarrow
{\pi}^{\A^1}_{n-1}(BSL_n).
\end{equation}
For \(n\geq3\), the terms are sheaves of abelian groups and are therefore
strictly \(\A^1\)-invariant.  In particular,
\[
{\pi}^{\A^1}_{1}(SL_{2})\cong \mathbf{K}^{MW}_{2},
\qquad
{\pi}^{\A^1}_{1}(SL_{3})\cong \mathbf{K}^{M}_{2}
\]
by \cite[Chapter~6]{MorelA1}; the higher cases follow from standard
\(\A^1\)-connectivity.

Next, recall the canonical $\A^1$-weak equivalence
\[
\A^{n}\setminus\{0\}\ \simeq_{\A^1}\ S^{n-1}_s\wedge \G_m^{\wedge n}.
\]
Moreover, the quotient $SL_n/SL_{n-1}$ is $\A^1$-weakly equivalent to $\A^n\setminus\{0\}$. Consequently,
\begin{equation}\label{eq:pi-quotient}
{\pi}^{\A^1}_{n-1}(SL_n/SL_{n-1})
\ \cong\
{\pi}^{\A^1}_{n-1}(\A^{n}\setminus\{0\})
\ \cong\
\mathbf{K}^{MW}_n.
\end{equation}
Using the canonical identifications
\[
{\pi}^{\A^1}_{n}(BSL_n)\cong {\pi}^{\A^1}_{n-1}(SL_n),
\qquad
{\pi}^{\A^1}_{n-1}(BSL_{n-1})\cong {\pi}^{\A^1}_{n-2}(SL_{n-1}),
\qquad
{\pi}^{\A^1}_{n-1}(BSL_n)\cong {\pi}^{\A^1}_{n-2}(SL_n),
\]
the segment \eqref{eq:les-segment} may be rewritten as
\begin{equation}\label{eq:stable-segment}
{\pi}^{\A^1}_{n-1}(SL_n)\xrightarrow{\,q_{n-1}\,}
\mathbf{K}^{MW}_n \xrightarrow{\,\delta_{n-1}\,}
{\pi}^{\A^1}_{n-2}(SL_{n-1})
\longrightarrow
{\pi}^{\A^1}_{n-2}(SL_n).
\end{equation}
Finally, ${\pi}^{\A^1}_{n-2}(SL_n)$ lies in the stable range, hence
\[
{\pi}^{\A^1}_{n-2}(SL_n)\ \cong\ \mathbf{K}^{Q}_{n-1}.
\]
Therefore, \eqref{eq:stable-segment} yields an exact sequence
\begin{equation}\label{eq:exact-seq-pi}
{\pi}^{\A^1}_{n-1}(SL_n)\xrightarrow{\,q_{n-1}\,}
\mathbf{K}^{MW}_n \xrightarrow{\,\delta_{n-1}\,}
{\pi}^{\A^1}_{n-2}(SL_{n-1})
\longrightarrow
\mathbf{K}^{Q}_{n-1}
\longrightarrow 0.
\end{equation}
Thus, understanding ${\pi}^{\A^1}_{n-2}(SL_{n-1})$ reduces to identifying the map
\(
q_{n-1}:{\pi}^{\A^1}_{n-1}(SL_n)\to \mathbf{K}^{MW}_n
\)
and, more specifically, the composite that appears after one more stabilization step.

\smallskip
\noindent\textbf{The composite $q_{n-1}\circ\delta_n$.}
Consider the $\A^1$-fiber sequence
\[
SL_{n+1}/SL_n\longrightarrow BSL_n\longrightarrow BSL_{n+1}.
\]
As in the introduction, it gives rise to a commutative diagram
\begin{center}
\begin{tikzcd}[column sep=large, row sep=large]
\Omega_s^{1}(SL_{n+1}/SL_n) \ar[d] & & & SL_{n+1}/SL_n \ar[d]\\
\Omega_s^{1}BSL_n \ar[r] & SL_n/SL_{n-1} \ar[r] & BSL_{n-1} \ar[r] & BSL_n \ar[d]\\
& & & BSL_{n+1}.
\end{tikzcd}
\end{center}
Applying ${\pi}^{\A^1}_{n-1}(-)$ and using the identifications
\[
\begin{aligned}
{\pi}^{\A^1}_{n-1}\!\big(\Omega_s^{1}(SL_{n+1}/SL_n)\big)
&\cong
{\pi}^{\A^1}_{n}(SL_{n+1}/SL_n)
\cong
\mathbf{K}^{MW}_{n+1},\\
{\pi}^{\A^1}_{n-1}(\Omega_s^{1}BSL_n)
&\cong
{\pi}^{\A^1}_{n}(BSL_n)
\cong
{\pi}^{\A^1}_{n-1}(SL_n),
\end{aligned}
\]
we see that the induced map
\[
\mathbf{K}^{MW}_{n+1}\longrightarrow \mathbf{K}^{MW}_n
\]
is precisely the composite
\begin{equation}\label{eq:composite-q-delta}
\mathbf{K}^{MW}_{n+1}\xrightarrow{\,\delta_n\,}
{\pi}^{\A^1}_{n-1}(SL_n)
\xrightarrow{\,q_{n-1}\,}
\mathbf{K}^{MW}_n.
\end{equation}

The quotient \(SL_{n+1}/SL_n\) is
\(\A^1\)-\((n-1)\)-connected.  For a strictly \(\A^1\)-invariant sheaf
\(\mathbf A\), maps out of its first nontrivial homotopy sheaf are detected
by Nisnevich cohomology \cite{AABD}.  Thus
\eqref{eq:composite-q-delta} is determined by a class in
\[
H^n_{\mathrm{Nis}}\big(SL_{n+1}/SL_n,\mathbf{K}^{MW}_n\big).
\]
Using the identification $SL_{n+1}/SL_n \simeq_{\A^1} Q_{2n+1}$, where $Q_{2n+1}\subset \A^{2n+2}$ is the split quadric
\[
\sum_{i=1}^{n+1} x_i y_i = 1,
\]
we may view this class as an element of
\[
H^n_{\mathrm{Nis}}(Q_{2n+1},\mathbf{K}^{MW}_n).
\]
Moreover, the morphism \(SL_{n+1}/SL_n\to BSL_n\) classifies the
tautological \(SL_n\)-torsor
\(SL_{n+1}\to SL_{n+1}/SL_n\).  The resulting cohomology class is Morel's
Euler class \cite{MorelA1}.  Fasel computes it using unimodular rows
\cite{faseldegree}, and \cite[Lemma~3.5]{AF14} yields the following formula.

\begin{theorem}\label{thm:q-delta}
Assume that \(k\) is a perfect field of characteristic different from \(2\).
For \(n\geq2\), the composite \eqref{eq:composite-q-delta} satisfies
\[
q_{n-1}\circ \delta_n=
\begin{cases}
\eta, & \text{if } n=2m,\\
0, & \text{if } n=2m+1.
\end{cases}
\]
\end{theorem}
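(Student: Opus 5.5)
The composite \eqref{eq:composite-q-delta} is a morphism of sheaves \(\mathbf K^{MW}_{n+1}\to\mathbf K^{MW}_n\), and I would identify it with an element of \(\mathbf K^{MW}_{-1}(k)=W(k)\cdot\eta\). The plan has three steps. First, reduce the composite to a single cohomology class. Second, identify that class with an Euler class. Third, compute the Euler class and read off its parity dependence.

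\emph{Step 1: reduction to a cohomology class.} Since \(SL_{n+1}/SL_n\simeq_{\A^1}\A^{n+1}\setminus\{0\}\simeq S^n_s\wedge\G_m^{\wedge(n+1)}\) is \(\A^1\)-\((n-1)\)-connected, Hurewicz and the universal property of the first nontrivial homotopy sheaf identify
\[
\operatorname{Hom}(\mathbf K^{MW}_{n+1},\mathbf K^{MW}_n)
\cong H^n_{\mathrm{Nis}}(\A^{n+1}\setminus\{0\},\mathbf K^{MW}_n)
\cong (\mathbf K^{MW}_n)_{-(n+1)}(k)
\cong K^{MW}_{-1}(k).
\]
The last two isomorphisms use the suspension isomorphism and Theorem~\ref{thm:contraction-K}. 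Moreover, \(\operatorname{Hom}(\mathbf K^{MW}_{n+1},\mathbf K^{MW}_n)=K^{MW}_{-1}(k)\) acts by multiplication, and \(K^{MW}_{-1}(k)=\eta\cdot GW(k)\cong W(k)\). So the composite is multiplication by a well-defined element of this group, and it suffices to compute that class.

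\emph{Step 2: identification with the Euler class.} The composite \(\Omega^1_s(SL_{n+1}/SL_n)\to\Omega^1_sBSL_n\to SL_n/SL_{n-1}\) is adjoint to the obstruction to lifting the classifying map \(Q_{2n+1}\to BSL_n\) of the torsor \(SL_{n+1}\to Q_{2n+1}\) to \(BSL_{n-1}\). The first obstruction in Morel's obstruction theory is the primary obstruction, valued in \(H^n(Q_{2n+1},\pi_{n-1}(\A^n\setminus\{0\}))=H^n(Q_{2n+1},\mathbf K^{MW}_n)\). By Morel's theory this primary obstruction is the Euler class of the associated rank-\(n\) oriented bundle. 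That bundle is the kernel of \(\mathcal O^{n+1}\to\mathcal O\) given by the unimodular row \((x_1,\dots,x_{n+1})\), or equivalently the complementary stably free module. I need to check that the transgression, i.e.\ the looping and adjunction, matches the identification in Step 1 up to the chosen sphere orientation. This is where the Euler/Gysin orientation convention of \cite[Lemma~3.5]{AF14} enters.

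\emph{Step 3: computation of the Euler class.} Compute the Euler class of the stably free module \(P_n\) attached to the universal unimodular row of length \(n+1\) on \(Q_{2n+1}\), following Fasel's unimodular-row computation \cite{faseldegree}. Choose a section of \(P_n^\vee\), for instance one built from the \(y_i\). Its zero locus is a point, or an explicit codimension-\(n\) subvariety. The local index is then \(\sum_{i}(-1)^{i}\langle\pm1\rangle\)-type contributions. Through Gysin/purity this yields the class
\[
\eta\cdot\bigl(1+(-1)^{n+1}\langle-1\rangle\bigr)/\text{(rank normalization)},
\]
which in \(W(k)\) equals \(\eta\) when \(n\) is even and \(\eta\cdot h=0\) when \(n\) is odd, since \(\eta h=0\).

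As a cross-check on the odd case, when \(n=2m+1\) the rank \(n+1\) is even, and \(Q_{2n+1}\to BSL_n\) lifts through a symplectic reduction. Equivalently, the row \((x_1,\dots,x_{2m+2})\) completes to give a nowhere-vanishing section of \(P_n\) via the standard symplectic form. So the Euler class, and hence the composite, vanishes.

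The main obstacle is the even case. There I must pin down the sign and unit exactly: the Euler class is determined a priori only up to a unit in \(GW(k)\), depending on orientation choices. I would handle this by invoking \cite[Lemma~3.5]{AF14} directly, with its fixed Euler/Gysin orientation, rather than recomputing from scratch.
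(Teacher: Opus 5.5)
Your proposal follows the paper's own route. You reduce the composite to a class in \(H^n_{\mathrm{Nis}}(Q_{2n+1},\mathbf K^{MW}_n)\cong K^{MW}_{-1}(k)\), identify that class with Morel's Euler class of the kernel of the unimodular row \((x_1,\dots,x_{n+1})\), and take its value from Fasel's computation as packaged in \cite[Lemma~3.5]{AF14}. Your odd-case check is also correct: the standard symplectic matrix applied to \((x_1,\dots,x_{n+1})\) gives a nowhere-vanishing section of that kernel, which is the same symplectic mechanism the paper later uses in Lemma~\ref{lem:stiefel-parity}.

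Drop the heuristic formula in your Step~3, though. In \(W(k)\) one has \(1-\langle-1\rangle=2\), so without the division it would give \(2\eta\), which is \(0\) over \(\mathbb C\). The ``rank normalization'' cannot be divided out either: \(1-\langle-1\rangle\) has rank \(0\), and \(2\) is not invertible in \(W(k)\). The even case therefore rests entirely on the citation, exactly as it does in the paper.
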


\smallskip
\noindent\textbf{The first non-stable sheaf of $BSL_n$.}
Following \cite{AF14}, we introduce the sheaves \(\mathbf S_n\) and
\(\mathbf T_n\) used to describe the first non-stable
\(\A^1\)-homotopy sheaf of \(BSL_n\).  Let
\(\psi_n:\mathbf K^Q_n\to\mathbf K^M_n\) be the natural morphism for
\(n\geq2\), and put \(\mathbf S_n:=\operatorname{coker}(\psi_n)\).  There is
an epimorphism
\(\mathbf K^M_n/(n-1)!\twoheadrightarrow\mathbf S_n\); for odd
\(n\geq3\), there is also a morphism
\(\mathbf S_n\to\mathbf K^M_n/2\).  For such odd \(n\), define
\(\mathbf T_n\) by the fiber square
\[
\begin{tikzcd}
\mathbf{T}_n \ar[r] \ar[d] & \mathbf{I}^n \ar[d]\\
\mathbf{S}_n \ar[r] & \mathbf{K}^M_n/2.
\end{tikzcd}
\]

\begin{theorem}\label{thm:first-nonstable-BSL}
Assume that \(k\) is a perfect field of characteristic different from \(2\).
For every \(m\geq1\), there are short exact sequences of strictly
\(\A^1\)-invariant sheaves
\[
0\longrightarrow \mathbf{T}_{2m+1}\longrightarrow
{\pi}^{\A^1}_{2m}(BSL_{2m})
\longrightarrow \mathbf{K}^{Q}_{2m}\longrightarrow 0,
\]
and
\[
0\longrightarrow \mathbf{S}_{2m+2}\longrightarrow
{\pi}^{\A^1}_{2m+1}(BSL_{2m+1})
\longrightarrow \mathbf{K}^{Q}_{2m+1}\longrightarrow 0.
\]
\end{theorem}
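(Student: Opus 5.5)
This is the Asok--Fasel computation of the first non-stable sheaf of \(BSL_n\), and I would reproduce it from the exact sequence \eqref{eq:exact-seq-pi} together with Theorem~\ref{thm:q-delta}. Replace \(n\) by \(n+1\) in \eqref{eq:exact-seq-pi}, so that the unknown term is \(\pi^{\A^1}_{n}(BSL_n)\cong\pi^{\A^1}_{n-1}(SL_n)\):
\[
\pi^{\A^1}_{n}(SL_{n+1})\xrightarrow{q_n}\mathbf K^{MW}_{n+1}\xrightarrow{\delta_n}\pi^{\A^1}_{n-1}(SL_n)\to\mathbf K^Q_n\to0.
\]
The right-hand quotient \(\mathbf K^Q_n\) is the one appearing in both displayed sequences. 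The task is therefore to identify \(\operatorname{coker}(q_n)=\operatorname{im}(\delta_n)\) with \(\mathbf T_{2m+1}\) when \(n=2m\), and with \(\mathbf S_{2m+2}\) when \(n=2m+1\).

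To compute the image of \(q_n\), I would factor \(q_n\) through stabilization. The group \(\pi^{\A^1}_n(SL_{n+1})\) is itself an extension: \(\delta_{n+1}\) maps \(\mathbf K^{MW}_{n+2}\) onto its kernel part, and the quotient is \(\pi^{\A^1}_n(SL_{n+2})\cong\mathbf K^Q_{n+1}\). By Theorem~\ref{thm:q-delta} at stage \(n+1\), the composite \(q_n\circ\delta_{n+1}\) is \(\eta\) if \(n+1\) is even and \(0\) if \(n+1\) is odd.

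\textbf{Case \(n=2m\).} Here the image of \(\mathbf K^{MW}_{2m+2}\) under \(q_n\) is \(\eta\mathbf K^{MW}_{2m+2}\). By Proposition~\ref{prop:MW-filtration} this is the image of \(\mathbf I^{2m+2}\) in \(\mathbf K^{MW}_{2m+1}\), and the quotient by it is \(\mathbf K^M_{2m+1}\). It then remains to compute the image of the stable part \(\mathbf K^Q_{2m+1}\) in \(\mathbf K^M_{2m+1}\). I would identify the induced map with \(\psi_{2m+1}\) by comparing with the natural map \(\pi_{2m}(SL_{2m+1})\to\mathbf K^{MW}_{2m+1}\to\mathbf K^M_{2m+1}\), which on the stable part is the Suslin-type map \(\mathbf K^Q\to\mathbf K^M\). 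I would also need its behaviour modulo the \(\mathbf I\) part, which is where the fiber square defining \(\mathbf T_{2m+1}\) enters. Concretely, \(\operatorname{coker}(q_n)\) is the pushout of \(\mathbf K^{MW}_{2m+1}\to\mathbf K^M_{2m+1}\) along \(\mathbf K^M_{2m+1}\to\mathbf S_{2m+1}\), modulo \(\eta\)-multiples. Using the cartesian square of Morel, this is the fiber product \(\mathbf S_{2m+1}\times_{\mathbf K^M_{2m+1}/2}\mathbf I^{2m+1}=\mathbf T_{2m+1}\).

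\textbf{Case \(n=2m+1\).} The composite on \(\mathbf K^{MW}_{2m+3}\) is zero, so \(q_n\) factors through \(\mathbf K^Q_{2m+2}\). Its image in \(\mathbf K^{MW}_{2m+2}\) should be the image of the composite \(\mathbf K^Q_{2m+2}\to\mathbf K^M_{2m+2}\), lifted so that it contains \(\mathbf I^{2m+3}=\eta\mathbf K^{MW}_{2m+3}\). The cokernel is then \(\operatorname{coker}(\psi_{2m+2})=\mathbf S_{2m+2}\). For this I would need the image of \(q_n\) to contain \(\eta\mathbf K^{MW}_{2m+3}\) by some other route. My first attempt would be the hyperbolic/forgetful structure: a \(GW\)-module argument showing that the image is stable under \(\eta\)-multiples of the \(\mathbf I\)-component.

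\textbf{Main obstacle.} The hard step is this last identification in both parities: showing that the composite of \(q_n\) with the stable quotient is exactly \(\psi\), including the \(\mathbf I\)-part. This needs more than the formal exact sequences. It requires Suslin's computation that \(\mathbf K^Q_{n}\to\mathbf K^M_n\) has image containing \((n-1)!\mathbf K^M_n\), which is the source of the epimorphism \(\mathbf K^M_n/(n-1)!\twoheadrightarrow\mathbf S_n\), together with the Suslin matrix/unimodular-row description of \(q_n\) from \cite{AF14}. I would import that comparison directly as the input from \cite{AF14}, and use the present exact sequences only to assemble the kernels.
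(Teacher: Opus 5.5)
You apply Theorem~\ref{thm:q-delta} with the parities reversed, and this breaks both cases. Your own rule is correct: $q_n\circ\delta_{n+1}=\eta$ exactly when $n+1$ is even. For $n=2m$ the index is $2m+1$, which is odd, so $q_{2m}\circ\delta_{2m+1}=0$, not $\eta$. For $n=2m+1$ the index is $2m+2$, so $q_{2m+1}\circ\delta_{2m+2}=\eta$, not $0$.

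As written, your even case cannot reach $\mathbf T_{2m+1}$. If $\operatorname{im}(q_{2m})$ contained $\eta\mathbf K^{MW}_{2m+2}=\mathbf I^{2m+2}$, the cokernel would be a quotient of $\mathbf K^{MW}_{2m+1}/\mathbf I^{2m+2}=\mathbf K^M_{2m+1}$. By exactness of contraction, it would then have vanishing $(2m+2)$-fold contraction. But $\mathbf T_{2m+1}$ surjects onto $\mathbf I^{2m+1}$, whose $(2m+2)$-fold contraction is $\mathbf W\neq0$. So your identification of the quotient ``modulo $\eta$-multiples'' with $\mathbf S_{2m+1}\times_{\mathbf K^M_{2m+1}/2}\mathbf I^{2m+1}$ is false, not merely unjustified; modulo $\mathbf I^{2m+2}$ you would get $\mathbf S_{2m+1}$. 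In your odd case you look for an unspecified $GW$-module argument to put $\eta\mathbf K^{MW}_{2m+3}$ into $\operatorname{im}(q_{2m+1})$. That inclusion is exactly what the correctly applied theorem gives.

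With the parities repaired, your outline is the Asok--Fasel argument, which the paper does not reprove but quotes from \cite{AF14}.

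For $n=2m+1$, Proposition~\ref{prop:MW-filtration} gives $\eta\mathbf K^{MW}_{2m+3}=\mathbf I^{2m+3}=\ker(\mathbf K^{MW}_{2m+2}\to\mathbf K^M_{2m+2})$. So $\operatorname{coker}(q_{2m+1})$ is $\mathbf K^M_{2m+2}$ modulo the image of the induced map $\mathbf K^Q_{2m+2}\to\mathbf K^M_{2m+2}$. This is $\mathbf S_{2m+2}$ once that map is identified with $\psi_{2m+2}$.

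For $n=2m$, $q_{2m}$ kills $\operatorname{im}(\delta_{2m+1})$ and factors through $\pi^{\A^1}_{2m}(SL_{2m+2})\cong\mathbf K^Q_{2m+1}$. Write $\mathbf K^{MW}_{2m+1}=\mathbf K^M_{2m+1}\times_{\mathbf K^M_{2m+1}/2}\mathbf I^{2m+1}$. The cokernel is $\mathbf T_{2m+1}$ provided the factored map has $\mathbf K^M$-component $\psi_{2m+1}$ and \emph{zero} $\mathbf I^{2m+1}$-component. Equivalently, it lands in $2\mathbf K^M_{2m+1}=\ker(\mathbf K^{MW}_{2m+1}\to\mathbf I^{2m+1})$. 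This also forces $\operatorname{im}\psi_{2m+1}\subseteq2\mathbf K^M_{2m+1}$, which is what makes $\mathbf S_{2m+1}\to\mathbf K^M_{2m+1}/2$ exist.

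So the non-formal input to import from \cite{AF14} is the identification of the $\mathbf K^M$-component with $\psi$, together with the \emph{vanishing} of the $\mathbf I$-component in even stages. In those stages the $\mathbf I$-part is not recovered from $\eta$-multiples. Suslin's $(n-1)!$ statement is not needed for these sequences, since $\mathbf S_n$ is defined as $\operatorname{coker}(\psi_n)$.
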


\subsection{\texorpdfstring{Computation of
\(\pi^{\A^1}_{n}(SL_n)\)}
{Computation of the second adjacent non-stable group}}
\label{subsec:pi-n-SLn}

To analyze the structure of ${\pi}^{\A^1}_{n}(SL_n)$, we begin with the $\A^1$-fiber sequence
\begin{equation}\label{eq:fiber-BSL}
SL_{n+1}/SL_n \ \simeq_{\A^1}\ \A^{n+1}\setminus\{0\}
\longrightarrow BSL_n \longrightarrow BSL_{n+1}.
\end{equation}
Applying ${\pi}^{\A^1}_n(-)$ to \eqref{eq:fiber-BSL}, we obtain the following diagram:
\begin{equation}\label{eq:diagram-pi-n}
\begin{tikzcd}[column sep=0.25in, row sep=large]
{\pi}^{\A^1}_n\!\big(\Omega_s^1(SL_{n+1}/SL_n)\big)
\ar[d, "\delta_{n,1}"']
\ar[dr, dashed, "d_{n,1}"] & &
{\pi}^{\A^1}_n(SL_{n}/SL_{n-1})\ar[d] \\
{\pi}^{\A^1}_n\!\big(\Omega_s^1 BSL_n\big) \ar[r, "q_{n-1,1}"] &
{\pi}^{\A^1}_n(SL_n/SL_{n-1}) \ar[r] &
{\pi}^{\A^1}_n(BSL_{n-1}) \ar[r] &
{\pi}^{\A^1}_n(BSL_n) \ar[d]\\
& & &
{\pi}^{\A^1}_n(BSL_{n+1}).
\end{tikzcd}
\end{equation}
Here the dashed arrow is the composite
\[
d_{n,1}:=q_{n-1,1}\circ \delta_{n,1} :
{\pi}^{\A^1}_n\!\big(\Omega_s^1(SL_{n+1}/SL_n)\big)
\longrightarrow
{\pi}^{\A^1}_n(SL_n/SL_{n-1}).
\]

From now on we focus on the morphism $d_{n,1}$.  For $n\ge 2$, we have $\A^1$-weak equivalences
\[
SL_{n+1}/SL_n \simeq_{\A^1} \A^{n+1}\setminus\{0\},
\qquad
SL_n/SL_{n-1} \simeq_{\A^1} \A^{n}\setminus\{0\},
\]
and hence the map $d_{n,1}$ may be viewed as a morphism
\[
d_{n,1}:\ {\pi}^{\A^1}_{n+1}\!\big(\A^{n+1}\setminus\{0\}\big)
\longrightarrow
{\pi}^{\A^1}_{n}\!\big(\A^{n}\setminus\{0\}\big).
\]

If \(k\) has characteristic \(0\) and \(n\geq4\), then
\cite[Theorem~7.2.1]{ABH} gives exact sequences
\begin{equation}\label{eq:ABH-exact-next}
0 \longrightarrow \mathbf{K}^M_{n+3}/24
\xrightarrow{\,(\nu_{n+1})_*\,}
{\pi}^{\A^1}_{n+1}\!\big(\A^{n+1}\setminus\{0\}\big)
\xrightarrow{\,(\delta_{n+1})_*\,}
\mathbf{GW}^{n+1}_{n+2},
\end{equation}
and
\begin{equation}\label{eq:ABH-exact-n}
0 \longrightarrow \mathbf{K}^M_{n+2}/24
\xrightarrow{\,(\nu_{n})_*\,}
{\pi}^{\A^1}_{n}\!\big(\A^{n}\setminus\{0\}\big)
\xrightarrow{\,(\delta_{n})_*\,}
\mathbf{GW}^{n}_{n+1}.
\end{equation}
The morphism \(d_{n,1}\) fits into a commutative diagram relating
\eqref{eq:ABH-exact-next} and \eqref{eq:ABH-exact-n}; it is induced by the
composite \(q_{n-1,1}\circ\delta_{n,1}\) in
\eqref{eq:diagram-pi-n}.
The next lemma provides a more detailed description of this diagram.

\begin{lemma}\label{lem:stiefel-parity}
Let \(n\geq3\).  The cross-composite \(d_{n,1}\) is the connecting
homomorphism
\[
\pi_{n+1}^{\A^1}(\A^{n+1}\setminus0)
\longrightarrow
\pi_n^{\A^1}(\A^n\setminus0)
\]
associated with the Stiefel fiber sequence
\[
\A^n\setminus0\longrightarrow
\operatorname{St}_2(n+1)\longrightarrow
\A^{n+1}\setminus0.
\]
If \(n\) is odd, the last morphism has an \(\A^1\)-homotopy section and,
consequently, \(d_{n,1}=0\).  In fact, every connecting homomorphism of this
fiber sequence is zero in that parity.
\end{lemma}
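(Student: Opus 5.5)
The proof has two parts: identify the cross-composite with a Stiefel connecting map, then split that map in odd rank.

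\emph{Identification.} I would use the octahedral axiom, or equivalently a $3\times3$ diagram of $\A^1$-fiber sequences. The composite $BSL_{n-1}\to BSL_n\to BSL_{n+1}$ has homotopy fiber $SL_{n+1}/SL_{n-1}=\operatorname{St}_2(n+1)$. This uses the representability and homogeneous-space results of \cite{AffineRep2}, which identify these quotients with the $\A^1$-homotopy fibers. We obtain a map of fiber sequences from
\[
SL_n/SL_{n-1}\to SL_{n+1}/SL_{n-1}\to SL_{n+1}/SL_n
\]
to the sequence $SL_n/SL_{n-1}\to BSL_{n-1}\to BSL_n$, covering $BSL_n\to BSL_{n+1}$. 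Naturality of connecting maps then gives two facts:
\begin{enumerate}
\item the boundary $\Omega_s^1(SL_{n+1}/SL_n)\to SL_n/SL_{n-1}$ of the Stiefel sequence factors as $\Omega^1_s(SL_{n+1}/SL_n)\to\Omega^1_s BSL_n\to SL_n/SL_{n-1}$;
\item the first arrow is the looped fiber-inclusion boundary $\delta_{n,1}$, and the second is the boundary $q_{n-1,1}$ of the rank-$n$ sequence.
\end{enumerate}
Applying $\motpi_n$ and using $SL_{n+1}/SL_n\simeq\A^{n+1}\setminus0$ and $SL_n/SL_{n-1}\simeq\A^n\setminus0$ shows that $d_{n,1}$ is the Stiefel connecting homomorphism. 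The hypothesis $n\geq3$ makes all sheaves involved abelian and keeps basepoints harmless.

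I expect the main bookkeeping obstacle to be sign and orientation conventions. Here we only need the identification up to the canonical equivalences, and a sign does not affect vanishing.

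\emph{Section in odd rank.} Write $n+1=2m$. Model $\operatorname{St}_2(2m)$ as pairs $(v,w)$ with $v$ a unimodular row and $w$ a vector with $\langle v,w\rangle$-frame extendable, or more concretely as $Q_{4m-1}$-type pairs $(x,y)$ with $\sum x_iy_i=1$, which are $\A^1$-equivalent to $\A^{2m}\setminus0$. Let $J$ be the standard symplectic form on $k^{2m}$. Following \cite[Lemma~18]{AOSS26}, I would define a section on the quadric model by
\[
(x,y)\mapsto (x, Jy),
\]
more precisely sending a point to the frame whose first vector is $x$ and whose second is $Jy$ adjusted to be independent of $x$. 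Independence holds because $\langle x,Jx\rangle=0$ while $x$ pairs to $1$ with $y$, so that the $2\times 2$ minors of $(x,Jy)$ generate the unit ideal. This is exactly the algebraic analogue of $S^{4m-1}\to V_2(\mathbb R^{4m})$ from the quaternionic or complex structure. It gives a morphism $s:\A^{n+1}\setminus0\to\operatorname{St}_2(n+1)$ in $\mathpzc H_\bullet(k)$, and $p\circ s$ is $\A^1$-homotopic to the identity; it is in fact equal to it if the Stiefel variety is modeled by first vectors.

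\emph{Conclusion.} Since $p_*\circ s_*=\mathrm{id}$, each $p_*:\motpi_i(\operatorname{St}_2(n+1))\to\motpi_i(\A^{n+1}\setminus0)$ is surjective. By exactness of the long exact sequence of the fiber sequence in $\A^1$-homotopy sheaves, which is available since the base is $\A^1$-simply connected for $n\geq3$, every connecting map $\motpi_i(\A^{n+1}\setminus0)\to\motpi_{i-1}(\A^n\setminus0)$ vanishes. In particular $d_{n,1}=0$.
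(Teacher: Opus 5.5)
Your proof is correct and follows the paper's argument: you identify $d_{n,1}$ with the Stiefel boundary by naturality of connecting maps for the map of fiber sequences $(SL_n/SL_{n-1},SL_{n+1}/SL_{n-1},SL_{n+1}/SL_n)\to(SL_n/SL_{n-1},BSL_{n-1},BSL_n)$, and for odd $n$ you use the symplectic section of \cite[Lemma~18]{AOSS26}, which your formula $(x,y)\mapsto(x,Jy)$ writes out in coordinates (the rows $y^t$ and $-x^tJ$ give a left inverse), so that $p_*$ is surjective and every connecting map vanishes. There are three minor slips, none affecting the argument: the quadric $Q_{4m-1}$ models the base $\A^{2m}\setminus0$, not $\operatorname{St}_2(2m)$; no ``adjustment'' of $Jy$ is needed; and the topological analogue is the quaternionic section $S^{4m-1}\to W_2(\mathbb C^{2m})$, not a map into $V_2(\mathbb R^{4m})$.
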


\begin{proof}
Using the quotient descriptions
\[
\operatorname{St}_2(n+1)\cong SL_{n+1}/SL_{n-1},
\qquad
\A^{n+1}\setminus0\simeq_{\A^1} SL_{n+1}/SL_n,
\]
the frame-forgetting morphism has fiber
\(SL_n/SL_{n-1}\simeq_{\A^1}\A^n\setminus0\).  Naturality of the homotopy long exact
sequence identifies its boundary with the composite of the boundary for
\(SL_n\to SL_{n+1}\to\A^{n+1}\setminus0\) and the quotient morphism
\(SL_n\to\A^n\setminus0\).  This composite is exactly
\(q_{n-1,1}\circ\delta_{n,1}=d_{n,1}\).  Equivalently, this is the instance
of the frame-forgetting fiber sequence in \cite[Definition~12]{AOSS26} with
\(j=n-1\).
If \(n\) is odd, then \(j\) is even.  In this case the natural morphism from
the rank-two symplectic Stiefel variety supplies a section of
\(\operatorname{St}_2(n+1)\to\A^{n+1}\setminus0\); see the proof of
\cite[Lemma~18]{AOSS26}.  A fiber sequence with a section has zero connecting
homomorphisms.
\end{proof}

\medskip\noindent
\textbf{\(KO\)-degree maps and their normalization ambiguity.}
Put \(X_m:=\A^m\setminus0\), and let
\[
\mathcal K_m:=\Omega_{\mathbb P^1}^{-m}\mathbf O
\]
be the geometric Bott model used in \cite[Theorem~2.2.2]{AF17}.  For every
\(m\geq2\), choose an arbitrary geometric representative
\[
\Psi_m^u:X_m\longrightarrow\mathcal K_m
\]
of the stable-unit generator, as in the discussion preceding
\cite[Theorem~7.2.1]{ABH}, and write
\[
(\delta_m^u)_*:
\pi_m^{\A^1}(X_m)\longrightarrow\mathbf{GW}^{m}_{m+1}
\]
for the induced morphism.  For \(m\geq4\) this is the right-hand morphism in
\eqref{eq:ABH-exact-n}.  For \(m=2,3\), it is the low-rank \(KO\)-degree
morphism identified in \cite[Examples~4.4.3--4.4.4]{AF17}.  The explicit
Suslin-matrix map of \cite[Definition~3.3.5]{AF17} is a generator of
\[
[X_m,\mathcal K_m]_{\A^1}\cong\mathbf{GW}_0^0(k)
\]
by \cite[Theorem~3.3.6]{AF17}.  Its comparison with the natural homomorphism
induced by the stable unit is only up to multiplication by a unit; see
\cite[Theorem~4.2.2]{AF17}.  We retain this ambiguity rather than building a
choice into the notation.

\begin{lemma}\label{lem:relative-transgression}
Let
\[
F\longrightarrow E\xrightarrow{\,p\,}B
\]
be a pointed \(\A^1\)-fiber sequence, with connecting morphism
\(\partial:\Omega_sB\to F\).  Let \(A\) be a grouplike
\(\A^1\)-local \(H\)-space with a delooping \(B_sA\).  A pointed map
\(\alpha:B\to B_sA\), together with a pointed nullhomotopy
\(h:\alpha p\simeq *\), determines a transgression
\(\tau_h(\alpha):F\to A\) for which the square
\begin{equation}\label{eq:relative-transgression-square}
\begin{tikzcd}[column sep=large]
\Omega_sB \ar[r,"\partial"] \ar[d,"\Omega_s\alpha"']&
F \ar[d,"\tau_h(\alpha)"]\\
\Omega_sB_sA \ar[r,"\simeq"']&A
\end{tikzcd}
\end{equation}
commutes in the pointed \(\A^1\)-homotopy category.  The square is natural
under maps of deloopable grouplike \(H\)-spaces.
\end{lemma}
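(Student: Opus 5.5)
The transgression will be built from the Puppe sequence of \(p\) together with the loop--delooping adjunction for \(A\). We work in the pointed \(\A^1\)-homotopy category of simplicial sheaves and fix fibrant models throughout. Replace \(p\) by an \(\A^1\)-fibration \(E\to B\) between \(\A^1\)-fibrant pointed spaces, so that \(F\) is its strict fiber. Replace \(B_sA\) by an \(\A^1\)-fibrant model; its simplicial loop space \(\Omega_sB_sA\) is then \(\A^1\)-local, and the equivalence \(\Omega_sB_sA\simeq A\) is part of the delooping data. With these models, \(\alpha\) and the nullhomotopy \(h\) can be taken to be honest pointed maps. Here \(h\) is a map \(E\wedge\Delta^1_+\to B_sA\), or equivalently a map \(E\to P(B_sA)\) into the path space \(P(B_sA)=\operatorname{Map}_*(\Delta^1,B_sA)\) that lifts \(\alpha p\) along the endpoint fibration \(P(B_sA)\to B_sA\).

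First define \(\tau_h(\alpha)\). The pair \((\alpha,h)\) is a map of fibrations over \(\alpha\), from \(p:E\to B\) to the path fibration \(\Omega_sB_sA\to P(B_sA)\to B_sA\). Taking fibers gives a map \(F\to\Omega_sB_sA\). Composing with \(\Omega_sB_sA\simeq A\) yields \(\tau_h(\alpha)\).

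Next, commutativity of \eqref{eq:relative-transgression-square} is naturality of the connecting map. The connecting morphism of a fibration is natural for maps of fibrations. This holds already at the level of simplicial sheaves, for instance via the model \(\Omega_sB\to\operatorname{hofib}(p)\) given by path-lifting, and the same model is used for both fibrations. Applied to our map of fibrations, it gives \(\partial_{\mathrm{path}}\circ\Omega_s\alpha\simeq(\text{fiber map})\circ\partial\). For the path fibration, the connecting map \(\Omega_sB_sA\to\Omega_sB_sA\) is homotopic to the identity. One sign check is needed here: with the standard convention of concatenating a loop with the constant path, this map is the identity, so no inverse appears. If another convention for \(\partial\) is used, we compose with the \(H\)-inverse of \(A\) and adjust the definition of \(\tau_h\) to match. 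Composing with \(\Omega_sB_sA\simeq A\) then gives the square.

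Finally, naturality. Suppose \(f:A\to A'\) is a map of deloopable grouplike \(H\)-spaces with delooping \(B_sf\) compatible with the equivalences \(\Omega_sB_s(-)\simeq(-)\). Then \(B_sf\) induces a map of path fibrations. The pair \((B_sf\circ\alpha,\,B_sf\circ h)\) gives \(\tau_{B_sf\circ h}(B_sf\circ\alpha)=f\circ\tau_h(\alpha)\), and the two squares paste. All constructions are invariant under \(\A^1\)-weak equivalence because we worked with fibrant models, and strict fibers compute homotopy fibers there.

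The main obstacle is the bookkeeping of the homotopy category. The class of \(\tau_h(\alpha)\) depends on \(h\), not only on \(\alpha\), and a homotopy-category nullhomotopy is not itself a morphism in that category. To handle this, we fix \(h\) as a point-set nullhomotopy in the fibrant model and note that the construction depends only on the homotopy class of \(h\) relative to its endpoints. This makes the pair \((\alpha,h)\) well-defined data.
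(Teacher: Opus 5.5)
Your proposal is correct and is essentially the paper's argument. The paper likewise uses $h$ to make the square with $E\to *$ over $\alpha: B\to B_sA$ homotopy-commutative, takes the induced map $\operatorname{hofib}(p)\to\operatorname{hofib}(*\to B_sA)\simeq\Omega_sB_sA\simeq A$, and reads off the square from naturality of the connecting morphism; your path fibration $P(B_sA)\to B_sA$ is just the fibrant replacement of $*\to B_sA$. Your extra bookkeeping (fibrant models, dependence only on the class of $h$ rel endpoints, and the convention-dependent sign of the path-fibration connecting map, which the paper absorbs into the unlabeled equivalence on the bottom edge) is consistent with the paper's sketch and slightly more careful.
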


\begin{proof}
The nullhomotopy \(h\) makes
\[
\begin{tikzcd}
E \ar[r] \ar[d,"p"']& * \ar[d]\\
B \ar[r,"\alpha"']&B_sA
\end{tikzcd}
\]
a homotopy-commutative pointed square.  Passing to homotopy fibers of the
vertical maps gives
\[
\tau_h(\alpha):
\operatorname{hofib}(p)\longrightarrow
\operatorname{hofib}(*\to B_sA).
\]
The source and target are \(F\) and \(\Omega_sB_sA\simeq A\),
respectively.  Naturality of the connecting morphism of a homotopy-fiber
square is exactly \eqref{eq:relative-transgression-square}.  Applying a map
\(A\to A'\) to the displayed homotopy-commutative square proves the last
assertion.  No stabilization or interchange of suspension spectra with
non-stable loop spaces is used.
\end{proof}

\begin{lemma}\label{lem:KO-euler-transgression}
Let \(\gamma_n\) be the universal oriented rank-\(n\) bundle on \(BSL_n\),
and let
\[
e_n^{KO}:BSL_n\longrightarrow B_s\mathcal K_n
\]
represent its \(KO\)-Euler class.  There is a unit
\(a_n\in\mathbf{GW}_0^0(k)^\times\) such that
\begin{equation}\label{eq:KO-euler-loop}
\Omega_s e_n^{KO}\simeq
a_n\bigl(\Psi_n^u\circ q_{n-1}\bigr).
\end{equation}
\end{lemma}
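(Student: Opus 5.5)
The plan is to compare both sides of \eqref{eq:KO-euler-loop} as elements of a cyclic free \(\mathbf{GW}_0^0(k)\)-module of pointed maps \(SL_n\to\mathcal K_n\), and to check that each side is a generator. The sequence is:

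1. Show that both sides factor through \(q_{n-1}:SL_n\to X_n\).
2. Identify the relevant mapping group with \([X_n,\mathcal K_n]_{\A^1}\cong\mathbf{GW}_0^0(k)\), using \cite[Theorem~3.3.6]{AF17}.
3. Show that \(\Omega_s e_n^{KO}\) corresponds to a unit of this group.

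\textbf{Factorization through \(q_{n-1}\).} The Euler class of \(\gamma_n\) restricts to zero on \(BSL_{n-1}\), because \(\gamma_n|_{BSL_{n-1}}=\gamma_{n-1}\oplus\mathcal O\) has a nowhere-vanishing section. So there is a pointed nullhomotopy \(h\) of the composite \(BSL_{n-1}\to BSL_n\to B_s\mathcal K_n\).

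I would then apply Lemma~\ref{lem:relative-transgression} to the fiber sequence
\[
X_n\simeq SL_n/SL_{n-1}\to BSL_{n-1}\to BSL_n,
\]
with \(A=\mathcal K_n\) (grouplike and deloopable, being an infinite-loop object). This gives a transgression \(\tau_h(e_n^{KO}):X_n\to\mathcal K_n\) with
\[
\tau_h(e_n^{KO})\circ\partial\simeq\Omega_s e_n^{KO}.
\]
The boundary \(\partial:\Omega_sBSL_n\simeq SL_n\to X_n\) is exactly the quotient map \(q_{n-1}\). Hence
\[
\Omega_s e_n^{KO}\simeq\tau_h(e_n^{KO})\circ q_{n-1}.
\]

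\textbf{Reduction to a unit.} Since \(\tau_h(e_n^{KO})\) lies in \([X_n,\mathcal K_n]_{\A^1}\cong\mathbf{GW}_0^0(k)\cdot\Psi_n^u\), we can write
\[
\tau_h(e_n^{KO})=a_n\Psi_n^u
\]
for some \(a_n\in\mathbf{GW}_0^0(k)\). Choices of \(h\) differ by maps \(BSL_{n-1}\to\mathcal K_n\). Their restrictions to the fiber \(X_n\) may change \(a_n\), so I fix one \(h\). The statement only asserts existence of some unit, so this is harmless.

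\textbf{Main obstacle: \(a_n\) is a unit.} The transgression of the Euler class of the tautological bundle on \(BSL_n\) to the fiber \(X_n\) is the \(KO\)-Thom class of \(\gamma_n\) restricted to the sphere bundle. Concretely, the Gysin sequence for the sphere bundle \(X_n\to BSL_{n-1}\to BSL_n\) sends the Thom class to a generator of \(\widetilde{KO}\)-cohomology of the fiber.

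The plan is to check this fiberwise. Restricting along a point \(*\to BSL_n\), the sphere bundle becomes \(X_n=\A^n\setminus 0\to\A^n\), and the Thom space becomes \(\A^n/(\A^n\setminus0)\simeq(\mathbb P^1)^{\wedge n}\). The Thom class then restricts to the Bott periodicity generator of \(\widetilde{KO}((\mathbb P^1)^{\wedge n})\) with the appropriate twist. The boundary map
\[
\widetilde{KO}^{\,\cdot}(X_n)\cong\widetilde{KO}^{\,\cdot+1}\bigl((\mathbb P^1)^{\wedge n}\bigr)
\]
is an isomorphism, because \(\A^n\) is contractible. Therefore \(\tau_h\) goes to a generator, i.e.\ \(a_n\in\mathbf{GW}_0^0(k)^\times\).

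This step needs the compatibility between the obstruction-theoretic Euler class and the Thom-class definition, which I would cite from \cite{AF16Euler,AF17}. It also needs care that the twist is the trivial line bundle, which holds because \(\gamma_n\) is oriented.
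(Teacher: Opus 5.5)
Your proposal is correct and is essentially the paper's proof. The paper likewise takes the pointed nullhomotopy from the distinguished nowhere-zero section of $\gamma_{n-1}\oplus\mathcal O$ and applies Lemma~\ref{lem:relative-transgression} to $X_n\to BSL_{n-1}\to BSL_n$. It then identifies the transgression with the fiber restriction of the $KO$-Thom class via the purity cofiber sequence $V_n^\circ\to V_n\to\operatorname{Th}(\gamma_n)$, using $V_n\simeq_{\A^1}BSL_n$ and $V_n^\circ\simeq_{\A^1}BSL_{n-1}$. Finally, the Thom isomorphism shows this map generates the free rank-one $\mathbf{GW}_0^0(k)$-module $[X_n,\mathcal K_n]_{\A^1}$, so it is a unit multiple of $\Psi_n^u$.

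One small correction: no comparison with the obstruction-theoretic Euler class is needed at this step. Since $e_n^{KO}$ is the Thom-class Euler class, only homotopy purity and naturality of the cofiber square enter; that comparison is used only in Lemma~\ref{lem:kernel-KO-euler}.
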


\begin{proof}
Let
\[
s_{n-1}:BSL_{n-1}\longrightarrow BSL_n
\]
be stabilization.  The pullback of \(\gamma_n\) is
\(\gamma_{n-1}\oplus\mathcal O\).  Its distinguished nowhere-zero section
gives a pointed nullhomotopy
\[
h_n:s_{n-1}^*e_n^{KO}\simeq *.
\]
Apply Lemma~\ref{lem:relative-transgression} to
\[
X_n\longrightarrow BSL_{n-1}\xrightarrow{\,s_{n-1}\,}BSL_n
\]
and to the relative class \((e_n^{KO},h_n)\).  Writing
\(\tau_n^{KO}:X_n\to\mathcal K_n\) for the resulting transgression gives the
commutative square
\begin{equation}\label{eq:KO-euler-transgression-square}
\begin{tikzcd}[column sep=large]
SL_n\simeq\Omega_sBSL_n
  \ar[r,"q_{n-1}"] \ar[d,"\Omega_se_n^{KO}"']&
X_n \ar[d,"\tau_n^{KO}"]\\
\mathcal K_n \ar[r,equal]&\mathcal K_n.
\end{tikzcd}
\end{equation}

Let \(V_n\to BSL_n\) be the total space of \(\gamma_n\), and let
\(V_n^\circ\) be the complement of its zero section.  There are
\(\A^1\)-equivalences
\[
V_n\simeq_{\A^1}BSL_n,
\qquad
V_n^\circ\simeq_{\A^1}BSL_{n-1},
\]
and homotopy purity identifies the cofiber of \(V_n^\circ\to V_n\) with
\(\operatorname{Th}(\gamma_n)\).  For the second equivalence, the stabilizer
of a nonzero vector is
\(SL_{n-1}\ltimes\mathbb A^{n-1}\), whose unipotent radical is
\(\A^1\)-contractible.  This is the oriented analogue of the universal
fiber--cofiber model in \cite[Lemma~5.2.1]{AF16Euler}.

Homotopy purity gives a cofiber sequence
\[
V_n^\circ\longrightarrow V_n\longrightarrow\operatorname{Th}(\gamma_n).
\]
Under the two equivalences above, its relative connecting construction is
the homotopy-fiber construction defining \(\tau_n^{KO}\).  Restricting the
\(KO\)-Thom class to an oriented trivial fiber, and then applying the
loop--suspension and Bott adjunctions, gives a map
\(\Phi_n:X_n\to\mathcal K_n\).  Naturality of the purity cofiber square
identifies
\[
\tau_n^{KO}\simeq\Phi_n.
\]
This is the universal Thom transgression displayed for Milnor--Witt
cohomology in \cite[Proposition~5.3.1]{AF16Euler}; the construction uses only
the functoriality of cofibers and therefore applies to the represented
theory \(KO\).  Substitution in
\eqref{eq:KO-euler-transgression-square} gives
\[
\Omega_s e_n^{KO}\simeq\Phi_n\circ q_{n-1}.
\]

The Thom isomorphism shows that \(\Phi_n\) is a generator, while
\(\Psi_n^u\) is a generator by its construction as a geometric
desuspension of the unit.  Thus both maps represent generators of the free
rank-one \(\mathbf{GW}_0^0(k)\)-module
\([X_n,\mathcal K_n]_{\A^1}\).  They therefore differ by a unit \(a_n\).
Substitution gives \eqref{eq:KO-euler-loop}.  Notice that this argument
does not identify that unit.
\end{proof}

\begin{lemma}\label{lem:kernel-KO-euler}
Let
\[
i_n:X_{n+1}\longrightarrow BSL_n
\]
be the fiber inclusion in
\(X_{n+1}\to BSL_n\to BSL_{n+1}\).  If \(n\) is even, there is a unit
\(b_n\in\mathbf{GW}_0^0(k)^\times\) such that
\begin{equation}\label{eq:kernel-KO-euler}
i_n^*e_n^{KO}=b_n\eta\,\Psi_{n+1}^u,
\end{equation}
where the right-hand side is viewed, by Bott periodicity, as a map to
\(B_s\mathcal K_n\).  If \(n\) is odd, \(i_n^*e_n^{KO}=0\).
\end{lemma}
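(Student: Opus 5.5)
The plan is to reduce \(i_n^*e_n^{KO}\) to the looped class already computed in Lemma~\ref{lem:KO-euler-transgression}, and then read off the parity dependence from Theorem~\ref{thm:q-delta}.

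First I identify the class. The source \(X_{n+1}\) is \(\A^1\)-\((n-1)\)-connected, and \(B_s\mathcal K_n\) is a deloopable grouplike \(H\)-space. Hence \(X_{n+1}\simeq_{\A^1}\Sigma_s\bigl(S^{n-1}_s\wedge\G_m^{\wedge n+1}\bigr)\) is a simplicial suspension, and the class \(i_n^*e_n^{KO}\) in
\[
[X_{n+1},B_s\mathcal K_n]_{\A^1}\cong[\Omega_s^{-1}X_{n+1},\mathcal K_n]_{\A^1}
\]
is determined by its adjoint. By Bott periodicity (\(\mathcal K_n\simeq\Omega_{\mathbb P^1}\mathcal K_{n+1}\) shifted appropriately), this group is a free rank-one \(\mathbf{GW}_0^0(k)\)-module. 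I would fix \(\eta\Psi^u_{n+1}\) as a candidate generator; the claim is that it generates and that \(i_n^*e_n^{KO}\) equals it up to a unit. Concretely, the adjoint of the fiber inclusion \(i_n\) is the connecting map \(\delta_n:\Omega_sX_{n+1}\to SL_n\simeq\Omega_sBSL_n\) of \(X_{n+1}\to BSL_n\to BSL_{n+1}\). Looping therefore gives
\[
\Omega_s(i_n^*e_n^{KO})\simeq\Omega_se_n^{KO}\circ\delta_n\simeq a_n\,\Psi_n^u\circ q_{n-1}\circ\delta_n
\]
by \eqref{eq:KO-euler-loop}. Because the source is a suspension in the stable range and \(\Omega_s\) is a bijection on the relevant hom-sets (Morel's simplicial Freudenthal theorem together with connectivity), it suffices to compute \(\Psi_n^u\circ(q_{n-1}\circ\delta_n)\).

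Next I would use Theorem~\ref{thm:q-delta}. For odd \(n\) the composite \(q_{n-1}\circ\delta_n\) is zero on the bottom homotopy sheaf. I would upgrade this to a space-level statement by invoking the section of Lemma~\ref{lem:stiefel-parity}: \(q_{n-1}\circ\delta_n\) is the connecting map of the Stiefel fibration, and a fibration with a section has a null connecting map as a map of spaces, not merely on \(\pi_n\). This gives \(i_n^*e_n^{KO}=0\). For even \(n\), Theorem~\ref{thm:q-delta} identifies the bottom component with \(\eta\). The map \(\Omega_sX_{n+1}\to X_n\) then agrees with the stable \(\eta\) up to a unit, since maps between these spheres in the relevant range are determined by \(\pi_n\) (via \(\mathbf K^{MW}\) and Morel's computation of \([\text{sphere},\text{sphere}]\)). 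Composing with \(\Psi_n^u\) and using \(\mathbf{GW}\)-linearity gives \(\Psi^u_n\circ\eta=\eta\Psi^u_{n+1}\) up to a unit, because both \(\Psi\)'s represent the stable unit up to units. All units are then collected into \(b_n\).

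The main obstacle is the step asserting that the map \(\Omega_sX_{n+1}\to X_n\) is determined by its effect on the first homotopy sheaf, together with the compatibility \(\Psi_n^u\circ\eta=c\,\eta\Psi^u_{n+1}\). I would first try to argue via the adjunction \([S^{n}_s\wedge\G_m^{\wedge n+1},X_n]\cong\mathbf K^{MW}_{-1}(k)=W(k)\), using Morel's Hurewicz/degree theorem, so that the map is exactly an element of \(W(k)\) and Theorem~\ref{thm:q-delta} pins it down as \(\eta\). The compatibility of \(\Psi\) with \(\eta\) then follows from \(\mathbf{GW}_0^0\)-module linearity in \([X_m,\mathcal K_m]\) after suspension, with any discrepancy absorbed into \(b_n\).
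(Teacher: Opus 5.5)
Your argument is correct, but it takes a different route from the paper's. The paper never loops the class and never uses Lemma~\ref{lem:KO-euler-transgression}. It observes that $i_n$ classifies the tautological kernel bundle $E_n$ of $(x_1,\ldots,x_{n+1})$ on $X_{n+1}$, so $i_n^*e_n^{KO}=e_{KO}(E_n)$. The stable framing $E_n\oplus\mathcal O\cong\mathcal O^{n+1}$ exhibits this class as the unit applied to a framed Pontryagin--Thom Euler class. The value $\eta$ or $0$ of the obstruction-theoretic Euler class from \cite[Lemma~3.5]{AF14} is then transported through a change-of-coefficients square. Its bottom map $\mu_{-1}:K^{MW}_{-1}(k)\to GW^{-1}_{-1}(k)$ is an isomorphism sending $\eta$ to $\eta$ \cite[Theorem~4.1.2]{AF17}, and the units come from \cite[Theorem~5.3.2]{AF16Euler} and \cite[Theorem~4.2.2]{AF17}.

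You instead feed in Lemma~\ref{lem:KO-euler-transgression} and restrict to the bottom cell. You then identify $\alpha_n=q_{n-1}\delta_n j$, where $j:Y_n=S^{n-1}_s\wedge\G_m^{\wedge(n+1)}\to\Omega_s\Sigma_sY_n\simeq\Omega_sX_{n+1}$ is the loop--suspension unit, inside $[Y_n,X_n]_*\cong K^{MW}_{-1}(k)$ using Theorem~\ref{thm:q-delta}. Finally you use the $\eta$-linearity of the unit. This is exactly the mechanism the paper deploys only later, in Propositions~\ref{prop:dn2-stable-eta} and~\ref{prop:oriented-unit-wood}. Your route avoids every Euler-class comparison theorem and confines the unit ambiguity to $a_n$ and to the normalization of the $\Psi^u_m$. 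The paper's route buys the geometric meaning of the restricted class, namely the $KO$-Euler class of the kernel bundle. It also keeps the two lemmas logically independent, so that Proposition~\ref{prop:suslin-stiefel-wood-up-to-unit} genuinely combines two separate computations.

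Three points need tightening.

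(i) Freudenthal is unnecessary. For $f:\Sigma_sY_n\to B_s\mathcal K_n$, the composite $\Omega_sf\circ j$ is the adjoint of $f$, so $f$ is determined by it. This is also why Lemma~\ref{lem:KO-euler-transgression} is essential for you: it makes $\Psi^u_nq_{n-1}\delta_n$ a loop map, so its restriction along $j$ determines it. The map $\Omega_sX_{n+1}\to X_n$ itself is not a map of spheres and cannot be compared with $\eta$ directly.

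(ii) The detecting group is $[S^{n-1}_s\wedge\G_m^{\wedge(n+1)},X_n]_*\cong(\mathbf K^{MW}_n)_{-(n+1)}(k)\cong K^{MW}_{-1}(k)$, read off on $\pi^{\A^1}_{n-1}$. With $S^n_s$, as you wrote it, one instead gets $\pi^{\A^1}_{n,n+1}(X_n)(k)$, a contraction of $\pi^{\A^1}_n(X_n)$ rather than of the bottom sheaf. Likewise $[X_{n+1},B_s\mathcal K_n]\cong GW^{-1}_{-1}(k)\cong W(k)$ is not a free $GW(k)$-module, though you never actually need a generator.

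(iii) $GW(k)$-linearity cannot move $\eta$ past $\Psi$, because $\eta$ has degree $-1$. What you need is two facts. First, $\Psi^u_m$ is, up to a unit of $GW(k)$, the infinite-loop adjoint of $u:\mathds1\to\mathbf{KO}$, as the paper stipulates. Second, $u\circ\eta=\eta_{\mathbf{KO}}\cdot u$, because $u$ is a map of $\mathds1$-modules.

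Your odd case via the Stiefel section is fine: a fibration with a section has a null-homotopic connecting map, not merely one that is zero on homotopy sheaves.
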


\begin{proof}
The map \(i_n\) classifies the oriented kernel bundle \(E_n\) in
\[
0\longrightarrow E_n\longrightarrow\mathcal O_{X_{n+1}}^{n+1}
\xrightarrow{(x_1,\ldots,x_{n+1})}
\mathcal O_{X_{n+1}}\longrightarrow0.
\]
Thus \(i_n^*e_n^{KO}=e_{KO}(E_n)\) by pullback naturality.
The displayed exact sequence also supplies a stable framing
\[
E_n\oplus\mathcal O\cong\mathcal O^{n+1}.
\]
Consequently the zero section of \(E_n\) has a framed
Pontryagin--Thom Euler class.  Applying the unit
\(u:\mathds1\to\mathbf{KO}\) to its Thom collapse gives
\(e_{KO}(E_n)\): this follows directly by applying \(u\) to the collapse
map and then pulling the resulting Thom class back along the zero section.

The discussion preceding \cite[Lemma~3.5]{AF14} identifies
\(q_{n-1}\delta_n\) with the obstruction-theoretic Euler class of \(E_n\).
Under
\[
H^n_{\mathrm{Nis}}(X_{n+1},\mathbf K_n^{MW})
\cong\mathbf K_{-1}^{MW},
\]
\cite[Lemma~3.5]{AF14} computes this class as \(\eta\) for even \(n\) and
as zero for odd \(n\).  The comparison of this obstruction class with the
relative-Hurewicz image of the framed Thom class is only determined up to a
unit in \(\mathbf{GW}_0^0(k)\).  Indeed,
\cite[Proposition~5.3.1]{AF16Euler} gives the commuting purity and relative
Hurewicz diagram, and \cite[Theorem~5.3.2]{AF16Euler} identifies its
obstruction and Thom Euler classes up to precisely such a unit.

Here \(X_{n+1}\) is a single motivic sphere.  The relative Hurewicz
comparison and the Bott adjunctions give the vertical isomorphisms in the
following diagram; the lower morphism defines the upper
change-of-coefficients morphism \(u_*\):
\begin{equation}\label{eq:unit-coefficient-square}
\begin{tikzcd}[column sep=large]
H^n_{\mathrm{Nis}}(X_{n+1},\mathbf K_n^{MW})
  \ar[r,"u_*"] \ar[d,"\cong"']&
{[X_{n+1},B_s\mathcal K_n]_{\A^1,*}}
  \ar[d,"\cong"]\\
\mathbf K_{-1}^{MW}(k)
  \ar[r,"\mu_{-1}"']&
\mathbf{GW}_{-1}^{-1}(k).
\end{tikzcd}
\end{equation}
The lower map is the homomorphism on zero-stem coefficients induced by
\(u\), as explained in \cite[Remark~4.1.3]{AF17}.  Naturality of the
relative Hurewicz map says that, on the framed Thom class used above,
\(u_*\) is obtained by applying \(u\) to the Thom collapse before taking
its relative Hurewicz image.  Thus the diagram records an actual
change-of-coefficients construction, rather than an identification chosen
only after evaluating the Euler class.

Combining this naturality with the comparison of the relative-Hurewicz and
Thom generators in \cite[Theorem~5.3.2]{AF16Euler} gives a unit
\(\rho_n\in\mathbf{GW}_0^0(k)^\times\) such that
\begin{equation}\label{eq:euler-change-of-coefficients}
u_*\bigl(e_{\mathrm{ob}}(E_n)\bigr)
=\rho_n e_{KO}(E_n).
\end{equation}
This equation is the only point at which the Euler-class normalization
enters the proof.

By \cite[Theorem~4.1.2]{AF17}, \(\mu_{-1}\) is an isomorphism and sends the
motivic Hopf class \(\eta\) to the Karoubi element \(\eta\).  The framed
Thom-collapse observation above, the commutativity of
\eqref{eq:unit-coefficient-square}, and
\eqref{eq:euler-change-of-coefficients} therefore show that
\[
e_{KO}(E_n)=
\begin{cases}
r_n\eta,&n\ \text{even},\\
0,&n\ \text{odd},
\end{cases}
\qquad
r_n\in\mathbf{GW}_0^0(k)^\times,
\]
under the right-hand vertical identification in
\eqref{eq:unit-coefficient-square}.  Finally,
\cite[Theorem~4.2.2]{AF17} compares the geometric desuspension of the
stable-unit generator with the Suslin \(KO\)-degree generator, again up to
a unit \(s_n\in\mathbf{GW}_0^0(k)^\times\).  Thus
\(b_n=r_ns_n\) gives \eqref{eq:kernel-KO-euler}.  The odd-rank equality is
literal because the zero class is unaffected by either normalization.
\end{proof}

\begin{proposition}\label{prop:suslin-stiefel-wood-up-to-unit}
Let \(k\) be a field of characteristic \(0\), and let \(n\geq2\).  For the
chosen geometric maps \((\delta_m^u)_*\), there is a unit
\(c_n\in\mathbf{GW}_0^0(k)^\times\) such that
\begin{equation}\label{eq:wood-compatibility-up-to-unit}
(\delta_n^u)_*\circ d_{n,1}
=
\begin{cases}
c_n\eta\circ(\delta_{n+1}^u)_*,&n\text{ even},\\
0,&n\text{ odd}.
\end{cases}
\end{equation}
\end{proposition}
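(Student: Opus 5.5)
The odd case is immediate from Lemma~\ref{lem:stiefel-parity}: for odd \(n\) the Stiefel projection \(\operatorname{St}_2(n+1)\to X_{n+1}\) has an \(\A^1\)-homotopy section, so \(d_{n,1}=0\). Hence \((\delta_n^u)_*\circ d_{n,1}=0\) regardless of the chosen representatives. For \(n\geq3\) this is literally the lemma. For \(n=2\) the case is even, so no separate odd argument is needed.

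For even \(n\), I would compute \((\delta_n^u)_*\circ d_{n,1}\) on the level of maps of spaces. Write \(d_{n,1}=q_{n-1,1}\circ\delta_{n,1}\), where \(\delta_{n,1}\) is induced by the connecting map \(\partial:\Omega_sX_{n+1}\to\Omega_sBSL_n\simeq SL_n\) of \(X_{n+1}\xrightarrow{i_n}BSL_n\to BSL_{n+1}\). Then \((\delta_n^u)_*\circ d_{n,1}\) is induced by the composite
\[
\Psi_n^u\circ q_{n-1}\circ\partial:\Omega_sX_{n+1}\longrightarrow\mathcal K_n .
\]
By Lemma~\ref{lem:KO-euler-transgression}, \(\Psi_n^u\circ q_{n-1}\simeq a_n^{-1}\,\Omega_se_n^{KO}\), so the composite equals \(a_n^{-1}\,\Omega_se_n^{KO}\circ\partial\). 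The connecting map of a fiber sequence \(F\to E\to B\), looped back, satisfies \(\partial\simeq\Omega_s i_n\) up to the standard sign or inversion convention, via the identification \(\Omega_s BSL_n\simeq SL_n\) and the Puppe sequence \(\Omega_sBSL_{n+1}\to X_{n+1}\to BSL_n\). More precisely, the relevant map on \(\pi_{n+1}(X_{n+1})\) is \(\pi_{n+1}(i_n)\) composed with the loop identification. Therefore
\[
\Omega_se_n^{KO}\circ\partial \simeq \pm\,\Omega_s(e_n^{KO}\circ i_n)=\pm\,\Omega_s(i_n^*e_n^{KO}).
\]
Lemma~\ref{lem:kernel-KO-euler} gives \(i_n^*e_n^{KO}=b_n\eta\Psi_{n+1}^u\). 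Looping this and using Bott periodicity \(\Omega_sB_s\mathcal K_n\simeq\mathcal K_n\) turns \(\eta\Psi_{n+1}^u\) into the map inducing \(\eta\circ(\delta_{n+1}^u)_*\) on \(\pi_{n+1}\). The underlying fact is that multiplication by \(\eta\) on \(\mathbf{GW}\) is compatible with the periodicity isomorphisms, since it is a module action. Taking \(c_n=\pm a_n^{-1}b_n\) gives \eqref{eq:wood-compatibility-up-to-unit}; the sign \(\pm1\) is itself a unit of \(GW(k)\).

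The main obstacle is bookkeeping of identifications rather than any new computation. One must check that the map on \(\pi_{n+1}\) induced by \(\Omega_s(i_n^*e_n^{KO})\), read through the Bott identifications, coincides with \(\eta\circ(\delta_{n+1}^u)_*\) as morphisms into \(\mathbf{GW}^n_{n+1}\), and not merely up to an unspecified automorphism. Any remaining discrepancy is \(\mathbf{GW}_0^0(k)\)-linear and invertible, since it is an automorphism of a free rank-one module \([X_{n+1},\mathcal K_{n}]\). So I would argue that it is multiplication by a unit and absorb it into \(c_n\).

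The low ranks \(n=2\) need the low-rank \(KO\)-degree morphisms of \cite[Examples~4.4.3--4.4.4]{AF17} in place of \eqref{eq:ABH-exact-n}. The characteristic-zero hypothesis enters only through these identifications. The argument above uses only the representing maps \(\Psi_m^u\), so it applies verbatim once those definitions are in place.
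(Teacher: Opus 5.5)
Your proposal is correct and follows the paper's proof essentially step for step. The odd case comes from Lemma~\ref{lem:stiefel-parity}; in the even case you rewrite \(\Psi_n^u q_{n-1}\) via Lemma~\ref{lem:KO-euler-transgression}, identify \(\delta_{n,1}\) with \(\Omega_s i_n\), substitute Lemma~\ref{lem:kernel-KO-euler}, and take \(c_n=a_n^{-1}b_n\). Your extra sign is harmless since \(-1\in GW(k)^\times\), and the fallback about an unspecified automorphism is unnecessary because \(\pi_n(\Omega_s(\eta\Psi_{n+1}^u))=\eta\circ(\delta_{n+1}^u)_*\) holds directly by the Bott identification used to interpret \(\eta\Psi_{n+1}^u\).
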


\begin{proof}
The boundary
\[
\delta_{n,1}:\Omega_sX_{n+1}\longrightarrow SL_n
\]
is \(\Omega_si_n\), and \(q_{n-1}\delta_{n,1}\) induces \(d_{n,1}\) on
\(\pi_n^{\A^1}\).  Lemma~\ref{lem:KO-euler-transgression} and functoriality
of looping give
\[
a_n\Psi_n^uq_{n-1}\delta_{n,1}
\simeq
\Omega_se_n^{KO}\,\Omega_si_n
\simeq
\Omega_s(i_n^*e_n^{KO})
\]
as maps \(\Omega_sX_{n+1}\to\mathcal K_n\).  For even \(n\),
Lemma~\ref{lem:kernel-KO-euler} identifies the last map with
\[
b_n\Omega_s(\eta\Psi_{n+1}^u):
\Omega_sX_{n+1}\longrightarrow\mathcal K_n,
\]
where \(\eta\Psi_{n+1}^u:X_{n+1}\to B_s\mathcal K_n\) is interpreted using
the Bott equivalence from that lemma.  Now
\[
\pi_n^{\A^1}(\Omega_sX_{n+1})
=\pi_{n+1}^{\A^1}(X_{n+1}),
\qquad
\pi_n^{\A^1}(\mathcal K_n)=\mathbf{GW}_{n+1}^{n}.
\]
Under these identifications,
\(\pi_n^{\A^1}(\Psi_n^uq_{n-1}\delta_{n,1})\) is
\((\delta_n^u)_*d_{n,1}\), while
\(\pi_n^{\A^1}(\Omega_s(\eta\Psi_{n+1}^u))\) is
\(\eta(\delta_{n+1}^u)_*\).  Setting \(c_n=a_n^{-1}b_n\) therefore gives
\eqref{eq:wood-compatibility-up-to-unit}.  For odd \(n\),
Lemma~\ref{lem:stiefel-parity} already gives \(d_{n,1}=0\).
\end{proof}

\begin{corollary}\label{prop:suslin-stiefel-wood}
Let \(k\) be a field of characteristic \(0\).  For every \(m\geq2\), one can choose a unit
\(\lambda_m\in\mathbf{GW}_0^0(k)^\times\) and put
\[
(\delta_m)_*:=\lambda_m(\delta_m^u)_*
\]
so that, for every \(n\geq2\),
\begin{equation}\label{eq:wood-compatibility}
(\delta_n)_*\circ d_{n,1}
=
\vartheta_n\circ(\delta_{n+1})_*,
\qquad
\vartheta_n=
\begin{cases}
\eta,&n\text{ even},\\
0,&n\text{ odd}.
\end{cases}
\end{equation}
For \(m\geq4\), postcomposing the quotient map in
\eqref{eq:ABH-exact-n} with \(\lambda_m\) preserves its exactness.  The same
is true of the low-rank quotient sequences for \(m=2,3\).
\end{corollary}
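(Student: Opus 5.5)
The plan is to absorb the units \(c_n\) of Proposition~\ref{prop:suslin-stiefel-wood-up-to-unit} by a recursive choice of the \(\lambda_m\).  Substituting \((\delta_m)_*=\lambda_m(\delta_m^u)_*\) into \eqref{eq:wood-compatibility-up-to-unit} gives, for even \(n\),
\[
(\delta_n)_*\circ d_{n,1}
=\lambda_n c_n\eta\circ(\delta_{n+1}^u)_*
=\lambda_n c_n\lambda_{n+1}^{-1}\,\eta\circ(\delta_{n+1})_*,
\]
because \(\mathbf{GW}_0^0(k)\) is commutative and multiplication by \(\eta\) is \(\mathbf{GW}_0^0\)-linear.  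The maps \((\delta_m)_*\) are maps of sheaves of \(\mathbf{GW}_0^0\)-modules, via the module structure on \(\mathbf{GW}^m_{m+1}\) coming from the ring spectrum \(\mathbf{KO}\).  The strict formula therefore holds at an even \(n\) as soon as
\[
\lambda_{n+1}=\lambda_n c_n .
\]
For odd \(n\) both sides are zero for any choice of the \(\lambda\)'s: the left side vanishes by Lemma~\ref{lem:stiefel-parity}, and the right side vanishes because \(\vartheta_n=0\).

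So I will set \(\lambda_2=1\) and define the remaining units by induction on \(m\).  For even \(m\), put \(\lambda_{m+1}:=\lambda_m c_m\).  For odd \(m\), put \(\lambda_{m+1}:=1\), or any unit at all, since no constraint links an odd stage to the next one.  Each condition involves only one adjacent pair \((n,n+1)\) with \(n\) even, and each index \(m\) appears as the target of at most one constraint.  Hence no compatibility condition can conflict, and the recursion is well defined for all \(m\geq2\).

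For the exactness claim I will use that \(\lambda_m\) is a unit, so multiplication by \(\lambda_m\) is an automorphism of \(\mathbf{GW}^m_{m+1}\).  Postcomposing the quotient map in \eqref{eq:ABH-exact-n} with this automorphism does not change its kernel, which is still the image of \((\nu_m)_*\).  Exactness of the sequence is therefore preserved, and the same argument applies to the low-rank \(KO\)-degree sequences for \(m=2,3\).

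The main obstacle is the linearity step: I must check that \(\eta\) commutes with the \(GW(k)\)-action on the homotopy sheaves.  The action of a unit of \(GW(k)\) is given by self-maps of \(\mathcal K_m\) that are compatible with the Bott equivalences, and I will use exactly this compatibility to move \(\lambda_{n+1}\) past \(\eta\).  I would verify it first, through the module structure of \(\mathbf{KO}\) over \(\mathbf{GW}_0^0\), where \(\eta\) is multiplication by a central element of the graded ring.
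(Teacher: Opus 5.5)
Your proposal matches the paper's proof: it too chooses the even-stage units freely, imposes \(\lambda_{n+1}=\lambda_n c_n\) for even \(n\) (the only constraints, since the odd-stage differential is zero), and notes that multiplication by a unit is an automorphism of the Grothendieck--Witt sheaf, so kernels and exactness are unchanged. The \(\mathbf{GW}_0^0\)-linearity of \(\eta\) that you single out as the main check is used implicitly in the paper when it multiplies the relation through by \(\lambda_n\).
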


\begin{proof}
Choose \(\lambda_n\) arbitrarily for even \(n\geq2\), and impose
\(\lambda_{n+1}=\lambda_nc_n\).  These are all the compatibility
conditions, since the odd-stage differential is zero.  Multiplying
\eqref{eq:wood-compatibility-up-to-unit} by \(\lambda_n\) gives
\eqref{eq:wood-compatibility}.  Since multiplication by \(\lambda_m\) is an
automorphism of the Grothendieck--Witt sheaf, it does not alter the kernel
or image of the quotient morphism in \eqref{eq:ABH-exact-n}.
\end{proof}

\begin{remark}\label{rem:wood-compatibility-status}
From now on, \((\delta_m)_*\) denotes a compatible normalization supplied
by Corollary~\ref{prop:suslin-stiefel-wood}.  The strict equality
\eqref{eq:wood-compatibility} is therefore a statement about a compatible
choice of Suslin generators.  For arbitrary geometric representatives, the
unconditional statement is \eqref{eq:wood-compatibility-up-to-unit}.
Proposition~\ref{prop:oriented-unit-wood} below removes this ambiguity in the
stable range after the sphere coordinates are given the Euler/Gysin
orientation used in \cite[Lemma~3.5]{AF14} and coherent Bott coordinates are
fixed.  These orientation data are hypotheses, not consequences of
module-linearity.  This argument alone does not compare those data with the
explicit Suslin matrices or with a prescribed algebraic $K$-theory
orientation.  The convention-level calculation needed for that comparison is
carried out in Theorem~\ref{prop:explicit-suslin-rank} below.

The proof above by itself does not identify the Stiefel boundary with a
stable sphere map.  That identification will instead be obtained by first
restricting the actual connecting map along a simplicial
loop--suspension unit and only then using the Freudenthal isomorphism; no
commutation of suspension spectra with non-stable loop spaces is asserted.
\end{remark}

\begin{proposition}\label{prop:abhcase4}
Let \(k\) be a field of characteristic \(0\), and let \(n\geq4\).  Then
for the compatible normalization fixed in
Remark~\ref{rem:wood-compatibility-status},
there is a commutative diagram
\begin{equation}\label{eq:ABH-diagram}
\begin{tikzcd}
0 \arrow[r]
  & \mathbf K^M_{n+3}/24
      \arrow[r,"(\nu_{n+1})_*"]
      \arrow[d,"0"]
  & \pi_{n+1}^{\A^1}(\A^{n+1}\setminus0)
      \arrow[r,"(\delta_{n+1})_*"]
      \arrow[d,"d_{n,1}"]
  & \mathbf{GW}_{n+2}^{n+1}
      \arrow[d,"\vartheta_n"]\\
0 \arrow[r]
  & \mathbf K^M_{n+2}/24
      \arrow[r,"(\nu_n)_*"]
  & \pi_n^{\A^1}(\A^n\setminus0)
      \arrow[r,"(\delta_n)_*"]
  & \mathbf{GW}_{n+1}^{n},
\end{tikzcd}
\end{equation}
where \(d_{n,1}=q_{n-1,1}\circ\delta_{n,1}\), and
\[
\vartheta_n=
\begin{cases}
\eta,&n\text{ even},\\
0,&n\text{ odd}.
\end{cases}
\]
\end{proposition}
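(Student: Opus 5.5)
The diagram has two squares, and I would treat them separately. The rows are the exact sequences \eqref{eq:ABH-exact-next} and \eqref{eq:ABH-exact-n} of \cite[Theorem~7.2.1]{ABH}. By Corollary~\ref{prop:suslin-stiefel-wood}, rescaling the right-hand arrows by the units \(\lambda_m\) preserves exactness, so both rows remain exact with the normalized quotient maps \((\delta_m)_*\).

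\textbf{The right square.} This is exactly \eqref{eq:wood-compatibility} for \(n\geq4\), which Corollary~\ref{prop:suslin-stiefel-wood} supplies for the compatible normalization fixed in Remark~\ref{rem:wood-compatibility-status}. Nothing further is needed there, since the parity split of \(\vartheta_n\) is already built into that statement.

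\textbf{The left square when \(n\) is odd.} Here Lemma~\ref{lem:stiefel-parity} gives \(d_{n,1}=0\), so \(d_{n,1}\circ(\nu_{n+1})_*=0=(\nu_n)_*\circ 0\).

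\textbf{The left square when \(n\) is even.} I must show that \(d_{n,1}\circ(\nu_{n+1})_*=0\). By the right square,
\[
(\delta_n)_*\circ d_{n,1}\circ(\nu_{n+1})_*=\eta\circ(\delta_{n+1})_*\circ(\nu_{n+1})_*=0,
\]
so by exactness of the lower row the composite factors uniquely through \((\nu_n)_*\) as a morphism
\[
\phi:\mathbf K^M_{n+3}/24\longrightarrow\mathbf K^M_{n+2}/24 .
\]
The main obstacle is showing \(\phi=0\). I would first try a Hom-vanishing argument between Milnor \(K\)-theory sheaves. Morphisms of strictly \(\A^1\)-invariant sheaves \(\mathbf K^M_{a}\to\mathbf M\) correspond to \(\mathbf M_{-a}(k)\); this holds because \(\mathbf K^M_a\) is generated by symbols as a quotient of \(\mathbf K^{MW}_a\), and \(\mathrm{Hom}(\mathbf K^{MW}_a,\mathbf M)\cong\mathbf M_{-a}(k)\) by Morel. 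A map out of \(\mathbf K^M_{n+3}/24\) is then determined by an element of \((\mathbf K^M_{n+2}/24)_{-(n+3)}(k)\). By Theorem~\ref{thm:contraction-K} and exactness of contraction (Lemma~\ref{lem:contraction-exact}), this group is \((\mathbf K^M_{-1}/24)(k)=0\). Hence \(\phi=0\).

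If the Hom identification needs more care, because of the quotient by \(24\) and the factorization through \(\mathbf K^{MW}\), a fallback is to contract \(n+3\) times and use that \(\phi_{-(n+3)}:\mathbf K^M_0/24\to 0\) vanishes. This is combined with the fact that a morphism out of \(\mathbf K^M_a\) is determined by its \(a\)-fold contraction, since symbols generate.

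Combining the two squares gives the commutative diagram \eqref{eq:ABH-diagram}.
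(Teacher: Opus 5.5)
Your proposal is correct and follows essentially the paper's argument: the right square comes from Corollary~\ref{prop:suslin-stiefel-wood}, and exactness of the rows produces a morphism $\mathbf K^M_{n+3}/24\to\mathbf K^M_{n+2}/24$. That morphism is killed by precomposing with the epimorphism from $\mathbf K^{MW}_{n+3}$ and using $(\mathbf K^M_{n+2}/24)_{-(n+3)}(k)=0$.

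The differences are cosmetic. The paper runs the factorization argument uniformly in both parities rather than invoking Lemma~\ref{lem:stiefel-parity} for odd $n$. Also, precomposition only gives an injection $\mathrm{Hom}(\mathbf K^M_a,\mathbf M)\hookrightarrow\mathbf M_{-a}(k)$, not a correspondence, but the injection is all you use.
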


\begin{proof}
Corollary~\ref{prop:suslin-stiefel-wood} gives the commutativity of the
right square.  Exactness of the two rows then produces a unique morphism
\[
\varphi:\mathbf K^M_{n+3}/24\longrightarrow\mathbf K^M_{n+2}/24
\]
making the left square commute.

The morphism \((\nu_{n+1})_*\) is induced from
\(\mathbf K^{MW}_{n+3}\) through the canonical epimorphism
\(\mathbf K^{MW}_{n+3}\twoheadrightarrow\mathbf K^M_{n+3}/24\);
see \cite[Theorem~5.2.9]{AWW17} and
\cite[Theorem~3.18]{AFW20}.  Hence \(\varphi\) is detected by a morphism
\[
\mathbf K^{MW}_{n+3}\longrightarrow\mathbf K^M_{n+2}/24.
\]
By \cite[Lemma~5.1.3]{AWW17}, the group of such morphisms is
\[
\bigl(\mathbf K^M_{n+2}/24\bigr)_{-(n+3)}(k)
\cong\mathbf K^M_{-1}(k)/24=0.
\]
Thus \(\varphi=0\), which proves the left square.
\end{proof}

We next treat the two low-dimensional cases \(n=3\) and \(n=2\).  To
describe \(d_{3,1}\), we first recall the structure of the
sheaf ${\pi}^{\A^1}_3(\A^3\setminus\{0\})$ from
\cite[Theorem~7.2.2]{ABH}.

\begin{proposition}\label{prop:ABH-722}
Assume that $k$ is a perfect field. Then the following statements hold.
\begin{enumerate}
\item For any $d\ge 0$ there are isomorphisms
\[
\pi_{4+r,\,r}\!\big(S^{3+d,d}\big)\ \xrightarrow{\ \sim\ }\ \pi_{d+r,\,r}\!\big(S^{2d-1,d}\big).
\]

\item For any $d\ge 1$ there is a short exact sequence
\begin{multline*}
0\longrightarrow
\mathrm{coker}\!\Big(\pi_{6+r,\,r}\!\big(S^{4+d,d}\big)\longrightarrow
\mathbf{K}^{MW}_{2d-r}\Big/\!h^{\frac{1+(-1)^{d+1}}{2}}\Big)
\\
\longrightarrow
\pi_{3+r,\,r}\!\big(S^{2+d,d}\big)
\longrightarrow
\pi_{4+r,\,r}\!\big(S^{3+d,d}\big)
\longrightarrow 0.
\end{multline*}

\item If, in addition, $\mathrm{char}(k)=0$, then for any $d\ge 2$ there is a short exact sequence
\[
0\longrightarrow
\mathbf{K}^{M}_{d+2-r}/24
\longrightarrow
\pi_{4+r,\,r}\!\big(S^{3+d,d}\big)
\longrightarrow
\mathbf{GW}^{d-r}_{d+1-r},
\]
and the right-hand map is surjective for $r\ge d-3$.
\end{enumerate}
\end{proposition}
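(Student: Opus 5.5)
This proposition is \cite[Theorem~7.2.2]{ABH}. Here I sketch how I would reconstruct it from the general machinery of that paper rather than re-deriving every input. Throughout, write \(S^{p,q}=S^{p-q}_s\wedge\G_m^{\wedge q}\), so that \(S^{2d-1,d}\simeq\A^d\setminus0\), and use Theorem~\ref{thm:contraction-bigrading} to translate between bigraded groups and contractions.

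\emph{Part (1).} Both source and target will be shown to lie in the stable range and to agree with the same contraction of the stable first stem.
\begin{enumerate}
\item For \(S^{3+d,d}\), I would apply Morel's simplicial Freudenthal theorem. The space \(S^{3+d,d}\) is a simplicial suspension of an \(\A^1\)-\(2\)-connected space, so \(\pi_{4+r,r}\) is already stable, i.e.\ it equals the first stable stem in weight \(d\), contracted \(r\) times.
\item For \(S^{2d-1,d}\) in degree \(d+r\), the loop space is less highly connected, so the Morel Freudenthal bound alone is not enough. Here I would invoke the \(\mathbb P^1\)-Freudenthal theorem of \cite{ABH}, whose range grows with the weight \(d\), and check that \(\pi_{d+r,r}\) of \(\A^d\setminus0\) lies just inside it.
\item Composing the two stabilization isomorphisms with the natural simplicial suspension maps (iterated \(d-2\) times) gives the displayed isomorphism. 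Naturality of suspension guarantees that it is the map induced by suspension and not merely an abstract identification.
\end{enumerate}

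\emph{Part (2).} I would use the motivic EHP sequence of \cite{ABH} for the simplicial suspension \(S^{2+d,d}\to\Omega_sS^{3+d,d}\).
\begin{enumerate}
\item In the relevant range, the James--Hopf fiber term is \(\Omega_sS^{4+d,d}\wedge\)-type with bottom cell \(S^{5+2d,2d}\). Its first homotopy sheaf in the degree that meets \(\pi_{3+r,r}\) is \(\mathbf K^{MW}_{2d-r}\), after contracting via Theorem~\ref{thm:contraction-K}.
\item I would identify the \(P\)-map (the Whitehead product) on this bottom class with multiplication by the Euler characteristic \(1+(-1)^{d+1}\langle-1\rangle\). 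This equals \(h\) or \(0\) according to the parity of \(d\), which produces the quotient \(\mathbf K^{MW}_{2d-r}/h^{(1+(-1)^{d+1})/2}\).
\item The EHP long exact sequence then reads
\[
\pi_{6+r,r}(S^{4+d,d})\xrightarrow{H}\mathbf K^{MW}_{2d-r}/h^{\epsilon}\xrightarrow{P}\pi_{3+r,r}(S^{2+d,d})\xrightarrow{E}\pi_{4+r,r}(S^{3+d,d})\xrightarrow{H}\cdots,
\]
with \(\epsilon=(1+(-1)^{d+1})/2\).
\item The final \(H\) lands in a group that vanishes by connectivity of the next James--Hopf term. Hence \(E\) is surjective, and the kernel of \(E\) is the cokernel of the previous \(H\), which is the stated short exact sequence.
\end{enumerate}
The main obstacle is this step: pinning down the \(P\)-map as exactly \(h^\epsilon\), i.e.\ the motivic Whitehead square computation. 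I would take it from the \(\A^1\)-degree of the swap on \(S^{2+d,d}\wedge S^{2+d,d}\).

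\emph{Part (3).} By part (1), \(\pi_{4+r,r}(S^{3+d,d})\) is the \(r\)-fold contraction of the stable first stem of weight \(d\). In characteristic \(0\), I would use the computation of the \(1\)-line (\cite{RSO21}, as packaged in \cite{ABH}). The unit \(\mathds1\to\mathbf{kq}\) induces a map whose kernel is \(\mathbf K^M_{d+2}/24\), coming from \(\nu\). Its target is the very effective hermitian term, which agrees with \(\mathbf{GW}^{d}_{d+1}\) in this degree. Contracting \(r\) times via Theorem~\ref{thm:contraction-K} gives the displayed sequence. For surjectivity when \(r\geq d-3\), I would compare \(\mathbf{kq}\to\mathbf{KQ}\) in that range and show that the cokernel of the unit map on the first stem vanishes there. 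This comes down to checking that the relevant \(\mathbf{GW}\) classes are hit by \(\eta\)-multiples of the unit, and this range check is the second delicate point of the proof.
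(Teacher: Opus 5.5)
Your first sentence is all the paper does: Proposition~\ref{prop:ABH-722} is quoted from \cite[Theorem~7.2.2]{ABH} with no argument. The reconstruction you add, however, has a genuine gap in part~(1). The gap sits exactly at $d=3$, which is the only case the paper uses.

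For $d\geq4$ you have the ranges backwards. The map in (1) is the $(d-4)$-fold (not $(d-2)$-fold) simplicial suspension $\pi_4(S^3_s\wedge\G_m^{\wedge d})\to\pi_d(S^{d-1}_s\wedge\G_m^{\wedge d})$. Morel's simplicial Freudenthal theorem alone makes each step an isomorphism, since $S^{3+m}_s\wedge\G_m^{\wedge d}$ is $(2+m)$-connected and $4+m\leq 4+2m$; no $\mathbb P^1$-stabilization is needed.

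For $d=3$ the only natural candidate is the inverse of the single suspension $E\colon\pi_{3+r,r}(S^{5,3})\to\pi_{4+r,r}(S^{6,3})$. Here $\A^3\setminus0$ lies on the boundary of the $\mathbb P^1$-Freudenthal range: the fiber of $S^{5,3}\to QS^{5,3}$ is only $2$-connected. So your step~(ii) gives surjectivity on $\pi_3$ but not injectivity.

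Injectivity cannot be supplied, and in fact no isomorphism exists:
\begin{itemize}
\item The Whitehead square $[\iota,\iota]\in\pi_{9,6}(S^{5,3})(\C)$ suspends to zero.
\item It is nonzero, because its complex realization is $[\iota_5,\iota_5]\neq0$ in $\pi_9(S^5)$.
\item Part~(3) with $(d,r)=(3,6)$ gives $\pi_{10,6}(S^{6,3})=0$ on fields. Indeed $\mathbf K^M_{-1}/24=0$, and $\mathbf{GW}^{-3}_{-2}=\mathbf{GW}^{1}_{-2}$ is the shifted Witt group $W^{3}$, which vanishes for fields.
\end{itemize}
So (1) as printed cannot hold at $d=3$; for $d=0$ the sphere $S^{-1,0}$ does not even parse. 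Since contraction preserves isomorphisms, the same check refutes the sheaf identification $\pi^{\A^1}_3(\A^3\setminus0)\cong\pi_{4,0}(S^{6,3})$ drawn from (1). Your own part~(2) already shows the problem: $\ker E$ is its cokernel term, and that term contains the Whitehead square. You should check the precise form of \cite[Theorem~7.2.2(1)]{ABH} rather than try to prove the printed version.

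The other steps also need repair.
\begin{itemize}
\item $S^{3+d,d}=\Sigma_sS^{2+d,d}$ with $S^{2+d,d}$ only $1$-connected, not $2$-connected. Morel's theorem gives only $S^1$-stability of $\pi_{4+r,r}(S^{3+d,d})$.
\item Identifying $\pi_{4+r,r}(S^{3+d,d})$ with the $\mathbb P^1$-stable first stem, which part~(3) needs, requires the $\mathbb P^1$-Freudenthal theorem applied to $S^{3+d,d}$ itself, not part~(1). The first-stem input is \cite{RSO19}, not \cite{RSO21}.
\item In part~(2), the honest EHP sequence for $S^{2+d,d}\to\Omega_sS^{3+d,d}$ has $H$ defined on $\pi_{5+r,r}(S^{3+d,d})$, not on $\pi_{6+r,r}(S^{4+d,d})$. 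To replace the former by the latter and pass to the quotient by $h^{\epsilon}$, you must check one thing. The kernel of the surjective suspension $\pi_{5+r,r}(S^{3+d,d})\to\pi_{6+r,r}(S^{4+d,d})$ is generated by the Whitehead square of $S^{3+d,d}$, and $H$ must carry it onto $H([\iota,\iota])\cdot\mathbf K^{MW}_{2d-r}$. The relevant quantity is therefore this Hopf invariant, not ``$P$ on the bottom class''.
\item That Hopf invariant is $H([\iota,\iota])=1+(-1)^{d+1}\langle-1\rangle^{d}$, and the exponent $d$ matters. It gives $h$ for odd $d$ and $0$ for even $d$. Your formula $1+(-1)^{d+1}\langle-1\rangle$ gives $1-\langle-1\rangle\neq0$ for even $d$ (its signature over $\mathbb R$ is $2$).
\end{itemize}
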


We apply Proposition~\ref{prop:ABH-722} with $(d,r)=(3,0)$. Using (1) to identify
${\pi}^{\A^1}_{3}(\A^3\setminus\{0\})\cong \pi_{4,0}(S^{6,3})$ and then (3), we obtain
a short exact sequence
\begin{equation}\label{eq:pi3A3-ABH}
0 \longrightarrow \mathbf{K}^M_{5}/24
\longrightarrow {\pi}^{\A^1}_{3}\!\big(\A^3\setminus\{0\}\big)
\longrightarrow \mathbf{GW}^{3}_{4}
\longrightarrow 0.
\end{equation}

\begin{lemma}\label{lem:abhcase3-KM}
Let $k$ be a field of characteristic $0$. There is a commutative diagram of strictly
$\A^1$-invariant sheaves
\begin{equation}\label{eq:abhcase3-KM}
\begin{tikzcd}
0 \ar[r] &
\mathbf{K}^M_{6}/24 \ar[r,"(\nu_{4})_*"] \ar[d,"0"'] &
{\pi}^{\A^1}_{4}\!\big(\A^{4}\setminus\{0\}\big) \ar[r,"(\delta_{4})_*"] \ar[d,"d_{3,1}"'] &
\mathbf{GW}^{4}_{5} \ar[d,"0"] \\
0 \ar[r] &
\mathbf{K}^M_{5}/24 \ar[r,"(\nu_{3})_*"'] &
{\pi}^{\A^1}_{3}\!\big(\A^{3}\setminus\{0\}\big) \ar[r,"(\delta_{3})_*"'] &
\mathbf{GW}^{3}_{4} \ar[r] & 0,
\end{tikzcd}
\end{equation}
where $d_{3,1}:=q_{2,1}\circ \delta_{3,1}$.  Moreover, \(d_{3,1}=0\).
\end{lemma}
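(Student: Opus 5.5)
The quickest route is Lemma~\ref{lem:stiefel-parity}. For \(n=3\) it identifies \(d_{3,1}\) with the connecting homomorphism of the two-frame Stiefel fiber sequence
\[
\A^3\setminus0\longrightarrow\operatorname{St}_2(4)\longrightarrow\A^4\setminus0 .
\]
Since \(n=3\) is odd, we have \(j=n-1=2\), which is even. So the projection admits the symplectic \(\A^1\)-homotopy section coming from the rank-two symplectic Stiefel variety, as in the proof of \cite[Lemma~18]{AOSS26}. A fiber sequence with a section has vanishing connecting homomorphisms, and therefore \(d_{3,1}=0\) as a morphism of sheaves. The lemma's hypothesis \(n\geq3\) covers this case, so nothing about the ABH description is needed for the vanishing itself.

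Next come the two rows. For the bottom row, use the exact sequence \eqref{eq:pi3A3-ABH}, obtained from Proposition~\ref{prop:ABH-722} with \((d,r)=(3,0)\). The surjectivity of \((\delta_3)_*\) comes from part (3), since \(r=0\geq d-3=0\). For the top row, use the \(n=4\) case of \eqref{eq:ABH-exact-n}, i.e.\ \eqref{eq:ABH-exact-next} with \(n=3\). This requires checking that the ABH sequence \cite[Theorem~7.2.1]{ABH} is available at rank \(4\). Alternatively, apply Proposition~\ref{prop:ABH-722}(1),(3) with \((d,r)=(4,0)\), which gives
\[
0\to\mathbf K^M_6/24\to\pi_4^{\A^1}(\A^4\setminus0)\to\mathbf{GW}^4_5 .
\]
The quotient maps should be the normalized ones \((\delta_m)_*=\lambda_m(\delta^u_m)_*\) of Corollary~\ref{prop:suslin-stiefel-wood}. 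This rescaling does not affect exactness.

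Commutativity is then immediate. Because the middle vertical map \(d_{3,1}\) is zero, the middle-and-right square commutes with right vertical map \(0\), as \((\delta_3)_*\circ 0=0=0\circ(\delta_4)_*\). This is also consistent with Proposition~\ref{prop:suslin-stiefel-wood-up-to-unit} for \(n=3\) odd. The left square commutes with left vertical map \(0\), as \(d_{3,1}\circ(\nu_4)_*=0=(\nu_3)_*\circ0\). Alternatively, one can obtain the left map from the induced map \(\varphi\) as in the proof of Proposition~\ref{prop:abhcase4} and kill it by the contraction argument \((\mathbf K^M_5/24)_{-6}=0\); this is unnecessary once \(d_{3,1}=0\).

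The only real point of care, and the step I expect to be the main obstacle, is bookkeeping rather than mathematics. I need to confirm that the top row at \(n+1=4\) is an honest ABH exact sequence, so that the diagram's rows are legitimate. I also need to confirm that the identification of \(d_{3,1}\) with the Stiefel boundary in Lemma~\ref{lem:stiefel-parity} is compatible with the identifications \(\pi_4(\Omega_s X_4)\cong\pi_4^{\A^1}(\A^4\setminus0)\) used in the diagram. Both follow from the constructions already recorded.
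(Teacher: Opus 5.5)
Your proposal is correct and is essentially the paper's proof. The paper also takes the $n=3$ case of Lemma~\ref{lem:stiefel-parity}, the symplectic Stiefel section, to get $d_{3,1}=0$, and then observes that with all three vertical maps zero the diagram between the two ABH rows commutes automatically.
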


\begin{proof}
This is the case \(n=3\) of Lemma~\ref{lem:stiefel-parity}.  Thus the middle
vertical morphism is zero, and the two remaining vertical morphisms are zero
as well.  The rows are the ABH sequences recalled above, so the displayed
diagram is commutative.
\end{proof}

\smallskip

For \(n=2\), the equivalence
\(\A^2\setminus0\simeq_{\A^1}SL_2=Sp_2\) and
\cite[Theorem~3.3]{AF14b} give a short exact sequence
\begin{equation}\label{eq:pi2A2-low-rank}
0\longrightarrow\mathbf T'_4
\longrightarrow\pi^{\A^1}_2(\A^2\setminus0)
\xrightarrow{\,(\delta_2)_*\,}\mathbf{GW}^2_3
\longrightarrow0.
\end{equation}
Here \(\mathbf T'_4\) denotes the kernel sheaf in that theorem, and the
identification of its quotient morphism with a Suslin \(KO\)-degree map
follows from \cite[Example~4.4.3]{AF17}.  Postcomposition by the unit used
in the compatible normalization does not change its kernel or exactness.
Likewise, \cite[Example~4.4.4]{AF17} identifies the quotient morphism in
\eqref{eq:pi3A3-ABH} with the rank-three \(KO\)-degree map, and compatible
renormalization again preserves that exact sequence.

\begin{proposition}\label{prop:low-rank-n2-diagram}
Let \(k\) be a field of characteristic \(0\), and use the compatible
normalization of Corollary~\ref{prop:suslin-stiefel-wood}.  There is a unique
morphism
\[
\kappa_2:\mathbf K^M_5/24\longrightarrow\mathbf T'_4
\]
for which the following diagram has exact rows and is commutative:
\begin{equation}\label{eq:low-rank-n2-diagram}
\begin{tikzcd}[column sep=large, row sep=large]
0 \ar[r]&
\mathbf K^M_5/24 \ar[r,"(\nu_3)_*"] \ar[d,"\kappa_2"']&
\pi^{\A^1}_3(\A^3\setminus0)
  \ar[r,"(\delta_3)_*"] \ar[d,"d_{2,1}"']&
\mathbf{GW}^3_4 \ar[r]\ar[d,"\eta"]&
0\\
0 \ar[r]&
\mathbf T'_4 \ar[r]&
\pi^{\A^1}_2(\A^2\setminus0)
  \ar[r,"(\delta_2)_*"']&
\mathbf{GW}^2_3 \ar[r]&
0.
\end{tikzcd}
\end{equation}
No vanishing assertion is made about \(\kappa_2\).
\end{proposition}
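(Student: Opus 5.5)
The proof is a diagram chase once the right square is known to commute. The rows of \eqref{eq:low-rank-n2-diagram} are already exact. The top row is \eqref{eq:pi3A3-ABH}, with its quotient morphism rescaled by the compatible unit \(\lambda_3\) of Corollary~\ref{prop:suslin-stiefel-wood}. The bottom row is \eqref{eq:pi2A2-low-rank}, rescaled by \(\lambda_2\). As noted before the statement, multiplying by a unit of \(\mathbf{GW}_0^0(k)\) does not change kernels or surjectivity, so exactness survives the renormalization. In particular \((\nu_3)_*\) and the inclusion \(\mathbf T'_4\to\pi^{\A^1}_2(\A^2\setminus0)\) are monomorphisms.

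The first step is the right square. I would apply Proposition~\ref{prop:suslin-stiefel-wood-up-to-unit} with \(n=2\), which is even and allowed since \(n\geq2\). For the geometric maps \((\delta^u_m)_*\) it gives
\[
(\delta_2^u)_*\circ d_{2,1}=c_2\eta\circ(\delta_3^u)_* .
\]
Corollary~\ref{prop:suslin-stiefel-wood} defines \(\lambda_3=\lambda_2c_2\), so this becomes \((\delta_2)_*\circ d_{2,1}=\eta\circ(\delta_3)_*\). That is the commutativity of the right square. The hypotheses of Lemmas~\ref{lem:KO-euler-transgression} and~\ref{lem:kernel-KO-euler} also hold at rank \(2\): here \((\delta_2^u)_*\) is the low-rank \(KO\)-degree map of \cite[Example~4.4.3]{AF17}, \(BSL_1\) is contractible, and \(X_2\simeq SL_2\). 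This low-rank input is where I expect the main difficulty. The point to check is that the identification of \((\delta_2)_*\) with a Suslin \(KO\)-degree map in \eqref{eq:pi2A2-low-rank} is the same morphism that Proposition~\ref{prop:suslin-stiefel-wood-up-to-unit} used, and not merely the same map up to an uncontrolled automorphism of \(\mathbf T'_4\) or of the sequence. If it differs, I would absorb the discrepancy into the unit \(\lambda_2\); since \(\lambda_2\) is arbitrary in Corollary~\ref{prop:suslin-stiefel-wood}, this costs nothing.

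Next comes the construction of \(\kappa_2\). For a local section \(x\) of \(\mathbf K^M_5/24\), the right square gives
\[
(\delta_2)_*\,d_{2,1}(\nu_3)_*x=\eta\,(\delta_3)_*(\nu_3)_*x=0 .
\]
So \(d_{2,1}\circ(\nu_3)_*\) factors through \(\ker(\delta_2)_*=\mathbf T'_4\). Because \(\mathbf T'_4\to\pi^{\A^1}_2(\A^2\setminus0)\) is a monomorphism, the factorization exists and is unique; this is \(\kappa_2\). The left square commutes by construction, and the diagram lives in the abelian category of strictly \(\A^1\)-invariant sheaves, so \(\kappa_2\) is a morphism of such sheaves. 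Uniqueness follows from the same monomorphism.

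I would not try to compute \(\kappa_2\). The argument of Proposition~\ref{prop:abhcase4} does not apply: \(\mathbf T'_4\) need not be a quotient of a Milnor \(K\)-theory sheaf with vanishing contraction. This matches the statement's explicit disclaimer that no vanishing assertion is made.
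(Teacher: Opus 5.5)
Your proposal is correct and follows the paper's proof. The right square comes from Corollary~\ref{prop:suslin-stiefel-wood} at \(n=2\), which you unpack as Proposition~\ref{prop:suslin-stiefel-wood-up-to-unit} together with \(\lambda_3=\lambda_2c_2\). The morphism \(\kappa_2\) is the restriction of \(d_{2,1}\) to the kernels, unique because \(\mathbf T'_4\to\pi^{\A^1}_2(\A^2\setminus0)\) is a monomorphism, and exactness is \eqref{eq:pi3A3-ABH} and \eqref{eq:pi2A2-low-rank}. Your worry about the low-rank identification is already settled by the paper's convention: for \(m=2,3\) the map \((\delta^u_m)_*\) is by definition the \(KO\)-degree map of \cite[Examples~4.4.3--4.4.4]{AF17}, so any discrepancy is a \(\mathbf{GW}_0^0(k)\)-unit and never an automorphism of \(\mathbf T'_4\).
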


\begin{proof}
Corollary~\ref{prop:suslin-stiefel-wood}, applied with \(n=2\), gives
\[
(\delta_2)_*\circ d_{2,1}
=\eta\circ(\delta_3)_*,
\]
which proves commutativity of the right square.  Hence \(d_{2,1}\) carries
\(\ker(\delta_3)_*=\mathbf K^M_5/24\) into
\(\ker(\delta_2)_*=\mathbf T'_4\).  Restriction to these kernels produces the
unique morphism \(\kappa_2\) and proves commutativity of the left square.
Exactness is \eqref{eq:pi3A3-ABH} and \eqref{eq:pi2A2-low-rank}.
\end{proof}

The preceding results give the following summary in the range used below.

\begin{theorem}\label{thm:main-diagrams}
Let $k$ be a field of characteristic $0$, and fix a compatible normalization
as in Corollary~\ref{prop:suslin-stiefel-wood}.

\smallskip
\noindent\textup{(i)} Let \(n\geq4\).  Then there is a commutative diagram
\begin{equation}
\begin{tikzcd}[column sep=large, row sep=large]
0 \ar[r] &
\mathbf{K}^M_{n+3}/24 \ar[r,"(\nu_{n+1})_*"] \ar[d,"0"'] &
\pi^{\A^1}_{n+1}\!\big(\A^{n+1}\setminus\{0\}\big) \ar[r,"(\delta_{n+1})_*"] \ar[d,"q_{n-1,1}\circ \delta_{n,1}"'] &
\mathbf{GW}^{n+1}_{n+2} \ar[d,"\vartheta_n"] \\
0 \ar[r] &
\mathbf{K}^M_{n+2}/24 \ar[r,"(\nu_n)_*"] &
\pi^{\A^1}_{n}\!\big(\A^{n}\setminus\{0\}\big) \ar[r,"(\delta_{n})_*"] &
\mathbf{GW}^{n}_{n+1}.
\end{tikzcd}
\end{equation}
where \(\vartheta_n=\eta\) for even \(n\) and \(\vartheta_n=0\) for odd
\(n\).  In particular, the middle vertical morphism is zero when \(n\) is
odd.

\smallskip
\noindent\textup{(ii)} If $n=3$, there is a commutative diagram
\begin{equation}
\begin{tikzcd}[column sep=large, row sep=large]
0 \ar[r] &
\mathbf{K}^M_{6}/24 \ar[r] \ar[d,"0"'] &
\pi^{\A^1}_{4}\!\big(\A^{4}\setminus\{0\}\big) \ar[r,"(\delta_4)_*"] \ar[d,"q_{2,1}\circ \delta_{3,1}"'] &
\mathbf{GW}^{4}_{5} \ar[d,"0"] \\
0 \ar[r] &
\mathbf{K}^M_{5}/24 \ar[r,"(\nu_3)_*"'] &
\pi^{\A^1}_{3}\!\big(\A^{3}\setminus\{0\}\big) \ar[r,"(\delta_3)_*"'] &
\mathbf{GW}^{3}_{4} \ar[r] & 0.
\end{tikzcd}
\end{equation}

\smallskip
\noindent\textup{(iii)} If \(n=2\), there is a commutative diagram with
exact rows
\begin{equation}
\begin{tikzcd}[column sep=large, row sep=large]
0 \ar[r]&
\mathbf K^M_5/24 \ar[r,"(\nu_3)_*"] \ar[d,"\kappa_2"']&
\pi^{\A^1}_3(\A^3\setminus0)
  \ar[r,"(\delta_3)_*"] \ar[d,"q_{1,1}\circ\delta_{2,1}"']&
\mathbf{GW}^3_4 \ar[r]\ar[d,"\eta"]&
0\\
0 \ar[r]&
\mathbf T'_4 \ar[r]&
\pi^{\A^1}_2(\A^2\setminus0)
  \ar[r,"(\delta_2)_*"']&
\mathbf{GW}^2_3 \ar[r]&
0.
\end{tikzcd}
\end{equation}
The morphism \(\kappa_2\) is the kernel morphism of
Proposition~\ref{prop:low-rank-n2-diagram}; it is not asserted to vanish.

\end{theorem}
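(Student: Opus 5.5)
The statement is a summary theorem, so the plan is to assemble it from the three preceding results, one case at a time, with the normalization of Corollary~\ref{prop:suslin-stiefel-wood} fixed throughout.

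For part (i), with \(n\geq4\), I will quote Proposition~\ref{prop:abhcase4} verbatim. Its rows are the Asok--Bachmann--Hopkins sequences \eqref{eq:ABH-exact-next} and \eqref{eq:ABH-exact-n}. Rescaling the quotient maps by the units \(\lambda_m\) keeps these rows exact, as checked in Corollary~\ref{prop:suslin-stiefel-wood}.
\begin{itemize}
\item The right square commutes by \eqref{eq:wood-compatibility}.
\item The left vertical map is zero by the contraction computation \(\mathbf K^M_{-1}/24=0\).
\end{itemize}
The additional claim that the middle map vanishes for odd \(n\) is exactly Lemma~\ref{lem:stiefel-parity}: the symplectic Stiefel section kills every connecting homomorphism of \(\A^n\setminus0\to\operatorname{St}_2(n+1)\to\A^{n+1}\setminus0\). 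This lemma also identifies the cross-composite \(q_{n-1,1}\circ\delta_{n,1}\) with that connecting map, which explains why the middle arrow is labelled as in the diagram.

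For part (ii), with \(n=3\), I will cite Lemma~\ref{lem:abhcase3-KM}. The bottom row is the short exact sequence \eqref{eq:pi3A3-ABH}, obtained from Proposition~\ref{prop:ABH-722} with \((d,r)=(3,0)\). Its surjectivity onto \(\mathbf{GW}^3_4\) comes from the clause ``\(r\geq d-3\)''. The top row is \eqref{eq:ABH-exact-n} for \(m=4\). Since \(d_{3,1}=0\) by the parity lemma, all three vertical maps are zero and commutativity is automatic.

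For part (iii), with \(n=2\), I will invoke Proposition~\ref{prop:low-rank-n2-diagram}. The rows \eqref{eq:pi3A3-ABH} and \eqref{eq:pi2A2-low-rank} are exact, and the right square commutes by Corollary~\ref{prop:suslin-stiefel-wood} with \(n=2\) even. Hence \(d_{2,1}\) restricts to the kernels, which defines \(\kappa_2\) uniquely.

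The only point needing care is consistency: the same normalization \((\delta_m)_*=\lambda_m(\delta_m^u)_*\) must be used in all three parts. This matters at \(m=3,4\), where the low-rank \(KO\)-degree maps of \cite[Examples~4.4.3--4.4.4]{AF17} enter. Corollary~\ref{prop:suslin-stiefel-wood} chooses the \(\lambda_m\) simultaneously for all \(m\geq2\), so this is satisfied. Once that is checked, no further obstacle remains; the theorem is a collation of these results.
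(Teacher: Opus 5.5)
Your proposal is correct and matches the paper, which states this theorem as a direct collation of Proposition~\ref{prop:abhcase4}, Lemma~\ref{lem:abhcase3-KM}, and Proposition~\ref{prop:low-rank-n2-diagram} under the single normalization of Corollary~\ref{prop:suslin-stiefel-wood}. One harmless indexing slip: the low-rank \(KO\)-degree maps of \cite[Examples~4.4.3--4.4.4]{AF17} are the cases \(m=2,3\), not \(m=3,4\); at \(m=4\) the quotient map is already the ABH one.
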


After interpreting the composite as a differential, we return to the left half of the original diagram.
Consider the diagram
\begin{equation}\label{eq:left-half-cokers}
\begin{tikzcd}[column sep=0.25in, row sep=large]
\pi_{n+1}^{\A^1}\!\big(\A^{n+1}\setminus\{0\}\big)
  \ar[d,"\delta_{n,1}"']
  \ar[dr,dotted,"d_{n,1}"]
\\
\pi_{n}^{\A^1}(SL_n)
  \ar[r,"q_{n-1,1}"']
  \ar[d]
&
\pi_{n}^{\A^1}\!\big(\A^{n}\setminus\{0\}\big)
  \ar[r]
  \ar[d]
&
\pi_{n-1}^{\A^1}(SL_{n-1})
  \ar[r,"(\mathrm{inc}_{n-1})_*"]
&
\pi_{n-1}^{\A^1}(SL_n)
  \ar[r,"q_{n-1}"]
&
\mathbf{K}^{MW}_{n}
\\
\mathrm{coker}(\delta_{n,1})
  \ar[r]
  \ar[d]
&
\mathrm{coker}(d_{n,1})
\\
0.
\end{tikzcd}
\end{equation}
Our first goal is to understand $\mathrm{coker}(d_{n,1})$.

\medskip\noindent
For \(n\geq4\), Theorem~\ref{thm:main-diagrams}\textup{(i)} identifies
\(d_{n,1}\) with the
vertical map in the following diagram with exact rows:
\begin{equation}\label{eq:ABH-coker-diagram}
\begin{tikzcd}[column sep=large, row sep=large]
0 \ar[r] &
\mathbf K^{M}_{n+3}/24 \ar[r,"(\nu_{n+1})_*"] \ar[d,"0"'] &
\pi_{n+1}^{\A^1}\!\big(\A^{n+1}\setminus\{0\}\big) \ar[r,"(\delta_{n+1})_*"] \ar[d,"d_{n,1}"'] &
\mathbf{GW}^{n+1}_{n+2} \ar[d,"\vartheta_n"] \\
0 \ar[r] &
\mathbf K^{M}_{n+2}/24 \ar[r,"(\nu_n)_*"] &
\pi_{n}^{\A^1}\!\big(\A^{n}\setminus\{0\}\big) \ar[r,"(\delta_n)_*"] &
\mathbf{GW}^{n}_{n+1}.
\end{tikzcd}
\end{equation}
Passing to cokernels in the middle vertical map yields an induced morphism
\[
\mathrm{coker}(d_{n,1}) \longrightarrow \mathrm{coker}(\vartheta_n).
\]

\medskip\noindent
Now apply $(n-2)$-fold contraction. Since contraction is exact on strictly $\A^1$-invariant sheaves
(Lemma~\ref{lem:contraction-exact}), it commutes with cokernels. Contracting
\eqref{eq:ABH-coker-diagram} therefore yields a commutative diagram with exact rows
\begin{equation}\label{eq:ABH-coker-diagram-contracted}
\begin{tikzcd}[column sep=large, row sep=large]
0 \ar[r] &
\mathbf K^{M}_{5}/24 \ar[r] \ar[d,"0"'] &
\big(\pi_{n+1}^{\A^1}(\A^{n+1}\setminus\{0\})\big)_{-(n-2)}
  \ar[r]
  \ar[d,"(d_{n,1})_{-(n-2)}"'] &
\big(\mathbf{GW}^{n+1}_{n+2}\big)_{-(n-2)}
  \ar[d,"(\vartheta_n)_{-(n-2)}"]
\\
0 \ar[r] &
\mathbf K^{M}_{4}/24 \ar[r] &
\big(\pi_{n}^{\A^1}(\A^{n}\setminus\{0\})\big)_{-(n-2)}
  \ar[r] &
\big(\mathbf{GW}^{n}_{n+1}\big)_{-(n-2)}.
\end{tikzcd}
\end{equation}
In particular,
\[
\mathrm{coker}(d_{n,1})_{-(n-2)}\ \cong\ \mathrm{coker}\bigl((d_{n,1})_{-(n-2)}\bigr),
\qquad
\mathrm{coker}(\vartheta_n)_{-(n-2)}\ \cong\ \mathrm{coker}\bigl((\vartheta_n)_{-(n-2)}\bigr).
\]
Both rows of \eqref{eq:ABH-coker-diagram-contracted} are short exact: for the
upper row the surjectivity condition is attained at the boundary value
\(r=d-3\), and for the lower row it lies one step inside that range; compare
Proposition~\ref{prop:ABH-722}\textup{(3)} and
\cite[Theorem~4.4.5]{AF17}.

\begin{theorem}\label{thm:coker-dn1-structure}
Let \(k\) be a field of characteristic \(0\), let $n\ge 4$, and write
\[
\overline d_{n,1}:=(d_{n,1})_{-(n-2)},
\qquad
\overline\vartheta_n:=(\vartheta_n)_{-(n-2)}.
\]
If \(n\) is even, the chosen compatible normalization gives an exact sequence
\begin{equation}\label{eq:coker-dn1-snake}
\begin{aligned}
\ker(\overline\vartheta_n)
\xrightarrow{\ \partial_{\mathrm{sn}}\ }
\mathbf K^M_4/24
\longrightarrow
\mathrm{coker}(d_{n,1})_{-(n-2)}
\longrightarrow
\ker\!\bigl(\mathbf K^Q_3\longrightarrow\mathbf K^{MW}_3\bigr)
\longrightarrow0.
\end{aligned}
\end{equation}
Equivalently, if
\[
\mathbf C_n:=
\mathrm{coker}\!\left(
\ker(\overline\vartheta_n)\xrightarrow{\partial_{\mathrm{sn}}}
\mathbf K^M_4/24\right),
\]
then there is a short exact sequence
\[
0\longrightarrow\mathbf C_n
\longrightarrow\mathrm{coker}(d_{n,1})_{-(n-2)}
\longrightarrow
\ker\!\bigl(\mathbf K^Q_3\longrightarrow\mathbf K^{MW}_3\bigr)
\longrightarrow0.
\]
If \(n\) is odd, then \(d_{n,1}=0\), and there is instead a canonical short
exact sequence
\begin{equation}\label{eq:coker-dn1-odd}
0\longrightarrow\mathbf K^M_4/24
\longrightarrow\mathrm{coker}(d_{n,1})_{-(n-2)}
\longrightarrow\mathbf{GW}^2_3
\longrightarrow0.
\end{equation}
\end{theorem}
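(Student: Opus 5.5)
The plan is to apply the snake lemma to the contracted diagram \eqref{eq:ABH-coker-diagram-contracted}. Before doing so I will identify its right-hand column explicitly. Contraction of higher Grothendieck--Witt sheaves lowers both indices by one, $(\mathbf{GW}^j_i)_{-1}\cong\mathbf{GW}^{j-1}_{i-1}$, since contracting a represented theory is looping along $\G_m$ and $\Omega_{\G_m}$ shifts both indices of $KO$. Iterating this $n-2$ times identifies $(\mathbf{GW}^{n+1}_{n+2})_{-(n-2)}\cong\mathbf{GW}^3_4$ and $(\mathbf{GW}^{n}_{n+1})_{-(n-2)}\cong\mathbf{GW}^2_3$. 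These identifications are natural in $\eta$-multiplication, because $\eta$ is a $\G_m$-stable operation. Consequently $\overline\vartheta_n$ is $\eta:\mathbf{GW}^3_4\to\mathbf{GW}^2_3$ for even $n$, and $\overline\vartheta_n=0$ for odd $n$. The kernel terms contract to $\mathbf K^M_5/24$ and $\mathbf K^M_4/24$ by Theorem~\ref{thm:contraction-K}. The left vertical map stays zero, since contraction is a functor (Lemma~\ref{lem:contraction-exact}).

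Both rows of \eqref{eq:ABH-coker-diagram-contracted} are short exact, by the surjectivity remark following that diagram. The snake lemma therefore gives an exact sequence
\[
\mathbf K^M_5/24\to\ker\overline d_{n,1}\to\ker\overline\vartheta_n\xrightarrow{\partial_{\mathrm{sn}}}\mathbf K^M_4/24\to\operatorname{coker}\overline d_{n,1}\to\operatorname{coker}\overline\vartheta_n\to0 .
\]
Here I use that the cokernel of the zero map on $\mathbf K^M_5/24\to\mathbf K^M_4/24$ is all of $\mathbf K^M_4/24$. By exactness of contraction, $\operatorname{coker}(d_{n,1})_{-(n-2)}\cong\operatorname{coker}\overline d_{n,1}$. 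This yields \eqref{eq:coker-dn1-snake} once $\operatorname{coker}\overline\vartheta_n$ is identified. The short exact sequence involving $\mathbf C_n$ is then a formal reformulation, obtained by splitting the four-term tail at the image of $\mathbf K^M_4/24$.

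For even $n$, I compute $\operatorname{coker}(\eta)$ from Karoubi's sequence \eqref{eq:karoubi-convention}, sheafified, in the segment
\[
\mathbf{GW}^3_4\xrightarrow{\eta}\mathbf{GW}^2_3\xrightarrow{f}\mathbf K^Q_3\xrightarrow{H}\mathbf{GW}^3_3 .
\]
Exactness gives $\operatorname{coker}\eta\cong\operatorname{im} f=\ker(H:\mathbf K^Q_3\to\mathbf{GW}^3_3)$. The remaining input is the identification $\mathbf{GW}^3_3\cong\mathbf K^{MW}_3$, which I would import from Asok--Fasel's computation $\mathbf{GW}^n_n\cong\mathbf K^{MW}_n$ in the diagonal range. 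I then need that, under it, $H$ becomes the natural map $\mathbf K^Q_3\to\mathbf K^{MW}_3$; this is the forgetful-hyperbolic compatibility. I expect this step to be the main obstacle: one must check that the identification is compatible with the hyperbolic map, and not merely an abstract isomorphism. If it holds only up to a unit of $\mathbf{GW}_0^0$, the kernel is unchanged, so that ambiguity is harmless.

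For odd $n$, Lemma~\ref{lem:stiefel-parity} gives $d_{n,1}=0$, hence $\overline d_{n,1}=0$. Then $\operatorname{coker}\overline d_{n,1}$ is the whole contracted source term $\bigl(\pi^{\A^1}_{n}(\A^n\setminus0)\bigr)_{-(n-2)}$ in the lower row. Its short exactness, with quotient $\mathbf{GW}^2_3$, is exactly \eqref{eq:coker-dn1-odd}, and it is canonical because no normalization is involved once the map is zero.
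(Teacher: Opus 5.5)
Your proposal is correct and follows the paper's proof. Both contract the diagram so that both rows are short exact, apply the snake lemma with zero left vertical map, use exactness of contraction for the cokernel, identify $\operatorname{coker}(\overline\vartheta_n)$ from the Karoubi segment $\mathbf{GW}^3_4\xrightarrow{\eta}\mathbf{GW}^2_3\to\mathbf K^Q_3\to\mathbf{GW}^3_3\cong\mathbf K^{MW}_3$, and handle odd $n$ via Lemma~\ref{lem:stiefel-parity}.

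The step you flag as the main obstacle is not actually needed. The paper simply takes the map $\mathbf K^Q_3\to\mathbf K^{MW}_3$ to be the hyperbolic map followed by that identification, and its kernel is unchanged by any automorphism of the target.
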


\begin{proof}
After $(n-2)$ contractions, the two rows of
\eqref{eq:ABH-coker-diagram-contracted} are short exact.  Apply the snake
lemma first when \(n\) is even.  Since the left vertical morphism is zero, the
relevant portion of the resulting exact sequence is
\[
\ker(\overline\vartheta_n)
\xrightarrow{\partial_{\mathrm{sn}}}
\mathbf K^M_4/24
\longrightarrow
\mathrm{coker}(\overline d_{n,1})
\longrightarrow
\mathrm{coker}(\overline\vartheta_n)
\longrightarrow0.
\]
Exactness of contraction identifies
\(\mathrm{coker}(\overline d_{n,1})\) with
\(\mathrm{coker}(d_{n,1})_{-(n-2)}\).  Finally, the long exact sequence in
Grothendieck--Witt theory associated with the Wood cofiber sequence contains
\[
\mathbf{GW}^3_4\xrightarrow{\ \overline\vartheta_n\ }\mathbf{GW}^2_3
\longrightarrow\mathbf K^Q_3
\longrightarrow\mathbf{GW}^3_3\cong\mathbf K^{MW}_3,
\]
and hence
\[
\mathrm{coker}(\overline\vartheta_n)
\cong\ker\!\bigl(\mathbf K^Q_3\to\mathbf K^{MW}_3\bigr).
\]
This proves \eqref{eq:coker-dn1-snake}; the short exact reformulation follows
by taking the cokernel of the connecting morphism.

If \(n\) is odd, Lemma~\ref{lem:stiefel-parity} gives \(d_{n,1}=0\).
Consequently its cokernel is the target sheaf.  The lower short exact row of
\eqref{eq:ABH-coker-diagram-contracted} is precisely
\eqref{eq:coker-dn1-odd}.
\end{proof}

The next task is to understand the second half of the diagram \eqref{eq:left-half-cokers}, i.e.\
the image of the maps $(\mathrm{inc}_n)_*$.
By \cite{AF14}, for $m\ge 1$ there are commutative diagrams with exact rows and columns
\begin{equation}\label{eq:AF-even}
\begin{tikzcd}
& & 0 \arrow[d] & 0 \arrow[d] \\
& & \mathbf{T}_{2m+1} \arrow[d] & \mathbf{I}^{2m+1} \arrow[d] \\
& \pi_{2m-1}^{\A^1}(SL_{2m-1}) \arrow[r, "(\mathrm{inc}_{2m-1})_*"]
& \pi_{2m-1}^{\A^1}(SL_{2m}) \arrow[r, "q_{2m-1}"] \arrow[d]
& \mathbf{K}^{MW}_{2m} \arrow[d] \\
& & \mathbf{K}^Q_{2m} \arrow[r, "\psi_{2m}"] \arrow[d]
& \mathbf{K}^M_{2m} \arrow[d] \\
& & 0 & 0
\end{tikzcd}
\end{equation}
and
\begin{equation}\label{eq:AF-odd}
\begin{tikzcd}
& & 0 \arrow[d] & 0 \arrow[d] \\
& & \mathbf{S}_{2m+2} \arrow[d] & \mathbf{I}^{2m+2} \arrow[d] \\
& \pi_{2m}^{\A^1}(SL_{2m}) \arrow[r, "(\mathrm{inc}_{2m})_*"]
& \pi_{2m}^{\A^1}(SL_{2m+1}) \arrow[r, "q_{2m}"] \arrow[d]
& \mathbf{K}^{MW}_{2m+1} \arrow[d] \\
& & \mathbf{K}^Q_{2m+1} \arrow[r, "\psi_{2m+1}"] \arrow[d]
& \mathbf{K}^M_{2m+1} \arrow[d] \\
& & 0 & 0
\end{tikzcd}
\end{equation}
Taking kernels in the right-hand squares of \eqref{eq:AF-even} and \eqref{eq:AF-odd}, and using
exactness, we obtain short exact sequences
\begin{equation}\label{eq:im-inc-even}
0 \longrightarrow \ker(\mathbf{T}_{2m+1}\to \mathbf{I}^{2m+1})
\longrightarrow \mathrm{im}\big((\mathrm{inc}_{2m-1})_*\big)
\longrightarrow \ker(\psi_{2m})
\longrightarrow 0,
\end{equation}
and
\begin{equation}\label{eq:im-inc-odd}
0 \longrightarrow \ker(\mathbf{S}_{2m+2}\to \mathbf{I}^{2m+2})
\longrightarrow \mathrm{im}\big((\mathrm{inc}_{2m})_*\big)
\longrightarrow \ker(\psi_{2m+1})
\longrightarrow 0.
\end{equation}

Since $\mathbf{T}_{2m+1}$ is defined as the fiber product
\[
\mathbf{T}_{2m+1}:=\mathbf{S}_{2m+1}\times_{\mathbf{K}^M_{2m+1}/2}\mathbf{I}^{2m+1},
\]
the projection $\mathbf{T}_{2m+1}\to \mathbf{I}^{2m+1}$ has kernel canonically identified with
\[
\ker(\mathbf{T}_{2m+1}\to \mathbf{I}^{2m+1})
\ \cong\
\ker(\mathbf{S}_{2m+1}\to \mathbf{K}^M_{2m+1}/2),
\]
via $s\mapsto (s,0)$. Hence understanding the left term in \eqref{eq:im-inc-even} reduces to
understanding
\[
\ker(\mathbf{S}_{2m+1}\to \mathbf{K}^M_{2m+1}/2),
\]
which is analyzed in
\cite[\S3.5]{AF14}; we refer the reader there for further details.

\begin{remark}\label{rem:suslin-hurewicz-limitation}
The short exact sequences \eqref{eq:im-inc-even} and
\eqref{eq:im-inc-odd} are unconditional.  Their further simplification is
constrained by the image of
\[
\psi_r:\mathbf K^Q_r\longrightarrow\mathbf K^M_r.
\]
The composite
\[
\mathbf K^M_r\longrightarrow\mathbf K^Q_r
\xrightarrow{\ \psi_r\ }\mathbf K^M_r
\]
is multiplication by $(r-1)!$.  Consequently
\[
(r-1)!\,\mathbf K^M_r\subseteq\operatorname{im}(\psi_r),
\]
which gives the canonical epimorphism
\[
\mathbf K^M_r/(r-1)!\twoheadrightarrow\mathbf S_r
\]
already used above.  The reverse inclusion
\(\operatorname{im}(\psi_r)\subseteq(r-1)!\mathbf K^M_r\) is not a formal
consequence of the composite calculation: it is the sheaf-theoretic form of
the Suslin--Hurewicz image conjecture; see \cite{AFW20}.  It is known in
degree \(3\), in degree \(4\) under the hypotheses of \cite{Rondigs23}, and
in degree \(5\) under the hypotheses of \cite{AFW20}.  We therefore retain
the kernels involving \(\mathbf S_r\), \(\mathbf T_r\), and \(\psi_r\) in
\eqref{eq:im-inc-even} and \eqref{eq:im-inc-odd}, and make no unconditional
factorization claim for arbitrary \(r\).
\end{remark}

\section{A contracted third non-stable calculation}\label{sec:third-unstable}

In this section we carry the construction of Section~3 one degree further.
The full uncontracted calculation is governed by the $2$-line of the motivic
sphere and is correspondingly more intricate.  We first isolate the formal
cross-composite that compares two adjacent long exact sequences, then record the known
description of its punctured affine-space terms, and finally give an explicit
calculation after sufficiently many contractions.

\subsection{The next differential}\label{subsec:dn2}

Recall the diagram of fiber sequences used in
\eqref{eq:diagram-pi-n}.  Applying $\pi_{n+1}^{\A^1}$ instead of
$\pi_n^{\A^1}$ gives a composite
\begin{equation}\label{eq:def-dn2}
\begin{aligned}
d_{n,2}:\
\pi_{n+2}^{\A^1}(\A^{n+1}\setminus\{0\})
&\xrightarrow{\ \delta_{n,2}\ }
\pi_{n+1}^{\A^1}(SL_n)\\
&\xrightarrow{\ q_{n-1,2}\ }
\pi_{n+1}^{\A^1}(\A^n\setminus\{0\}).
\end{aligned}
\end{equation}
Thus $d_{n,2}:=q_{n-1,2}\circ\delta_{n,2}$ is the next differential
associated with the special linear tower.  To simplify notation, set
\[
\mathcal P_n:=
\pi_{n+1}^{\A^1}(\A^n\setminus\{0\}).
\]
With this notation $d_{n,2}$ is a morphism
\[
d_{n,2}:\mathcal P_{n+1}\longrightarrow\mathcal P_n.
\]

The two arrows in \eqref{eq:def-dn2} occur in different long exact
sequences.  We record those sequences separately.

\begin{proposition}\label{prop:third-unstable-two-rows}
For every $n\geq3$, the fiber sequences for two adjacent stages of the
special linear tower give exact sequences of strongly $\A^1$-invariant
sheaves
\begin{equation}\label{eq:third-unstable-upper-row}
\begin{aligned}
\mathcal P_{n+1}
\xrightarrow{\ \delta_{n,2}\ }
\pi_{n+1}^{\A^1}(SL_n)
\xrightarrow{\ (\mathrm{inc}_n)_*\ }
\pi_{n+1}^{\A^1}(SL_{n+1})
\longrightarrow
\pi_{n+1}^{\A^1}(\A^{n+1}\setminus\{0\}),
\end{aligned}
\end{equation}
and
\begin{equation}\label{eq:third-unstable-lower-row}
\begin{aligned}
\pi_{n+1}^{\A^1}(SL_{n-1})
\xrightarrow{\ (\mathrm{inc}_{n-1})_*\ }
\pi_{n+1}^{\A^1}(SL_n)
\xrightarrow{\ q_{n-1,2}\ }
\mathcal P_n
\xrightarrow{\ \partial_{n,2}\ }
\pi_n^{\A^1}(SL_{n-1})
\xrightarrow{\ (\mathrm{inc}_{n-1})_*\ }
\pi_n^{\A^1}(SL_n).
\end{aligned}
\end{equation}
The morphism \(d_{n,2}\) is the cross-composite
\[
\mathcal P_{n+1}\xrightarrow{\delta_{n,2}}
\pi_{n+1}^{\A^1}(SL_n)\xrightarrow{q_{n-1,2}}\mathcal P_n.
\]
In particular, \eqref{eq:third-unstable-upper-row} and
\eqref{eq:third-unstable-lower-row} are not to be concatenated at their
common middle term.
\end{proposition}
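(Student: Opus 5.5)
The plan is to read both displayed sequences off the long exact sequences of $\A^1$-homotopy sheaves attached to the $\A^1$-fiber sequences
\[
SL_{m}\xrightarrow{\ \mathrm{inc}_{m}\ }SL_{m+1}\longrightarrow SL_{m+1}/SL_m\simeq_{\A^1}\A^{m+1}\setminus\{0\},
\]
used at $m=n$ and at $m=n-1$. These sequences are equivalent, after one simplicial looping, to $\A^{m+1}\setminus0\to BSL_m\to BSL_{m+1}$. Two inputs make them available. First, the quotient $SL_{m+1}\to SL_{m+1}/SL_m$ is an $SL_m$-torsor, and the base is $\A^1$-equivalent to a punctured affine space; together these give an $\A^1$-fiber sequence by the affine representability and homogeneous-space results of \cite{AffineRep2}, as already invoked in the introduction. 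Second, Morel's theory \cite{MorelA1} supplies the associated long exact sequence of homotopy sheaves, and the identification $\pi_i^{\A^1}(SL_m)\cong\pi_{i+1}^{\A^1}(BSL_m)$ comes from Theorem~\ref{thm:stable-range}(1).

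\textbf{The two rows.} For \eqref{eq:third-unstable-upper-row} take $m=n$ and read the segment in degrees $n+2$ and $n+1$. The boundary $\pi_{n+2}^{\A^1}(\A^{n+1}\setminus0)=\mathcal P_{n+1}\to\pi_{n+1}^{\A^1}(SL_n)$ is by definition the map $\delta_{n,2}$ obtained from $\delta_{n,1}=\Omega_si_n$ in the previous section. It is followed by $(\mathrm{inc}_n)_*$ and then by the projection to $\pi_{n+1}^{\A^1}(\A^{n+1}\setminus0)$. For \eqref{eq:third-unstable-lower-row} take $m=n-1$ and read the segment from degree $n+1$ down to degree $n$. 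Here the map $\pi_{n+1}^{\A^1}(SL_n)\to\pi_{n+1}^{\A^1}(\A^n\setminus0)=\mathcal P_n$ is induced by the quotient $q_{n-1}$, so it is $q_{n-1,2}$. Call the next boundary map $\partial_{n,2}$.

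\textbf{Abelian sheaves.} Every term must be a sheaf of abelian groups, hence strongly (indeed strictly) $\A^1$-invariant. For $n\ge3$ all the degrees involved are at least $2$, so the sheaves $\pi_i^{\A^1}$ for $i\ge2$ are abelian and strictly $\A^1$-invariant by \cite{MorelA1}. The only point that needs a check is $\pi_n^{\A^1}(SL_{n-1})$ when $n=3$. There it is $\pi_3^{\A^1}(SL_2)$, which is again a higher homotopy sheaf and so causes no difficulty. The $\pi_1$-type subtleties enter only below degree $2$, which never occurs here.

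\textbf{Cross-composite and the non-concatenation remark.} The identification of $d_{n,2}$ as the cross-composite is immediate: by \eqref{eq:def-dn2}, $d_{n,2}=q_{n-1,2}\circ\delta_{n,2}$, with $\delta_{n,2}$ taken from the first sequence and $q_{n-1,2}$ from the second. The closing remark records that the middle term $\pi_{n+1}^{\A^1}(SL_n)$ plays different roles in the two sequences, so exactness at the composite is not asserted. To justify it I would note that $d_{n,2}$ need not vanish; Theorem~\ref{thm:q-delta} shows its analogue is $\eta$ in even degree. I expect the main obstacle to be not the exactness, which is formal, but keeping the index conventions consistent. In particular I must check that the loop identification $\pi_{n+1}(\Omega_sX)\cong\pi_{n+2}(X)$ and the sign conventions for $\delta_{n,2}$ agree with those fixed for $\delta_{n,1}$ in \eqref{eq:diagram-pi-n}.
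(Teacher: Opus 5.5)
Your proposal is correct and matches the paper's proof. The paper reads both rows off the long exact homotopy sequences of $SL_n\to SL_{n+1}\to\A^{n+1}\setminus\{0\}$ and $SL_{n-1}\to SL_n\to\A^n\setminus\{0\}$, and takes the cross-composite assertion to be just the definition \eqref{eq:def-dn2}; your additional checks on abelianness and on the possible nonvanishing of $d_{n,2}$ are harmless but not needed.
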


\begin{proof}
Sequence \eqref{eq:third-unstable-upper-row} is the indicated segment of the
long exact sequence of
\[
SL_n\longrightarrow SL_{n+1}\longrightarrow\A^{n+1}\setminus\{0\},
\]
and \eqref{eq:third-unstable-lower-row} is the corresponding segment for
\[
SL_{n-1}\longrightarrow SL_n\longrightarrow\A^n\setminus\{0\}.
\]
The last assertion is the definition \eqref{eq:def-dn2}.
\end{proof}

\begin{proposition}\label{prop:dn2-odd-zero}
If \(n\geq3\) is odd, then
\[
d_{n,2}:\mathcal P_{n+1}\longrightarrow\mathcal P_n
\]
is the zero morphism.  More generally, every cross-composite obtained from
the same two adjacent stages of the special-linear tower is zero.
\end{proposition}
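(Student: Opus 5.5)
The plan is to reduce the statement to Lemma~\ref{lem:stiefel-parity}, applied in every simplicial degree at once rather than only on \(\pi_n^{\A^1}\). First, for arbitrary \(i\), we identify the cross-composite
\[
\pi_{i+1}^{\A^1}(\A^{n+1}\setminus0)\xrightarrow{\ \delta\ }\pi_i^{\A^1}(SL_n)\xrightarrow{\ q\ }\pi_i^{\A^1}(\A^n\setminus0)
\]
with the connecting homomorphism of the two-frame Stiefel fiber sequence
\[
\A^n\setminus0\longrightarrow\operatorname{St}_2(n+1)\longrightarrow\A^{n+1}\setminus0 .
\]
This identification is degree-independent. Using \(\operatorname{St}_2(n+1)\cong SL_{n+1}/SL_{n-1}\), there is a map of fiber sequences
\[
\bigl(SL_n\to SL_{n+1}\to SL_{n+1}/SL_n\bigr)\longrightarrow\bigl(SL_n/SL_{n-1}\to SL_{n+1}/SL_{n-1}\to SL_{n+1}/SL_n\bigr),
\]
which is the identity on bases and the quotient \(q_{n-1}\) on fibers. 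Naturality of the boundary in the associated long exact sequences of \(\A^1\)-homotopy sheaves then gives \(\partial^{\mathrm{St}}=q_{n-1,*}\circ\delta\) in every degree. Taking \(i=n+1\) recovers \(d_{n,2}\), and \(i=n\) recovers \(d_{n,1}\).

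Second, for odd \(n\) the index \(j=n-1\) is even. The proof of \cite[Lemma~18]{AOSS26}, as already invoked in Lemma~\ref{lem:stiefel-parity}, gives a morphism from the rank-two symplectic Stiefel variety that supplies an \(\A^1\)-homotopy section \(s\) of \(p:\operatorname{St}_2(n+1)\to\A^{n+1}\setminus0\).

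Third, we use the section to kill the boundaries. Since \(p\circ s\simeq\mathrm{id}\), the map \(p_*\) is split surjective on every \(\pi_{i+1}^{\A^1}\). By exactness, \(\partial^{\mathrm{St}}\) vanishes on \(\operatorname{im}(p_*)\), which is the whole source, so \(\partial^{\mathrm{St}}=0\) in every degree. In particular \(d_{n,2}=0\), and the general claim about all cross-composites follows from the same degree-independent identification.

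The main obstacle is the first step. One must check that the map of fiber sequences really exists in the pointed \(\A^1\)-homotopy category, which requires \(\A^1\)-local fibrancy of the homogeneous spaces (see \cite{AffineRep2}). One must also check that the identification \(SL_{n+1}/SL_n\simeq\A^{n+1}\setminus0\) used in defining \(\delta_{n,2}\) agrees with the base of the Stiefel sequence, so that no extra automorphism is inserted. I would handle this exactly as in the proof of Lemma~\ref{lem:stiefel-parity}, noting that nothing there used the degree \(n\).
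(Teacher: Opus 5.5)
Your proposal is correct and matches the paper's proof: there too, $d_{n,2}$ (and every cross-composite between the same two stages) is identified with a connecting map of the two-frame Stiefel sequence $\A^n\setminus0\to\operatorname{St}_2(n+1)\to\A^{n+1}\setminus0$ by the degree-independent naturality argument of Lemma~\ref{lem:stiefel-parity}. The symplectic homotopy section from \cite[Lemma~18]{AOSS26} then makes $p_*$ surjective in every degree, so all of these connecting maps vanish.
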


\begin{proof}
As in Lemma~\ref{lem:stiefel-parity}, the cross-composite is a connecting
homomorphism for
\[
\A^n\setminus0\longrightarrow
\operatorname{St}_2(n+1)\longrightarrow
\A^{n+1}\setminus0.
\]
For odd \(n\), the projection admits the symplectic \(\A^1\)-homotopy
section used in the proof of \cite[Lemma~18]{AOSS26}.  Hence all connecting
homomorphisms, including the one on \(\pi_{n+2}^{\A^1}\), vanish.
\end{proof}

\begin{proposition}\label{prop:dn2-KO-compatibility}
Let \(k\) be a field of characteristic \(0\), and use the compatible
normalization of Corollary~\ref{prop:suslin-stiefel-wood}.  For \(m\geq2\),
write
\[
\deg^{KO}_{m,2}:=
\lambda_m\pi_{m+1}^{\A^1}(\Psi_m^u):
\mathcal P_m\longrightarrow\mathbf{GW}^{m}_{m+2}.
\]
Then, for every \(n\geq2\),
\begin{equation}\label{eq:dn2-KO-compatibility}
\deg^{KO}_{n,2}\circ d_{n,2}
=
\vartheta_n\circ\deg^{KO}_{n+1,2},
\qquad
\vartheta_n=
\begin{cases}
\eta:\mathbf{GW}^{n+1}_{n+3}\longrightarrow\mathbf{GW}^{n}_{n+2},
&n\text{ even},\\
0,&n\text{ odd}.
\end{cases}
\end{equation}
\end{proposition}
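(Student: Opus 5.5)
The plan is to rerun the proof of Proposition~\ref{prop:suslin-stiefel-wood-up-to-unit} one simplicial degree higher. That proof produced a homotopy identity of maps \(\Omega_sX_{n+1}\to\mathcal K_n\) in the pointed \(\A^1\)-homotopy category, not merely an equality on \(\pi_n^{\A^1}\). Any homotopy identity of maps of spaces induces equalities on every homotopy sheaf. So it suffices to apply \(\pi_{n+1}^{\A^1}\) instead of \(\pi_n^{\A^1}\) to the same identity.

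Concretely, for even \(n\) I will combine Lemma~\ref{lem:KO-euler-transgression} with Lemma~\ref{lem:kernel-KO-euler} and functoriality of \(\Omega_s\). Using \(\delta_{n,1}=\Omega_s i_n\) as maps of spaces (the same morphism \(\Omega_sX_{n+1}\to SL_n\) whose effect on \(\pi_{n+1}^{\A^1}\) is \(\delta_{n,2}\)), this gives
\[
a_n\Psi_n^u\circ q_{n-1}\circ\Omega_s i_n\simeq\Omega_s(i_n^*e_n^{KO})\simeq b_n\,\Omega_s(\eta\Psi^u_{n+1}).
\]
I then apply \(\pi_{n+1}^{\A^1}\). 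On the left, \(\pi_{n+1}^{\A^1}(q_{n-1}\circ\Omega_si_n)=q_{n-1,2}\circ\delta_{n,2}=d_{n,2}\), and \(\pi_{n+1}^{\A^1}(\Psi_n^u)\) is \(\lambda_n^{-1}\deg^{KO}_{n,2}\). On the right, I use \(\pi_{n+1}^{\A^1}(\Omega_sX_{n+1})=\pi_{n+2}^{\A^1}(X_{n+1})=\mathcal P_{n+1}\) and \(\pi_{n+1}^{\A^1}(\mathcal K_n)=\mathbf{GW}^n_{n+2}\). The Bott-periodicity identification used in Lemma~\ref{lem:kernel-KO-euler} turns \(\pi_{n+1}^{\A^1}\Omega_s(\eta\Psi^u_{n+1})\) into \(\eta\circ\pi_{n+2}^{\A^1}(\Psi^u_{n+1})\colon\mathcal P_{n+1}\to\mathbf{GW}^{n+1}_{n+3}\to\mathbf{GW}^n_{n+2}\). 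The result is \(\deg^{KO}_{n,2}\circ d_{n,2}=\lambda_n c_n\lambda_{n+1}^{-1}\,\eta\circ\deg^{KO}_{n+1,2}\). The prefactor is \(1\), because Corollary~\ref{prop:suslin-stiefel-wood} imposes \(\lambda_{n+1}=\lambda_nc_n\). The same units work in every degree since \(c_n=a_n^{-1}b_n\) was defined at the level of maps.

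For odd \(n\), Proposition~\ref{prop:dn2-odd-zero} gives \(d_{n,2}=0\), so both sides of \eqref{eq:dn2-KO-compatibility} vanish. For \(n=2,3\), the lemmas used are stated for all \(n\geq2\) and make no use of the ABH sequences, so no separate treatment is needed.

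The main obstacle is bookkeeping rather than new input. I must check that the unit \(c_n\) from Proposition~\ref{prop:suslin-stiefel-wood-up-to-unit} genuinely came from a homotopy of maps, as in Lemmas~\ref{lem:KO-euler-transgression} and~\ref{lem:kernel-KO-euler}, where the units are scalars acting on \([X_m,\mathcal K_m]_{\A^1}\). I also need \(\mathbf{GW}_0^0(k)\)-multiplication to commute with passage to \(\pi_{n+1}^{\A^1}\), which holds because it is induced by a self-map of \(\mathcal K_m\). Finally, I must verify that \(\eta\) on spaces, transported through Bott periodicity, induces Karoubi's \(\eta\colon\mathbf{GW}^{n+1}_{n+3}\to\mathbf{GW}^n_{n+2}\) on the next homotopy sheaf with the indexing of \S\ref{sec:HGW}. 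I would settle this by noting that \(\eta\) is a map of \(\mathbf{KO}\)-module spectra, so its effect on all homotopy sheaves is multiplication by \(\eta\in GW^{-1}_{-1}\).
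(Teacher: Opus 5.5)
Your proposal is correct and is essentially the paper's proof: both apply \(\pi_{n+1}^{\A^1}\) instead of \(\pi_n^{\A^1}\) to the map-level homotopy \(a_n\Psi_n^u q_{n-1}\delta_{n,1}\simeq b_n\Omega_s(\eta\Psi_{n+1}^u)\) from Proposition~\ref{prop:suslin-stiefel-wood-up-to-unit}, and both absorb \(c_n=a_n^{-1}b_n\) through the relation \(\lambda_{n+1}=\lambda_nc_n\) of Corollary~\ref{prop:suslin-stiefel-wood}. For odd \(n\), both dispose of the case with Proposition~\ref{prop:dn2-odd-zero}.
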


\begin{proof}
Before applying homotopy sheaves, the proof of
Proposition~\ref{prop:suslin-stiefel-wood-up-to-unit} establishes a pointed
homotopy
\[
a_n\Psi_n^u q_{n-1}\delta_{n,1}
\simeq
b_n\Omega_s(\eta\Psi_{n+1}^u)
:
\Omega_sX_{n+1}\longrightarrow\mathcal K_n.
\]
Applying \(\pi_{n+1}^{\A^1}\) instead of \(\pi_n^{\A^1}\) identifies the
map induced by \(q_{n-1}\delta_{n,1}\) with \(d_{n,2}\), and gives
\[
\pi_{n+1}^{\A^1}(\mathcal K_n)=\mathbf{GW}^{n}_{n+2},
\qquad
\pi_{n+2}^{\A^1}(\mathcal K_{n+1})
=\mathbf{GW}^{n+1}_{n+3}.
\]
Thus, for even \(n\),
\[
\pi_{n+1}^{\A^1}(\Psi_n^u)\,d_{n,2}
=c_n\eta\,\pi_{n+2}^{\A^1}(\Psi_{n+1}^u),
\qquad c_n=a_n^{-1}b_n.
\]
The relation \(\lambda_{n+1}=\lambda_nc_n\) used in
Corollary~\ref{prop:suslin-stiefel-wood} yields
\eqref{eq:dn2-KO-compatibility}.  For odd \(n\), both sides vanish by
Proposition~\ref{prop:dn2-odd-zero}.
\end{proof}

\subsection{Input from the second stable stem}\label{subsec:second-stem-input}

We now recall the available description of $\mathcal P_n$.  Write
\[
\mathbf H^{a,b}/m
\]
for the Nisnevich sheaf associated with
$U\mapsto H^{a,b}(U,\mathbb Z)/m$.  We write $\mathbf{KQ}$ for the periodic
hermitian \(K\)-theory spectrum represented by the geometric
\(\mathbf{KO}\)-model used above, and $\mathbf{kq}$ for its very effective
cover.  To make the contraction indices transparent, we use the convention
\[
\boldsymbol\pi^s_{p+q\alpha}(E):=\boldsymbol\pi^s_{p+q,q}(E).
\]
Thus one contraction raises the coefficient of $\alpha$ by one:
\begin{equation}\label{eq:stable-contraction-alpha}
\big(\boldsymbol\pi^s_{p+q\alpha}(E)\big)_{-1}
\cong \boldsymbol\pi^s_{p+(q+1)\alpha}(E).
\end{equation}

\begin{theorem}\label{thm:Pn-second-stem}
Let $k$ be a field of characteristic $0$ and let $n\geq 5$.  Stabilization
induces a canonical isomorphism
\begin{equation}\label{eq:Pn-stable-identification}
\mathcal P_n\xrightarrow{\ \sim\ }
\boldsymbol\pi^s_{2-n\alpha}(\mathds 1).
\end{equation}
Under this identification, the stable unit
$\mathds 1\to\mathbf{kq}$ gives an exact sequence
\begin{equation}\label{eq:Pn-second-stem}
0\longrightarrow
\mathbf H^{n+1,n+2}/24\ \oplus\ \mathbf K^M_{n+4}/2
\longrightarrow
\mathcal P_n
\longrightarrow
\boldsymbol\pi^s_{2-n\alpha}(\mathbf{kq}).
\end{equation}
After further contraction there are canonical isomorphisms
\begin{equation}\label{eq:Pn-high-contractions}
\begin{aligned}
(\mathcal P_n)_{-(n+2)}&\cong \mathbf K^M_2/2,\\
(\mathcal P_n)_{-(n+3)}&\cong \mathbf K^M_1/2,\\
(\mathcal P_n)_{-(n+4)}&\cong \mathbf K^M_0/2.
\end{aligned}
\end{equation}
\end{theorem}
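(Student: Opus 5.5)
The plan has three parts, taken in the order they appear in the statement.

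\textbf{Step 1: the stable identification.} I would use $\A^n\setminus\{0\}\simeq_{\A^1} S^{n-1}_s\wedge\G_m^{\wedge n}=S^{2n-1,n}$. Under this equivalence,
\[
\mathcal P_n=\pi^{\A^1}_{n+1}(S^{2n-1,n})=\boldsymbol\pi_{n+1,0}(S^{2n-1,n}),
\]
which sits two steps above the bottom cell in the simplicial direction. For $n\geq5$ this lies in the stable range of Morel's simplicial Freudenthal theorem, in the form sharpened by \cite{ABH}: $\pi_{i,0}$ of $S^{2n-1,n}$ stabilizes for $i\le 2(n-1)-2$, and here $n+1\le 2n-4$ exactly when $n\ge5$. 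The stable target is $\boldsymbol\pi^s_{(n+1)-(2n-1),\,-n}(\mathds1)=\boldsymbol\pi^s_{2-n,-n}(\mathds1)$. In the $\alpha$-convention $\boldsymbol\pi^s_{p+q\alpha}=\boldsymbol\pi^s_{p+q,q}$, this is $\boldsymbol\pi^s_{2-n\alpha}(\mathds1)$, which gives \eqref{eq:Pn-stable-identification}. I would check that the map is induced by $\Sigma^\infty$, so that it is natural and hence canonical.

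\textbf{Step 2: the $\mathbf{kq}$ sequence.} I would import the sheafified form of \cite[Theorem~1.2]{RSO21}, which describes the motivic $2$-line. In weight $-n$ the unit map $\boldsymbol\pi^s_2(\mathds1)_{*}\to\boldsymbol\pi^s_2(\mathbf{kq})_*$ has kernel generated by $\nu^2$-type classes and by $\eta\eta_{\mathrm{top}}$-type classes. These give the summand $\mathbf H^{n+1,n+2}/24$ (the $\nu$-multiples over $\mathbf{H}\Z/24$-coefficients) and the summand $\mathbf K^M_{n+4}/2$ (Milnor classes times $\nu^2$). The work in this step is reindexing: I would translate the RSO bidegrees into $2-n\alpha$, using \eqref{eq:stable-contraction-alpha} to track the weights. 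Exactness is then the RSO statement, transported through the isomorphism from Step 1.

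\textbf{Step 3: the deep contractions.} Contraction is exact (Lemma~\ref{lem:contraction-exact}) and shifts $\alpha$ by \eqref{eq:stable-contraction-alpha}. So I would contract \eqref{eq:Pn-second-stem} $n+j$ times for $j=2,3,4$, which moves to $\boldsymbol\pi^s_{2+j\alpha}$. Two computations follow.
\begin{itemize}
\item By Theorem~\ref{thm:contraction-K}, $(\mathbf K^M_{n+4}/2)_{-(n+j)}=\mathbf K^M_{4-j}/2$, which is $\mathbf K^M_2/2$, $\mathbf K^M_1/2$, $\mathbf K^M_0/2$ for $j=2,3,4$.
\item The motivic cohomology summand contracts to $\mathbf H^{1-j,2-j}/24$, which vanishes for $j\ge2$ because the weight becomes $\le0$ and the degree is nonpositive. (For weight $0$ with degree $\ne0$, and for negative weight, the group is zero.)
\end{itemize}

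\textbf{The main obstacle} is the right-hand term. I must show the contracted unit map to $\boldsymbol\pi^s_{2+j\alpha}(\mathbf{kq})$ is zero, and that surjectivity onto the kernel holds. By \cite[Theorem~2.38]{RSO21}, $\boldsymbol\pi^s_{4,2}(\mathbf{kq})\cong\mathbf K^M_0$ is torsion-free. I would argue that the image of $\mathcal P_n$ there is torsion, since $\boldsymbol\pi^s_2(\mathds1)$ is torsion by the $2$-line computation (all of its classes are killed by $24$ or $2$), so the map vanishes. For $j=3,4$, the $\mathbf{kq}$ groups $\boldsymbol\pi^s_{5,3}$ and $\boldsymbol\pi^s_{6,4}$ vanish by the same theorem.

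In each case the contracted sequence therefore collapses to an isomorphism from the kernel summand, which proves \eqref{eq:Pn-high-contractions}.
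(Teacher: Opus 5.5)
Your plan is essentially the paper's proof. You identify $\mathcal P_n$ with $\boldsymbol\pi^s_{2-n\alpha}(\mathds 1)$ through the $(2n-4)$-connectivity of the fiber of $S^{2n-1,n}\to QS^{2n-1,n}$, import \cite[Theorem~1.2]{RSO21}, and contract, settling the weight-$(4,2)$ case because a torsion source maps trivially to the torsion-free $\boldsymbol\pi^s_{4,2}(\mathbf{kq})\cong\mathbf K^M_0$. Note that the connectivity input is the $\mathbb P^1$-stabilization theorem \cite[Theorem~6.3.3]{ABH}, not a form of simplicial Freudenthal, which only reaches $S^1$-spectra.

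You should not justify that torsion by saying all of $\boldsymbol\pi^s_2(\mathds 1)$ is killed by $24$ or $2$. The exact sequence only controls the kernel, and in weight $-n$ the rationalization is $H^{n-2,n}(-;\mathbb Q)$, which need not vanish. What you need is the specific fact for $\boldsymbol\pi^s_{4,2}(\mathds 1)$. It holds rationally because $H^{-4,-2}(k;\mathbb Q)=0$ and the rational $\eta$-inverted sphere has homotopy only in stem $0$; the paper instead cites the remark following \cite[Theorem~1.2]{RSO21}.
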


\begin{proof}
Write $\A^n\setminus0\simeq S^{2n-1,n}$.  Applying
\cite[Theorem~6.3.3]{ABH} directly to the stable unit
\[
S^{2n-1,n}\longrightarrow Q S^{2n-1,n}
\]
shows that its fiber is weakly $S^{4n-3,2n}$-cellular.  By
\cite[Corollary~3.1.27]{ABH}, it is therefore $(2n-4)$-$\A^1$-connected.
Since $n+1\leq2n-4$ for $n\geq5$, the unit induces an isomorphism on
$\pi_{n+1}^{\A^1}$.  Since $Q$ is the infinite
$\mathbb P^1$-loop--suspension stabilization, this proves
\eqref{eq:Pn-stable-identification}.  In particular, the comparison is
natural in the base field and identifies the map to $\mathbf{kq}$ with the
map induced by the stable unit.

Now combine \eqref{eq:Pn-stable-identification} with the calculation of the
$2$-line of the motivic sphere in \cite[Theorem~1.2]{RSO21}.  This gives
\eqref{eq:Pn-second-stem}; the resulting sequence is also recorded in
\cite[Example~6.6]{Gant_Williams_2025}.  We explain separately what happens
at the first of the three displayed contractions, since it is a boundary case
in the notation of \cite[Theorem~2.38]{RSO21}.

After $(n+2)$ contractions the motivic-cohomology summand vanishes and the
Milnor $K$-theory summand becomes
\[
\big(\mathbf H^{n+1,n+2}/24\big)_{-(n+2)}
  =\mathbf H^{-1,0}/24=0,
\qquad
\big(\mathbf K^M_{n+4}/2\big)_{-(n+2)}
  =\mathbf K^M_2/2.
\]
On the right, \eqref{eq:stable-contraction-alpha} gives
\[
\big(\boldsymbol\pi^s_{2-n\alpha}(\mathbf{kq})\big)_{-(n+2)}
\cong \boldsymbol\pi^s_{2+2\alpha}(\mathbf{kq})
=\boldsymbol\pi^s_{4,2}(\mathbf{kq}).
\]
This sheaf does \emph{not} vanish.  In the notation
$\pi_{2-(w)}=\pi_{2-w,-w}$ of \cite{RSO21}, it is the case $w=-2$, and
\cite[Theorem~2.38]{RSO21} identifies it with
$\mathbf K^M_0\cong\mathbb Z$ (equivalently, with
$\mathbf K^{Sp}_0\cong2\mathbb Z$ under the rank map).  What vanishes is the
map to this torsion-free sheaf.  Indeed, after stabilization it is induced by
the unit $\mathds 1\to\mathbf{kq}$ on
\[
\boldsymbol\pi^s_{4,2}(\mathds 1)\longrightarrow
\boldsymbol\pi^s_{4,2}(\mathbf{kq}).
\]
The source is torsion whereas the target is torsion-free, so this map is zero;
this is also pointed out immediately after \cite[Theorem~1.2]{RSO21}.
Exactness of \eqref{eq:Pn-second-stem} therefore gives
$(\mathcal P_n)_{-(n+2)}\cong\mathbf K^M_2/2$.

After $(n+3)$ and $(n+4)$ contractions, the right-hand terms are
\[
\boldsymbol\pi^s_{2+3\alpha}(\mathbf{kq})
=\pi_{2-(-3)}(\mathbf{kq}),
\qquad
\boldsymbol\pi^s_{2+4\alpha}(\mathbf{kq})
=\pi_{2-(-4)}(\mathbf{kq}).
\]
These do vanish by \cite[Theorem~2.38]{RSO21}, whose vanishing range is the
strict inequality $w<-2$.  The motivic-cohomology summands again vanish, and
the Milnor $K$-theory summands contract to $\mathbf K^M_1/2$ and
$\mathbf K^M_0/2$, respectively.  This proves all three isomorphisms in
\eqref{eq:Pn-high-contractions}.
\end{proof}

\begin{remark}\label{rem:uncontracted-kq-terms}
The point of \eqref{eq:Pn-second-stem} is not that
$\boldsymbol\pi^s_{2-n\alpha}(\mathbf{kq})$ is an unknown black box.
R{\"o}ndigs--Spitzweck--{\O}stv{\ae}r describe the $2$-line by motivic cohomology,
Milnor $K$-theory, powers of the class $\rho=\{-1\}$, and hermitian
$K$-theory.  The boundary value $w=-2$ deserves particular care: the
$\mathbf{kq}$-term is nonzero there, and the first isomorphism in
\eqref{eq:Pn-high-contractions} uses the vanishing of the unit-induced map,
not the vanishing of its target.  The target itself vanishes only at the next
two weights $w=-3,-4$.  Retaining the compact $\mathbf{kq}$-notation makes
clear which part of the answer is stable and which part comes from the
non-stable-to-stable comparison.
\end{remark}

\subsection{\texorpdfstring{The contracted calculation of \(d_{n,2}\)}
{The contracted calculation of the next differential}}

The identifications in Theorem~\ref{thm:Pn-second-stem} turn two contractions
of $d_{n,2}$ into morphisms between adjacent Milnor $K$-theory sheaves:
\[
(d_{n,2})_{-(n+3)}:\mathbf K^M_2/2\longrightarrow\mathbf K^M_1/2
\]
and
\[
(d_{n,2})_{-(n+4)}:\mathbf K^M_1/2\longrightarrow\mathbf K^M_0/2.
\]
Both morphisms vanish for a formal reason.

\begin{lemma}\label{lem:KM-degree-drop-zero}
Let $a>b\geq 0$ and $m\geq 1$.  Every morphism of strictly
$\A^1$-invariant sheaves
\[
\mathbf K^M_a/m\longrightarrow\mathbf K^M_b/m
\]
is zero.
\end{lemma}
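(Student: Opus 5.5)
The plan is to use the universal property of $\mathbf K^M_a$ among strictly $\A^1$-invariant sheaves, in the form already invoked in the proof of Proposition~\ref{prop:abhcase4}: by \cite[Lemma~5.1.3]{AWW17}, for a strictly $\A^1$-invariant sheaf $\mathbf M$, morphisms $\mathbf K^{MW}_a\to\mathbf M$ correspond bijectively to sections of $\mathbf M_{-a}(k)$.

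First, I will reduce from the source $\mathbf K^M_a/m$ to $\mathbf K^{MW}_a$. The composite
\[
\mathbf K^{MW}_a\twoheadrightarrow\mathbf K^M_a\twoheadrightarrow\mathbf K^M_a/m
\]
is an epimorphism of sheaves. Therefore a morphism $f:\mathbf K^M_a/m\to\mathbf K^M_b/m$ vanishes as soon as its precomposition with this epimorphism vanishes.

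Second, I will compute the relevant contraction of the target. Contraction is exact by Lemma~\ref{lem:contraction-exact}, and Theorem~\ref{thm:contraction-K} gives $(\mathbf K^M_b)_{-a}=0$ because $a>b$. Applying contraction to the exact sequence $\mathbf K^M_b\xrightarrow{m}\mathbf K^M_b\to\mathbf K^M_b/m\to0$ therefore yields $(\mathbf K^M_b/m)_{-a}=0$.

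Third, I will conclude. By \cite[Lemma~5.1.3]{AWW17}, the group $\operatorname{Hom}(\mathbf K^{MW}_a,\mathbf K^M_b/m)$ is isomorphic to $(\mathbf K^M_b/m)_{-a}(k)=0$. Hence the precomposite from the first step is zero, and so is $f$.

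The only point that needs care is that $\mathbf K^M_b/m$ is strictly $\A^1$-invariant, so that the cited lemma applies. This holds because it is the cokernel of a morphism of strictly $\A^1$-invariant sheaves, and by Morel that category is abelian and closed under cokernels. The argument is the same one used in the proof of Proposition~\ref{prop:abhcase4} for $\mathbf K^M_{n+3}/24\to\mathbf K^M_{n+2}/24$, and I do not expect any other obstacle.
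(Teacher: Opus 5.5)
Your proposal is correct and follows the paper's proof exactly: precompose with the epimorphism $\mathbf K^{MW}_a\twoheadrightarrow\mathbf K^M_a/m$, identify $\operatorname{Hom}(\mathbf K^{MW}_a,\mathbf K^M_b/m)$ with $(\mathbf K^M_b/m)_{-a}(k)$ via \cite[Lemma~5.1.3]{AWW17}, and note that this contraction vanishes because $a>b$. Your additional checks (the contraction computed by exactness, and strict $\A^1$-invariance of the quotient) are the details the paper leaves implicit.
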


\begin{proof}
Precomposition with the canonical epimorphism
$\mathbf K^{MW}_a\twoheadrightarrow\mathbf K^M_a/m$ injects the group of such
morphisms into
\[
\mathrm{Hom}(\mathbf K^{MW}_a,\mathbf K^M_b/m).
\]
By \cite[Lemma~5.1.3]{AWW17}, the latter group is
\[
(\mathbf K^M_b/m)_{-a}(k)
\cong \mathbf K^M_{b-a}(k)/m=0,
\]
where the last equality uses $b-a<0$.
\end{proof}

\begin{theorem}\label{thm:dn2-contracted-zero}
Let $k$ be a field of characteristic $0$ and let $n\geq 5$.  Under the
canonical identifications of Theorem~\ref{thm:Pn-second-stem}, one has
\[
(d_{n,2})_{-(n+3)}=0
\qquad\text{and}\qquad
(d_{n,2})_{-(n+4)}=0.
\]
Consequently,
\begin{equation}\label{eq:coker-dn2-contracted}
\mathrm{coker}(d_{n,2})_{-(n+3)}
\cong \mathbf K^M_1/2,
\qquad
\mathrm{coker}(d_{n,2})_{-(n+4)}
\cong \mathbf K^M_0/2.
\end{equation}
\end{theorem}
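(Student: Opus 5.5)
The plan is to reduce both vanishing statements to Lemma~\ref{lem:KM-degree-drop-zero} by using exactness of contraction together with the identifications of Theorem~\ref{thm:Pn-second-stem}.

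First I would set up the identifications on both sides. The morphism \(d_{n,2}:\mathcal P_{n+1}\to\mathcal P_n\) is a morphism of strictly \(\A^1\)-invariant sheaves; these are sheaves of abelian groups in this range. By Lemma~\ref{lem:contraction-exact} its iterated contractions are again such morphisms. Theorem~\ref{thm:Pn-second-stem} applies to \(\mathcal P_n\) for \(n\geq5\), and to \(\mathcal P_{n+1}\) since \(n+1\geq6\). It gives \((\mathcal P_n)_{-(n+3)}\cong\mathbf K^M_1/2\) and \((\mathcal P_n)_{-(n+4)}\cong\mathbf K^M_0/2\). For the source, \((\mathcal P_{n+1})_{-(n+3)}=(\mathcal P_{n+1})_{-((n+1)+2)}\cong\mathbf K^M_2/2\) and \((\mathcal P_{n+1})_{-(n+4)}=(\mathcal P_{n+1})_{-((n+1)+3)}\cong\mathbf K^M_1/2\). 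Some care is needed with the index shift: the source sits at the boundary contraction of Theorem~\ref{thm:Pn-second-stem} when \(r=n+3\), and it is exactly the first isomorphism there (the vanishing of the unit-induced map to \(\mathbf K^M_0\)) that gives \(\mathbf K^M_2/2\) rather than an extension. This bookkeeping is the only step I would check carefully. In particular, the \(\mathbf{kq}\) term does not intervene at these weights, so no further comparison is required.

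Second, transporting \((d_{n,2})_{-(n+3)}\) and \((d_{n,2})_{-(n+4)}\) along these isomorphisms gives morphisms \(\mathbf K^M_2/2\to\mathbf K^M_1/2\) and \(\mathbf K^M_1/2\to\mathbf K^M_0/2\) of strictly \(\A^1\)-invariant sheaves. Lemma~\ref{lem:KM-degree-drop-zero}, with \(m=2\) and \((a,b)=(2,1)\) or \((1,0)\), shows both are zero. Because the identifications are isomorphisms, the contracted differentials themselves vanish. This argument uses no parity hypothesis and no knowledge of the \(\eta\)-description of \(d_{n,2}\).

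Finally, for the cokernels, exactness of contraction (Lemma~\ref{lem:contraction-exact}) gives \(\mathrm{coker}(d_{n,2})_{-r}\cong\mathrm{coker}((d_{n,2})_{-r})\). A zero map has cokernel equal to its target, so the cokernels are \((\mathcal P_n)_{-(n+3)}\cong\mathbf K^M_1/2\) and \((\mathcal P_n)_{-(n+4)}\cong\mathbf K^M_0/2\), which is \eqref{eq:coker-dn2-contracted}. I expect no real obstacle; the only delicate point is the index matching at the boundary weight noted above.
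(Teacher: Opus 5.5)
Your proposal is correct and matches the paper's proof. The paper also applies Theorem~\ref{thm:Pn-second-stem} to both $\mathcal P_{n+1}$ and $\mathcal P_n$, kills the resulting maps $\mathbf K^M_2/2\to\mathbf K^M_1/2$ and $\mathbf K^M_1/2\to\mathbf K^M_0/2$ with Lemma~\ref{lem:KM-degree-drop-zero}, and then uses exactness of contraction (Lemma~\ref{lem:contraction-exact}) to identify the cokernels; your index bookkeeping at the boundary weight for $\mathcal P_{n+1}$ is right.
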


\begin{proof}
Apply Theorem~\ref{thm:Pn-second-stem} to $\mathcal P_{n+1}$ and
$\mathcal P_n$.  After $(n+3)$ contractions, the map $d_{n,2}$ becomes
\[
\mathbf K^M_2/2\longrightarrow\mathbf K^M_1/2,
\]
and after $(n+4)$ contractions it becomes
\[
\mathbf K^M_1/2\longrightarrow\mathbf K^M_0/2.
\]
Both are zero by Lemma~\ref{lem:KM-degree-drop-zero}.  Exactness of contraction
(Lemma~\ref{lem:contraction-exact}) shows that contraction commutes with the
displayed cokernels, which proves \eqref{eq:coker-dn2-contracted}.
\end{proof}

Before invoking the periodic compatibility, the preceding vanishing can be
sharpened by one contraction directly from the sphere \(2\)-line.  Write
\[
\mathcal E_{n+1}:=(\mathcal P_{n+1})_{-(n+2)}
\]
and let
\[
\mathcal U_{n+1}:=
\operatorname{im}\!\left(
\mathcal E_{n+1}\longrightarrow
\boldsymbol\pi^s_{3,1}(\mathbf{kq})\cong\G_m
\right).
\]

\begin{theorem}\label{thm:dn2-boundary-defect}
Let \(k\) be a field of characteristic \(0\) and let \(n\geq5\).  One has
\(\mathcal U_{n+1}=\boldsymbol\mu_{24}\), and there is a natural short exact
sequence
\begin{equation}\label{eq:source-boundary-extension}
0\longrightarrow\mathbf K^M_3/2
\longrightarrow\mathcal E_{n+1}
\longrightarrow\boldsymbol\mu_{24}\longrightarrow0.
\end{equation}
Under the canonical identification
\((\mathcal P_n)_{-(n+2)}\cong\mathbf K^M_2/2\), the morphism
\((d_{n,2})_{-(n+2)}\) vanishes on the subobject
\(\mathbf K^M_3/2\).  It is therefore determined by a unique morphism
\begin{equation}\label{eq:epsilon-n-defect}
\epsilon_n:\boldsymbol\mu_{24}\longrightarrow\mathbf K^M_2/2.
\end{equation}
The morphism \(\epsilon_n\) factors through
\(\boldsymbol\mu_{24}/2\boldsymbol\mu_{24}\), and it is zero when \(n\) is
odd.
\end{theorem}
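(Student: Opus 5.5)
The plan is to contract the exact sequence \eqref{eq:Pn-second-stem} for \(\mathcal P_{n+1}\) exactly \(n+2\) times and read off the boundary weight from \cite[Theorem~1.2]{RSO21}. For \(\mathcal P_{n+1}\), the left-hand term is \(\mathbf H^{n+2,n+3}/24\oplus\mathbf K^M_{n+5}/2\). After \(n+2\) contractions it becomes \(\mathbf H^{0,1}/24\oplus\mathbf K^M_3/2=0\oplus\mathbf K^M_3/2\), since \(\mathbf H^{0,1}\) vanishes. By \eqref{eq:stable-contraction-alpha}, the right-hand term becomes \(\boldsymbol\pi^s_{2+\alpha}(\mathbf{kq})=\boldsymbol\pi^s_{3,1}(\mathbf{kq})\). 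This is the case \(w=-1\) of \cite[Theorem~2.38]{RSO21}, which identifies it with \(\G_m\). Exactness of contraction (Lemma~\ref{lem:contraction-exact}) then gives \(0\to\mathbf K^M_3/2\to\mathcal E_{n+1}\to\mathcal U_{n+1}\to0\). So it remains to identify the image of the unit-induced map \(\boldsymbol\pi^s_{3,1}(\mathds1)\to\boldsymbol\pi^s_{3,1}(\mathbf{kq})\cong\G_m\). Here I would invoke the \((3,1)\) case of \cite[Theorem~1.2]{RSO21}, where the unit map on the first stable stem in this weight is the \(e\)-invariant-type map with image \(\boldsymbol\mu_{24}\); compare the \(24\) appearing in the \(\nu\)-summand. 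This yields \(\mathcal U_{n+1}=\boldsymbol\mu_{24}\) and the sequence \eqref{eq:source-boundary-extension}. Naturality holds because every map involved is induced by stabilization and the stable unit.

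Next I would show that \((d_{n,2})_{-(n+2)}\) kills \(\mathbf K^M_3/2\). By Theorem~\ref{thm:Pn-second-stem}, the target is \(\mathbf K^M_2/2\), so the restriction is a morphism \(\mathbf K^M_3/2\to\mathbf K^M_2/2\). Lemma~\ref{lem:KM-degree-drop-zero} does not apply directly, because the degree drops by one. Instead I would argue as in its proof: precompose with \(\mathbf K^{MW}_3\twoheadrightarrow\mathbf K^M_3/2\). By \cite[Lemma~5.1.3]{AWW17}, such morphisms correspond to elements of \((\mathbf K^M_2/2)_{-3}(k)=\mathbf K^M_{-1}(k)/2=0\). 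So the restriction vanishes, and \((d_{n,2})_{-(n+2)}\) descends uniquely to the quotient \(\boldsymbol\mu_{24}\), giving \(\epsilon_n\).

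The factorization through \(\boldsymbol\mu_{24}/2\boldsymbol\mu_{24}\) is formal: the target \(\mathbf K^M_2/2\) is \(2\)-torsion, so \(\epsilon_n\) kills \(2\boldsymbol\mu_{24}\). When \(n\) is odd, Proposition~\ref{prop:dn2-odd-zero} gives \(d_{n,2}=0\), so its contraction vanishes and hence so does \(\epsilon_n\), by the uniqueness just established.

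The main obstacle is the identification \(\mathcal U_{n+1}=\boldsymbol\mu_{24}\). This requires the unit-map image at the boundary weight, not merely the abstract extension data in \cite[Theorem~1.2]{RSO21}. I would first try to read it off from the RSO description of \(\boldsymbol\pi^s_{1+(-1)}\), namely the \(\mathbf K^Q_3\)-indecomposable / \(\boldsymbol\mu_{24}\) layer. I would then check it against \(\pi_{3,1}\) of \(\mathbf{kq}\) being the units sheaf, using the fact that the unit is surjective onto torsion of order \(24\).
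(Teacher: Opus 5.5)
Your proposal is correct and is essentially the paper's argument. The paper likewise contracts the stable identification \(n+2\) times and reads \(0\to\mathbf K^M_3/2\to\boldsymbol\pi^s_{3,1}(\mathds1)\to\boldsymbol\mu_{24}\to0\), with unit image \(\boldsymbol\mu_{24}\subset\G_m\), directly off the bidegree \((3,1)\) case of \cite[Theorem~1.2]{RSO21}. Your contraction of \eqref{eq:Pn-second-stem} using \(\mathbf H^{0,1}=0\) is a consistent way to see the kernel. The paper then concludes by the same degree-drop vanishing, the \(2\)-torsion target, and Proposition~\ref{prop:dn2-odd-zero}.

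Two small corrections. First, Lemma~\ref{lem:KM-degree-drop-zero} applies directly here, since it only needs \(a>b\) (take \(a=3\), \(b=2\), \(m=2\)); your re-derivation is just its proof. Second, \((3,1)\) lies in the second stable stem, not the first. So \(\mathcal U_{n+1}=\boldsymbol\mu_{24}\) must come from the \(2\)-line citation, not from a first-stem \(e\)-invariant heuristic. Your fallback check that the unit is surjective onto the \(24\)-torsion is circular, since it assumes the conclusion.
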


\begin{proof}
Apply the stable identification \eqref{eq:Pn-stable-identification} with
\(n+1\) in place of \(n\) and contract \(n+2\) times.  By
\eqref{eq:stable-contraction-alpha}, this gives a natural identification
\[
\mathcal E_{n+1}\cong\boldsymbol\pi^s_{3,1}(\mathds1),
\]
and the map defining $\mathcal U_{n+1}$ is identified with the stable unit
\[
\boldsymbol\pi^s_{3,1}(\mathds1)\longrightarrow
\boldsymbol\pi^s_{3,1}(\mathbf{kq})\cong\G_m.
\]
The case $n=1$ of \cite[Theorem~1.2]{RSO21} is the short exact sequence
\[
0\longrightarrow\mathbf K^M_3/2
\longrightarrow\boldsymbol\pi^s_{3,1}(\mathds1)
\longrightarrow\boldsymbol\mu_{24}\longrightarrow0,
\]
where the last map is the stable unit followed by restriction to its image.
This proves both $\mathcal U_{n+1}=\boldsymbol\mu_{24}$ and
\eqref{eq:source-boundary-extension}.

The restriction of \(\left(d_{n,2}\right)_{-(n+2)}\) to the kernel in
\eqref{eq:source-boundary-extension} is a morphism
\[
\mathbf K^M_3/2\longrightarrow\mathbf K^M_2/2,
\]
which is zero by Lemma~\ref{lem:KM-degree-drop-zero}.  The universal property
of the cokernel produces the unique morphism \eqref{eq:epsilon-n-defect}.
Since its target is annihilated by \(2\), it factors through
\(\boldsymbol\mu_{24}/2\boldsymbol\mu_{24}\).  Finally,
Proposition~\ref{prop:dn2-odd-zero} gives \(\epsilon_n=0\) for odd \(n\).
\end{proof}

\begin{proposition}\label{prop:dn2-stable-eta}
Let \(k\) be a field of characteristic \(0\) and let \(n\geq5\).  Relative
to the standard sphere and purity identifications used in
\cite[Lemma~3.5]{AF14},
\[
d_{n,2}:
\boldsymbol\pi^s_{2-(n+1)\alpha}(\mathds1)
\longrightarrow
\boldsymbol\pi^s_{2-n\alpha}(\mathds1)
\]
is induced by
\[
\begin{cases}
\eta,&n\text{ even},\\
0,&n\text{ odd}.
\end{cases}
\]
Changing those identifications replaces the even-stage formula by
\(u_n\eta\) for a unit \(u_n\in GW(k)^\times\).  In either convention, if
\(n\) is even, then
\[
D_n:=(d_{n,2})_{-(n+2)}:
\boldsymbol\pi^s_{3,1}(\mathds1)
\longrightarrow
\boldsymbol\pi^s_{4,2}(\mathds1)
\]
is multiplication by \(\eta\) under the canonical identifications
\eqref{eq:Pn-high-contractions}.
\end{proposition}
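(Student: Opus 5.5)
The plan follows the route sketched in the introduction: restrict the Stiefel boundary along the first simplicial desuspension of \(X_{n+1}\), and then apply Freudenthal. First, odd \(n\) is already settled, since Proposition~\ref{prop:dn2-odd-zero} gives \(d_{n,2}=0\) before any identification. So assume \(n\) is even. Write \(X_{n+1}=\A^{n+1}\setminus0\simeq S^1_s\wedge Y\) with \(Y=S^{n-1}_s\wedge\G_m^{\wedge(n+1)}\). By Lemma~\ref{lem:stiefel-parity}, the cross-composite \(q_{n-1}\delta_{n,1}\colon\Omega_sX_{n+1}\to X_n\) is the connecting map of the Stiefel fibration \(X_n\to\operatorname{St}_2(n+1)\to X_{n+1}\). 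Precomposing with the loop--suspension unit \(Y\to\Omega_s\Sigma_sY=\Omega_sX_{n+1}\) gives a map
\[
f_n\colon Y=S^{2n,n+1}\longrightarrow X_n=S^{2n-1,n}.
\]
This is the clutching map of the Stiefel fibration over the top cell.

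The second step is to show that \(f_n\) determines \(d_{n,2}\). By Morel's simplicial Freudenthal theorem, \(Y\to\Omega_s\Sigma_sY\) is highly \(\A^1\)-connected. Since \(Y\) is \(\A^1\)-\((n-2)\)-connected, the unit induces an isomorphism on \(\pi_i^{\A^1}\) for \(i\le 2n-3\), which covers \(i=n+1\) once \(n\ge5\). Hence \(d_{n,2}\) is the map induced on \(\pi_{n+1}^{\A^1}\) by \(f_n\), with the source identified through \(\pi_{n+1}^{\A^1}(Y)\cong\pi_{n+2}^{\A^1}(X_{n+1})=\mathcal P_{n+1}\). Now \(f_n\) is an unstable map of spheres of the same connectivity class. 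Its class in \([S^{2n,n+1},S^{2n-1,n}]\) is read off from its bottom homotopy sheaf, that is, from \(\pi_{n-1}^{\A^1}(f_n)\colon\mathbf K^{MW}_{n+1}\to\mathbf K^{MW}_n\). This sheaf map equals the degree-one composite \(q_{n-1}\circ\delta_n\) of Theorem~\ref{thm:q-delta}, which is \(\eta\) for even \(n\), relative to the sphere and purity coordinates of \cite[Lemma~3.5]{AF14}. By Morel's computation \([S^{2n,n+1},S^{2n-1,n}]\cong\mathbf K^{MW}_{-1}(k)\), valid for \(n\ge3\) where these spheres are in the stable range for this stem, we get \(f_n=\eta\) as an unstable map, and in particular stably.

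Next I pass to stable stems. By Theorem~\ref{thm:Pn-second-stem}, \(\mathcal P_m\cong\boldsymbol\pi^s_{2-m\alpha}(\mathds1)\) through the stable unit, and this identification is natural in maps of spheres. Under it, the map induced by \(f_n=\eta\) on \(\pi_{n+1}^{\A^1}\) becomes multiplication by \(\eta\colon\boldsymbol\pi^s_{2-(n+1)\alpha}(\mathds1)\to\boldsymbol\pi^s_{2-n\alpha}(\mathds1)\). If the sphere coordinates are changed, the bottom-cell computation changes by a unit of \(GW(k)=\mathbf K^{MW}_0(k)\) acting on \(\mathbf K^{MW}_{-1}\), which gives the formula \(u_n\eta\). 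For the contracted statement, I contract \(n+2\) times. Contraction is exact (Lemma~\ref{lem:contraction-exact}) and compatible with the stable \(\alpha\)-grading by \eqref{eq:stable-contraction-alpha}, so \(D_n\) is multiplication by \(u_n\eta\colon\boldsymbol\pi^s_{3,1}(\mathds1)\to\boldsymbol\pi^s_{4,2}(\mathds1)\). The target is \(\mathbf K^M_2/2\), on which \(GW\) acts through its rank modulo \(2\), and the rank of \(u_n\) is odd. Hence \(u_n\eta\) and \(\eta\) agree on the target, and \(D_n\) is multiplication by \(\eta\) in either convention.

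I expect the main obstacle to be the second step. One must check that the actual Stiefel connecting map, which is a map out of \(\Omega_sX_{n+1}\), really restricts along the suspension unit to the clutching map \(f_n\), and that \(f_n\) is detected by its bottom sheaf without any hidden commutation of loops with stabilization. My first attempt would use the homotopy pushout description of \(\operatorname{St}_2(n+1)\) over the cells of \(X_{n+1}\), together with naturality of the Ganea/fiber-sequence boundary under \(Y\to\Omega_s\Sigma_sY\). I would then use Morel's Hurewicz theorem to identify \(\pi_{n-1}(f_n)\) with the Euler class of the kernel bundle \(E_n\), as computed in Lemma~\ref{lem:kernel-KO-euler}.
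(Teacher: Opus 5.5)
Your proposal is correct and follows essentially the same route as the paper's proof. You restrict the Stiefel boundary along the loop--suspension unit $E_n\colon Y_n\to\Omega_s X_{n+1}$ and detect the resulting map $Y_n\to X_n$ in $[Y_n,X_n]_{\A^1,*}\cong K^{MW}_{-1}(k)$ through its bottom sheaf, using \cite[Lemma~3.5]{AF14}. You then transport to $\pi_{n+1}^{\A^1}$ by Freudenthal and to the stable stems by naturality of stabilization, and remove the unit ambiguity on $\mathbf K^M_2/2$ via rank modulo $2$.

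Two small corrections. For an $\A^1$-$(n-2)$-connected $Y_n$, the Freudenthal isomorphism range is $i\le 2n-4$, not $2n-3$; this still covers $i=n+1$ for $n\geq5$. Also, the obstacle you anticipate does not arise. Since $(E_n)_*$ is the suspension isomorphism in degree $n-1$, $\pi_{n-1}^{\A^1}(f_n)$ is literally $q_{n-1}\circ\delta_n$, so no cell-structure or clutching argument is needed.
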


\begin{proof}
Put \(X_m=\mathbb A^m\setminus0\) and choose the standard sphere
identifications
\[
X_{n+1}\simeq
\Sigma_sY_n,
\qquad
Y_n:=S_s^{n-1}\wedge\mathbb G_m^{\wedge(n+1)},
\qquad
X_n\simeq S_s^{n-1}\wedge\mathbb G_m^{\wedge n}.
\]
Let
\[
E_n:Y_n\longrightarrow\Omega_s\Sigma_sY_n\simeq\Omega_sX_{n+1}
\]
be the loop--suspension unit.  The space \(Y_n\) is
\(\mathbb A^1\)-\((n-2)\)-connected.  Morel's simplicial suspension theorem
\cite[Theorem~6.61]{MorelA1} therefore shows that
\[
(E_n)_*:\pi_i^{\mathbb A^1}(Y_n)
\xrightarrow{\ \cong\ }
\pi_i^{\mathbb A^1}(\Omega_sX_{n+1})
\]
for \(i\leq2n-4\).  In particular this is an isomorphism for
\(i=n-1,n,n+1\) when \(n\geq5\).

Let
\[
\widetilde d_n:\Omega_sX_{n+1}\longrightarrow X_n
\]
be the connecting morphism of the two-frame Stiefel fiber sequence, followed
by the projection to \(X_n\), and set
\[
\alpha_n:=\widetilde d_n\circ E_n:Y_n\longrightarrow X_n.
\]
The map induced by \(\alpha_n\) on the first nonzero homotopy sheaf is the
bottom Stiefel boundary
\[
\mathbf K^{MW}_{n+1}
\longrightarrow
\mathbf K^{MW}_{n}.
\]
Indeed, \((E_n)_*\) is the suspension isomorphism in that degree.  The
calculation of \cite[Lemma~3.5]{AF14} identifies this morphism with
multiplication by \(\eta\) for even \(n\) and with zero for odd \(n\).

This bottom calculation identifies the map \(\alpha_n\) itself.  Namely,
evaluation on the fundamental class of \(Y_n\), followed by Morel's
computation of the first nonzero homotopy sheaf, gives the canonical
bijection
\[
[Y_n,X_n]_{\mathbb A^1,*}
\cong
\big(\mathbf K_n^{MW}\big)_{-(n+1)}(k)
\cong
K_{-1}^{MW}(k).
\]
The first identification is the definition of bigraded homotopy together
with \(\pi_{n-1}^{\mathbb A^1}(X_n)=\mathbf K_n^{MW}\), and the second is
the contraction formula of \cite[Proposition~2.9]{AF14}.  On the other hand,
\cite[Lemma~5.1.3]{AWW17} gives
\[
 \operatorname{Hom}
 (\mathbf K_{n+1}^{MW},\mathbf K_n^{MW})
 \cong(\mathbf K_n^{MW})_{-(n+1)}(k)
 \cong K_{-1}^{MW}(k).
\]
Under these two identifications, the passage from a pointed map to its
morphism on the bottom homotopy sheaf is the identity on
\(K_{-1}^{MW}(k)\).  In particular it is injective, and the corresponding
class acts by multiplication on the bottom Milnor--Witt sheaf.
Therefore the calculation of the bottom Stiefel boundary gives the equality
of pointed maps
\[
\alpha_n=
\begin{cases}
\eta,&n\text{ even},\\
0,&n\text{ odd}.
\end{cases}
\]

Finally, the isomorphism \((E_n)_*\) for \(i=n+1\) identifies the map
induced by \(\widetilde d_n\) on
\[
\pi_{n+1}^{\mathbb A^1}(\Omega_sX_{n+1})
=\pi_{n+2}^{\mathbb A^1}(X_{n+1})
\]
with the map induced by \(\alpha_n\) on
\(\pi_{n+1}^{\mathbb A^1}(Y_n)\).  Stabilization is natural for the actual
map \(\alpha_n\), and Theorem~\ref{thm:Pn-second-stem} identifies the resulting
source and target with the two displayed stable stems.  Thus \(d_{n,2}\)
is induced by \(\eta\) in even stages and by zero in odd stages with the
standard sphere and purity coordinates used in \cite[Lemma~3.5]{AF14}.
Changing a sphere coordinate multiplies the even-stage class by some
\(u_n\in K^{MW}_0(k)^\times=GW(k)^\times\).  Its action on
\(\mathbf K^M_2/2\) is nevertheless the identity: the action factors through
\[
K^{MW}_0(k)\longrightarrow K^M_0(k)/2=\mathbb Z/2,
\]
and a unit of \(GW(k)\) has rank \(\pm1\), hence rank one modulo \(2\).
Consequently the \((n+2)\)-fold contraction is canonically multiplication by
\(\eta\).  This argument uses the simplicial desuspension and the
Freudenthal isomorphism;
it does not require suspension spectra to commute with non-stable loop
spaces.
\end{proof}

\begin{proposition}\label{prop:oriented-unit-wood}
Let \(k\) be a field of characteristic \(0\).  Fix sphere and purity
identifications
\[
X_m\simeq S_s^{m-1}\wedge\mathbb G_m^{\wedge m}
\]
with the Euler/Gysin orientation used in \cite[Lemma~3.5]{AF14}; thus the
adjoint bottom Stiefel boundary is multiplication by \(\eta\) in even stages
and zero in odd stages.  Also fix a coherent system of geometric Bott
equivalences for \(\mathbf{KO}\).  Let
\[
\Psi_m^{\mathrm{or}}:X_m\longrightarrow\mathcal K_m
\]
be the class which, under the spectrum--infinite-loop adjunction, corresponds
to the stable unit \(u:\mathds1\to\mathbf{KO}\) relative to these coordinates.
For
\(j=0,1\), write
\[
(\delta^{\mathrm{or}}_{m,j})_*:
\pi_{m+j}^{\mathbb A^1}(X_m)
\longrightarrow
\pi_{m+j}^{\mathbb A^1}(\mathcal K_m)
=\mathbf{GW}^{m}_{m+j+1}
\]
for the induced morphism.  Define
\[
\vartheta_{n,j}:
\mathbf{GW}^{n+1}_{n+j+2}
\longrightarrow
\mathbf{GW}^{n}_{n+j+1}
\]
to be multiplication by \(\eta\) when \(n\) is even and the zero morphism
when \(n\) is odd.  Then
\begin{equation}\label{eq:oriented-unit-wood}
(\delta^{\mathrm{or}}_{n,j})_*\circ d_{n,j+1}
=
\vartheta_{n,j}\circ(\delta^{\mathrm{or}}_{n+1,j})_*.
\end{equation}
For \(j=0\), this holds for \(n\geq4\); for \(j=1\), it holds for
\(n\geq5\).
\end{proposition}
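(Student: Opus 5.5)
The plan is to rerun the argument of Proposition~\ref{prop:dn2-stable-eta} with \(\Psi^{\mathrm{or}}_m\) in place of \(\Psi^u_m\). The goal is to get the Wood square on the nose from a pointed homotopy, with no unknown unit. Working with the oriented unit-adjoint classes should give exactly such a homotopy between maps \(\Omega_sX_{n+1}\to\mathcal K_n\).

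\textbf{Odd \(n\).} For odd \(n\), both sides of \eqref{eq:oriented-unit-wood} vanish. The left side vanishes by Lemma~\ref{lem:stiefel-parity} and Proposition~\ref{prop:dn2-odd-zero}, and the right side vanishes by definition of \(\vartheta_{n,j}\). So assume \(n\) is even.

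\textbf{Step 1: desuspend the boundary.} Use the standard identifications \(X_{n+1}\simeq\Sigma_sY_n\) and the loop--suspension unit \(E_n:Y_n\to\Omega_sX_{n+1}\). By Morel's simplicial Freudenthal theorem, \((E_n)_*\) is an isomorphism on \(\pi^{\A^1}_{n}\) and \(\pi^{\A^1}_{n+1}\) in the stated ranges; for \(j=0\) one needs only \(n\geq4\), since \(n\leq 2n-4\) there. It therefore suffices to check the square after precomposing with \(E_n\), i.e.\ to compare
\[
\Psi^{\mathrm{or}}_n\circ\alpha_n
\quad\text{and}\quad
\Omega_s(\eta\Psi^{\mathrm{or}}_{n+1})\circ E_n
\]
as pointed maps \(Y_n\to\mathcal K_n\). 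Here \(\alpha_n=\widetilde d_n\circ E_n\), exactly as in the proof of Proposition~\ref{prop:dn2-stable-eta}. That proof, using the chosen Euler/Gysin orientation, shows \(\alpha_n=\eta\) as a pointed map \(Y_n\to X_n\) for even \(n\).

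\textbf{Step 2: pass to spectra.} Since \(\Psi^{\mathrm{or}}_m\) is the infinite-loop adjoint of \(u\), the composite \(\Psi^{\mathrm{or}}_n\circ\eta\) is adjoint to \(u\circ\Sigma^\infty\eta\), which equals \(\eta\cdot u\) because \(u\) is a ring map. The second map \(\Omega_s(\eta\Psi^{\mathrm{or}}_{n+1})\circ E_n\) is the adjoint, via the loop--suspension adjunction, of \(\eta\Psi^{\mathrm{or}}_{n+1}\) on \(\Sigma_sY_n\). The fixed coherent Bott equivalences identify this with the same spectrum-level class \(\eta\cdot u\). Hence the two maps agree, and applying \(\pi^{\A^1}_{n+j}\) together with the isomorphism \((E_n)_*\) yields \eqref{eq:oriented-unit-wood}.

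\textbf{Main obstacle.} I expect the hard part to be Step 2: showing that the adjunctions line up so that \(\Omega_s\) of the Bott-shifted adjoint at level \(n+1\), restricted along \(E_n\), is literally the level-\(n\) adjoint. No hidden unit from the Bott coordinates may appear. This is precisely what the coherence hypothesis on the Bott system is for. I would verify it by writing \(\mathcal K_n\simeq\Omega_s B_s\mathcal K_n\) and \(B_s\mathcal K_n\simeq\Omega_{\mathbb P^1}^{-(n+1)}\mathbf O\) shifted, and then checking the triangle identity of the \(\Sigma_s\dashv\Omega_s\) adjunction.

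A second point must also be checked: the identification \(\alpha_n=\eta\) from Proposition~\ref{prop:dn2-stable-eta} is stated in exactly the Euler/Gysin coordinates fixed here. This is what makes the formula strict rather than merely up to \(u_n\in GW(k)^\times\).
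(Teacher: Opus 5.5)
Your proposal is correct and follows essentially the same route as the paper: desuspend along \(E_n\), identify \(\alpha_n\) with \(\eta\) via the bottom Stiefel calculation, and compare \(\Psi^{\mathrm{or}}_n\alpha_n\) with the simplicial adjoint \(\Omega_s(\eta\Psi^{\mathrm{or}}_{n+1})\circ E_n\) using that \(u\) is a map of \(\mathds1\)-modules, then transport back through the Freudenthal isomorphism \((E_n)_*\). The only differences are minor. You dispose of odd \(n\) via the symplectic Stiefel section, whereas the paper uses \(\alpha_n=0\). For the case \(n=4\), \(j=0\), you should note, as the paper does, that the identification of \(\alpha_n\) in the proof of Proposition~\ref{prop:dn2-stable-eta} uses only the bottom homotopy sheaf and therefore holds for \(n\geq4\), even though that proposition is stated for \(n\geq5\).
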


\begin{proof}
Retain the notation from the proof of
Proposition~\ref{prop:dn2-stable-eta}.  Thus
\[
X_{n+1}\simeq\Sigma_sY_n,\qquad
E_n:Y_n\longrightarrow\Omega_sX_{n+1},
\qquad
\alpha_n=\widetilde d_nE_n:Y_n\longrightarrow X_n.
\]
The bottom Stiefel calculation of \cite[Lemma~3.5]{AF14}, with the orientation
fixed in the statement, together with
\[
[Y_n,X_n]_{\mathbb A^1,*}\cong K_{-1}^{MW}(k),
\]
identifies the actual map \(\alpha_n\), in the fixed coordinates, as
\[
\alpha_n=
\begin{cases}
\eta,&n\text{ even},\\
0,&n\text{ odd}.
\end{cases}
\]
This part of the argument only uses the bottom homotopy sheaf and is valid
for \(n\geq4\).

Let
\[
\operatorname{ad}_s\!
\left(\eta_{\mathbf{KO}}\Psi_{n+1}^{\mathrm{or}}\right):
Y_n\longrightarrow\mathcal K_n
\]
denote the simplicial adjoint, where the target is identified using the
fixed Bott equivalence.  There is an equality
\begin{equation}\label{eq:oriented-unit-map-level}
\Psi_n^{\mathrm{or}}\alpha_n
\simeq
\begin{cases}
\operatorname{ad}_s
  \left(\eta_{\mathbf{KO}}\Psi_{n+1}^{\mathrm{or}}\right),
  &n\text{ even},\\
0,&n\text{ odd},
\end{cases}
\end{equation}
in the pointed \(\mathbb A^1\)-homotopy category.  Indeed, after the
spectrum--infinite-loop adjunction, the two maps in the even case are
respectively the composites
\[
u\circ\eta
\qquad\text{and}\qquad
\eta_{\mathbf{KO}}\circ u.
\]
They agree because \(u\) is a morphism of \(\mathds1\)-modules.  This is an
equality for the unit-adjoint classes themselves; no comparison of a Thom
generator with a Suslin-matrix generator enters it.

Morel's simplicial suspension theorem gives
\[
(E_n)_*:\pi_i^{\mathbb A^1}(Y_n)
\xrightarrow{\ \cong\ }
\pi_i^{\mathbb A^1}(\Omega_sX_{n+1})
\]
for \(i\leq2n-4\).  Take \(i=n+j\).  This is allowed for \(j=0\) when
\(n\geq4\), and for \(j=1\) when \(n\geq5\).  Under this isomorphism,
\((\alpha_n)_*\) is \(d_{n,j+1}\), while the right-hand side of
\eqref{eq:oriented-unit-map-level} induces
\(\vartheta_{n,j}(\delta^{\mathrm{or}}_{n+1,j})_*\).
The left-hand side induces
\((\delta^{\mathrm{or}}_{n,j})_*d_{n,j+1}\).  This proves
\eqref{eq:oriented-unit-wood}.
\end{proof}

\begin{remark}\label{rem:oriented-unit-status}
The orientation clause in Proposition~\ref{prop:oriented-unit-wood} is
essential.  The spectrum unit and module-linearity prove strict compatibility
once the sphere, purity, and Bott coordinates have been aligned with the Euler
calculation of \cite[Lemma~3.5]{AF14}; they do not canonically select that
alignment.  Changing the sphere coordinate can reintroduce a unit in the
even-stage formula.  Thus the proposition is a strict oriented statement, not
a coordinate-free identification with the matrices printed in \cite{AF17}.
\end{remark}

\begin{lemma}\label{lem:unit-suslin-K-normalization}
Let \(k\) be a field of characteristic \(0\) and let \(m\geq1\).  Use the
sphere coordinate on \(Q_{2m-1}\) induced by
\[
p_m:Q_{2m-1}\longrightarrow\mathbb A^m\setminus0,
\qquad (x,y)\longmapsto x,
\]
and the corresponding Bott coordinate for algebraic \(K\)-theory.  The
forgetful image of the unit-adjoint class
\(\Psi_m^{\mathrm{or}}\) is the classical Suslin class
\[
F[\Psi_m^{\mathrm{or}}]=[\alpha_m]
\quad\text{in }\widetilde K_1(Q_{2m-1}).
\]
\end{lemma}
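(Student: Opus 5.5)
The plan is to reduce the statement to a computation in stable algebraic \(K\)-theory of a sphere, where the unit is detected by a degree/contraction invariant.

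\textbf{Step 1: pass to the forgetful image.}
The forgetful map \(F:\mathbf{KO}\to\mathbf{KGL}\) is a ring map, so \(F\circ u\) is the unit \(\mathds1\to\mathbf{KGL}\). Under the spectrum--infinite-loop adjunction, with the Bott coordinate for \(K\)-theory compatible with the fixed \(\mathbf{KO}\) Bott equivalences, \(F[\Psi_m^{\mathrm{or}}]\) is therefore the class adjoint to the \(K\)-theory unit. Concretely, it is the image of the stable fundamental class of the sphere \(X_m\simeq S^{m-1}_s\wedge\G_m^{\wedge m}\) under
\[
\widetilde K_0\bigl(S^{m-1}_s\wedge\G_m^{\wedge m}\bigr)\cong\widetilde K_{m-1}\bigl(\G_m^{\wedge m}\bigr)\cong K_{-1}(k)\text{-shift}\cong\mathbb Z,
\]
transported to \(\widetilde K_1(Q_{2m-1})\) along the Bott coordinate and \(p_m\).

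\textbf{Step 2: reduce to a comparison in \(\mathbb Z\).}
Both \(F[\Psi_m^{\mathrm{or}}]\) and \([\alpha_m]\) lie in \(\widetilde K_1(Q_{2m-1})\cong K_0(k)=\mathbb Z\) (Suslin; see \cite[Theorem~3.3.6]{AF17}). Both are generators: the first because the unit generates, the second by Suslin's theorem. Hence they differ by a sign \(\pm1\), and it remains to fix that sign.

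\textbf{Step 3: fix the sign by contraction.}
I would detect the sign by applying \(m-1\) contractions (Theorem~\ref{thm:contraction-K}), restricting along the coordinate inclusion \(\G_m^{\times m}\to\A^m\setminus0\) followed by the boundary maps of the Gysin/localization sequences that remove one coordinate hyperplane at a time. On the unit side, each step takes the fundamental class of \(\G_m^{\wedge r}\) to that of \(\G_m^{\wedge(r-1)}\). This is exactly how the Euler/Gysin sphere coordinate of \cite[Lemma~3.5]{AF14} is defined, so the unit class goes to \(1\in K_0(k)\).

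On the Suslin side, induct on \(m\) using the standard recursion relating \(\alpha_m\) to \(\alpha_{m-1}\) under the Koszul-type localization boundary. For \(m=1\), \(\alpha_1=(x)\) is the unit \(x\in K_1(\G_m)\), whose residue is \(1\) (or \(-1\), depending on the orientation of the boundary). The classical contraction compatibility of the Suslin class then gives that the iterated residue of \([\alpha_m]\) equals that of \([\alpha_{m-1}]\). Since the map \(\widetilde K_1(Q_{2m-1})\to K_0(k)\) is an isomorphism, the two classes coincide.

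\textbf{Main obstacle.}
The hard part is the sign bookkeeping in Step 3. One must check that the residue convention in the Gysin boundary, the ordering of coordinates in \(\G_m^{\wedge m}\), and the transposition convention for \(\alpha_m\) relative to \(p_m\) all produce \(+1\) at every stage, not \((-1)^{m-1}\). My first attempt would be to verify \(m=1,2\) by explicit computation: for \(m=2\), \(\alpha_2\) is the \(2\times2\) matrix \(\begin{pmatrix}x_1&x_2\\-y_2&y_1\end{pmatrix}\) up to convention. I would then show that the inductive step introduces no sign, because both sides use the same ordered boundary.
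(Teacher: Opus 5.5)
Your overall strategy is the paper's: the two classes are generators differing by a sign \(s_m\) (the paper gets this from \cite[Lemma~3.8]{AF14}, you from Suslin's generation theorem, which is equally fine). Contraction compatibility on the Suslin side and on the unit side makes the sign independent of \(m\), and the case \(m=1\) fixes it. But Step~3 as you formulate it does not detect the class for \(m\geq2\).

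Restricting a reduced class of \(\widetilde K_1(Q_{2m-1})\cong\widetilde K_1(\mathbb A^m\setminus0)\) towards the torus kills it already on \(U=\{x_m\neq0\}\cong\mathbb A^{m-1}\times\mathbb G_m\). Consider the localization sequence \(K_1(\mathbb A^m\setminus0)\to K_1(U)\xrightarrow{\partial}K_0(\mathbb A^{m-1}\setminus0)\), with \(K_1(U)=K_1(k)\oplus K_0(k)\cdot[x_m]\). The residue \(\partial\) is injective on the summand \(K_0(k)\cdot[x_m]\), since \(\partial[x_m]\) is the class of the structure sheaf of the hyperplane, of rank one. So the image of restriction consists of constant classes, and a reduced class restricts to zero. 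The composite ``restrict to the torus, then take boundaries'' is therefore zero, and your claimed isomorphism \(\widetilde K_1(Q_{2m-1})\to K_0(k)\) fails as described.

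Step~1 has a related indexing slip. There \(\widetilde K_0(X_m)\cong K_{-1}(k)=0\); the correct group is \(\widetilde K_1(X_m)\cong\widetilde K_m(\mathbb G_m^{\wedge m})\cong K_0(k)\).

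The repair is the paper's formulation. Compare the induced morphisms on bottom homotopy sheaves \(\Phi_m,\mu_m:\mathbf K^{MW}_m\to\mathbf K^Q_m\), induced by the Suslin matrix and by the unit respectively. Contraction of such morphisms is computed by residues on sections. The integer is detected by the \(m\)-fold residue of \(\Phi_m([t_1]\cdots[t_m])\in K_m(k(t_1,\dots,t_m))\), not of a restricted \(K_1\)-class.

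The two identities you need are \((\Phi_m)_{-1}=\Phi_{m-1}\) \cite[Corollary~3.9]{AF14} and \((\mu_m)_{-1}=\mu_{m-1}\) \cite[Lemma~3.7]{AF14}. Writing \(\Phi_m=s_m\mu_m\) and contracting gives \(s_m\mu_{m-1}=s_{m-1}\mu_{m-1}\). Hence \(s_m=s_{m-1}\), because \(\mu_{m-1}\) generates the torsion-free group \(\operatorname{Hom}(\mathbf K^{MW}_{m-1},\mathbf K^Q_{m-1})\cong K_0(k)\).

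For the base case, drop the residue and its sign hedge. For \(m=1\) both classes are literally \([t]\in\widetilde K_1(\mathbb G_m)\), i.e. both maps are \(t\mapsto t\), so \(s_1=1\) independently of any boundary convention. The two contraction identities are comparable only because \(p_m\) fixes the coordinate order and the Bott coordinates introduce no permutation. The paper simply asserts this, as you propose to.
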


\begin{proof}
Let \(\Phi_m:\mathbf K_m^{MW}\to\mathbf K_m^Q\) be induced by the
classical Suslin matrix \(\alpha_m\).  By
\cite[Lemma~3.8]{AF14}, \(\Phi_m\) differs by a sign from the natural
homomorphism induced by the unit \(\mathds1\to\mathbf{KGL}\).  On the other
hand, \cite[Corollary~3.9]{AF14} gives the exact contraction identity
\[
(\Phi_m)_{-1}=\Phi_{m-1},
\]
while \cite[Lemma~3.7]{AF14} gives the same identity for the natural
homomorphisms.  The coordinates in the statement are coherent under these
two contractions: the projection \(p_m\) fixes the order of the sphere
coordinates, and the chosen Bott coordinates introduce no permutation.
Thus, if \(\Phi_m=s_m\mu_m\) with \(s_m\in\{\pm1\}\), the two cited
contraction identities give \(s_m=s_{m-1}\).  In degree one both maps are
represented by \(t\mapsto t\), so \(s_1=1\).  Induction proves that every
comparison sign is \(1\), which is the asserted equality after adjunction.
\end{proof}

\begin{lemma}\label{lem:koszul-permutation-sign}
Let \(k\) be a field of characteristic different from \(2\), and let
\(i:Z\hookrightarrow Y\) be a regular immersion of smooth \(k\)-schemes
whose conormal bundle is trivialized by an ordered regular sequence
\(f=(f_1,\ldots,f_c)\).  If the order is changed by
\(\tau\in S_c\), then the Koszul Thom coordinate and the associated
Grothendieck--Witt push-forward are changed by the factor
\[
\langle\operatorname{sgn}(\tau)\rangle.
\]
In particular, on a free rank-one \(W(k)\)-summand the factor is
\(\operatorname{sgn}(\tau)\).  Moreover, if a linear automorphism
\(A\in GL_c(k)\) changes the ordered coordinates of a motivic sphere, then
precomposition acts on its oriented rank-one \(W(k)\)-summand by
\(\langle\det A\rangle\).
\end{lemma}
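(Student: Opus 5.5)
The plan is to reduce both assertions to the effect of a permutation, or more generally a linear change of coordinates, on the \emph{Koszul complex}. Recall how the Koszul Thom class is built. The ordered regular sequence \(f=(f_1,\ldots,f_c)\) determines the Koszul complex \(K(f)=\Lambda^\bullet(\mathcal O_Y^c)\) with differential contraction against \(f\). It also determines the canonical symmetric (or shifted-symmetric) form on \(K(f)\), coming from the wedge pairing
\[
\Lambda^p\otimes\Lambda^{c-p}\to\Lambda^c\cong\mathcal O_Y .
\]
This pairing identifies \(K(f)\) with its shifted dual, and the resulting class in \(GW^c_Z(Y)\) is the Thom coordinate. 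The push-forward \(i_*\) is the composite of this class with dévissage, so it suffices to track the Thom coordinate.

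First step: a reordering by \(\tau\) is the linear automorphism \(P_\tau\in GL_c(k)\) of the free module \(\mathcal O_Y^c\) sending \(e_j\mapsto e_{\tau(j)}\). It induces an isomorphism of Koszul complexes
\[
\Lambda^\bullet P_\tau:K(f)\xrightarrow{\sim}K(f\circ\tau^{-1}).
\]
More generally, any \(A\in GL_c(k)\) carries \(K(f)\) to \(K(fA^{-1})\) via \(\Lambda^\bullet A\).

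Second step: compare the two symmetric forms through this isomorphism. The wedge pairing is \(GL_c\)-equivariant except on the top exterior power, where \(\Lambda^cA\) acts by \(\det A\). Hence the transported form equals the new Koszul form multiplied by the scalar \(\det A\) on the orientation \(\Lambda^c\cong\mathcal O\). In a Grothendieck--Witt group, scaling a (shifted) symmetric form by a unit \(u\) multiplies its class by \(\langle u\rangle\). Since the isomorphism of complexes is an isometry onto the rescaled form, the two Thom classes differ by \(\langle\det A\rangle\). For a permutation this factor is \(\langle\operatorname{sgn}\tau\rangle\).

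Naturality of dévissage with respect to isometries then transports this factor to the push-forward. On a free rank-one \(W(k)\)-summand, \(\langle-1\rangle\) acts by \(-1\), because \(\langle1\rangle+\langle-1\rangle=h\) is zero in \(W\); this gives \(\operatorname{sgn}(\tau)\). The motivic-sphere statement follows the same way once we apply it to the purity identification \(X_c\simeq\operatorname{Th}\) of the origin in \(\A^c\), or equivalently its Koszul orientation. Precomposition by \(A\) transforms the oriented generator by the factor just computed. The oriented rank-one \(W(k)\)-summand is generated by this Thom/Koszul class, so precomposition acts by \(\langle\det A\rangle\).

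The main obstacle is the sign bookkeeping in the shifted duality. We must confirm that the shifted-symmetric structure of \(K(f)\), including the Koszul sign rule \((-1)^{p(c-p)}\) and the identification \(\Lambda^c\cong\mathcal O\) via \(e_1\wedge\cdots\wedge e_c\), is the only place the ordering enters. Every other component of \(\Lambda^\bullet A\) must be a genuine isometry, with no additional signs arising from the chosen symmetrization conventions. To check this, I would first verify the case of an adjacent transposition directly on \(\Lambda^\bullet\) with \(c=2\). I would then reduce the general case to this one using multiplicativity of both \(\tau\mapsto\langle\operatorname{sgn}\tau\rangle\) and \(A\mapsto\langle\det A\rangle\), together with the fact that the Koszul form is invariant under \(SL_c\) acting through elementary matrices.
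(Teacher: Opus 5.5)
Your proposal is correct and follows essentially the paper's route. The reordering, or more generally the linear change, induces an isomorphism of Koszul complexes that is an isometry except on the top exterior power, where it contributes $\det A$ (so $\operatorname{sgn}(\tau)$); this gives the factor $\langle\det A\rangle$, which transfers to the push-forward, has image $\operatorname{sgn}(\tau)$ in $W(k)$, and yields the sphere statement via purity and the Thom class of the trivial bundle. The only difference is cosmetic: you treat every $A\in GL_c(k)$ at once through the $\det A$-equivariance of the wedge pairing, whereas the paper checks an adjacent transposition, composes, and handles the general linear case separately through the determinant trivialization of the Thom class.
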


\begin{proof}
The Koszul complex of \(f\) is the exterior algebra on the free conormal
module with its ordered basis.  It is enough to consider an adjacent
transposition.  Interchanging the corresponding degree-one generators and
extending by the graded exterior rule gives an isomorphism of Koszul
complexes.  On the top exterior power this isomorphism is multiplication by
\(-1\).  Comparing the standard self-duality
\[
K(f)\longrightarrow
\operatorname{Hom}\!\left(K(f),\det(I/I^2)\right)[c]
\]
on the two ordered complexes shows directly that the duality square
commutes after the target symmetric form is multiplied by
\(\langle-1\rangle\).  The cohomological shift is the same on both sides,
so it introduces no further sign.  Composing adjacent transpositions gives
the factor
\(\langle\det(P_\tau)\rangle
=\langle\operatorname{sgn}(\tau)\rangle\).

The description of a
regular-immersion push-forward by this symmetric Koszul complex and the
composition theorem for push-forwards
\cite[Proposition~7.1 and Theorem~5.3]{CalmesHornbostel11} show that the same
factor occurs in the Gysin map.  Its image in \(W(k)\) is the ordinary sign
because \(\langle-1\rangle=-1\) there.

For the last assertion, purity identifies the sphere coordinate with the
Thom coordinate of the trivial bundle.  Changing its frame by \(A\) changes
the determinant trivialization by \(\det A\), so the Thom class is multiplied
by \(\langle\det A\rangle\).  This is also the usual formula for the
\(\mathbb A^1\)-degree of a linear automorphism, and proves the claim.
\end{proof}

\begin{lemma}\label{lem:suslin-coordinate-pair-permutation}
Let \(k\) be a field of characteristic \(0\), let \(m\geq2\), let
\(\tau\in S_m\), and let
\[
 \rho_\tau:Q_{2m-1}\longrightarrow Q_{2m-1}
\]
be defined by
\(\rho_\tau^*x_i=x_{\tau(i)}\) and
\(\rho_\tau^*y_i=y_{\tau(i)}\).  On
\[
 G_m=\widetilde{GW}^{m-1}_0(Q_{2m-1})
\]
precomposition by \(\rho_\tau\) is multiplication by
\(\langle\operatorname{sgn}(\tau)\rangle\).  Moreover, if the ordered
tuples \((a,b)\) in the push-forward formulas of
\cite[Lemmas~3.5.2--3.5.3]{AF17} are replaced by their \(\tau\)-permuted
tuples, the resulting right-hand class is literally
\(\rho_\tau^*\eta[\Psi_m^{\mathrm{AF}}]\).  Thus relabeling the coordinate
pairs contributes no sign other than the displayed motivic degree and the
sign already printed in those two lemmas.
\end{lemma}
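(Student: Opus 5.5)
The proof has two parts. The first computes the action of \(\rho_\tau\) on \(G_m\). The second checks what happens to the push-forward formulas when the coordinate pairs are relabeled.

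\emph{The motivic degree.} Write \(\rho_\tau\) as the restriction of the linear automorphism \(P_\tau\oplus P_\tau\) of \(\A^{2m}\), which permutes the \(x\)-coordinates and the \(y\)-coordinates simultaneously. Under the projection \(p_m:Q_{2m-1}\to\A^m\setminus0\) of Lemma~\ref{lem:unit-suslin-K-normalization}, \(\rho_\tau\) covers the linear automorphism \(x\mapsto P_\tau x\) of \(\A^m\setminus0\). The map \(p_m\) is an \(\A^1\)-equivalence because it is a vector-bundle torsor, and it is equivariant for these two actions. Hence precomposition by \(\rho_\tau\) on \(\widetilde{GW}^{m-1}_0(Q_{2m-1})\) agrees with precomposition by \(P_\tau\) on the reduced theory of \(\A^m\setminus0\simeq S_s^{m-1}\wedge\G_m^{\wedge m}\).

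That reduced group is a free rank-one \(\mathbf{GW}_0^0(k)\)-module by \cite[Theorem~3.3.6]{AF17}, and the action of a linear automorphism on it is \(\GW\)-linear. The last assertion of Lemma~\ref{lem:koszul-permutation-sign} then gives the factor \(\langle\det P_\tau\rangle=\langle\operatorname{sgn}\tau\rangle\). Strictly, that lemma is stated on a \(W(k)\)-summand. To obtain the full \(GW\) statement, I will argue that precomposition by a linear automorphism acts on a sphere through its \(\A^1\)-degree in \(K_0^{MW}(k)=GW(k)\). The degree of \(P_\tau\) is \(\langle\det P_\tau\rangle\) by Morel's computation, and \(\GW\)-module maps out of a sphere are acted on by this degree.

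\emph{The push-forward formulas.} The formulas of \cite[Lemmas~3.5.2--3.5.3]{AF17} express \(\eta[\Psi_m^{\mathrm{AF}}]\) as a Gysin push-forward along a closed subscheme cut out by an ordered regular sequence built from the tuples \((a,b)\). Replacing \((a,b)\) by the \(\tau\)-permuted tuples means replacing the subscheme \(Z\) by \(\rho_\tau^{-1}(Z)\), and the regular sequence by its pullback \(\rho_\tau^*f\).

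Push-forwards satisfy base change along the automorphism \(\rho_\tau\), by the composition and base-change results of \cite{CalmesHornbostel11}. For an automorphism this base change is just transport of structure: \(\rho_\tau^*\circ i_*=(i')_*\circ(\rho_\tau|)^*\), with Koszul trivializations matching because the sequence is pulled back in the same order. Consequently, the permuted right-hand side equals \(\rho_\tau^*\) applied to the original right-hand side, which is literally \(\rho_\tau^*\eta[\Psi_m^{\mathrm{AF}}]\).

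The only remaining place a sign could enter is a reordering of the regular sequence itself. This does not occur: the permuted tuples are pulled back coordinatewise, so the order of the sequence is preserved and no Koszul permutation factor from Lemma~\ref{lem:koszul-permutation-sign} arises. The only signs present are those already printed in the cited lemmas and the motivic degree computed above.

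\emph{Main obstacle.} The delicate step is the bookkeeping of the last claim. One must verify that the permuted tuples give exactly the pulled-back ordered regular sequence, rather than a reordering of it. In particular, the \(x\)- and \(y\)-coordinates must move together, so that no hidden transposition of Koszul generators arises. I would check this directly on the explicit formulas of \cite[Lemma~3.5.2]{AF17}, first for an adjacent transposition and then in general by composing.
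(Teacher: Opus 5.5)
Your proposal is correct and follows the paper's proof. The first part is the same reduction: \(p_m\) intertwines \(\rho_\tau\) with the linear permutation \(P_\tau\), which acts by \(\langle\det P_\tau\rangle\). The second part is the same naturality in the tuple, and the paper gets it more directly: the push-forward identities of \cite{AF17} already hold for arbitrary ordered tuples, and the printed Suslin formula evaluated at \((a^\tau,b^\tau)\) is literally \(\Psi_m^{\mathrm{AF}}\circ\rho_\tau\), so no base-change property of push-forwards is needed.

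One small correction: \(G_m=\widetilde{GW}^{m-1}_0(Q_{2m-1})\) is free of rank one over \(W(k)\), not over \(GW(k)\). The free \(GW(k)\)-module of Theorem~3.3.6 of \cite{AF17} is the group containing \([\Psi_m^{\mathrm{AF}}]\) before multiplication by \(\eta\). Your degree argument does not use freeness, so this slip does not affect the proof.
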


\begin{proof}
The projection
\(p_m:Q_{2m-1}\to\mathbb A^m\setminus0\) intertwines \(\rho_\tau\) with
the linear coordinate permutation having matrix \(P_\tau\).  The last
assertion of Lemma~\ref{lem:koszul-permutation-sign} therefore shows that
\(\rho_\tau^*\) acts on the free rank-one \(W(k)\)-module \(G_m\) by
\[
 \langle\det P_\tau\rangle
 =\langle\operatorname{sgn}(\tau)\rangle.
\]
For the second assertion, write
\(a^\tau=(a_{\tau(1)},\ldots,a_{\tau(m)})\) and similarly for \(b\).
By Definition~3.3.2 of \cite{AF17}, evaluating the printed formula at
\((a^\tau,b^\tau)\) is exactly the composite
\(\Psi_m^{\mathrm{AF}}\rho_\tau\); the integral matrices \(I_m\) and
\(E_m\) are unchanged because they belong to the target coordinate, not to
the list of functions on the quadric.  The push-forward lemmas of
\cite{AF17} are identities for arbitrary ordered tuples
\((a_1,\ldots,a_m;b_1,\ldots,b_m)\).  Applying them to
\((a^\tau,b^\tau)\) therefore produces
\(\eta[\Psi_m^{\mathrm{AF}}\rho_\tau]\), which is
\(\rho_\tau^*\eta[\Psi_m^{\mathrm{AF}}]\).  No conjugation or further
orientation change is present.
\end{proof}

\begin{theorem}\label{prop:explicit-suslin-rank}
Let \(k\) be a field of characteristic \(0\) and let \(m\geq2\).  Transport
\(\Psi_m^{\mathrm{or}}\) to \(Q_{2m-1}\)
along \(p_m\).  Give the sphere its ordered Koszul orientation
\((x_2,\ldots,x_m)\).  Normalize the degree-one Bott coordinate by the
pointed representative of
\cite[Remark~3.3.4]{AF17}.  Let
\(\Psi_m^{\mathrm{AF}}\) be the stabilized matrix of
\cite[Definitions~3.3.2 and~3.3.5]{AF17}.  Then
\begin{equation}\label{eq:explicit-suslin-full}
[\Psi_m^{\mathrm{AF}}]=\sigma_m[\Psi_m^{\mathrm{or}}],
\end{equation}
where, writing \(\langle-1\rangle\) for the one-dimensional form,
\begin{equation}\label{eq:explicit-suslin-unit}
\sigma_m=
\begin{cases}
1,&m\equiv1\pmod4,\\
-\langle-1\rangle,&m\equiv2\pmod4,\\
\langle-1\rangle,&m\equiv3\pmod4,\\
-1,&m\equiv0\pmod4.
\end{cases}
\end{equation}
In particular,
\begin{equation}\label{eq:explicit-suslin-rank}
\operatorname{rk}(\sigma_m)=(-1)^{m+1}.
\end{equation}
\end{theorem}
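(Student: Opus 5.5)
Both classes \([\Psi_m^{\mathrm{AF}}]\) and \([\Psi_m^{\mathrm{or}}]\) generate the free rank-one \(GW(k)\)-module \(\widetilde{GW}^{m-1}_0(Q_{2m-1})\cong[X_m,\mathcal K_m]_{\A^1}\) (\cite[Theorem~3.3.6]{AF17}, and Proposition~\ref{prop:oriented-unit-wood} for the unit-adjoint class). Hence they differ by a unit \(\sigma_m\in GW(k)^\times\). Since the fiber product \(GW(k)=\mathbb Z\times_{\mathbb Z/2}W(k)\) identifies a unit by its rank and its Witt image, I will compute these two coordinates separately and then assemble \eqref{eq:explicit-suslin-unit}.

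\emph{Rank.} Apply the forgetful map \(F:GW^{m-1}_0\to K_0\), or after the Bott shift \(F\) to \(\widetilde K_1(Q_{2m-1})\). Lemma~\ref{lem:unit-suslin-K-normalization} gives \(F[\Psi_m^{\mathrm{or}}]=[\alpha_m]\). From \cite[Definitions~3.3.2 and~3.3.5]{AF17}, the forgetful image of \(\Psi_m^{\mathrm{AF}}\) is the class of \(\alpha_m\) after the stabilization matrices \(I_m,E_m\) are applied. Its determinant and sign bookkeeping in \(K_1\) therefore gives \(F[\Psi_m^{\mathrm{AF}}]=(-1)^{m+1}[\alpha_m]\), and this computation is read off the block form of the printed matrix. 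Because \(F(\sigma x)=\operatorname{rk}(\sigma)F(x)\) and \([\alpha_m]\) generates a copy of \(\mathbb Z\), this yields \eqref{eq:explicit-suslin-rank}.

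\emph{Witt image.} I plan to argue by induction on \(m\), using the Gysin formulas \cite[Lemmas~3.5.2--3.5.3]{AF17}. Those formulas express \(\eta[\Psi_m^{\mathrm{AF}}]\), restricted to a smaller quadric, as a push-forward of \([\Psi_{m-1}^{\mathrm{AF}}]\), with a printed sign and with the coordinate pairs in a non-standard order. For \(\Psi^{\mathrm{or}}\), the corresponding relation holds without extra sign; this is the strict Wood compatibility from module linearity of \(u\) in Proposition~\ref{prop:oriented-unit-wood}, expressed through the Euler/Gysin orientation. I first reorder the pairs by a permutation \(\tau\) so that both formulas use the ordered Koszul orientation \((x_2,\ldots,x_m)\). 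Lemma~\ref{lem:suslin-coordinate-pair-permutation} shows this costs exactly \(\langle\operatorname{sgn}\tau\rangle\), and Lemma~\ref{lem:koszul-permutation-sign} converts it to \(\operatorname{sgn}\tau\) in \(W(k)\). This gives a recursion of the form \(\bar\sigma_m=\epsilon_m\bar\sigma_{m-1}\) in \(W(k)\), where \(\epsilon_m\) is the product of the printed sign and the permutation sign. The base case is \(m=1\) or \(m=2\), fixed by the normalization of \cite[Remark~3.3.4]{AF17}: there \(\sigma_1=1\), since both classes are \(t\mapsto t\).

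\emph{Assembly.} I expect the recursion to give the Witt images \(1,-1,-1,1\) for \(m\equiv1,2,3,0\pmod 4\). The unit is then determined by its rank together with its Witt class. Rank \(-1\) with Witt class \(-1\) gives \(-\langle-1\rangle\); rank \(1\) with Witt class \(-1\) gives \(\langle-1\rangle\); rank \(-1\) with Witt class \(1\) gives \(-1\). These are exactly \eqref{eq:explicit-suslin-unit}.

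The main obstacle is the Witt step. The difficulty is tracking precisely which permutation relates the pair order in the AF17 push-forward lemmas to the fixed Koszul order, and verifying that the Euler/Gysin orientation of Proposition~\ref{prop:oriented-unit-wood} introduces no additional \(\langle-1\rangle\) under the same push-forward. I would first check this by hand for \(m=2,3\) and then confirm that the sign \(\epsilon_m\) is 2-periodic in the required way.
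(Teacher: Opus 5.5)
Your final table agrees with the statement, but only because two errors cancel. In $W(k)$ one has $\langle1\rangle+\langle-1\rangle=h=0$, so $\langle-1\rangle\mapsto-1$. Hence $-\langle-1\rangle$ has rank $-1$ and Witt image $+1$, while $-1$ has rank $-1$ and Witt image $-1$.

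Your two assembly rules for rank $-1$ are therefore interchanged. The Witt images you expect, $1,-1,-1,1$ for $m\equiv1,2,3,0\pmod4$, are also not the ones the theorem forces. The correct values are $1,1,-1,-1$, i.e.\ $(-1)^{\lfloor(m-1)/2\rfloor}$. To see this, put $g_m:=\eta[\Psi_m^{\mathrm{AF}}]$ and $\omega_m:=\eta[\Psi_m^{\mathrm{or}}]$ in the free rank-one $W(k)$-module $G_m=\widetilde{GW}^{m-1}_0(Q_{2m-1})$. Already $g_2=J_2(g_1)=J_2(\omega_1)=\omega_2$. So a correct execution of your recursion would contradict your own assembly.

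The rank step is also misattributed. Constant integral matrices such as $I_m,E_m$ vanish in reduced $\widetilde K_1$, so no ``determinant bookkeeping'' of them can produce a sign. The sign comes from the transpose: \cite[Definition~3.3.2]{AF17} has forgetful class $[\alpha_m^t]$ for even $m$ and $[\alpha_m]$ for odd $m$. Combine this with $[\alpha_m^t]=(-1)^{m+1}[\alpha_m]$ from \cite[Lemma~3.4.2]{AF17}.

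The Witt recursion is missing its essential sign. You assert that for $\Psi^{\mathrm{or}}$ the Gysin relation holds without extra sign, citing Proposition~\ref{prop:oriented-unit-wood}. That proposition compares Stiefel cross-stage maps with Karoubi's $\eta$ and says nothing about the Gysin isomorphism $J_m:G_{m-1}\to G_m$ along $x_m=0$. Since $J_m$ turns the Koszul order $(x_2,\dots,x_{m-1})$ into $(x_m,x_2,\dots,x_{m-1})$, Lemma~\ref{lem:koszul-permutation-sign} gives $J_m(\omega_{m-1})=(-1)^{m-2}\omega_m$.

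On the printed side, the relabeling signs from Lemma~\ref{lem:suslin-coordinate-pair-permutation} cancel the printed minus signs of \cite[Lemmas~3.5.2--3.5.3]{AF17}. These are the cyclic permutation $(m,1,\dots,m-1)$ for even $m$ and the permutation $(m,m-1,1,\dots,m-2)$ for odd $m$, both of determinant $-1$. The result is $g_m=J_m(g_{m-1})$ for even $m$ and the two-step relation $g_m=J_mJ_{m-1}(g_{m-2})$ for odd $m$. The odd case is therefore not a one-step recursion $m-1\to m$ of the form you propose.

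The four-periodicity comes entirely from the unit side. Its two Gysin steps contribute $(-1)^{m-3}(-1)^{m-2}=-1$ at each odd $m$, and its single step contributes $+1$ at each even $m$. If you treat the unit side as sign-free, these printed-side relations give $\bar\sigma_m=1$ for every $m$, which is false.
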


\begin{proof}
The printed matrix represents the generator specified in
\cite[Theorem~3.3.6]{AF17}, while the Bott/sphere adjunction identifies
\(\Psi_m^{\mathrm{or}}\) with the unit generator.  Hence both sides of
\eqref{eq:explicit-suslin-full} generate the same free rank-one
\(GW(k)\)-module, so there is a unique
\(\sigma_m\in GW(k)^\times\) relating them.  We determine its rank and its
image in \(W(k)\).

Lemma~\ref{lem:unit-suslin-K-normalization} identifies the forgetful image
of the oriented unit with \([\alpha_m]\).  Constant integral
change-of-basis matrices disappear in reduced \(K_1\), whereas
\cite[Definition~3.3.2]{AF17} has forgetful class
\[
\begin{cases}
[\alpha_m^t],&m\text{ even},\\
[\alpha_m],&m\text{ odd}.
\end{cases}
\]
Since \cite[Lemma~3.4.2]{AF17} gives
\([\alpha_m^t]=(-1)^{m+1}[\alpha_m]\), this proves
\eqref{eq:explicit-suslin-rank}.

It remains to calculate the Witt coordinate.  Put
\[
G_m:=\widetilde{GW}^{m-1}_0(Q_{2m-1}),\qquad
g_m:=\eta[\Psi_m^{\mathrm{AF}}],\qquad
\omega_m:=\eta[\Psi_m^{\mathrm{or}}].
\]
The group \(G_m\) is free of rank one over \(W(k)\).  Let
\[
J_m:G_{m-1}\xrightarrow{\ \cong\ }G_m
\]
be the Gysin isomorphism for \(x_m=0\), with conormal coordinate \(x_m\).
The composition theorem for push-forwards identifies successive
push-forwards with tensoring the length-one Koszul forms with the outer
regular equation first.  In particular, applying \(J_m\) to the ordered
orientation \((x_2,\ldots,x_{m-1})\) gives the Koszul order
\((x_m,x_2,\ldots,x_{m-1})\).  Moving \(x_m\) to the last position has
determinant \((-1)^{m-2}\).  Lemma~\ref{lem:koszul-permutation-sign}
therefore gives, in the \(W(k)\)-module \(G_m\),
\begin{equation}\label{eq:unit-gysin-recursion}
J_m(\omega_{m-1})=(-1)^{m-2}\omega_m.
\end{equation}
In degree one, the calculation at the beginning of the proof of
\cite[Proposition~3.4.3]{AF17} gives
\(\eta(\Psi_1)=\langle t\rangle-\langle1\rangle\); this is also the image
of the unit symbol.  Hence \(g_1=\omega_1\).

We next retain the signs that disappear in the generator-only argument of
\cite[Proposition~3.4.3]{AF17}.  Suppose first that \(m\) is even.  Let
\(\rho_m\) cyclically permute the coordinate pairs by
\[
(1,\ldots,m)\longmapsto(m,1,\ldots,m-1).
\]
Its determinant on the \(x\)-coordinates is \((-1)^{m-1}=-1\), so
Lemma~\ref{lem:suslin-coordinate-pair-permutation} shows that
precomposition by \(\rho_m\) acts on \(G_m\) by
\(\langle-1\rangle=-1\in W(k)\).  Apply
\cite[Lemma~3.5.2]{AF17} with \(a_1=x_m\) and the remaining coordinates in
their original order.  The second assertion of
Lemma~\ref{lem:suslin-coordinate-pair-permutation} shows that the resulting
right-hand class is exactly \(\rho_m^*g_m\); hence the displayed minus sign
in \cite[Lemma~3.5.2]{AF17} gives
\[
J_m(-g_{m-1})=\rho_m^*g_m=-g_m,
\]
and therefore
\begin{equation}\label{eq:explicit-gysin-even}
g_m=J_m(g_{m-1})\qquad(m\text{ even}).
\end{equation}

Now suppose that \(m\geq3\) is odd.  Move the last two coordinate pairs to
the front in the order required by the composite Gysin map:
\[
(1,\ldots,m)\longmapsto(m,m-1,1,\ldots,m-2).
\]
This permutation has determinant \(-1\).  Transitivity and
Lemma~\ref{lem:koszul-permutation-sign} identify
\(J_mJ_{m-1}\) with the push-forward for the ordered regular sequence
\((x_m,x_{m-1})\).  The coordinate permutation therefore contributes
\(\langle-1\rangle=-1\) in \(W(k)\), while the displayed minus sign in
\cite[Lemma~3.5.3]{AF17} contributes a second minus sign.  By
Lemma~\ref{lem:suslin-coordinate-pair-permutation}, the relabeled printed
matrix contributes no further factor.  The two signs cancel, and we obtain
\begin{equation}\label{eq:explicit-gysin-odd}
g_m=J_mJ_{m-1}(g_{m-2})\qquad(m\text{ odd}).
\end{equation}
If \(m\) is even, the sign in
\eqref{eq:unit-gysin-recursion} is \(1\), so
\eqref{eq:explicit-gysin-even} preserves the comparison coefficient.  If
\(m\) is odd, the two successive unit recursions contribute
\((-1)^{m-3}(-1)^{m-2}=-1\), whereas
\eqref{eq:explicit-gysin-odd} has no remaining sign.  Starting from
\(g_1=\omega_1\), we therefore obtain
\begin{equation}\label{eq:explicit-suslin-witt}
g_m=(-1)^{\lfloor(m-1)/2\rfloor}\omega_m.
\end{equation}
Consequently the image of \(\sigma_m\) in \(W(k)^\times\) is
\((-1)^{\lfloor(m-1)/2\rfloor}\).

The first two nontrivial ranks check the orientation signs before the
four-periodic pattern is read off.  For \(m=2\),
\eqref{eq:explicit-gysin-even} and
\eqref{eq:unit-gysin-recursion} give
\[
g_2=J_2(g_1)=J_2(\omega_1)=\omega_2.
\]
Thus the comparison unit has rank \(-1\) and Witt image \(1\), which identifies
it as \(-\langle-1\rangle\).  For \(m=3\),
\eqref{eq:explicit-gysin-odd} and the two unit recursions give
\[
g_3=J_3J_2(g_1)=J_3(\omega_2)=-\omega_3.
\]
The comparison unit now has rank \(1\) and Witt image \(-1\), and hence is
\(\langle-1\rangle\).  These are precisely the \(m=2\) and \(m=3\) entries of
\eqref{eq:explicit-suslin-unit}; in particular, the transpose sign, the Gysin
order, and the ordered Koszul orientation use compatible conventions.

Finally, the homomorphism
\[
GW(k)\longrightarrow\mathbb Z\times W(k),\qquad
q\longmapsto(\operatorname{rk}(q),\overline q),
\]
is injective.  Indeed, the kernel of \(GW(k)\to W(k)\) is generated by the
hyperbolic plane, which has rank \(2\).  Combining
\eqref{eq:explicit-suslin-rank} and
\eqref{eq:explicit-suslin-witt} therefore gives precisely the four cases in
\eqref{eq:explicit-suslin-unit}.
\end{proof}

\begin{corollary}\label{cor:explicit-suslin-wood}
Retain the normalization of Theorem~\ref{prop:explicit-suslin-rank}, and let
\((\delta^{\mathrm{AF}}_{m,j})_*\) be induced by the printed matrices.  In
the ranges of Proposition~\ref{prop:oriented-unit-wood},
\[
(\delta^{\mathrm{AF}}_{n,j})_*\circ d_{n,j+1}
=
\begin{cases}
-\eta\circ(\delta^{\mathrm{AF}}_{n+1,j})_*,&n\text{ even},\\
0,&n\text{ odd}.
\end{cases}
\]
\end{corollary}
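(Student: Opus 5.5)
The plan is to derive the corollary from Proposition~\ref{prop:oriented-unit-wood} by transporting its strict oriented Wood square through the comparison of Theorem~\ref{prop:explicit-suslin-rank}. That theorem gives $[\Psi_m^{\mathrm{AF}}]=\sigma_m[\Psi_m^{\mathrm{or}}]$ with $\sigma_m\in GW(k)^\times$. Since the induced maps on homotopy sheaves are $GW(k)$-linear, this yields
\[
(\delta^{\mathrm{AF}}_{m,j})_*=\sigma_m(\delta^{\mathrm{or}}_{m,j})_*
\]
for every $m$ and $j$. I would state this $GW(k)$-linearity explicitly: precomposition by a map $X_m\to\mathcal K_m$ is a module map over $\mathbf{GW}^0_0$ acting on $\mathcal K_m$.

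For odd $n$, nothing needs to be computed: $d_{n,j+1}=0$ by Lemma~\ref{lem:stiefel-parity} when $j=0$ and by Proposition~\ref{prop:dn2-odd-zero} when $j=1$. Both sides of the identity therefore vanish.

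For even $n$, the argument is a short calculation. Apply $\sigma_n$ to \eqref{eq:oriented-unit-wood} to get
\[
(\delta^{\mathrm{AF}}_{n,j})_*\,d_{n,j+1}=\sigma_n\eta\,(\delta^{\mathrm{or}}_{n+1,j})_*=\sigma_n\sigma_{n+1}^{-1}\,\eta\,(\delta^{\mathrm{AF}}_{n+1,j})_*.
\]
This uses that multiplication by $\eta$ commutes with the $GW(k)$-action, which holds because $\mathbf{KO}$ is a commutative ring spectrum and $\eta$ is central. It remains to show that $\sigma_n\sigma_{n+1}^{-1}\eta=-\eta$ for even $n$, using the table \eqref{eq:explicit-suslin-unit}.

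\begin{itemize}
\item For $n\equiv2\pmod4$, we have $\sigma_n=-\langle-1\rangle$ and $\sigma_{n+1}=\langle-1\rangle$. Since $\langle-1\rangle^2=1$, the ratio is $-1$.
\item For $n\equiv0\pmod4$, we have $\sigma_n=-1$ and $\sigma_{n+1}=1$, so the ratio is again $-1$.
\end{itemize}

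In both cases the ratio is exactly $-1\in GW(k)$, with no need to pass to the Witt ring. This gives the stated $-\eta$ in the range $n\geq4$ for $j=0$ and $n\geq5$ for $j=1$.

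The main obstacle I anticipate is justifying that the scalar $\sigma_m$, which was defined as a relation between classes in $[X_m,\mathcal K_m]_{\A^1}$ (equivalently on $Q_{2m-1}$ via $p_m$), transports to the induced maps on $\pi_{m+j}^{\A^1}$ as the same scalar. This needs the transport along $p_m$ to be an $\A^1$-equivalence compatible with the sphere coordinates fixed in Proposition~\ref{prop:oriented-unit-wood}. I would check this first by noting that Theorem~\ref{prop:explicit-suslin-rank} uses precisely the Euler/Gysin-oriented coordinates and Bott normalization assumed there. If there were a residual coordinate mismatch, it would contribute a unit $\langle\det A\rangle$ via Lemma~\ref{lem:koszul-permutation-sign}. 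Such a unit would appear identically at ranks $n$ and $n+1$ and would cancel in the ratio.
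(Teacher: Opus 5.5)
Your proposal is correct and is essentially the paper's own proof: substitute \((\delta^{\mathrm{AF}}_{m,j})_*=\sigma_m(\delta^{\mathrm{or}}_{m,j})_*\) into Proposition~\ref{prop:oriented-unit-wood}, read off \(\sigma_n\sigma_{n+1}^{-1}=-1\) for even \(n\) from \eqref{eq:explicit-suslin-unit}, and note that the odd-stage map vanishes whatever the normalization. Your closing worry about a residual coordinate mismatch does not arise, because the statement fixes the normalization of Theorem~\ref{prop:explicit-suslin-rank}; do not lean on your fallback argument there, since a mismatch unit that depends on the rank would not in general cancel in \(\sigma_n\sigma_{n+1}^{-1}\).
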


\begin{proof}
Substitute \((\delta^{\mathrm{AF}}_{m,j})_*=\sigma_m
(\delta^{\mathrm{or}}_{m,j})_*\) into
Proposition~\ref{prop:oriented-unit-wood}.  Formula
\eqref{eq:explicit-suslin-unit} gives
\(\sigma_n\sigma_{n+1}^{-1}=-1\) for every even \(n\).  The odd-stage map
is zero independently of the normalization.
\end{proof}

\begin{corollary}\label{cor:explicit-suslin-quadratically-closed}
If \(k\) is a quadratically closed field of characteristic \(0\), then
\[
\sigma_m=(-1)^{m+1}.
\]
In particular, this holds over every algebraically closed field of
characteristic \(0\).
\end{corollary}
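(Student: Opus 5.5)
The plan is to specialize the four-periodic formula \eqref{eq:explicit-suslin-unit} of Theorem~\ref{prop:explicit-suslin-rank}. Over a quadratically closed field the form \(\langle-1\rangle\) is isometric to \(\langle1\rangle\), so \(\langle-1\rangle=1\) in \(GW(k)\). Substituting this into the four cases gives the following values.
\[
\sigma_m=
\begin{cases}
1,&m\equiv1\pmod4,\\
-1,&m\equiv2\pmod4,\\
1,&m\equiv3\pmod4,\\
-1,&m\equiv0\pmod4.
\end{cases}
\]
This is exactly \((-1)^{m+1}\).

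It is cleaner to avoid the case list altogether. Since every element of \(k\) is a square, \(GW(k)\cong\mathbb Z\) via the rank map: every nondegenerate form diagonalizes to a sum of copies of \(\langle1\rangle\). Hence any element of \(GW(k)\) is determined by its rank, and \eqref{eq:explicit-suslin-rank} gives \(\operatorname{rk}(\sigma_m)=(-1)^{m+1}\) directly. Under the identification \(GW(k)=\mathbb Z\), this says \(\sigma_m=(-1)^{m+1}\).

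The algebraically closed case follows because such fields are quadratically closed. The only point to check is that the hypotheses of Theorem~\ref{prop:explicit-suslin-rank} (characteristic \(0\), \(m\geq2\)) are in force. For \(m=1\), the normalization \(g_1=\omega_1\) from its proof gives \(\sigma_1=1\), consistent with the formula. I do not expect any real obstacle here.
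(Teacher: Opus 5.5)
Your argument is correct and is the paper's proof: substitute $\langle-1\rangle=1$ into \eqref{eq:explicit-suslin-unit}. Your alternative via $GW(k)\cong\mathbb Z$ is also valid and needs only the rank formula \eqref{eq:explicit-suslin-rank}, not the Witt-coordinate computation. The aside on $m=1$ is outside the statement's scope, since $\sigma_m$ is defined only for $m\geq2$. It is also not justified as written: $g_1=\omega_1$ alone only fixes a Witt image, which over such $k$ lies in $W(k)=\mathbb Z/2$ and cannot detect the sign.
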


\begin{proof}
Over a quadratically closed field \(\langle-1\rangle=1\), so the assertion
is immediate from \eqref{eq:explicit-suslin-unit}.
\end{proof}

\begin{remark}\label{rem:explicit-suslin-convention-gap}
Theorem~\ref{prop:explicit-suslin-rank} is convention-sensitive but no longer
contains an undetermined fundamental-ideal component.  The ordered Koszul
orientation is essential: permuting coordinates changes the comparison by
the corresponding motivic degree.  Likewise, replacing the pointed
rank-one Bott model changes every \(\sigma_m\) coherently.

The formula also records the low-rank convention change in \cite{AF17}.
Definition~3.3.2 specializes at \(m=2\) to \(\alpha_2^t\), whereas
Example~4.4.3 writes \(\alpha_2\).  Relative to the orientation fixed above,
the former has comparison unit \(-\langle-1\rangle\); its forgetful rank is
\(-1\), while its Witt image is \(1\).  Thus the transpose changes exactly
the part invisible to either invariant separately.  The calculation does
not produce a coordinate-free preferred generator; it computes the full
unit after all coordinates have been stated.

There is also a low-rank consistency check on the coordinate-permutation
sign.  In the proof of
\cite[Theorem~4.3.1]{AF17}, the authors compare \(\Psi_3\) with the geometric
map \(Q_5\to SL_4/Sp_4\) by the coordinate involution
\(x_2,y_2\mapsto-x_2,-y_2\).  Its motivic degree is
\(\langle-1\rangle\), the same factor as the value \(\sigma_3\) in
\eqref{eq:explicit-suslin-unit}.  This check does not replace the orientation
normalization above, but it agrees with the rank-three matrix convention.
\end{remark}

\begin{corollary}\label{lem:dn2-eta-linearity}
Let \(k\) be a field of characteristic \(0\), let \(n\geq5\), and put
\[
D_{n,n+1}:=(d_{n,2})_{-(n+1)},
\qquad
D_n:=(d_{n,2})_{-(n+2)}.
\]
Then the diagram
\[
\begin{tikzcd}[column sep=large,row sep=large]
\boldsymbol\pi^s_{2,0}(\mathds1)
  \ar[r,"D_{n,n+1}"] \ar[d,"\eta"']&
\boldsymbol\pi^s_{3,1}(\mathds1)
  \ar[d,"\eta"]\\
\boldsymbol\pi^s_{3,1}(\mathds1)
  \ar[r,"D_n"']&
\boldsymbol\pi^s_{4,2}(\mathds1)
\end{tikzcd}
\]
commutes.
\end{corollary}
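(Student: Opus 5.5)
We will derive the square from Proposition~\ref{prop:dn2-stable-eta} together with the fact that contraction of a morphism of stable homotopy sheaves is compatible with the \(\eta\)-action. First, identify the sources and targets. By \eqref{eq:Pn-stable-identification} and \eqref{eq:stable-contraction-alpha}, we have \((\mathcal P_{n+1})_{-(n+1)}\cong\boldsymbol\pi^s_{2,0}(\mathds1)\) and \((\mathcal P_{n+1})_{-(n+2)}\cong\boldsymbol\pi^s_{3,1}(\mathds1)\). Likewise \((\mathcal P_n)_{-(n+1)}\cong\boldsymbol\pi^s_{3,1}(\mathds1)\) and \((\mathcal P_n)_{-(n+2)}\cong\boldsymbol\pi^s_{4,2}(\mathds1)\). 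Under these identifications the vertical maps \(\eta\) are the maps \(\boldsymbol\pi^s_{p+q\alpha}(\mathds1)\to\boldsymbol\pi^s_{p+(q+1)\alpha}(\mathds1)\) given by precomposition with \(\eta\in\pi_{1,1}\). These are natural morphisms \(\mathcal M_{-j}\to\mathcal M_{-(j+1)}\) for any sheaf of the form \(\mathcal M=\boldsymbol\pi^s_{\ast}(E)\).

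Second, apply Proposition~\ref{prop:dn2-stable-eta}. For odd \(n\), \(d_{n,2}=0\), so both horizontal maps vanish and the square commutes trivially. For even \(n\), \(d_{n,2}\) is induced by a fixed stable map, namely \(u_n\eta\) for some unit \(u_n\in GW(k)^\times\) depending on coordinates. So \(D_{n,n+1}\) and \(D_n\) are the contractions of the map \(x\mapsto u_n\eta\cdot x\) on \(\boldsymbol\pi^s_{\ast}(\mathds1)\).

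Third, and this is the key step, check that contraction commutes with this module action. I would argue that the contraction isomorphism \eqref{eq:stable-contraction-alpha} is itself given by smashing with \(\mathbb G_m\), that is, by evaluation on \(\mathbb G_m\wedge U_+\). Both the \(\eta\)-action on a contracted stem and the horizontal map \(u_n\eta\) are then multiplications in the graded-commutative ring \(\boldsymbol\pi^s_{\ast,\ast}(\mathds1)\), acting on stems as a module over \(\pi_{\ast,\ast}(\mathds1)(k)\). Commutativity of the square therefore reduces to
\[
\eta\cdot(u_n\eta\cdot x)=u_n\eta\cdot(\eta\cdot x).
\]
This holds because \(\eta\) and \(u_n\) commute. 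Morel's graded commutativity gives a twist \(\epsilon=-\langle-1\rangle\) when exchanging elements of odd weight/degree. Here \(\eta\) has bidegree \((1,1)\), so exchanging \(\eta\) with itself contributes \(\epsilon\), and \(\epsilon\eta=\eta\). Moreover \(u_n\) lies in degree \(0\) and commutes with everything.

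The main obstacle is making this last sign bookkeeping honest. The contraction identification must be shown to intertwine left multiplication by \(\eta\) with the \(\eta\)-action coming from the \(\mathbb G_m\)-coordinate, rather than differing from it by a twist \(\langle-1\rangle\). I would handle this by using the relation \(\eta\langle-1\rangle=\eta\) in \(K^{MW}_{-1}\), which makes any such twist invisible. Alternatively, one could note that on the relevant targets \(\mathbf K^M_2/2\) every unit of \(GW(k)\) acts trivially, as already observed in the proof of Proposition~\ref{prop:dn2-stable-eta}.
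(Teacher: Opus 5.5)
Your proposal is correct and is essentially the paper's argument. By Proposition~\ref{prop:dn2-stable-eta}, both horizontal maps are contractions of the map induced by one and the same stable element: $\eta$ (or $u_n\eta$ with $u_n$ central of degree zero) in even stages, and $0$ in odd stages. The square therefore commutes by associativity of products, equivalently of pre- and post-composition, in $\boldsymbol\pi^s_{*,*}(\mathds1)$, and no swap of the two copies of $\eta$ is ever needed.

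One slip in your last paragraph: $\eta\langle-1\rangle=-\eta$ (because $\eta h=0$), not $\eta$; the identity you want is $\epsilon\eta=\eta$ with $\epsilon=-\langle-1\rangle$. This fallback is unnecessary anyway, and your alternative remark already covers any such twist: units of $GW(k)$ act trivially on the common final target $\boldsymbol\pi^s_{4,2}(\mathds1)\cong\mathbf K^M_2/2$.
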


\begin{proof}
Both horizontal maps are induced by the same stable element described in
Proposition~\ref{prop:dn2-stable-eta}, and multiplication in the stable
motivic homotopy ring is associative.
\end{proof}

\begin{corollary}\label{cor:epsilon-is-eta}
Let \(n\geq5\) be even.  For every characteristic-zero field \(L\) and every
\(\xi\in\boldsymbol\mu_{24}(L)\), choose a lift
\[
\widetilde\xi\in
\boldsymbol\pi^s_{3,1}(\mathds1)(L)
\]
under the quotient in \eqref{eq:source-boundary-extension}.  Then
\[
\epsilon_n(L)(\xi)=\eta\widetilde\xi
\quad\text{in}\quad K^M_2(L)/2.
\]
The right-hand side is independent of the lift.  In particular,
\(\epsilon_n\) is independent of \(n\) among the even stages.
\end{corollary}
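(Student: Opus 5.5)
The plan is to combine the definition of \(\epsilon_n\) in Theorem~\ref{thm:dn2-boundary-defect} with the stable identification of Proposition~\ref{prop:dn2-stable-eta}. Evaluate on \(L\); the argument is natural in \(L\) because every identification used there is natural in the base field. By construction, \(\epsilon_n(L)(\xi)=D_n(L)(\widetilde\xi)\), where \(\widetilde\xi\) is any lift of \(\xi\) through the epimorphism \(\mathcal E_{n+1}\to\boldsymbol\mu_{24}\). To be careful here, I would note that \(\epsilon_n\) is defined as a morphism of sheaves. Its value on a section is therefore computed by lifting locally, and the lift exists globally only if \(\mathcal E_{n+1}(L)\to\boldsymbol\mu_{24}(L)\) is surjective. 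Over a field, evaluation is exact on strictly \(\A^1\)-invariant sheaves (fields are stalks / Henselian local in the Nisnevich topology, and Gersten injectivity also applies). Hence sequence \eqref{eq:source-boundary-extension} stays exact on \(L\)-points, and a global lift \(\widetilde\xi\in\boldsymbol\pi^s_{3,1}(\mathds1)(L)\) exists.

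Next, Proposition~\ref{prop:dn2-stable-eta} says that for even \(n\) the map \(D_n\) is multiplication by \(\eta\) under the canonical identifications \eqref{eq:Pn-high-contractions}. This holds in either convention, since a unit \(u_n\in GW(k)^\times\) acts trivially on \(\mathbf K^M_2/2\). So \(\epsilon_n(L)(\xi)=\eta\widetilde\xi\) in \(\boldsymbol\pi^s_{4,2}(\mathds1)(L)\cong K^M_2(L)/2\).

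Independence of the lift follows in one of two ways. One can use well-definedness of \(\epsilon_n\) directly. Alternatively, two lifts differ by an element \(x\) of \(K^M_3(L)/2\), and \(\eta x=D_n(x)\) is the restriction of \(D_n\) to \(\mathbf K^M_3/2\). That restriction vanishes by Lemma~\ref{lem:KM-degree-drop-zero}, as shown in Theorem~\ref{thm:dn2-boundary-defect}. Independence of \(n\) among even stages is then immediate. The formula \(\xi\mapsto\eta\widetilde\xi\) involves only \(\boldsymbol\pi^s_{3,1}(\mathds1)\), \(\boldsymbol\pi^s_{4,2}(\mathds1)\), the unit quotient to \(\boldsymbol\mu_{24}\), and the identification of \(\boldsymbol\pi^s_{4,2}(\mathds1)\) with \(\mathbf K^M_2/2\), and none of these depends on \(n\).

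The main obstacle is making sure the identifications in \eqref{eq:Pn-high-contractions} for \(\mathcal P_n\) and \(\mathcal P_{n+1}\) are the stable ones used in Proposition~\ref{prop:dn2-stable-eta}. Concretely, the contracted target identification \((\mathcal P_n)_{-(n+2)}\cong\boldsymbol\pi^s_{4,2}(\mathds1)\cong\mathbf K^M_2/2\) must be the one through which \(D_n\) becomes stable multiplication by \(\eta\), so that no extra \(n\)-dependent automorphism of \(\mathbf K^M_2/2\) sneaks in. To settle this, I would argue from the structure of \(\mathbf K^M_2/2\) itself. Its automorphisms as a strictly \(\A^1\)-invariant sheaf are given by \(\operatorname{Hom}(\mathbf K^{MW}_2,\mathbf K^M_2/2)\cong(\mathbf K^M_2/2)_{-2}(k)=\mathbb Z/2\) (via \cite[Lemma~5.1.3]{AWW17}), so the only automorphism is the identity. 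Hence any two canonical identifications agree, and the formula is unambiguous.
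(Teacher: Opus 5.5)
Your proposal is correct and follows the paper's route: Proposition~\ref{prop:dn2-stable-eta} makes $D_n$ multiplication by $\eta$, and Theorem~\ref{thm:dn2-boundary-defect} defines $\epsilon_n$ as the map $D_n$ induces on the quotient $\boldsymbol\mu_{24}$. The one real difference is lift-independence: the paper uses the description from \cite{RSO21} that $\mathbf K^M_3/2$ is generated by contractions of $\nu^2$, together with $\eta\nu=0$, whereas you use the formal vanishing of Lemma~\ref{lem:KM-degree-drop-zero} (equivalently, well-definedness of $\epsilon_n$), which is self-contained; your added remarks on existence of lifts over fields and on the rigidity of $\mathbf K^M_2/2$ are correct but not needed.
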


\begin{proof}
Proposition~\ref{prop:dn2-stable-eta} identifies \(D_n\) with
multiplication by \(\eta\).  The kernel
\(\mathbf K^M_3/2\) in \eqref{eq:source-boundary-extension} is a
Milnor \(K\)-theory module generated by the contractions of \(\nu^2\).
The relation \(\eta\nu=0\) therefore annihilates this kernel; see
\cite[Theorem~1.2 and the discussion at the beginning of \S3]{RSO21}.
Therefore
\(\eta\widetilde\xi\) depends only on \(\xi\), and
Theorem~\ref{thm:dn2-boundary-defect} identifies the induced quotient map
with \(\epsilon_n\).
\end{proof}

\begin{proposition}\label{prop:epsilon-order-two-zero}
Let \(k\) be a field of characteristic \(0\), let \(n\geq5\), and let
\(\epsilon_n\) be the boundary defect of
Theorem~\ref{thm:dn2-boundary-defect}.  For every field extension \(L/k\),
\[
\epsilon_n(L)(-1)=0.
\]
Consequently, \(\epsilon_n\) factors through
\[
\boldsymbol\mu_{24}/
\bigl(2\boldsymbol\mu_{24}+\langle-1\rangle\bigr).
\]
\end{proposition}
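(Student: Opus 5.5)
The odd case is Theorem~\ref{thm:dn2-boundary-defect}, so I assume \(n\) is even. By Corollary~\ref{cor:epsilon-is-eta}, \(\epsilon_n(L)(-1)=\eta\widetilde\xi\) in \(K^M_2(L)/2\) for any lift \(\widetilde\xi\in\boldsymbol\pi^s_{3,1}(\mathds1)(L)\) of \(-1\in\boldsymbol\mu_{24}(L)\). The plan is to exhibit a lift defined over \(\mathbb Q\) and compute \(\eta\widetilde\xi\) there.

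\textbf{Step 1: reduce to the prime field.} Every characteristic-zero field \(L\) contains \(\mathbb Q\), and \(-1\in\boldsymbol\mu_{24}(\mathbb Q)\). By naturality of \eqref{eq:source-boundary-extension} and of \(\epsilon_n\), it suffices to show \(\epsilon_n(\mathbb Q)(-1)=0\in K^M_2(\mathbb Q)/2\). Base change then gives the result for every \(L\).

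\textbf{Step 2: choose a lift from the first stem.} The natural candidate is \(\widetilde\xi=\eta_{\mathrm{top}}\cdot\eta\), or more generally an element built from \([-1]\) and \(\eta\). Concretely, I would use the relation identifying the image of the unit on \(\boldsymbol\pi^s_{3,1}\) with \(\boldsymbol\mu_{24}\), as in \cite[Theorem~1.2]{RSO21}. Under this identification, the element of order two is the image of a product of first-stem classes, which should be expressible as \(\nu\cdot[-1]\) or \(\eta\eta_{\mathrm{top}}\) up to the \(\mathbf K^M_3/2\) ambiguity.

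\textbf{Step 3: compute \(\eta\widetilde\xi\).} Multiplying the lift by \(\eta\) gives one of two things:
\begin{itemize}
\item \(\eta\nu[-1]=0\), using the relation \(\eta\nu=0\) already invoked in Corollary~\ref{cor:epsilon-is-eta};
\item \(\eta^2\eta_{\mathrm{top}}\), whose image in \(\boldsymbol\pi^s_{4,2}(\mathds1)\cong\mathbf K^M_2/2\) I expect to be controlled by the first-stem module relations of \cite{RSO21}.
\end{itemize}

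\textbf{Fallback for Steps 2--3.} If a direct identification of the lift is unavailable, I would instead use the rigidity and local argument sketched in the introduction. The class \(\epsilon_n(\mathbb Q)(-1)\in K^M_2(\mathbb Q)/2\cong\bigoplus_v\) (local symbols) is detected by tame symbols at odd primes and by the real and \(2\)-adic Hilbert symbols. At odd primes, naturality with respect to the Henselization and reduction to \(\mathbb F_p\) kills it, since \(K^M_2(\mathbb F_p)=0\). The real component should vanish because complex and real realization of the relevant classes lie in even-degree stems where the order-two element maps trivially. Hilbert reciprocity then forces the \(2\)-adic component to vanish as well.

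\textbf{Main obstacle.} The real place is the delicate point. \(\mathbb Q\) has a real place, unlike the cyclotomic fields mentioned in the introduction, so reciprocity alone does not suffice. I will have to compute \(\eta\widetilde\xi\) over \(\mathbb R\), where it is the product \(\{-1,-1\}\) times some coefficient. I would try real realization: \(\eta\) realizes to \(2\) (up to sign), so \(\eta\widetilde\xi\) maps to twice a class and hence to zero modulo \(2\).

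\textbf{Factorization.} Once \(\epsilon_n(-1)=0\) holds for every \(L\), the factorization through \(\boldsymbol\mu_{24}/(2\boldsymbol\mu_{24}+\langle-1\rangle)\) follows formally. This combines the statement just proved with the factorization through \(\boldsymbol\mu_{24}/2\boldsymbol\mu_{24}\) from Theorem~\ref{thm:dn2-boundary-defect}.
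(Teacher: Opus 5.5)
Steps 2 and 3 do not work as written, because your proposed lifts live in the wrong group. With the paper's conventions, $\eta\in\boldsymbol\pi^s_{1,1}$, $\eta_{\mathrm{top}}\in\boldsymbol\pi^s_{1,0}$, $\nu\in\boldsymbol\pi^s_{3,2}$ and $[u]\in\boldsymbol\pi^s_{-1,-1}$. So both $\eta\eta_{\mathrm{top}}$ and $\nu[-1]$ lie in the first stem $\boldsymbol\pi^s_{2,1}(\mathds1)$, not in $\boldsymbol\pi^s_{3,1}(\mathds1)$, whose quotient is $\boldsymbol\mu_{24}$. Likewise $\eta^2\eta_{\mathrm{top}}$ lies in $\boldsymbol\pi^s_{3,2}(\mathds1)$, where it equals $12\nu\neq0$; it does not lie in $\boldsymbol\pi^s_{4,2}(\mathds1)\cong\mathbf K^M_2/2$.

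The missing idea is the correct stem-two lift. The paper takes $a=\eta\eta_{\mathrm{top}}^{2}\in\boldsymbol\pi^s_{3,1}(\mathds1)(L)$, which maps to $-1\in\boldsymbol\mu_{24}(L)$ by the relations of \cite[\S6]{RSO21}. Then $\eta a=\eta^2\eta_{\mathrm{top}}^2=0$. One way to see this: $\eta^2\eta_{\mathrm{top}}=12\nu$, so $\eta a$ is $12$ times the class $\nu\eta_{\mathrm{top}}$, which lies in the $2$-torsion group $\boldsymbol\pi^s_{4,2}(\mathds1)(L)$. Corollary~\ref{cor:epsilon-is-eta} then gives $\epsilon_n(L)(-1)=0$ directly over every $L$. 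This needs no reduction to $\mathbb Q$ and no local analysis.

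Your fallback is not yet a proof, for two reasons.
\begin{itemize}
\item At an odd prime $p$, the group $K^M_2(\mathbb Q_p)/2\cong\mathbb Z/2$ is detected by the tame symbol to $\mathbb F_p^\times/2$. The vanishing $K^M_2(\mathbb F_p)=0$ therefore kills the local component only once you know the class is unramified at $p$. Establishing that requires a lift of $-1$ over the henselian local ring whose unit image is exactly $-1$. This is the rigidity argument of \cite{BO22}, which the paper carries out for the cyclotomic fields in Theorem~\ref{thm:epsilon-global-zero}.
\item At the real place, $\mathfrak R_{\mathbb R}(\eta)=\pm2$ only shows that the realization of $\eta\widetilde\xi$ vanishes. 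To conclude that $\eta\widetilde\xi$ itself vanishes, you also need real realization to be injective on $\boldsymbol\pi^s_{4,2}(\mathds1)(\mathbb R)\cong\mathbb Z/2$. This holds because its generator $\rho^2\nu^2$ realizes to $\eta_{\mathrm{top}}^2\neq0$ \cite[Example~6.6]{Gant_Williams_2025}.
\end{itemize}
With both inputs supplied, Hilbert reciprocity would finish the argument. Even so, this route is much heavier than the direct computation with $\eta\eta_{\mathrm{top}}^2$.
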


\begin{proof}
The assertion is immediate for odd \(n\), so assume that \(n\) is even.
Let
\[
\eta_{\mathrm c}\in\boldsymbol\pi^s_{1,0}(\mathds1)(L)
\]
be the constant, or topological, Hopf element, and put
\[
a:=\eta\eta_{\mathrm c}^{\,2}
\in\boldsymbol\pi^s_{3,1}(\mathds1)(L).
\]
The relations calculated in \cite[\S6]{RSO21} say that \(a\) maps to
\(-1\in\boldsymbol\mu_{24}(L)\) and that
\[
\eta a=\eta^2\eta_{\mathrm c}^{\,2}=0.
\]
Corollary~\ref{cor:epsilon-is-eta} therefore gives
\(\epsilon_n(L)(-1)=0\).  Combining this with the factorization through
\(\boldsymbol\mu_{24}/2\boldsymbol\mu_{24}\) from
Theorem~\ref{thm:dn2-boundary-defect} proves the sheaf-level factorization.
\end{proof}

\begin{corollary}\label{prop:epsilon-real-zero}
Let \(L/k\) be a field extension such that \(\sqrt{-1}\notin L\).  Then
\[
\epsilon_n(L)=0.
\]
In particular, this holds whenever \(L\) admits a real embedding.
\end{corollary}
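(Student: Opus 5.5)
The plan is to reduce the claim to Proposition~\ref{prop:epsilon-order-two-zero}, using the structure of \(\boldsymbol\mu_{24}(L)\) when \(\sqrt{-1}\notin L\). For odd \(n\), Theorem~\ref{thm:dn2-boundary-defect} already gives \(\epsilon_n=0\), so I will assume \(n\) is even.

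First I will determine \(\boldsymbol\mu_{24}(L)\). It is a finite cyclic group. Its \(2\)-primary part is \(\mu_{2^a}(L)\) with \(2^a\mid 8\). If \(a\geq2\), then \(L\) would contain a primitive fourth root of unity, that is \(\sqrt{-1}\), contrary to hypothesis. Hence the \(2\)-primary part is \(\{\pm1\}\), and
\[
\boldsymbol\mu_{24}(L)=\{\pm1\}\times\mu_3(L).
\]

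Next I will compute the quotient appearing in Proposition~\ref{prop:epsilon-order-two-zero}. The subgroup \(2\boldsymbol\mu_{24}(L)\) consists of squares, and it contains \(\mu_3(L)\), since every element of odd order is a square. Therefore
\[
\boldsymbol\mu_{24}(L)/2\boldsymbol\mu_{24}(L)=\{\pm1\},
\]
generated by the class of \(-1\). Quotienting further by \(\langle-1\rangle\) gives the trivial group.

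By Proposition~\ref{prop:epsilon-order-two-zero}, \(\epsilon_n(L)\) factors through the group of \(L\)-points of this quotient, so \(\epsilon_n(L)=0\). To avoid any sheafification subtlety, I will argue directly on \(L\)-points rather than through the quotient sheaf. The map \(\epsilon_n(L)\) kills \(-1\) by Proposition~\ref{prop:epsilon-order-two-zero}. Its target \(K^M_2(L)/2\) is \(2\)-torsion, so it also kills \(\mu_3(L)\): each element of order \(3\) is the square of an element of order \(3\), and a homomorphism into a \(2\)-torsion group sends squares to \(0\). Since \(-1\) and \(\mu_3(L)\) together generate \(\boldsymbol\mu_{24}(L)\), the map vanishes.

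For the last sentence of the statement: if \(L\) has a real embedding, then \(-1\) is not a square in \(L\), so \(\sqrt{-1}\notin L\). I expect no real obstacle in this argument. The only point needing care is the evaluation on \(L\)-points just described.
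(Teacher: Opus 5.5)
Your proposal is correct and follows the paper's own argument. Since $\sqrt{-1}\notin L$, the $2$-primary part of the cyclic group $\boldsymbol\mu_{24}(L)$ is $\{\pm1\}$, so $\boldsymbol\mu_{24}(L)/2\boldsymbol\mu_{24}(L)$ is generated by $-1$, which Proposition~\ref{prop:epsilon-order-two-zero} kills. Your separate treatment of odd $n$ and your direct check on $L$-points (squares map to zero in the $2$-torsion target) are harmless refinements of that same proof.
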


\begin{proof}
The group \(\boldsymbol\mu_{24}(L)\) is cyclic.  If
\(\sqrt{-1}\notin L\), its \(2\)-primary subgroup is
\(\boldsymbol\mu_2(L)=\{\pm1\}\).  Therefore
\(\boldsymbol\mu_{24}(L)/2\boldsymbol\mu_{24}(L)\) is generated by the
class of \(-1\), which is killed by
Proposition~\ref{prop:epsilon-order-two-zero}.
\end{proof}

\begin{proposition}\label{prop:epsilon-cyclotomic-reduction}
Fix a primitive eighth root of unity \(\zeta_8\), and let
\(\epsilon_{n,F}\) denote the map on sections over a characteristic-zero
field \(F\).  The fieldwise values of \(\epsilon_n\) are determined by the
two universal classes
\[
e_{n,4}:=\epsilon_{n,\mathbb Q(i)}(i)
\in K^M_2(\mathbb Q(i))/2,
\qquad
e_{n,8}:=\epsilon_{n,\mathbb Q(\zeta_8)}(\zeta_8)
\in K^M_2(\mathbb Q(\zeta_8))/2.
\]
They satisfy
\[
e_{n,4}\in
\bigl(K^M_2(\mathbb Q(i))/2\bigr)^{\operatorname{Gal}(\mathbb Q(i)/\mathbb Q)},
\qquad
e_{n,8}\in
\bigl(K^M_2(\mathbb Q(\zeta_8))/2\bigr)^{
\operatorname{Gal}(\mathbb Q(\zeta_8)/\mathbb Q)}
\]
and
\[
\operatorname{res}_{\mathbb Q(\zeta_8)/\mathbb Q(i)}(e_{n,4})=0.
\]
In particular,
\[
e_{n,4}\in
\{i\}\,K^M_1(\mathbb Q(i))/2
\subset K^M_2(\mathbb Q(i))/2.
\]
\end{proposition}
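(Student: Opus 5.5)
Fix an even \(n\geq5\); for odd \(n\) every assertion is trivial because \(\epsilon_n=0\). The plan rests on naturality of \(\epsilon_n\) as a morphism of Nisnevich sheaves: for every field extension \(F\subset F'\), the map \(\epsilon_{n,F'}\) restricted to \(\boldsymbol\mu_{24}(F)\) equals \(\operatorname{res}_{F'/F}\circ\epsilon_{n,F}\). By Theorem~\ref{thm:dn2-boundary-defect}, \(\epsilon_{n,F}\) factors through \(\boldsymbol\mu_{24}(F)/2\). By Proposition~\ref{prop:epsilon-order-two-zero}, it also kills \(-1\). So only the \(2\)-primary part of the cyclic group \(\boldsymbol\mu_{24}(F)\) matters, and that part is \(\boldsymbol\mu_{2^a}(F)\) with \(a\leq3\). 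If \(a\leq1\) the value is zero (Corollary~\ref{prop:epsilon-real-zero}). If \(a=2\), a generator of the relevant quotient is a primitive fourth root of unity \(\xi\). Choosing an embedding \(\mathbb Q(i)\to F\) with \(i\mapsto\xi\) gives \(\epsilon_{n,F}(\xi)=\operatorname{res}(e_{n,4})\). If \(a=3\), a primitive eighth root generates the quotient, and the same argument with \(\mathbb Q(\zeta_8)\to F\) expresses the value through \(e_{n,8}\). Since a homomorphism from a cyclic group is determined by its value on a generator, these two classes determine every fieldwise value.

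For Galois invariance, take \(\sigma\in\operatorname{Gal}(\mathbb Q(i)/\mathbb Q)\), i.e. \(\sigma=\) conjugation. Naturality along the field automorphism \(\sigma\) gives
\[
\sigma_*e_{n,4}=\epsilon_{n,\mathbb Q(i)}(\sigma(i))=\epsilon_{n,\mathbb Q(i)}(i^{-1})=-e_{n,4}=e_{n,4},
\]
where the last equality holds modulo \(2\). For \(\mathbb Q(\zeta_8)\), any \(\sigma\) sends \(\zeta_8\mapsto\zeta_8^{j}\) with \(j\) odd. Then \(\sigma_*e_{n,8}=j\,e_{n,8}=e_{n,8}\), again because the target is \(2\)-torsion and \(j\) is odd.

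For the restriction identity, naturality along \(\mathbb Q(i)\subset\mathbb Q(\zeta_8)\) with \(i=\zeta_8^2\) gives
\[
\operatorname{res}(e_{n,4})=\epsilon_{n,\mathbb Q(\zeta_8)}(\zeta_8^2)=2e_{n,8}=0.
\]

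The last containment, \(e_{n,4}\in\{i\}K^M_1(\mathbb Q(i))/2\), is the main obstacle. The plan is to identify the kernel of restriction along the quadratic extension \(\mathbb Q(\zeta_8)=\mathbb Q(i)(\sqrt i)\) on \(K^M_2/2\). This is the standard exact sequence for quadratic extensions in mod-\(2\) Milnor \(K\)-theory, a consequence of the Milnor conjecture (\cite{OVV07}) in the form of Arason/Bloch--Kato:
\[
K^M_1(E)/2\xrightarrow{\{a\}\cdot}K^M_2(E)/2\xrightarrow{\operatorname{res}}K^M_2(E(\sqrt a))/2,
\]
applied with \(E=\mathbb Q(i)\) and \(a=i\). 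Exactness in the middle is the statement that the kernel of restriction is exactly \(\{a\}K^M_1(E)/2\). Applied to the restriction identity above, this gives \(e_{n,4}\in\{i\}K^M_1(\mathbb Q(i))/2\).

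The one detail to check is that the boundary quotient map of \eqref{eq:source-boundary-extension}, and hence \(\epsilon_n\), is natural for arbitrary field homomorphisms, including automorphisms. This follows because it comes from morphisms of sheaves on \(\mathpzc{Sm}_k\) extended to essentially smooth field spectra, after base-changing to \(\mathbb Q\) where needed. The construction is defined over the prime field, so the universal classes do not depend on \(k\).
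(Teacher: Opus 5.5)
Your proposal is correct and follows essentially the same route as the paper. Both arguments use naturality under field extension together with the factorization through $\boldsymbol\mu_{24}/2\boldsymbol\mu_{24}$ and the vanishing on $-1$ to reduce to primitive fourth and eighth roots, deduce Galois invariance and $\operatorname{res}(e_{n,4})=\epsilon_{n,\mathbb Q(\zeta_8)}(\zeta_8^2)=0$ from the $2$-torsion of the target, and conclude with the norm-residue description of the kernel of restriction along $\mathbb Q(i)\subset\mathbb Q(i)(\sqrt i)$. The only cosmetic difference is that you write $-i=i^{-1}$ where the paper writes $(-1)\cdot i$; your version does not need Proposition~\ref{prop:epsilon-order-two-zero} at that step.
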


\begin{proof}
The Stiefel morphisms, the connecting maps, and the canonical stable
identifications used to define \(\epsilon_n\) commute with extension of the
ground field.  Let \(L\) be a characteristic-zero field and
\(\xi\in\boldsymbol\mu_{24}(L)\).  The odd-primary component of \(\xi\) is a
square in \(\boldsymbol\mu_{24}(L)\), and hence is killed by
Theorem~\ref{thm:dn2-boundary-defect}.  The \(2\)-primary component has order
at most \(8\).  The order-two case is killed by
Proposition~\ref{prop:epsilon-order-two-zero}; an order-four component is
obtained from \(i\) by scalar extension and possibly inversion; and an
order-eight component is obtained in the same way from \(\zeta_8\).
Since the target is annihilated by \(2\), inversion does not change the
value.  This proves the determination by \(e_{n,4}\) and \(e_{n,8}\).

Complex conjugation sends \(i\) to \(-i\), and
\[
\epsilon_{n,\mathbb Q(i)}(-i)
=\epsilon_{n,\mathbb Q(i)}(-1)
\mathbin{+}\epsilon_{n,\mathbb Q(i)}(i)
=e_{n,4}.
\]
Every automorphism of \(\mathbb Q(\zeta_8)\) sends \(\zeta_8\) to
\(\zeta_8^r\) for an odd \(r\), and
\(\epsilon_n(\zeta_8^r)=r\epsilon_n(\zeta_8)=e_{n,8}\).
This proves the two invariance statements.

In \(\mathbb Q(\zeta_8)\), one has \(i=\zeta_8^2\).  The factorization
through
\(\boldsymbol\mu_{24}/2\boldsymbol\mu_{24}\) therefore gives
\[
\operatorname{res}_{\mathbb Q(\zeta_8)/\mathbb Q(i)}(e_{n,4})=0.
\]
Finally, \(\mathbb Q(\zeta_8)=\mathbb Q(i)(\sqrt{i})\).  The standard
restriction sequence for the \(2\)-torsion Brauer group, together with the
norm-residue isomorphism, identifies the kernel of
\[
K^M_2(\mathbb Q(i))/2
\longrightarrow
K^M_2(\mathbb Q(\zeta_8))/2
\]
with \(\{i\}K^M_1(\mathbb Q(i))/2\); see \cite{OVV07}.  This proves the last
assertion.
\end{proof}

\begin{theorem}\label{thm:epsilon-global-zero}
Let \(k\) be a field of characteristic \(0\) and let \(n\geq5\).  Then
\[
\epsilon_n:
\boldsymbol\mu_{24}\longrightarrow\mathbf K^M_2/2
\]
is the zero morphism.  Equivalently,
\[
(d_{n,2})_{-(n+2)}=0.
\]
\end{theorem}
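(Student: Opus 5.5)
By Proposition~\ref{prop:epsilon-cyclotomic-reduction}, it suffices to prove that the two universal classes vanish:
\[
e_{n,4}=0\in K^M_2(\mathbb Q(i))/2
\qquad\text{and}\qquad
e_{n,8}=0\in K^M_2(\mathbb Q(\zeta_8))/2 .
\]
By Proposition~\ref{prop:dn2-odd-zero} we may assume that \(n\) is even. The equivalence with \((d_{n,2})_{-(n+2)}=0\) then follows from Theorem~\ref{thm:dn2-boundary-defect}. On \(\mathbf K^M_3/2\) the restriction is already zero, so \((d_{n,2})_{-(n+2)}\) factors through the quotient \(\boldsymbol\mu_{24}\) and is exactly \(\epsilon_n\).

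The tool is the norm-residue identification \(K^M_2(F)/2\cong{}_2\mathrm{Br}(F)\) for number fields \(F\). By Hasse--Brauer--Noether, a class in \({}_2\mathrm{Br}(F)\) is zero if and only if all of its local invariants vanish, and the sum of those local invariants is zero. Both \(F=\mathbb Q(i)\) and \(F=\mathbb Q(\zeta_8)\) are totally imaginary, so there are no real places. Each also has a unique place above \(2\), because \(2\) is totally ramified in both fields. So I will show that the local invariant vanishes at every odd finite place \(v\). Reciprocity then forces the invariant at the unique dyadic place to vanish as well.

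\textbf{Step 1: naturality.} Fix an odd finite place \(v\) of \(F\) and let \(F_v\) be the completion. By the base-change naturality used in the proof of Proposition~\ref{prop:epsilon-cyclotomic-reduction}, the image of \(e_{n,4}\) (resp.\ \(e_{n,8}\)) in \(K^M_2(F_v)/2\) equals
\[
\epsilon_{n,F_v}(i)\qquad(\text{resp. }\epsilon_{n,F_v}(\zeta_8)).
\]

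\textbf{Step 2: the residue map.} Let \(\kappa_v\) be the residue field of \(F_v\). The unramified sheaf \(\mathbf K^M_2/2\) has a residue sequence
\[
0\to K^M_2(\kappa_v)/2\to K^M_2(F_v)/2\xrightarrow{\ \partial_v\ }K^M_1(\kappa_v)/2\to0 .
\]
Here \(K^M_2(\kappa_v)=0\) because \(\kappa_v\) is finite. Hence \(\partial_v\) is an isomorphism, and it suffices to show that \(\partial_v\epsilon_{n,F_v}(\xi)=0\).

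\textbf{Step 3: rigidity.} Since \(v\) is odd, the root of unity \(\xi\) is a unit in \(\mathcal O_v\) and lies in \(\boldsymbol\mu_{24}(\mathcal O_v)\). By naturality of \(\epsilon_n\) for the morphism \(\operatorname{Spec}F_v\to\operatorname{Spec}\mathcal O_v\), it is enough to produce the value on \(\mathcal O_v\): the class \(\epsilon_{n,\mathcal O_v}(\xi)\) is unramified, hence lies in the kernel of \(\partial_v\). Concretely, evaluate the strictly \(\A^1\)-invariant morphism \(\epsilon_n\) on the essentially smooth local scheme \(\operatorname{Spec}\mathcal O_v\); this is where the \(2\)-complete Henselian rigidity enters. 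By strict \(\A^1\)-invariance and the Gersten resolution, sections over \(\mathcal O_v\) inject into \(F_v\) and land in \(\ker\partial_v\).

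This is the step I expect to be the main obstacle. The paper works with sheaves on smooth \(k\)-schemes with \(\operatorname{char}k=0\), while \(\mathcal O_v\) has mixed characteristic. I would first try to replace the spreading-out over \(\mathcal O_v\) by a Henselian equicharacteristic argument. Over \(F_v\), the root \(\xi\) lifts to \(\boldsymbol\pi^s_{3,1}(\mathds1)\) by a class coming from \(\mathbb Q(\xi)\). The recipe of Corollary~\ref{cor:epsilon-is-eta} computes \(\epsilon_n(\xi)\) as \(\eta\widetilde\xi\). One then needs to check that \(\eta\widetilde\xi\) has trivial tame symbol at odd \(v\). The argument for this is that the \(\mu_{24}\)-part is detected by \(\mathbf{kq}\), i.e.\ by an image-of-\(J\) type class defined over \(\mathbb Z[1/2]\). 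Rigidity of \(2\)-complete \(\mathbf{kq}\) and of the sphere at odd primes identifies the value with one coming from the unramified part. If this proves too delicate, a fallback is to use Corollary~\ref{cor:epsilon-is-eta} directly: write \(\widetilde\xi\) through the Adams/\(J\)-homomorphism description in \cite[\S6]{RSO21} and compute \(\eta\widetilde\xi\) as an explicit symbol, for example one of the form \(\{\xi,\cdot\}\) or \(\{-1,\cdot\}\), whose tame symbols at odd places can be checked by hand.

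\textbf{Step 4: conclusion.} Once the invariants vanish at all odd places, the absence of real places together with uniqueness of the dyadic place and reciprocity give \(e_{n,4}=e_{n,8}=0\). Proposition~\ref{prop:epsilon-cyclotomic-reduction} then yields \(\epsilon_n=0\) over every characteristic-zero field, hence on the sheaf level.
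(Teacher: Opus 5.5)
Your global architecture matches the paper's. You reduce to $e_{n,4}$ and $e_{n,8}$ for even $n$, prove that the local invariant vanishes at every odd finite place, and finish with Brauer--Hasse--Noether, using that both fields are totally imaginary with a single dyadic place. Steps~1, 2 and~4 are fine. Step~2 is harmless but superfluous: since $K^M_2(\kappa_v)/2=0$, ``unramified'' already means ``zero''.

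The gap is Step~3, which carries the entire content of the theorem, and the mechanism you propose there does not work. The morphism $\epsilon_n$ is a morphism of sheaves on $\mathpzc{Sm}_k$ with $\operatorname{char}k=0$. It has no value on $\operatorname{Spec}\mathcal O_v$, which has mixed characteristic. Moreover, $v$ is an arithmetic valuation, not a geometric valuation over $k$. So strict $\A^1$-invariance, Gersten injectivity and Rost--Schmid residues say nothing about $\partial_v$, and no equicharacteristic Henselian substitute can detect $v$ either. Your alternative sketches (an image-of-$J$ class, or an explicit symbol for $\eta\widetilde\xi$) point in a direction but contain no argument.

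The missing idea is to leave the sheaf-theoretic setting altogether. Corollary~\ref{cor:epsilon-is-eta} rewrites $\epsilon_n(\xi)$ as $\eta\widetilde\xi$. This is a statement in stable motivic homotopy and can be posed over an integral base. The paper works over the henselization $R=\mathcal O_{F,v}^h$, which is essentially smooth over $\mathcal O_F[1/2]$ so that continuity applies, and then embeds $K=\operatorname{Frac}R$ into $F_v$. Three inputs are needed.

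\begin{enumerate}
\item The $2$-complete cellular rigidity equivalence $\mathbf{SH}(R)^{\mathrm{cell},\wedge}_2\simeq\mathbf{SH}(\kappa)^{\mathrm{cell},\wedge}_2$ of \cite{BO22}. It is symmetric monoidal, so it respects the unit and $\eta$, and it applies to $\mathbf{KQ}$ because $\mathbf{KQ}$ is cellular.
\item A class $a_R\in\pi_{3,1}(\mathds1_R)^\wedge_2$ whose unit image is exactly $\xi$, not merely some lift of $\bar\xi$. Take $\bar a$ over the finite field $\kappa$ with unit image $\bar\xi$, which exists by the positive-characteristic version of \cite[Theorem~1.2]{RSO21}, and lift it by rigidity. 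Then identify its unit image through the hyperbolic map $(R^\times)^\wedge_2\to\pi_{3,1}(\mathbf{KQ}_R)^\wedge_2$, using that $1+\mathfrak m$ is uniquely $2$-divisible by Hensel's lemma.
\item The vanishing $\pi_{4,2}(\mathds1_\kappa)^\wedge_2=0$. This is where $K^M_2(\kappa)=0$ genuinely enters. Together with $H^{-1,0}(\kappa)=0$, it makes the unit map on $\pi_{4,2}$ injective after inverting $\operatorname{char}\kappa$; its source is torsion and its target $K^M_0(\kappa)$ is torsion-free. This gives $\eta\bar a=0$, hence $\eta a_R=0$ and $\eta a_K=0$.
\end{enumerate}

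To return to $K^M_2(K)/2$ you still need two facts. First, $\eta\widetilde\xi$ is independent of the lift, because the kernel $\mathbf K^M_3/2$ is killed by $\eta$ (since $\eta\nu=0$). Second, a class in the $2$-torsion group $K^M_2(K)/2$ with trivial $2$-completion is trivial. Without these steps the odd-place vanishing, and hence the theorem, is not established.
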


\begin{proof}
The assertion for odd \(n\) is Proposition~\ref{prop:dn2-odd-zero}.  Suppose
that \(n\) is even.  By
Proposition~\ref{prop:epsilon-cyclotomic-reduction}, it suffices to prove
\[
e_{n,4}=0
\quad\text{and}\quad
e_{n,8}=0.
\]
Corollary~\ref{cor:epsilon-is-eta} identifies these classes with the
operation induced by \(\eta\) on the fourth and eighth roots of unity.

We first prove an odd-place vanishing statement for this operation.  Let
\[
(F,\xi)=\bigl(\mathbb Q(i),i\bigr)
\quad\text{or}\quad
(F,\xi)=\bigl(\mathbb Q(\zeta_8),\zeta_8\bigr),
\]
and let \(v\nmid2\) be a finite place of \(F\).  Write
\[
R:=\mathcal O_{F,v}^{\,h},
\qquad
K:=\operatorname{Frac}(R),
\qquad
\kappa:=R/\mathfrak m.
\]
The reduction \(\bar\xi\in\kappa^\times\) has the same order as \(\xi\).
Indeed, the difference of two distinct \(2\)-power roots of unity is a unit
at every odd place.

Work in the \(2\)-complete cellular stable motivic categories.  Since
\(1/2\in R\), the rigidity equivalence of
\cite[Proposition~3.1(1)]{BO22} gives
\[
 i^*:\mathbf{SH}(R)^{\mathrm{cell},\wedge}_2
 \xrightarrow{\ \simeq\ }
 \mathbf{SH}(\kappa)^{\mathrm{cell},\wedge}_2.
\]
The functor \(i^*\) is exact and symmetric monoidal.  Thus this equivalence
identifies the sphere units and their \(\eta\)-actions; the argument below
compares the operation itself and not merely the abstract homotopy groups.
The statement applies to the henselization above by the continuity
formulation recorded in the introduction of \cite{BO22}.  More explicitly,
because \(v\nmid2\), the local ring \(\mathcal O_{F,v}\) is a localization
of the regular Dedekind ring \(\mathcal O_F[1/2]\), and its henselization is
the filtered limit of its pointed \(\acute{e}\)tale neighborhoods.  It is
therefore in the essentially smooth henselian-local case covered by that
proposition and continuity.  The sphere is
cellular by definition, and \(\mathbf{KQ}\) is cellular over every base with
no residue characteristic \(2\) by
\cite[Theorem~1.1]{RSO18Cellular}; hence both objects lie in the categories
to which the displayed rigidity equivalence applies.

The positive-characteristic version of
\cite[Theorem~1.2 and the paragraph following it]{RSO21}, followed by
\(2\)-completion, supplies a class
\[
\bar a\in
\pi_{3,1}(\mathds1_\kappa)^\wedge_2
\]
whose image under the stable unit is \(\bar\xi\).  The theorem in positive
characteristic is stated after inverting the exponential characteristic;
this does not change its \(2\)-completion because that characteristic is
odd.  Here we use
\cite[Theorem~2.38]{RSO21} to identify
\(\pi_{3,1}(\mathbf{KQ}_\kappa)\) with \(\kappa^\times\).
Rigidity lifts \(\bar a\) uniquely to a class
\[
a_R\in\pi_{3,1}(\mathds1_R)^\wedge_2.
\]

We record why the unit image of \(a_R\) is the unit \(\xi\), rather than
only some lift of \(\bar\xi\).  The hyperbolic map and Bott periodicity give
a commutative square
\[
\begin{tikzcd}[column sep=large,row sep=large]
(R^\times)^\wedge_2
  \ar[r,"\mathrm{hyp}"] \ar[d,"\cong"']&
\pi_{3,1}(\mathbf{KQ}_R)^\wedge_2
  \ar[d,"\cong"]\\
(\kappa^\times)^\wedge_2
  \ar[r,"\mathrm{hyp}"']&
\pi_{3,1}(\mathbf{KQ}_\kappa)^\wedge_2.
\end{tikzcd}
\]
The right vertical map is the rigidity isomorphism.  The lower horizontal
map is an isomorphism by the Wood sequence calculation
\cite[Lemma~2.33 and Theorem~2.38]{RSO21}.  The left vertical map is an
isomorphism because \(1+\mathfrak m\) is uniquely \(2\)-divisible: for
every \(r\geq1\), Hensel's lemma applied to \(T^{2^r}-u\) shows that
\[
1+\mathfrak m\xrightarrow{\;2^r\;}1+\mathfrak m
\]
is bijective.  Thus the upper horizontal map is also an isomorphism, and
the unit image of \(a_R\) is \(\xi\).

For the finite field \(\kappa\), Matsumoto's theorem gives
\(K^M_2(\kappa)=0\).  In the case \(n=2\) of the positive-characteristic
version of \cite[Theorem~1.2]{RSO21}, the other kernel term is
\(H^{-1,0}(\kappa)/24=0\).  Hence the unit map embeds
\(\pi_{4,2}(\mathds1_\kappa)[1/p]\) into
\(\pi_{4,2}(\mathbf{kq}_\kappa)[1/p]\), where
\(p=\operatorname{char}(\kappa)\).  The paragraph following
\cite[Theorem~1.2]{RSO21} says that the source before localization is
torsion, whereas \cite[Theorem~2.38]{RSO21} identifies the target before
localization with the torsion-free group \(K^M_0(\kappa)\).  Hence the
localized source is zero.  Since \(p\) is odd, inverting \(p\) does not
change the \(2\)-completion, and therefore
\[
\pi_{4,2}(\mathds1_\kappa)^\wedge_2=0.
\]
Consequently \(\eta\bar a=0\).  Rigidity and restriction to the generic
point give
\[
\eta a_K=0,
\qquad
a_K:=a_R|_K,
\]
where \(a_K\) maps to \(\xi\) in
\(\pi_{3,1}(\mathbf{KQ}_K)^\wedge_2\).
The chosen embedding of the henselian fraction field \(K\) into the
completion \(F_v\) carries both \(a_K\) and the equality
\(\eta a_K=0\) to \(F_v\).

Over the characteristic-zero field \(K\), the split exact sequence in
\cite[Theorem~1.2]{RSO21} identifies the kernel of the unit on
\(\pi_{3,1}\) with \(K^M_3(K)/2\); here
\cite[Theorem~2.38]{RSO21} identifies
\(\pi_{3,1}(\mathbf{kq}_K)\to\pi_{3,1}(\mathbf{KQ}_K)\), so the same
description applies to the unit into periodic hermitian \(K\)-theory.  This
kernel is generated by contractions of \(\nu^2\) and is annihilated by the
relation \(\eta\nu=0\); see
\cite[Theorem~1.2 and the discussion at the beginning of \S3]{RSO21}.
It follows that the \(2\)-completion of the class
\(\eta\widetilde\xi\), for any integral lift \(\widetilde\xi\) of \(\xi\),
is zero.  This already implies
\[
\eta\widetilde\xi=0
\quad\text{in}\quad
\pi_{4,2}(\mathds1_K)=K^M_2(K)/2.
\]
The last implication follows injectively: the target is
annihilated by \(2\), so the map
\(\pi_{4,2}(\mathds1_K)\to\pi_{4,2}(\mathds1_K/8)\) is injective by the
Bockstein exact sequence, and \(2\)-completion maps to
\(\mathds1_K/8\).  We have proved that the restriction of the relevant
class to the completion \(F_v\) is zero for every \(v\nmid2\).

By the norm-residue isomorphism,
\[
K^M_2(F)/2\cong\operatorname{Br}(F)[2].
\]
Both \(\mathbb Q(i)\) and \(\mathbb Q(\zeta_8)\) are totally imaginary, and
each has a unique place above \(2\).  The class under consideration has
zero local invariant at every finite place away from \(2\), by the
preceding argument, and at every archimedean place.  The
Brauer--Hasse--Noether exact sequence
\cite[Chapter~IV, Example~2.14(e)]{MilneCFT} says that the sum of all local
invariants is zero.  Since only the single dyadic place remains, its
invariant is zero as well.  Hence the global Brauer class is zero.  This
proves \(e_{n,4}=e_{n,8}=0\), and
Proposition~\ref{prop:epsilon-cyclotomic-reduction} proves the result over
every characteristic-zero field.
\end{proof}

\begin{remark}\label{rem:epsilon-real-realization-check}
Over \(\mathbb R\), real realization gives an independent check.
It sends \(\eta\eta_{\mathrm c}^{\,2}\) to
\(2\eta_{\mathrm{top}}^2=0\), whereas
\cite[Example~6.6]{Gant_Williams_2025} identifies the generator of
\(\mathbf K^M_2(\mathbb R)/2\) with a class realizing to the nonzero element
\(\eta_{\mathrm{top}}^2\).  Hence realization is injective on the target and
also proves \(\epsilon_n(\mathbb R)=0\).
\end{remark}

\subsection{\texorpdfstring{The \(kq\)-visible range before the boundary}
{The kq-visible range before the boundary}}

The periodic compatibility controls more than the single boundary weight.
The strict inequality in \cite[Theorem~2.38]{RSO21} gives an exact starting
point for transporting it back from periodic hermitian \(K\)-theory to
\(\mathbf{kq}\).

\begin{theorem}\label{thm:dn2-kq-visible-range}
Let \(k\) be a field of characteristic \(0\), let \(n\geq5\), and let
\[
n-2\leq r\leq n+2.
\]
Put
\[
w:=n-r,\qquad
D_{n,r}:=(d_{n,2})_{-r},
\qquad
\mathcal Q_j:=\boldsymbol\pi^s_{2-j\alpha}(\mathbf{kq})
              =\boldsymbol\pi^s_{2-j,-j}(\mathbf{kq}),
\]
and, for \(m\in\{n,n+1\}\), put
\[
\mathcal A_{m,r}:=
\mathbf H^{m+1-r,m+2-r}/24
\ \oplus\
\mathbf K^M_{m+4-r}/2.
\]
There are normalized unit morphisms
\[
\overline u_{m,r}:
(\mathcal P_m)_{-r}\longrightarrow\mathcal Q_{m-r}
\]
and a commutative diagram with exact rows
\begin{equation}\label{eq:dn2-kq-visible-diagram}
\begin{tikzcd}[column sep=large, row sep=large]
0 \ar[r]&
\mathcal A_{n+1,r}
  \ar[r] \ar[d,"\kappa_{n,r}"']&
(\mathcal P_{n+1})_{-r}
  \ar[r,"\overline u_{n+1,r}"] \ar[d,"D_{n,r}"']&
\mathcal Q_{w+1}
  \ar[d,"\vartheta_n"]\\
0 \ar[r]&
\mathcal A_{n,r}
  \ar[r]&
(\mathcal P_n)_{-r}
  \ar[r,"\overline u_{n,r}"]&
\mathcal Q_w ,
\end{tikzcd}
\end{equation}
where \(\kappa_{n,r}\) is the restriction of \(D_{n,r}\) and
\[
\vartheta_n=
\begin{cases}
\eta,&n\text{ even},\\
0,&n\text{ odd}.
\end{cases}
\]
For \(r<n+2\), the four \(\mathbf{kq}\)-morphisms supplied by this theorem are
\begin{equation}\label{eq:dn2-kq-visible-table}
\begin{array}{c|c|c}
r&w&\text{\(\mathbf{kq}\)-morphism}\\ \hline
n-2&2&\mathcal Q_3\xrightarrow{\ \vartheta_n\ }\mathcal Q_2\\
n-1&1&\mathcal Q_2\xrightarrow{\ \vartheta_n\ }\mathcal Q_1\\
n&0&\mathcal Q_1\xrightarrow{\ \vartheta_n\ }\mathcal Q_0\\
n+1&-1&\mathcal Q_0\xrightarrow{\ \vartheta_n\ }\mathcal Q_{-1}.
\end{array}
\end{equation}
On sections over a field \(L/k\), \cite[Theorem~2.38]{RSO21} gives
\[
\mathcal Q_{-1}(L)\cong K^M_1(L),
\qquad
\mathcal Q_0(L)\cong K^M_2(L)\oplus k^M_0(L),
\qquad
k^M_*(L):=K^M_*(L)/2,
\]
as abelian groups, while \(\mathcal Q_1(L),\mathcal Q_2(L),\mathcal Q_3(L)\)
are the three explicit Milnor-symbol extensions displayed there.
\end{theorem}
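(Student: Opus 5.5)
The plan is to build the diagram by contracting the uncontracted exact sequence of Theorem~\ref{thm:Pn-second-stem} $r$ times and then transporting the periodic compatibility of Proposition~\ref{prop:dn2-KO-compatibility} (equivalently, the sphere-level identification of Proposition~\ref{prop:dn2-stable-eta}) down to $\mathbf{kq}$. For $m\in\{n,n+1\}$, apply the exact functor $(-)_{-r}$ (Lemma~\ref{lem:contraction-exact}) to \eqref{eq:Pn-second-stem}. By Theorem~\ref{thm:contraction-K} and the analogous contraction rule for motivic cohomology sheaves, the kernel becomes $\mathcal A_{m,r}$. By \eqref{eq:stable-contraction-alpha}, the right-hand term becomes $\boldsymbol\pi^s_{2-(m-r)\alpha}(\mathbf{kq})=\mathcal Q_{m-r}$. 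This defines $\overline u_{m,r}$ as the contracted unit, and it gives the two exact rows with $\mathcal Q_{w+1}$ and $\mathcal Q_w$.

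The right square is handled next. By Proposition~\ref{prop:dn2-stable-eta}, $d_{n,2}$ is induced by the stable map $\eta$ in even stages and by $0$ in odd stages, under the stabilization isomorphism \eqref{eq:Pn-stable-identification}. Possibly it is $u_n\eta$ for a unit $u_n\in GW(k)^\times$ in other coordinates, but we fix the coordinates of \cite[Lemma~3.5]{AF14}. The unit $\mathds1\to\mathbf{kq}$ is a map of $\mathds1$-modules, so it commutes with multiplication by $\eta$. Hence $\overline u_{n,r}\circ D_{n,r}=\vartheta_n\circ\overline u_{n+1,r}$ already holds before contraction, and it is preserved by contraction. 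In the odd case both sides vanish by Proposition~\ref{prop:dn2-odd-zero}.

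This $\mathbf{kq}$-level argument is the cleanest route. The alternative would be to go through $\mathbf{KQ}$ and descend along $\mathbf{kq}\to\mathbf{KQ}$ using the range $w\geq-2$ of \cite[Theorem~2.38]{RSO21}. That route is only needed to identify the $\mathbf{kq}$-morphism with Karoubi's $\eta$ from Proposition~\ref{prop:dn2-KO-compatibility}. We would record that the comparison $\mathbf{kq}\to\mathbf{KQ}$ is injective in weights $w\geq-1$, which is exactly the range $r\leq n+1$ used in the table. This injectivity is the main point to verify, and it is where the strict inequality in \cite[Theorem~2.38]{RSO21} enters.

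With the right square commuting, the left square follows formally. Exactness of the rows shows that $D_{n,r}$ carries $\ker\overline u_{n+1,r}=\mathcal A_{n+1,r}$ into $\ker\overline u_{n,r}=\mathcal A_{n,r}$, and $\kappa_{n,r}$ is defined as that restriction. The table \eqref{eq:dn2-kq-visible-table} is then simply the list $w=n-r\in\{2,1,0,-1\}$ for $r=n-2,\dots,n+1$.

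Finally, the sectionwise descriptions of $\mathcal Q_{-1}$, $\mathcal Q_0$ and $\mathcal Q_1,\mathcal Q_2,\mathcal Q_3$ are quoted from \cite[Theorem~2.38]{RSO21} at weights $w=-1,0,1,2,3$. Here $K^M_2\oplus k^M_0$ at $w=0$ and $K^M_1$ at $w=-1$ are read off from that theorem as abelian groups.

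The main obstacle is the bookkeeping of indices. We must check that the bigrading conventions $\pi_{2-(w)}$ of \cite{RSO21} match $\mathcal Q_j=\boldsymbol\pi^s_{2-j,-j}$. We must also check that the motivic-cohomology kernel contracts to $\mathbf H^{m+1-r,m+2-r}/24$, using the standard identity $(\mathbf H^{a,b})_{-1}\cong\mathbf H^{a-1,b-1}$.
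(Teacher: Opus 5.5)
Your proof is correct, but it takes a different route from the paper's proof of this theorem.

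\textbf{The paper's route.} The paper builds the right-hand square from the periodic compatibility of Proposition~\ref{prop:dn2-KO-compatibility}. After stabilization and $r$ contractions, the normalized $KO$-degree maps $\lambda_m\pi^{\A^1}_{m+1}(\Psi^u_m)$ are the unit maps into $\mathbf{KQ}$ followed by the automorphisms $\lambda_m$. The paper defines $\overline u_{m,r}$ by transporting these maps through $\iota_*:\boldsymbol\pi^s_{2-j\alpha}(\mathbf{kq})\cong\boldsymbol\pi^s_{2-j\alpha}(\mathbf{KQ})$ for $j=w,w+1$. This is an isomorphism by \cite[Theorem~2.38]{RSO21} because $w+1=n+1-r\leq3<4$.

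\textbf{Your route.} You combine the sphere-level identification of Proposition~\ref{prop:dn2-stable-eta} with the $\mathds1$-linearity of $\mathds1\to\mathbf{kq}$. This is exactly the argument the paper records later as Corollary~\ref{cor:dn2-kq}, whose last sentence notes that contracting it recovers the present diagrams.

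\textbf{What each buys.}
\begin{itemize}
\item Your route needs neither the $\mathbf{kq}$--$\mathbf{KQ}$ comparison nor the compatible normalization $\lambda_m$.
\item Since your square already commutes before contraction, it gives the diagram for every $r\geq0$, not only $n-2\leq r\leq n+2$.
\item Your $\overline u_{m,r}$ are the plain unit maps in the coordinates of \cite[Lemma~3.5]{AF14}; that suffices for the existence claim.
\item The paper's route identifies $\overline u_{m,r}$ with the normalized $KO$-degree maps of Section~3, and the right column with Karoubi's $\eta$.
\item The paper's route rests only on the Thom--Euler transgression behind Proposition~\ref{prop:dn2-KO-compatibility}. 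It does not use the simplicial desuspension and bottom-cell identification of Proposition~\ref{prop:dn2-stable-eta}. The price is the restriction $r\geq n-2$.
\end{itemize}

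\textbf{A correction to your aside on the $\mathbf{KQ}$ route.} There, $\iota_*$ must be an isomorphism on both $\mathcal Q_{w+1}$ and $\mathcal Q_w$. The strict inequality in \cite[Theorem~2.38]{RSO21} that matters is the upper bound $w+1<4$, which is exactly the hypothesis $r\geq n-2$. At $r=n-3$ the source index is $4$, outside that range; compare Remark~\ref{rem:dn2-linear-spectral-sequence}. It is not tied to $r\leq n+1$: the paper also uses the isomorphism at $r=n+2$ in Theorem~\ref{thm:dn2-kq-blindness}. Calling this injectivity ``the main point to verify'' is also inconsistent with your own argument. Your main argument never passes through $\mathbf{KQ}$, so this does not affect its correctness.
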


\begin{proof}
The stable identification \eqref{eq:Pn-stable-identification} and
\eqref{eq:stable-contraction-alpha} give
\[
(\mathcal P_{n+1})_{-r}
\cong\boldsymbol\pi^s_{2-(w+1)\alpha}(\mathds1),
\qquad
(\mathcal P_n)_{-r}
\cong\boldsymbol\pi^s_{2-w\alpha}(\mathds1).
\]
In the notation \(\pi_{2-(j)}=\pi_{2-j,-j}\) of \cite{RSO21}, the
corresponding \(\mathbf{kq}\)-terms therefore have indices \(j=w+1\) and
\(j=w\).  Since \(r\geq n-2\), one has
\[
w+1=n+1-r\leq3<4.
\]
Thus \cite[Theorem~2.38]{RSO21} identifies
\(\mathbf{kq}\to\mathbf{KQ}\) on both terms.

After stabilization and \(r\) contractions, the two normalized
\(KO\)-degree maps in \eqref{eq:dn2-KO-compatibility} are the unit-induced
maps into \(\mathbf{KQ}\), followed by the normalization automorphisms.
Transporting them through the two isomorphisms
\[
\boldsymbol\pi^s_{2-j\alpha}(\mathbf{kq})
\xrightarrow{\ \cong\ }
\boldsymbol\pi^s_{2-j\alpha}(\mathbf{KQ}),
\qquad j=w,w+1,
\]
defines \(\overline u_{n,r}\) and \(\overline u_{n+1,r}\).  The comparison
\(\mathbf{kq}\to\mathbf{KQ}\) is a morphism of \(\mathds1\)-modules, so it
commutes with the \(\eta\)-action.
Proposition~\ref{prop:dn2-KO-compatibility} now proves the right-hand square in
\eqref{eq:dn2-kq-visible-diagram}.

Contracting \eqref{eq:Pn-second-stem} is exact by
Lemma~\ref{lem:contraction-exact}, and its kernel is precisely
\(\mathcal A_{m,r}\).  Postcomposition by the normalization automorphism
does not change this kernel.  The right-hand square therefore carries
\(\mathcal A_{n+1,r}\) into \(\mathcal A_{n,r}\), defining
\(\kappa_{n,r}\) and completing the diagram.  The stated fieldwise
descriptions of \(\mathcal Q_{-1},\ldots,\mathcal Q_3\) are exactly the cases
\(j=-1,0,1,2,3\) of
\cite[Theorem~2.38]{RSO21}.  Finally, the four rows in
\eqref{eq:dn2-kq-visible-table} are obtained by substituting
\(r=n-2,n-1,n,n+1\).
\end{proof}

\begin{corollary}\label{cor:dn2-unit-image}
Under the hypotheses of Theorem~\ref{thm:dn2-kq-visible-range}, write
\[
\mathcal I_{m,r}:=\operatorname{im}(\overline u_{m,r}).
\]
The morphism induced by \(D_{n,r}\) modulo the unit kernels is
\[
\overline D_{n,r}:
\mathcal I_{n+1,r}\longrightarrow\mathcal I_{n,r},
\qquad
\overline D_{n,r}
=\vartheta_n|_{\mathcal I_{n+1,r}}.
\]
Consequently,
\begin{equation}\label{eq:dn2-visible-kernel-cokernel}
\ker(\overline D_{n,r})
=\mathcal I_{n+1,r}\cap\ker(\vartheta_n),
\qquad
\operatorname{coker}(\overline D_{n,r})
=
{\mathcal I_{n,r}}/
{\vartheta_n(\mathcal I_{n+1,r})}.
\end{equation}
Thus every part of \(D_{n,r}\) not determined by
\cite[Theorem~2.38]{RSO21} is supported on the target unit kernel
\(\mathcal A_{n,r}\).
\end{corollary}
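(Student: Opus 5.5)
The plan is to read everything off the commutative diagram \eqref{eq:dn2-kq-visible-diagram} of Theorem~\ref{thm:dn2-kq-visible-range}. The key observation is that the middle vertical map \(D_{n,r}\) carries the kernel \(\mathcal A_{n+1,r}=\ker(\overline u_{n+1,r})\) into \(\mathcal A_{n,r}=\ker(\overline u_{n,r})\); this is the left square, with \(\kappa_{n,r}\) the restriction. Hence \(D_{n,r}\) descends to a morphism between the quotients \((\mathcal P_{n+1})_{-r}/\mathcal A_{n+1,r}\) and \((\mathcal P_n)_{-r}/\mathcal A_{n,r}\). Exactness of the rows at the middle term identifies these quotients with the images \(\mathcal I_{n+1,r}\) and \(\mathcal I_{n,r}\) via \(\overline u_{n+1,r}\) and \(\overline u_{n,r}\). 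This is a first-isomorphism-theorem statement in the abelian category of strictly \(\A^1\)-invariant sheaves, so images are taken sheaf-theoretically.

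Next, compute the induced map \(\overline D_{n,r}\). Take a local section \(y=\overline u_{n+1,r}(x)\) of \(\mathcal I_{n+1,r}\). Then \(\overline D_{n,r}(y)=\overline u_{n,r}(D_{n,r}x)\), and the right square gives \(\overline u_{n,r}D_{n,r}=\vartheta_n\overline u_{n+1,r}\), so \(\overline D_{n,r}(y)=\vartheta_n(y)\). This computation also shows that \(\vartheta_n\) maps \(\mathcal I_{n+1,r}\) into \(\mathcal I_{n,r}\), so the restriction \(\vartheta_n|_{\mathcal I_{n+1,r}}\) is well defined with that target, and that \(\overline D_{n,r}\) is independent of the chosen preimage \(x\).

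The formulas \eqref{eq:dn2-visible-kernel-cokernel} then follow immediately: the kernel of a restriction is the intersection of the subobject with \(\ker(\vartheta_n)\), and the cokernel is \(\mathcal I_{n,r}\) modulo the image of the restricted map, which is \(\vartheta_n(\mathcal I_{n+1,r})\).

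For the final sentence, suppose \(D'\) is any other morphism \((\mathcal P_{n+1})_{-r}\to(\mathcal P_n)_{-r}\) satisfying the same right-square identity \(\overline u_{n,r}D'=\vartheta_n\overline u_{n+1,r}\). Then \(\overline u_{n,r}(D_{n,r}-D')=0\), so the difference factors through \(\ker(\overline u_{n,r})=\mathcal A_{n,r}\). I read ``not determined'' in exactly this sense, namely the information beyond the \(\mathbf{kq}\)-visible compatibility.

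I expect no real obstacle. The only point requiring care is that images and quotients are formed in the abelian category of strictly \(\A^1\)-invariant sheaves, so that the diagram chase is legitimate sheaf-locally rather than only on field sections.
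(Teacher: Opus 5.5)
Your proposal is correct and follows essentially the same route as the paper: exactness of the rows of \eqref{eq:dn2-kq-visible-diagram} identifies \((\mathcal P_m)_{-r}/\mathcal A_{m,r}\) with \(\mathcal I_{m,r}\), and the right-hand square identifies the induced quotient morphism with \(\vartheta_n\). Your additional checks are details the paper leaves implicit rather than a different argument. These are the well-definedness of \(\overline D_{n,r}\), the fact that \(\vartheta_n(\mathcal I_{n+1,r})\subseteq\mathcal I_{n,r}\), and the reading of the last sentence as ``any two lifts compatible with the right square differ by a map into \(\mathcal A_{n,r}\).''
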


\begin{proof}
Exactness of the rows in \eqref{eq:dn2-kq-visible-diagram} identifies
\[
(\mathcal P_m)_{-r}/\mathcal A_{m,r}
\cong\mathcal I_{m,r}.
\]
The right-hand square identifies the induced quotient morphism with
\(\vartheta_n\).  Formula \eqref{eq:dn2-visible-kernel-cokernel} follows.
\end{proof}

\begin{remark}\label{rem:dn2-visible-residual-kernels}
For the four contractions strictly before the boundary, the residual target
kernels in Corollary~\ref{cor:dn2-unit-image} are
\[
\begin{array}{c|c}
r&\mathcal A_{n,r}\\ \hline
n-2&\mathbf H^{3,4}/24\oplus\mathbf K^M_6/2\\
n-1&\mathbf H^{2,3}/24\oplus\mathbf K^M_5/2\\
n&\mathbf H^{1,2}/24\oplus\mathbf K^M_4/2\\
n+1&\mathbf H^{0,1}/24\oplus\mathbf K^M_3/2.
\end{array}
\]
Thus Theorem~\ref{thm:dn2-kq-visible-range} gives a complete calculation
whenever the entry in the second column vanishes after evaluation.  It also
shows why fewer contractions are harder than the \(n+2\) boundary:
motivic-cohomology terms, absent from the target there, remain in the last
four pre-boundary kernels.
\end{remark}

\begin{corollary}\label{cor:dn2-visible-injective-target}
Let \(L/k\) be a field extension and suppose
\(\mathcal A_{n,r}(L)=0\).  Then \(D_{n,r}(L)\) is uniquely determined by
\[
\overline u_{n,r}\bigl(D_{n,r}(x)\bigr)
=
\vartheta_n\bigl(\overline u_{n+1,r}(x)\bigr).
\]
In particular,
\[
\ker D_{n,r}(L)
=
\overline u_{n+1,r}^{-1}\!\bigl(\ker\vartheta_n\bigr),
\qquad
\operatorname{coker}D_{n,r}(L)
\cong
{\mathcal I_{n,r}(L)}/
{\vartheta_n(\mathcal I_{n+1,r}(L))}.
\]
\end{corollary}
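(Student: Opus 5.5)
The argument is a diagram chase in the commutative diagram \eqref{eq:dn2-kq-visible-diagram} of Theorem~\ref{thm:dn2-kq-visible-range}, evaluated on sections over \(L\). Evaluation at a field is exact on these strictly \(\A^1\)-invariant sheaves, since stalks at finitely generated extensions compute it. Hence the lower row remains exact on \(L\)-points:
\[
0\longrightarrow\mathcal A_{n,r}(L)\longrightarrow(\mathcal P_n)_{-r}(L)\xrightarrow{\ \overline u_{n,r}\ }\mathcal Q_w(L).
\]
The hypothesis \(\mathcal A_{n,r}(L)=0\) therefore makes \(\overline u_{n,r}(L)\) injective. The right-hand square gives \(\overline u_{n,r}D_{n,r}=\vartheta_n\overline u_{n+1,r}\). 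Injectivity then shows that \(D_{n,r}(x)\) is the unique element whose image under \(\overline u_{n,r}\) is \(\vartheta_n(\overline u_{n+1,r}(x))\). This element exists because \(D_{n,r}(x)\) itself is such an element. This proves the first assertion.

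For the kernel, \(D_{n,r}(x)=0\) holds iff \(\overline u_{n,r}D_{n,r}(x)=0\), by injectivity. This in turn holds iff \(\vartheta_n\overline u_{n+1,r}(x)=0\), that is, iff \(x\in\overline u_{n+1,r}^{-1}(\ker\vartheta_n)\).

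For the cokernel, the injective map \(\overline u_{n,r}(L)\) identifies \((\mathcal P_n)_{-r}(L)\) isomorphically with its image. This image is \(\mathcal I_{n,r}(L)\), using again that images of sheaf maps evaluated at fields agree with the images on sections. Under this identification, the image of \(D_{n,r}(L)\) goes to \(\overline u_{n,r}D_{n,r}\bigl((\mathcal P_{n+1})_{-r}(L)\bigr)\). By commutativity this equals \(\vartheta_n\bigl(\overline u_{n+1,r}((\mathcal P_{n+1})_{-r}(L))\bigr)=\vartheta_n(\mathcal I_{n+1,r}(L))\). Passing to quotients gives the stated cokernel formula, in agreement with Corollary~\ref{cor:dn2-unit-image}.

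The only delicate point, and the one I would check first, is the claim that taking \(L\)-sections commutes with kernels and images for these sheaves. This holds because a field is a stalk for the Nisnevich topology, and stalks are exact. Once this is granted, the rest is formal.
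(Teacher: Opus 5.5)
Your proof is correct and is the same diagram chase as the paper's. The hypothesis and exactness of the lower row of \eqref{eq:dn2-kq-visible-diagram} make \(\overline u_{n,r}(L)\) injective, and commutativity of the right-hand square (packaged in the paper through Corollary~\ref{cor:dn2-unit-image}) then gives the uniqueness, kernel, and cokernel statements. Your remark that evaluation at a field is a filtered colimit of Nisnevich stalks, hence exact, makes explicit a step the paper leaves implicit: passing from the sheaf diagram to \(L\)-sections and identifying \(\mathcal I_{m,r}(L)\) with the image on sections.
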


\begin{proof}
The hypothesis and exactness of the lower row in
\eqref{eq:dn2-kq-visible-diagram} make \(\overline u_{n,r}(L)\) injective.
The assertions follow from commutativity and
Corollary~\ref{cor:dn2-unit-image}.
\end{proof}

\begin{corollary}\label{cor:dn2-visible-low-cd2}
Let \(0\leq s\leq3\), put \(r=n+1-s\), and let \(L/k\) be a field extension
such that
\[
\operatorname{cd}_2(L)\leq s+2,
\qquad
H^{s,s+1}(L,\mathbb Z)/24=0.
\]
Then \(D_{n,r}(L)\) is completely determined by
\[
\overline u_{n,r}\bigl(D_{n,r}(x)\bigr)
=
\vartheta_n\bigl(\overline u_{n+1,r}(x)\bigr).
\]
Equivalently, the four rows of
\eqref{eq:dn2-kq-visible-table} give complete field-valued calculations under
the corresponding vanishing hypotheses.
\end{corollary}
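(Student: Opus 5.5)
The plan is to reduce Corollary~\ref{cor:dn2-visible-low-cd2} to Corollary~\ref{cor:dn2-visible-injective-target}. That corollary already gives the complete field-valued determination once \(\mathcal A_{n,r}(L)=0\). So the whole task is to show that both summands of \(\mathcal A_{n,r}(L)\) vanish under the stated hypotheses.

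First I would compute the indices. With \(r=n+1-s\) and \(m=n\), the definition in Theorem~\ref{thm:dn2-kq-visible-range} gives
\[
\mathcal A_{n,r}=\mathbf H^{n+1-r,\,n+2-r}/24\oplus\mathbf K^M_{n+4-r}/2
=\mathbf H^{s,s+1}/24\oplus\mathbf K^M_{s+3}/2.
\]
This agrees with the table in Remark~\ref{rem:dn2-visible-residual-kernels} for \(s=3,2,1,0\), that is, \(r=n-2,\dots,n+1\). The constraint \(0\le s\le3\) is exactly the range \(n-2\le r\le n+1\) of the theorem, so the diagram \eqref{eq:dn2-kq-visible-diagram} is available. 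On sections over the field \(L\), the first summand is \(H^{s,s+1}(L,\mathbb Z)/24\), which is zero by hypothesis.

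For the second summand I would use the norm-residue isomorphism \cite{OVV07}. It gives
\[
K^M_{s+3}(L)/2\cong H^{s+3}_{\mathrm{et}}(L,\mathbb Z/2),
\]
and this group vanishes because \(s+3>s+2\ge\operatorname{cd}_2(L)\). Hence \(\mathcal A_{n,r}(L)=0\).

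One point needs care: the sheaf sections of \(\mathbf H^{a,b}/24\) over a field must agree with \(H^{a,b}(L,\mathbb Z)/24\), so that the hypothesis really kills the summand. This holds because the Nisnevich sheafification does not change the value on a field, which is a henselian local scheme for the Nisnevich topology. Fields over \(k\) of this kind are the points used throughout the paper.

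With \(\mathcal A_{n,r}(L)=0\), Corollary~\ref{cor:dn2-visible-injective-target} shows that \(\overline u_{n,r}(L)\) is injective. It then determines \(D_{n,r}(L)\) uniquely by the displayed formula. The final sentence of the statement is just this conclusion read across the four rows of \eqref{eq:dn2-kq-visible-table}, one row for each \(s\). I expect no serious obstacle. The only care needed is the bookkeeping of indices and making sure the degree in the cohomological-dimension bound matches \(s+3\).
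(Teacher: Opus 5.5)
Your proposal is correct and follows the paper's proof exactly: you identify \(\mathcal A_{n,r}(L)=H^{s,s+1}(L,\mathbb Z)/24\oplus K^M_{s+3}(L)/2\), kill the first summand by hypothesis and the second by the norm-residue isomorphism together with \(\operatorname{cd}_2(L)\leq s+2\), and then invoke Corollary~\ref{cor:dn2-visible-injective-target}. Your remark about evaluating \(\mathbf H^{a,b}/24\) on a field is a small point the paper leaves implicit, and it is harmless.
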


\begin{proof}
For \(r=n+1-s\), the target unit kernel is
\[
\mathcal A_{n,r}(L)
=H^{s,s+1}(L,\mathbb Z)/24
\ \oplus\
K^M_{s+3}(L)/2.
\]
The first summand vanishes by hypothesis.  The norm-residue isomorphism and
\(\operatorname{cd}_2(L)\leq s+2\) give
\[
K^M_{s+3}(L)/2
\cong
H^{s+3}_{\mathrm{\acute et}}
  (L,\boldsymbol\mu_2^{\otimes(s+3)})
=0;
\]
see \cite{OVV07}.  Corollary~\ref{cor:dn2-visible-injective-target} applies.
\end{proof}

\begin{corollary}\label{cor:dn2-next-low-cd2}
Let \(L/k\) be a field extension with \(\operatorname{cd}_2(L)\leq2\).
Then \(D_{n,n+1}(L)\) is completely determined by the last line of
\eqref{eq:dn2-kq-visible-table}.  More precisely, for even \(n\),
\[
\overline u_{n,n+1}\bigl(D_{n,n+1}(x)\bigr)
=
\eta\,\overline u_{n+1,n+1}(x)
\quad\text{in}\quad
\mathcal Q_{-1}(L)\cong K^M_1(L),
\]
while \(D_{n,n+1}(L)=0\) for odd \(n\).  This applies in particular to
algebraically closed fields of characteristic \(0\).
\end{corollary}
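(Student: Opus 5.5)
The plan is to read this statement as the case \(s=1\) of Corollary~\ref{cor:dn2-visible-low-cd2}. Then \(r=n+1-s=n\), and the row used is \(\mathcal Q_1\to\mathcal Q_0\) in \eqref{eq:dn2-kq-visible-table}. That is not the last line of the table, so the index bookkeeping has to be done with care. The statement concerns \(D_{n,n+1}=(d_{n,2})_{-(n+1)}\), the row \(r=n+1\), \(w=-1\), \(s=0\). Corollary~\ref{cor:dn2-visible-low-cd2} with \(s=0\) needs \(\operatorname{cd}_2(L)\leq2\) together with \(H^{0,1}(L,\mathbb Z)/24=0\). I will therefore apply Corollary~\ref{cor:dn2-visible-injective-target} directly, which requires checking \(\mathcal A_{n,n+1}(L)=0\). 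By Remark~\ref{rem:dn2-visible-residual-kernels},
\[
\mathcal A_{n,n+1}(L)=H^{0,1}(L,\mathbb Z)/24\oplus K^M_3(L)/2 .
\]

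For the Milnor summand I will use the norm-residue isomorphism \cite{OVV07}:
\[
K^M_3(L)/2\cong H^3_{\mathrm{\acute et}}(L,\boldsymbol\mu_2^{\otimes3}),
\]
which vanishes when \(\operatorname{cd}_2(L)\leq2\). For the motivic summand I will use the standard computation \(H^{0,1}(L,\mathbb Z)=0\). The weight-one motivic complex is \(\mathbb Z(1)\simeq\mathbb G_m[-1]\), so \(H^{p,1}(L,\mathbb Z)=0\) for \(p\neq1\) on a field, and \(H^{1,1}=L^\times\). This summand therefore vanishes for every field, with no hypothesis. I expect this to be the main point needing care: I must confirm that \(\mathbf H^{a,b}/24\) denotes the sheaf \(U\mapsto H^{a,b}(U)/24\), as defined in \S\ref{subsec:second-stem-input}, and not mod-\(24\) coefficients. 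With mod-\(24\) coefficients \(H^{0,1}(L,\mathbb Z/24)=\boldsymbol\mu_{24}(L)\) would survive. The paper's convention is the former, so the summand is \(0/24=0\).

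Given \(\mathcal A_{n,n+1}(L)=0\), Corollary~\ref{cor:dn2-visible-injective-target} makes \(\overline u_{n,n+1}(L)\) injective and gives
\[
\overline u_{n,n+1}(D_{n,n+1}(x))=\vartheta_n(\overline u_{n+1,n+1}(x)),
\]
with \(\mathcal Q_{-1}(L)\cong K^M_1(L)\) by Theorem~\ref{thm:dn2-kq-visible-range}. For even \(n\), \(\vartheta_n=\eta\), which is the displayed formula. For odd \(n\), I will cite Proposition~\ref{prop:dn2-odd-zero} directly, so no injectivity is needed there. Finally, an algebraically closed field of characteristic \(0\) has \(\operatorname{cd}_2=0\leq2\), which gives the last sentence.
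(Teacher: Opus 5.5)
Your argument is correct and is essentially the paper's. The paper cites Corollary~\ref{cor:dn2-visible-low-cd2} with \(s=0\), i.e.\ \(r=n+1\), \(w=-1\); your opening reading as \(s=1\) is a slip that you rightly discard. It notes that \(H^{0,1}(L,\mathbb Z)=0\), and your direct appeal to Corollary~\ref{cor:dn2-visible-injective-target} after checking \(\mathcal A_{n,n+1}(L)=H^{0,1}(L,\mathbb Z)/24\oplus K^M_3(L)/2=0\) just unwinds the proof of that corollary, while the odd case via Proposition~\ref{prop:dn2-odd-zero} and the algebraically closed case via \(\operatorname{cd}_2=0\) match the paper as well.
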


\begin{proof}
This is Corollary~\ref{cor:dn2-visible-low-cd2} with \(s=0\), since
\(H^{0,1}(L,\mathbb Z)=0\) by motivic-cohomology connectivity.  The odd case
also follows directly from Proposition~\ref{prop:dn2-odd-zero}.
\end{proof}

\begin{theorem}\label{thm:dn2-kq-blindness}
Let \(k\) be a field of characteristic \(0\), let \(n\geq5\), and put
\[
D_n:=(d_{n,2})_{-(n+2)}:
\boldsymbol\pi^s_{3,1}(\mathds1)
\longrightarrow
\boldsymbol\pi^s_{4,2}(\mathds1).
\]
After contracting the diagram
\eqref{eq:dn2-KO-compatibility} \(n+2\) times and using
  the canonical comparison \(\iota:\mathbf{kq}\to\mathbf{KQ}\), one obtains a
  commutative square
\begin{equation}\label{eq:dn2-kq-blind-square}
\begin{tikzcd}[column sep=large, row sep=large]
\boldsymbol\pi^s_{3,1}(\mathds1)
  \ar[r,"D_n"] \ar[d,"\overline u_{n+1}"']&
\boldsymbol\pi^s_{4,2}(\mathds1)
  \ar[d,"\overline u_n=0"]\\
\boldsymbol\pi^s_{3,1}(\mathbf{kq})\cong\mathbf K^M_1
  \ar[r,"\eta=0"']&
\boldsymbol\pi^s_{4,2}(\mathbf{kq})\cong\mathbf K^M_0.
\end{tikzcd}
\end{equation}
Here \(\overline u_m\) denotes the unit-induced map, transported through
\(\iota_*\), followed by the automorphism coming from the compatible
normalization.  Consequently,
\eqref{eq:dn2-kq-blind-square} imposes no condition on \(D_n\), and in
particular does not determine the defect
\(\epsilon_n:\boldsymbol\mu_{24}\to\mathbf K^M_2/2\).
\end{theorem}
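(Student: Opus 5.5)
The plan is to contract the compatibility \eqref{eq:dn2-KO-compatibility} of Proposition~\ref{prop:dn2-KO-compatibility} exactly \(n+2\) times, and then read off all four arrows of \eqref{eq:dn2-kq-blind-square} from the computations already in Theorem~\ref{thm:Pn-second-stem}. First, Lemma~\ref{lem:contraction-exact} tells us that contraction is a functor on strictly \(\A^1\)-invariant sheaves, so the \(n+2\)-fold contraction of \eqref{eq:dn2-KO-compatibility} is again a commutative square:
\[
(\deg^{KO}_{n,2})_{-(n+2)}\circ D_n=(\vartheta_n)_{-(n+2)}\circ(\deg^{KO}_{n+1,2})_{-(n+2)}.
\]
Next, the stable identification \eqref{eq:Pn-stable-identification}, together with \eqref{eq:stable-contraction-alpha}, identifies the source with \(\boldsymbol\pi^s_{3,1}(\mathds1)\) and the target with \(\boldsymbol\pi^s_{4,2}(\mathds1)\). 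This is exactly as in the proof of Theorem~\ref{thm:dn2-boundary-defect}.

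Second, I will transport the \(KO\)-degree maps to \(\mathbf{kq}\). As in the proof of Theorem~\ref{thm:dn2-kq-visible-range}, the contracted degree maps are the unit-induced maps into \(\mathbf{KQ}\) followed by the normalization units \(\lambda_m\). In the relevant weights \(j=-1\) and \(j=-2\), \cite[Theorem~2.38]{RSO21} shows that \(\iota_*\) is an isomorphism onto the periodic terms, with \(\boldsymbol\pi^s_{3,1}(\mathbf{kq})\cong\mathbf K^M_1\) and \(\boldsymbol\pi^s_{4,2}(\mathbf{kq})\cong\mathbf K^M_0\). This isomorphism is the step I would check most carefully. If it fails at the boundary weight, I would instead define \(\overline u_m\) directly through \(\mathbf{kq}\), using the unit \(\mathds1\to\mathbf{kq}\), and obtain commutativity from the fact that \(\iota\) is \(\mathds1\)-linear. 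Either way, \(\iota\) commutes with \(\eta\), so the square \eqref{eq:dn2-kq-blind-square} commutes with bottom arrow \(\vartheta_n\). In odd stages this arrow is zero by definition.

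Third, I identify the two zero arrows. The right vertical map \(\overline u_n\) is the unit-induced map \(\boldsymbol\pi^s_{4,2}(\mathds1)\to\mathbf K^M_0\) composed with an automorphism. Its source is torsion, being \(\mathbf K^M_2/2\), and its target is torsion-free, so it vanishes; this is the argument from the proof of Theorem~\ref{thm:Pn-second-stem}. For the bottom arrow, multiplication by \(\eta\) from \(\mathbf K^M_1\) to \(\mathbf K^M_0\) must vanish, and I would give two arguments.

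\begin{itemize}
\item It is a morphism \(\mathbf K^M_1\to\mathbf K^M_0\) of strictly \(\A^1\)-invariant sheaves. Precomposing with the epimorphism \(\mathbf K^{MW}_1\twoheadrightarrow\mathbf K^M_1\), and arguing as in Lemma~\ref{lem:KM-degree-drop-zero} via \cite[Lemma~5.1.3]{AWW17}, places it in \((\mathbf K^M_0)_{-1}(k)=\mathbf K^M_{-1}(k)=0\).
\item Alternatively, under the Wood sequence \(\eta\) kills the image of the hyperbolic map, and \(\pi_{3,1}(\mathbf{KQ})\) is hyperbolic by \cite[Lemma~2.33]{RSO21}.
\end{itemize}

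I will use the first argument because it needs no extra input.

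Finally, both composites around the square are zero, identically in \(D_n\). So the square holds for every morphism \(D_n\), and it imposes no condition on \(D_n\). In particular, any value of \(\epsilon_n\) from Theorem~\ref{thm:dn2-boundary-defect} yields a morphism \(D_n\) compatible with the square, so the square cannot determine \(\epsilon_n\). The main obstacle is the second step, namely checking that the normalization units and the \(\mathbf{kq}\)/\(\mathbf{KQ}\) comparison are compatible at the boundary weight. Once commutativity is established, the rest is a degree and torsion argument.
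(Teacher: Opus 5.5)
Your proposal is correct and follows essentially the same route as the paper. You identify the contracted terms via the stable identification and \cite[Theorem~2.38]{RSO21}, kill the right vertical map by the torsion versus torsion-free argument, kill the bottom $\eta$, and observe that both composites vanish for every $D_n$. The only variation is that you kill the bottom arrow with the Hom-vanishing argument $\operatorname{Hom}(\mathbf K^{MW}_1,\mathbf K^M_0)\cong(\mathbf K^M_0)_{-1}(k)=0$. The paper instead uses the Wood-sequence argument of your second bullet; both are valid. Note also that once both zero arrows are known, commutativity of the square is automatic, so the step you flag as the main obstacle is not a real difficulty.
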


\begin{proof}
In the notation \(\pi_{2-(w)}=\pi_{2-w,-w}\) of \cite{RSO21}, the two
bottom terms have \(w=-1\) and \(w=-2\).  Thus
\cite[Theorem~2.38]{RSO21} says that \(\iota_*\) is an isomorphism on
these sheaves and gives
\[
\boldsymbol\pi^s_{3,1}(\mathbf{kq})\cong\mathbf K^M_1,
\qquad
\boldsymbol\pi^s_{4,2}(\mathbf{kq})\cong\mathbf K^M_0.
\]
The Wood cofiber sequence of \cite[Theorem~2.1]{RSO21} supplies the exact
segment
\[
\mathbf K^M_1
\xrightarrow[\cong]{\ \mathrm{hyper}\ }
\boldsymbol\pi^s_{3,1}(\mathbf{kq})
\xrightarrow{\ \eta\ }
\boldsymbol\pi^s_{4,2}(\mathbf{kq});
\]
the first map is an isomorphism by the calculation in
\cite[Lemma~2.33]{RSO21}.  Hence the lower horizontal map in
\eqref{eq:dn2-kq-blind-square} is zero.

On the other hand, the case of bidegree \((4,2)\) in
\cite[Theorem~1.2]{RSO21} gives
\[
\boldsymbol\pi^s_{4,2}(\mathds1)\cong\mathbf K^M_2/2.
\]
The unit-induced map from this \(2\)-torsion sheaf to
\(\boldsymbol\pi^s_{4,2}(\mathbf{kq})\cong\mathbf K^M_0\) is zero because
the latter is torsion-free.  The normalization automorphism does not change
this vanishing, so the right vertical map is zero.

Theorem~\ref{thm:dn2-kq-visible-range}, with \(r=n+2\), gives the
commutativity of \eqref{eq:dn2-kq-blind-square}.  Since both composites are
zero for every possible \(D_n\), the square cannot detect \(\epsilon_n\).
\end{proof}

\begin{remark}\label{rem:dn2-linear-spectral-sequence}
The linear exact couple has
\[
E^1_{p,q}=\pi_{p+q}^{\A^1}(\A^p\setminus0)
\]
by \cite[Proposition~2.1.1]{AF17Spheres}.  With the exact-couple sign
convention used there, \(d_{n,1}=d^1_{n+1,0}\) and
\[
d_{n,2}=d^1_{n+1,1}.
\]
With the opposite convention for connecting morphisms, both identifications
carry the usual common sign; none of the kernels, cokernels, or vanishing
statements below depends on this choice.
Here \(q\) is the second index of this exact couple, not a rank or a
contraction index.  Remark~20 of \cite{AOSS26} identifies
\(d^1_{j+2,0}\) and also records certain higher and symplectic
differentials, but it does not identify the linear differential
\(d^1_{n+1,1}=d_{n,2}\).  Proposition~\ref{prop:dn2-stable-eta} supplies
the missing identification by a different argument: the simplicial
desuspension of the Stiefel boundary lies in the Freudenthal range and is
detected on its bottom Milnor--Witt homotopy sheaf.  Thus
\(d^1_{n+1,1}\) is induced by \(\eta\) in even stages and is zero in odd
stages.  Theorem~\ref{thm:dn2-kq-visible-range} determines the
\(\mathbf{kq}\)-visible quotient of this \(q=1\) differential after every
contraction \(r\geq n-2\), and
Corollary~\ref{cor:dn2-unit-image} describes its image modulo the explicit
unit kernel \(\mathcal A_{n,r}\).  At \(r=n+2\),
Theorem~\ref{thm:dn2-kq-blindness} still shows that the visible quotient is
the tautology \(0=0\); the vanishing at that weight instead follows from
Theorem~\ref{thm:epsilon-global-zero}.  At \(r=n-3\), the source has index
\(w=4\), exactly outside the strict comparison range \(w<4\) of
\cite[Theorem~2.38]{RSO21}, although the sphere-level differential itself
is already fixed by Proposition~\ref{prop:dn2-stable-eta}.
\end{remark}

Combining this result with the lower exact row of
Proposition~\ref{prop:third-unstable-two-rows} gives the following
contracted statement.

\begin{corollary}\label{cor:third-unstable-contracted-rows}
Let $k$ and $n$ be as in Theorem~\ref{thm:dn2-contracted-zero}.  There are
exact sequences
\begin{equation}\label{eq:third-lower-row-weight-next-three}
\begin{aligned}
\pi^{\A^1}_{n+1,n+3}(SL_{n-1})
&\longrightarrow
\pi^{\A^1}_{n+1,n+3}(SL_n)
\xrightarrow{\ (q_{n-1,2})_{-(n+3)}\ }
\mathbf K^M_1/2\\
&\xrightarrow{\ (\partial_{n,2})_{-(n+3)}\ }
\pi^{\A^1}_{n,n+3}(SL_{n-1})
\longrightarrow
\pi^{\A^1}_{n,n+3}(SL_n),
\end{aligned}
\end{equation}
and
\begin{equation}\label{eq:third-lower-row-weight-next-four}
\begin{aligned}
\pi^{\A^1}_{n+1,n+4}(SL_{n-1})
&\longrightarrow
\pi^{\A^1}_{n+1,n+4}(SL_n)
\xrightarrow{\ (q_{n-1,2})_{-(n+4)}\ }
\mathbf K^M_0/2\\
&\xrightarrow{\ (\partial_{n,2})_{-(n+4)}\ }
\pi^{\A^1}_{n,n+4}(SL_{n-1})
\longrightarrow
\pi^{\A^1}_{n,n+4}(SL_n).
\end{aligned}
\end{equation}
The additional pairs
\[
\mathbf K^M_2/2
\xrightarrow{\ (\delta_{n,2})_{-(n+3)}\ }
\pi^{\A^1}_{n+1,n+3}(SL_n)
\xrightarrow{\ (q_{n-1,2})_{-(n+3)}\ }
\mathbf K^M_1/2
\]
and
\[
\mathbf K^M_1/2
\xrightarrow{\ (\delta_{n,2})_{-(n+4)}\ }
\pi^{\A^1}_{n+1,n+4}(SL_n)
\xrightarrow{\ (q_{n-1,2})_{-(n+4)}\ }
\mathbf K^M_0/2
\]
are complexes, but are not asserted to be exact at their middle terms.  In
each weight, the image of the displayed \(\delta_{n,2}\) is contained in the
image of
\[
\pi^{\A^1}_{n+1,*}(SL_{n-1})
\longrightarrow\pi^{\A^1}_{n+1,*}(SL_n).
\]
Finally, the kernel of stabilization in degree \(n\) and weight \(n+3\) is a
quotient of \(\mathbf K^M_1/2\), and the analogous kernel in weight \(n+4\)
is a quotient of \(\mathbf K^M_0/2\).
\end{corollary}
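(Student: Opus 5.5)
The plan is to contract the two long exact sequences of Proposition~\ref{prop:third-unstable-two-rows} and substitute the identifications of Theorem~\ref{thm:Pn-second-stem}. Since contraction is exact on strictly \(\A^1\)-invariant sheaves (Lemma~\ref{lem:contraction-exact}), contracting \eqref{eq:third-unstable-lower-row} \(n+3\), respectively \(n+4\), times yields exact sequences. Theorem~\ref{thm:contraction-bigrading} rewrites the contracted homotopy sheaves of \(SL_{n-1}\) and \(SL_n\) as the bigraded sheaves \(\pi^{\A^1}_{i,j}\). The isomorphisms \eqref{eq:Pn-high-contractions} then replace \((\mathcal P_n)_{-(n+3)}\) and \((\mathcal P_n)_{-(n+4)}\) by \(\mathbf K^M_1/2\) and \(\mathbf K^M_0/2\). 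This gives \eqref{eq:third-lower-row-weight-next-three} and \eqref{eq:third-lower-row-weight-next-four} directly. The hypothesis \(n\geq5\) is needed so that Theorem~\ref{thm:Pn-second-stem} applies to both \(\mathcal P_n\) and \(\mathcal P_{n+1}\). Some care is also needed to check that \(\pi_{n+1}\) and \(\pi_n\) of the special linear groups are strictly \(\A^1\)-invariant abelian sheaves in this range, which is standard since these groups are \(\A^1\)-connected and the degrees are at least \(2\).

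For the displayed pairs, I would contract \(\delta_{n,2}\) from \eqref{eq:third-unstable-upper-row}, identifying its source \((\mathcal P_{n+1})_{-(n+3)}\cong\mathbf K^M_2/2\) and \((\mathcal P_{n+1})_{-(n+4)}\cong\mathbf K^M_1/2\) by \eqref{eq:Pn-high-contractions} applied with \(n+1\). The composite \(q_{n-1,2}\circ\delta_{n,2}=d_{n,2}\) commutes with contraction. By Theorem~\ref{thm:dn2-contracted-zero} these contracted composites vanish, so the pairs are complexes. No exactness is claimed, because the two arrows belong to different long exact sequences.

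The containment of images then follows from exactness of the contracted lower row at \(\pi^{\A^1}_{n+1,*}(SL_n)\). Since \(q_{n-1,2}\circ\delta_{n,2}=0\) after contraction, the image of \(\delta_{n,2}\) lies in \(\ker(q_{n-1,2})_{-j}\), which equals the image of stabilization from \(SL_{n-1}\). Finally, exactness at \(\pi^{\A^1}_{n,j}(SL_{n-1})\) identifies the kernel of stabilization with the image of \((\partial_{n,2})_{-j}\). That image is a quotient of \(\mathbf K^M_1/2\) for \(j=n+3\) and of \(\mathbf K^M_0/2\) for \(j=n+4\).

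The only step I expect to need attention is the bookkeeping of degrees and weights. In particular, one must check that \(\pi_{n+1}\) of \(\A^n\setminus0\) contracted \(j\) times corresponds to \(\pi^{\A^1}_{n+1,j}\), and that the sheaves \(\pi^{\A^1}_{n,j}(SL_{n-1})\) appear with the same weight.
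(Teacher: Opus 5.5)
Your proposal is correct and matches the paper's proof: contract the lower row \eqref{eq:third-unstable-lower-row} and substitute \eqref{eq:Pn-high-contractions}. The complex relations then come from Theorem~\ref{thm:dn2-contracted-zero}, and exactness at the two special linear terms gives the image containment and the quotient statements. Your extra bookkeeping (the bigrading via Theorem~\ref{thm:contraction-bigrading}, and applying Theorem~\ref{thm:Pn-second-stem} to $\mathcal P_{n+1}$ to identify the sources of the contracted $\delta_{n,2}$) only makes explicit what the paper leaves implicit.
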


\begin{proof}
Contract \eqref{eq:third-unstable-lower-row} and use
\eqref{eq:Pn-high-contractions}; this gives
\eqref{eq:third-lower-row-weight-next-three} and
\eqref{eq:third-lower-row-weight-next-four}.  The two complex relations are exactly
the vanishing statements in Theorem~\ref{thm:dn2-contracted-zero}.
Exactness at the middle special linear term identifies the kernel of each
contracted \(q_{n-1,2}\) with the image of stabilization, proving the
factorization assertion.  Exactness at the next special linear term gives the
final quotient statements.
\end{proof}

\begin{remark}\label{rem:third-unstable-meaning}
Corollary~\ref{cor:third-unstable-contracted-rows} deliberately separates two
kinds of information.  The lower row is exact and controls a stabilization
kernel by Milnor \(K\)-theory modulo \(2\).  The cross-composite calculation only
says that the contracted boundary from the adjacent upper row lands in the
stabilization image.  Proposition~\ref{prop:dn2-KO-compatibility} determines
the image of the uncontracted morphism in periodic hermitian \(K\)-theory.
Theorem~\ref{thm:dn2-kq-visible-range} upgrades this to a very-effective
\(\mathbf{kq}\)-level description after \(r\geq n-2\) contractions.
Proposition~\ref{prop:dn2-stable-eta} identifies the sphere-level
cross-composite before contraction, and Theorem~\ref{thm:epsilon-global-zero}
settles its first boundary specialization.
\end{remark}

\begin{corollary}\label{cor:dn2-kq}
For \(n\geq5\), the stable unit gives a commutative square
\[
\begin{tikzcd}[column sep=large,row sep=large]
\mathcal P_{n+1}
  \ar[r,"d_{n,2}"] \ar[d]&
\mathcal P_n \ar[d]\\
\boldsymbol\pi^s_{2-(n+1)\alpha}(\mathbf{kq})
  \ar[r,"\vartheta_n"']&
\boldsymbol\pi^s_{2-n\alpha}(\mathbf{kq}),
\end{tikzcd}
\]
where
\[
\vartheta_n=
\begin{cases}
\eta,&n\text{ even},\\
0,&n\text{ odd}.
\end{cases}
\]
relative to the standard sphere and purity identifications of
Proposition~\ref{prop:dn2-stable-eta}.  With arbitrary sphere coordinates,
the even-stage horizontal map is \(u_n\eta\); all contracted diagrams above
are unchanged.
Thus the compatibility exists before contraction; after contraction it
recovers the diagrams of Theorem~\ref{thm:dn2-kq-visible-range}.
\end{corollary}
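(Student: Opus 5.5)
The square should be obtained by stabilizing the map-level identification of Proposition~\ref{prop:dn2-stable-eta} and then applying the unit \(\mathds1\to\mathbf{kq}\). First I would use Theorem~\ref{thm:Pn-second-stem} for \(\mathcal P_{n+1}\) and \(\mathcal P_n\). It identifies both with stable stems \(\boldsymbol\pi^s_{2-(n+1)\alpha}(\mathds1)\) and \(\boldsymbol\pi^s_{2-n\alpha}(\mathds1)\), and under these identifications the vertical maps are the ones induced by the stable unit. This identification is natural for actual maps of spheres, because it comes from the stabilization \(S^{2m-1,m}\to QS^{2m-1,m}\).

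Next I would recall from the proof of Proposition~\ref{prop:dn2-stable-eta} that \(d_{n,2}\) equals the map induced on \(\pi_{n+1}^{\A^1}\) by the pointed map \(\alpha_n:Y_n\to X_n\). This uses the Freudenthal isomorphism \((E_n)_*\), and \(\alpha_n\) is \(\eta\) for even \(n\) and \(0\) for odd \(n\) in the standard coordinates. After stabilization, \(d_{n,2}\) is therefore multiplication by the stable class of \(\alpha_n\) on stable stems. Since \(\Sigma^\infty Y_n\) and \(\Sigma^\infty X_{n+1}\) differ by one simplicial suspension, this gives precisely the index shift between the two stems.

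The commutativity itself comes from module-linearity. The unit \(\mathds1\to\mathbf{kq}\) is a map of \(\mathds1\)-modules, so for any \(x\) we have \(u_*(\eta x)=\eta\,u_*(x)\), and trivially \(u_*(0)=0\). For the coordinate change I would invoke the last paragraph of Proposition~\ref{prop:dn2-stable-eta}: changing coordinates replaces \(\eta\) by \(u_n\eta\) with \(u_n\in GW(k)^\times\), and the same linearity argument goes through verbatim. To show that the contracted diagrams are unchanged, I would contract \(r\) times and compare with the right square of \eqref{eq:dn2-kq-visible-diagram}. There the horizontal map is also \(\eta\) or \(0\), and the normalized maps \(\overline u_{m,r}\) differ from the raw unit maps only by the automorphisms \(\lambda_m\). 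Those automorphisms change neither kernels nor images, and the compatibility \(\lambda_{n+1}=\lambda_n c_n\) absorbs the units.

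The main obstacle I expect is to justify that the identification of \(d_{n,2}\) with the stable class of \(\alpha_n\) really intertwines with stabilization. This needs the Freudenthal restriction along \(E_n\) to commute with the stabilization isomorphism of Theorem~\ref{thm:Pn-second-stem}. I would handle it by noting that \(QE_n\) is compatible with the suspension equivalence \(\Sigma^\infty\Sigma_sY_n\simeq\Sigma^\infty X_{n+1}\), which is exactly the standard naturality of the unit \(Z\to QZ\) for maps of pointed spaces. The ranges \(n+1\le 2n-4\) used there are satisfied for \(n\ge5\).
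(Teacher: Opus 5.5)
Your proposal is correct and follows the paper's own argument. You identify both terms with stable stems via Theorem~\ref{thm:Pn-second-stem}, use Proposition~\ref{prop:dn2-stable-eta} to see that $d_{n,2}$ is induced by $\vartheta_n$ (or by $u_n\eta$ in other coordinates), and conclude from the $\mathds1$-module linearity of the unit $\mathds1\to\mathbf{kq}$. Your extra check that restriction along $E_n$ is compatible with the stabilization units simply makes explicit what the paper compresses into ``stabilization is natural for the actual map $\alpha_n$''.
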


\begin{proof}
Under Theorem~\ref{thm:Pn-second-stem}, the upper horizontal map is
\(\vartheta_n\) by Proposition~\ref{prop:dn2-stable-eta}.  The stable unit
is a morphism of \(\mathds1\)-modules, so it commutes with multiplication by
\(\eta\).  The odd case is the zero square.
\end{proof}

\section{Applications}
In this section, we illustrate how the description of the higher $\A^1$-homotopy sheaves of
$SL_n$ can be used to connect motivic calculations with classical topology and with concrete
classification and generation problems for algebraic vector bundles.

The \(kq\)-visible range also has a direct field-valued consequence:
Corollary~\ref{cor:dn2-visible-low-cd2} turns the quotient calculation into a
complete calculation whenever the displayed motivic-cohomology group and the
mod-\(2\) Milnor kernel vanish.  In particular,
Corollary~\ref{cor:dn2-next-low-cd2} completely determines the
\((n+1)\)-fold contraction over fields of \(2\)-cohomological dimension at
most \(2\).

We first specialize to \(k=\C\) and study complex realization on the
bigraded \(\A^1\)-homotopy sheaves appearing in the preceding diagrams.  In
both parities we obtain an unconditional comparison isomorphism.  The odd-rank
case is not a formal five-lemma argument: it uses Kervaire's computation to
show that the deeply contracted even-stage Stiefel differential is the
nonzero map \(\mathbb Z/2\to\mathbb Z/2\).  We also use the realization range
for motivic spheres to identify the three contracted groups from Section~4
with $\pi^S_4$, $\pi^S_5$, and $\pi^S_6$.

We then apply these comparison results to vector bundles on the split odd quadric
$Q_{2n-1}$.  Since $Q_{2n-1}$ is $\A^1$-equivalent to $\A^n\setminus\{0\}$ and
$Q_{2n-1}(\C)\simeq S^{2n-1}$, oriented algebraic bundles of rank $n-2$ on $Q_{2n-1}$ are governed by
the same non-stable $\A^1$-homotopy group that controls the corresponding topological classification.
As a consequence, complex realization yields an unconditional isomorphism
between algebraic and topological classification sets in both parities and
gives explicit finite classification groups.  Two ranks lower, the contracted
exact row shows that all oriented rank $n-3$ bundles
split off a trivial line bundle whenever the square-class group of the ground field
vanishes.

Over a real subfield, the weight behaves differently: real realization sends
$\G_m$ to $S^0$, so all weights with a fixed simplicial degree map to the same
classical homotopy group.  We use the explicit $\rho$- and $\nu$-classes in
the second stable stem to state a real comparison theorem for the punctured
affine-space terms, while recording why a direct analogue of the complex
comparison theorem for $SL_n$ requires additional information.

Finally, we place the calculations in the Stiefel-variety obstruction theory
of \cite{AOSS26}.  Exact Stiefel segments locate the sheaves studied in
Sections~3 and~4 among the terms entering secondary and tertiary obstruction
problems for efficient generation of projective modules.  The calculation in
Section~4 proves that the punctured-affine-space contribution to the top
tertiary obstruction is $2$-primary.  We do not identify the corresponding
two-primary Moore--Postnikov $k$-invariant; its three-primary analogue is
identified in Subsection~\ref{sec:secondary}.
\subsection{Complex realization and comparison with classical homotopy theory}\label{subsec:realization}

Assume throughout that $k=\C$.
If $(X,x)$ is a pointed smooth $k$-scheme, we write $X(\C)$ for the space of complex points
equipped with the analytic topology. The assignment
\[
X \longmapsto X(\C)
\]
extends to a (symmetric monoidal) complex realization functor on the pointed $\A^1$-homotopy
category
\[
\mathfrak{R}_{\C}:\ \mathpzc{H}_\bullet(\C)\longrightarrow \mathpzc{H}^{\mathrm{top}}_\bullet,
\]
where $\mathpzc{H}^{\mathrm{top}}_\bullet$ denotes the classical pointed topological homotopy
category; see \cite[\S3]{MV99} for details.

In particular, for every pointed space $(\mathpzc{X},x)$ and every pair of integers $(i,j)$,
complex realization induces homomorphisms
\[
{\pi}^{\A^1}_{i,j}(\mathpzc{X},x)(\C)\ \longrightarrow \pi_{i+j}\!\big(\mathpzc{X}(\C)\big).
\]

\medskip
\noindent
We now specialize to $\mathpzc{X}=SL_n$. Complex realization allows us to compare the motivic
computations of the first and second non-stable $\A^1$-homotopy sheaves of $SL_n$ with the
corresponding classical homotopy groups of the Lie group $SL_n(\C)\simeq SU(n)$.
The guiding philosophy is that the non-stable motivic behavior of $SL_n$ should mirror classical
phenomena, such as those studied by Bott \cite{Bott57} and Kervaire \cite{Kervaire60}.

\subsubsection*{Classical topological results}

We record the following computation of Kervaire.

\begin{theorem}\label{thm:Kervaire}
For $k>1$, the homotopy groups of the unitary groups satisfy
\[
\pi_{4k}\!\left(U(2k-1)\right)
\;=\; \mathbb{Z}/\big((2k)!/2\big),
\qquad
\pi_{4k+2}\!\left(U(2k)\right)
\;=\; \mathbb{Z}/2\oplus\mathbb{Z}/(2k+1)!.
\]
For $k=1$, one has
\[
\pi_{4k+2}\!\left(U(2k)\right)
\;=\;
\pi_{6}(U(2))
\;=\;
\pi_{6}(S^{1}\times S^{3})
\;=\;
\mathbb{Z}/12;
\]
the last equality is the classical computation
\(\pi_6(S^3)\cong\mathbb Z/12\).
\end{theorem}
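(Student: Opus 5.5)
This statement is classical, and the proof I intend is by citation: Bott \cite{Bott57} for the metastable groups \(\pi_{2n}(U(n))\cong\mathbb Z/n!\), and Kervaire \cite{Kervaire60} for the next two non-stable groups. I will still outline how the two displayed groups are obtained, because the same boundary maps reappear below in the comparison with \(d_{n,1}\).

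First reduce to special unitary groups. The splitting \(U(n)\cong SU(n)\times S^1\) as spaces identifies \(\pi_i(U(n))\) with \(\pi_i(SU(n))\) for \(i\geq2\). Then use the fibrations
\[
U(n-1)\longrightarrow U(n)\longrightarrow S^{2n-1}
\]
together with Bott's input \(\pi_{2n}(U(n))\cong\mathbb Z/n!\) and \(\pi_{2n-1}(U(n))\cong\mathbb Z\).

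For \(\pi_{4k}(U(2k-1))\), take \(n=2k\). The relevant segment of the long exact sequence is
\[
\pi_{4k+1}(S^{4k-1})\cong\mathbb Z/2\xrightarrow{\partial}\pi_{4k}(U(2k-1))\to\mathbb Z/(2k)!\to\pi_{4k}(S^{4k-1})\cong\mathbb Z/2\xrightarrow{\partial}\pi_{4k-1}(U(2k-1))\to\mathbb Z.
\]
There are two things to determine. One is whether the first \(\partial\) is injective. The other is how \(\mathbb Z/(2k)!\) maps onto \(\mathbb Z/2\), or fails to. These are exactly the boundaries of the fibration
\[
U(n-1)\to U(n+1)\to W_{n+1,2}
\]
whose base is the two-frame Stiefel manifold. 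Kervaire computes them from the attaching map of the top cell of \(W_{n+1,2}\), which is \(\eta\) or \(0\) according to parity. For even \(n\) this produces the cyclic answer \(\mathbb Z/((2k)!/2)\).

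For \(\pi_{4k+2}(U(2k))\), take \(n=2k+1\). There the corresponding Stiefel fibration has a section in the relevant parity, and the sequence splits as \(\mathbb Z/2\oplus\mathbb Z/(2k+1)!\).

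The case \(k=1\) is direct. The map
\[
S^1\times SU(2)\longrightarrow U(2),\qquad (z,A)\longmapsto \operatorname{diag}(z,1)\,A,
\]
is a homeomorphism and \(SU(2)\cong S^3\). Hence \(\pi_6(U(2))\cong\pi_6(S^3)\cong\mathbb Z/12\) by the classical computation.

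The main obstacle, if one were to reprove rather than cite, is the extension problem in the odd-rank case: deciding that \(\pi_{4k}(U(2k-1))\) is cyclic rather than \(\mathbb Z/2\oplus\mathbb Z/((2k)!/2)\). That needs Kervaire's explicit identification of the \(W_{n+1,2}\) boundary with the nonzero map on the relevant \(\mathbb Z/2\)'s. Since it is genuinely Kervaire's theorem, I would simply cite \cite{Kervaire60} for this step.
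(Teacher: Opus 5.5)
Citing Bott and Kervaire, and handling $k=1$ through $U(2)\cong S^1\times S^3$, is exactly what the paper does; it simply records Kervaire's computation. So the proof you would actually give is fine. The outline you add, however, contains a step that fails, and it misplaces the real difficulty. This matters because these boundary maps are reused in Lemma~\ref{lem:Kervaire-maps} and in the odd-rank case of Theorem~\ref{thm:realization-SLn}.

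\textbf{The failing step.} You claim that for $\pi_{4k+2}(U(2k))$ a section of the Stiefel fibration splits the sequence. But the section of $W_2(\mathbb C^{2k+2})\to S^{4k+3}$ exists for every $k$, including $k=1$. There the same short exact sequence $0\to\pi_7(S^5)\to\pi_6(U(2))\to\pi_6(U(3))\to0$ reads $0\to\mathbb Z/2\to\mathbb Z/12\to\mathbb Z/6\to0$, which is not split. An argument from that section alone would therefore contradict the $k=1$ clause of the statement.

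\textbf{Where the extension problem is actually solved.} This case is where the genuine extension problem lives. It is settled by the even-rank Stiefel manifold $W_2(\mathbb C^{2k+1})$, whose clutching class is $\eta$; this is the attaching map of its $(4k+1)$-cell, not of the top cell as you wrote. Let $q\colon U(2k)\to S^{4k-1}$ be the projection and $\partial'$ the boundary of $U(2k)\to U(2k+1)\to S^{4k+1}$. Then $q_*\partial'(\eta_{4k+1}\eta_{4k+2})=\eta_{4k-1}\eta_{4k}\eta_{4k+1}$, which is $\eta_{\mathrm{top}}^3=12\nu_{\mathrm{top}}\neq0$ in $\pi_{4k+2}(S^{4k-1})\cong\mathbb Z/24$. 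This shows $\partial'$ is injective on $\pi_{4k+3}(S^{4k+1})$. If the extension did not split, $\pi_{4k+2}(U(2k))$ would be cyclic of order $2(2k+1)!$, with its order-two element equal to $(2k+1)!$ times a generator. The image of that element in $\mathbb Z/24$ vanishes as soon as $24\mid(2k+1)!$, i.e.\ $k\geq2$, a contradiction. That is precisely where the hypothesis $k>1$ is used.

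\textbf{No extension problem for $\pi_{4k}(U(2k-1))$.} The boundary $\pi_{4k+1}(S^{4k-1})\to\pi_{4k}(U(2k-1))$ is zero, not injective. Indeed $q_*\partial'(\eta_{4k+1})=\eta_{4k-1}\eta_{4k}\neq0$ makes $\pi_{4k+1}(U(2k))\to\pi_{4k+1}(S^{4k-1})$ surjective, which is exactly how Lemma~\ref{lem:Kervaire-maps} argues. In addition, $q_*\partial'(\iota_{4k+1})=\eta_{4k-1}$ makes $\pi_{4k}(U(2k))\to\pi_{4k}(S^{4k-1})$ surjective. Hence $\pi_{4k}(U(2k-1))$ is the index-two subgroup of the cyclic group $\mathbb Z/(2k)!$, and it is automatically cyclic. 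The group $\mathbb Z/2\oplus\mathbb Z/((2k)!/2)$ is not a competing extension: it has order $(2k)!$ and would correspond to an injective boundary. The ingredient you name, the nonzero two-frame Stiefel boundary, is the right one; its role is to force that boundary to vanish, not to decide an extension.
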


\begin{lemma}\label{lem:Kervaire-maps}
For $k>1$, the homotopy exact sequences of the standard unitary fibrations
contain short exact sequences
\begin{equation}\label{eq:Kervaire-even-short-exact}
0\longrightarrow
\pi_{4k+3}(S^{4k+1})\cong\mathbb Z/2
\longrightarrow
\pi_{4k+2}(U(2k))
\longrightarrow
\pi_{4k+2}(U(2k+1))\cong\mathbb Z/(2k+1)!
\longrightarrow0
\end{equation}
and
\begin{equation}\label{eq:Kervaire-odd-short-exact}
0\longrightarrow
\pi_{4k}(U(2k-1))\cong\mathbb Z/\big((2k)!/2\big)
\longrightarrow
\pi_{4k}(U(2k))\cong\mathbb Z/(2k)!
\longrightarrow
\pi_{4k}(S^{4k-1})\cong\mathbb Z/2
\longrightarrow0.
\end{equation}
In particular, in the long exact sequence for
$U(2k-1)\to U(2k)\to S^{4k-1}$ the preceding boundary
\begin{equation}\label{eq:Kervaire-left-boundary-zero}
\pi_{4k+1}(S^{4k-1})\longrightarrow\pi_{4k}(U(2k-1))
\end{equation}
is zero.
\end{lemma}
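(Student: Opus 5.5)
The plan is to read off both short exact sequences from the homotopy long exact sequences of the standard fibrations
\[
U(m-1)\longrightarrow U(m)\longrightarrow S^{2m-1},
\]
with input from Theorem~\ref{thm:Kervaire}, Bott periodicity for the stable groups \(\pi_{2j}(U)=0\) and \(\pi_{2j-1}(U)\cong\mathbb Z\), the stable range \(\pi_i(U(m))\cong\pi_i(U)\) for \(i<2m\), and Bott's classical result that \(\pi_{2m}(U(m))\cong\mathbb Z/m!\).

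For \eqref{eq:Kervaire-even-short-exact}, take \(m=2k+1\). The relevant segment of the fibration \(U(2k)\to U(2k+1)\to S^{4k+1}\) is
\[
\pi_{4k+3}(U(2k+1))\to\pi_{4k+3}(S^{4k+1})\to\pi_{4k+2}(U(2k))\to\pi_{4k+2}(U(2k+1))\to\pi_{4k+2}(S^{4k+1}).
\]
The last term is \(\pi_{4k+2}(S^{4k+1})\cong\mathbb Z/2\). The first thing to check is that the map \(\pi_{4k+2}(U(2k+1))\to\pi_{4k+2}(S^{4k+1})\) is zero. Bott gives \(\pi_{4k+2}(U(2k+1))\cong\mathbb Z/(2k+1)!\). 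Kervaire gives \(\pi_{4k+2}(U(2k))\cong\mathbb Z/2\oplus\mathbb Z/(2k+1)!\), whose order is \(2\cdot(2k+1)!\). The source of the left map is \(\pi_{4k+3}(S^{4k+1})\cong\mathbb Z/2\), so it has order at most \(2\). Exactness then forces the boundary \(\mathbb Z/2\to\pi_{4k+2}(U(2k))\) to be injective and the map to \(\pi_{4k+2}(U(2k+1))\) to be surjective. Both groups are finite, so the orders must match exactly, and that counting argument yields the short exact sequence.

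For \eqref{eq:Kervaire-odd-short-exact}, take \(m=2k\). The relevant segment is
\[
\pi_{4k+1}(S^{4k-1})\to\pi_{4k}(U(2k-1))\to\pi_{4k}(U(2k))\to\pi_{4k}(S^{4k-1})\to\pi_{4k-1}(U(2k-1))\to\pi_{4k-1}(U(2k)).
\]
Here \(\pi_{4k-1}(U(2k-1))\) and \(\pi_{4k-1}(U(2k))\) are both stable, since \(4k-1<4k\) and \(4k-1\le 2(2k-1)\) hold for \(k\ge1\), and both equal \(\mathbb Z\). I plan to identify the map between them as the stabilization isomorphism, so that \(\pi_{4k}(S^{4k-1})\cong\mathbb Z/2\) maps onto \(\ker=0\). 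This makes \(\pi_{4k}(U(2k))\to\mathbb Z/2\) surjective.

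The middle groups are then determined by Bott's \(\pi_{4k}(U(2k))\cong\mathbb Z/(2k)!\) and Kervaire's \(\pi_{4k}(U(2k-1))\cong\mathbb Z/((2k)!/2)\). The image of \(\pi_{4k}(U(2k-1))\) is the kernel of the surjection onto \(\mathbb Z/2\), which has order \((2k)!/2\). Since this matches the order of \(\pi_{4k}(U(2k-1))\), the map \(\pi_{4k}(U(2k-1))\to\pi_{4k}(U(2k))\) is injective, which gives \eqref{eq:Kervaire-odd-short-exact}. Injectivity at this term is exactly the statement that the preceding boundary \eqref{eq:Kervaire-left-boundary-zero} has zero image, so that vanishing follows immediately.

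I expect the main obstacle to be making sure the orders are correct rather than assumed. Concretely, the argument relies on three inputs: Bott's value \(\pi_{2m}(U(m))\cong\mathbb Z/m!\), stated in exactly the form used here; the stable-range bounds, including the boundary case \(i=2m-1\), where \(\pi_{2m-1}(U(m))\cong\mathbb Z\) and the stabilization map is an isomorphism; and the values \(\pi_{4k+3}(S^{4k+1})\cong\pi_{4k}(S^{4k-1})\cong\mathbb Z/2\), which hold in the stable range once \(k>1\). With these three inputs in hand, each step is a finite order count in an exact sequence.
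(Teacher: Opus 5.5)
Your even-rank count is correct. Kervaire's order $2\cdot(2k+1)!$ for $\pi_{4k+2}(U(2k))$, together with $|\pi_{4k+3}(S^{4k+1})|=2$ and Bott's $|\pi_{4k+2}(U(2k+1))|=(2k+1)!$, forces the boundary to be injective and the stabilization to be surjective.

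The odd-rank step, however, is wrong. You claim that $\pi_{4k-1}(U(2k-1))$ is stable and that $\pi_{4k-1}(U(2k-1))\to\pi_{4k-1}(U(2k))$ is an isomorphism $\mathbb Z\to\mathbb Z$. But the inequality you check, $4k-1\le 2(2k-1)=4k-2$, fails. By your own criterion $i<2m$, the stable range of $U(2k-1)$ ends at $i=4k-3$. The claim is actually false, not just unjustified. Since $\pi_{4k-2}(U(2k))=0$, the map $\pi_{4k-1}(U(2k))\cong\mathbb Z\to\pi_{4k-1}(S^{4k-1})\cong\mathbb Z$ has cokernel $\pi_{4k-2}(U(2k-1))\cong\mathbb Z/(2k-1)!$, so it is injective. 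Hence $\pi_{4k-1}(U(2k-1))\to\pi_{4k-1}(U(2k))$ is the zero map; in fact $\pi_{4k-1}(U(2k-1))=0$, e.g.\ $\pi_7(U(3))=0$. Your derivation of the surjectivity of $p_*\colon\pi_{4k}(U(2k))\to\pi_{4k}(S^{4k-1})$ therefore does not stand as written.

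The repair is to count exactly as in the even case, without proving surjectivity first. Exactness of
\[
\pi_{4k+1}(S^{4k-1})\xrightarrow{\ \partial\ }\pi_{4k}(U(2k-1))\xrightarrow{\ i_*\ }\pi_{4k}(U(2k))\xrightarrow{\ p_*\ }\pi_{4k}(S^{4k-1})
\]
gives $(2k)!=|\operatorname{im} i_*|\cdot|\operatorname{im} p_*|\le\frac{(2k)!}{2}\cdot 2$. Equality must hold, so $i_*$ is injective (hence $\partial=0$) and $p_*$ is surjective, both at once. Equivalently, $p_*$ is onto because $\pi_{4k-1}(U(2k-1))=0$.

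Once repaired, your route differs from the paper's. The paper cites Kervaire's Lemma~I.6 for the sequences, and gets $\partial=0$ from the surjection $\pi_{4k+1}(U(2k))\to\pi_{4k+1}(S^{4k-1})$ of Kervaire's Lemmas~I.4--I.5. Your count needs no information about any map, only the orders in Theorem~\ref{thm:Kervaire} and Bott's theorem. In particular, once those orders are granted, the vanishing of \eqref{eq:Kervaire-left-boundary-zero} follows from orders alone, despite the remark to the contrary in the proof of Theorem~\ref{thm:realization-SLn}. It is not independent of Kervaire's map-level input, though: his value $(2k)!/2$ for $\pi_{4k}(U(2k-1))$ is derived from this same exact sequence.
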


\begin{proof}
These are the exact sequences used in the proof of
\cite[Lemma~I.6]{Kervaire60}.  For the last assertion, Kervaire's
Lemmas~I.4--I.5 show that the projection induces a surjection
\[
\pi_{4k+1}(U(2k))\longrightarrow\pi_{4k+1}(S^{4k-1})\cong\mathbb Z/2.
\]
Exactness therefore forces \eqref{eq:Kervaire-left-boundary-zero} to vanish,
or equivalently forces the first nonzero arrow in
\eqref{eq:Kervaire-odd-short-exact} to be injective.  The passage between
$U(r)$ and $SU(r)$ causes no change in the positive homotopy groups used
here.
\end{proof}

\begin{lemma}\label{lem:Kervaire-two-frame-boundary}
Let \(n\geq4\) be even.  Write \(W_2(\mathbb C^{n+1})\) for the space of
orthonormal complex \(2\)-frames.  The boundary homomorphism of the
two-frame Stiefel fibration
\[
S^{2n-1}\longrightarrow W_2(\mathbb C^{n+1})
\longrightarrow S^{2n+1}
\]
is an isomorphism
\[
\partial^{\mathrm{St}}_{\mathrm{top}}:
\pi_{2n+2}(S^{2n+1})\xrightarrow{\ \sim\ }
\pi_{2n+1}(S^{2n-1}).
\]
Both groups are cyclic of order \(2\).
\end{lemma}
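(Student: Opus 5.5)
The plan is to compute the boundary through the long exact sequence of the two-frame fibration and to identify the relevant groups using Kervaire's sequences in Lemma~\ref{lem:Kervaire-maps}. Write \(n=2k\) with \(k\geq2\). We have \(W_2(\mathbb C^{n+1})\cong U(n+1)/U(n-1)\), and the fibration \(S^{2n-1}\to W_2(\mathbb C^{n+1})\to S^{2n+1}\) is the frame-forgetting map. Both \(\pi_{2n+2}(S^{2n+1})\) and \(\pi_{2n+1}(S^{2n-1})\) are stable stems \(\pi_1^S\) and \(\pi_2^S\), so each is cyclic of order two. It therefore suffices to show that \(\partial^{\mathrm{St}}_{\mathrm{top}}\) is nonzero, or equivalently that it is injective or surjective.

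The boundary factors as \(\pi_{2n+2}(S^{2n+1})\xrightarrow{\partial}\pi_{2n+1}(U(n))\xrightarrow{p_*}\pi_{2n+1}(S^{2n-1})\). Here \(\partial\) is the boundary of \(U(n)\to U(n+1)\to S^{2n+1}\) and \(p\) is the projection \(U(n)\to S^{2n-1}\); this is the topological analogue of Lemma~\ref{lem:stiefel-parity}, proved by naturality of the fibration sequences. With \(n=2k\), the group \(\pi_{2n+1}(U(n))=\pi_{4k+1}(U(2k))\) is zero by Bott, since \(4k+1\geq 2\cdot 2k\) puts it just outside the stable range. This would force the composite to vanish, which contradicts the claim, so the naive factorization needs to be rechecked.

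The correct indexing is as follows. \(\partial\) goes from \(\pi_{2n+2}(S^{2n+1})\) to \(\pi_{2n+1}(U(n))\), and the connecting map of \(W_2\) lands in \(\pi_{2n+1}(S^{2n-1})\). The factorization instead goes through the map \(\pi_{2n+1}(U(n))\to\pi_{2n+1}(U(n+1)/U(n-1))\), so the honest route is the long exact sequence of \(W_2\) itself. I would compute \(\pi_{2n+1}(W_2(\mathbb C^{n+1}))\) through the fibration \(U(n-1)\to U(n+1)\to W_2\). Since \(\pi_{2n+1}(U(n+1))=\mathbb Z\) and \(\pi_{2n}(U(n-1))=\pi_{4k}(U(2k-1))=\mathbb Z/((2k)!/2)\), the task reduces to showing that \(\pi_{2n+1}(S^{2n-1})\to\pi_{2n+1}(W_2)\) is zero. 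By exactness, that is equivalent to surjectivity of the boundary.

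Kervaire's Lemma~\ref{lem:Kervaire-maps} supplies the key input, namely the short exact sequence \eqref{eq:Kervaire-odd-short-exact} together with the vanishing boundary \eqref{eq:Kervaire-left-boundary-zero}, which gives injectivity of \(\pi_{4k}(U(2k-1))\to\pi_{4k}(U(2k))\). A diagram chase in the ladder of the fibrations \(U(n-1)\to U(n)\to S^{2n-1}\) and \(U(n-1)\to U(n+1)\to W_2\) then needs to produce the following: the generator of \(\pi_{2n+1}(S^{2n-1})\) maps to the boundary of a class in \(\pi_{2n+1}(W_2)\) coming from \(S^{2n+1}\).

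The main obstacle is bookkeeping this chase so that the \(\mathbb Z/2\) is carried correctly. My first attempt would use the \(\mathbb Z\to\mathbb Z/(2k)!\) index count. The generator of \(\pi_{4k+1}(U(2k+1))\cong\mathbb Z\) maps under \(\pi_{4k+1}(U(2k+1))\to\pi_{4k+1}(S^{4k+1})\cong\mathbb Z\) with image \((2k)!\,\mathbb Z\). By \eqref{eq:Kervaire-odd-short-exact}, the corresponding map to \(W_2\) has image of index \((2k)!/2\) rather than \((2k)!\). Comparing these two indices forces \(\pi_{2n+1}(S^{2n-1})\to\pi_{2n+1}(W_2)\) to be zero, which gives surjectivity and hence the isomorphism.
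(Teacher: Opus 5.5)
Your closing index count is a valid proof, by a different route from the paper's, but the middle of the proposal needs repair.

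The factorization you abandoned in your second paragraph is correct, and it is exactly the paper's argument. There \(\partial^{\mathrm{St}}_{\mathrm{top}}=q_*\circ\partial\), with \(\partial\) the boundary of \(U(2k)\to U(2k+1)\to S^{4k+1}\) and \(q:U(2k)\to S^{4k-1}\). The paper then cites Kervaire's Lemma~I.5: \(q_*\partial(\eta_{4k+1})=\eta_{4k-1}\circ\eta_{4k}\neq0\).

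What failed was your claim \(\pi_{4k+1}(U(2k))=0\). Degree \(4k+1\) lies beyond the stable range for \(U(2k)\), so Bott periodicity does not apply. In fact \(\pi_{4k+1}(U(2k))\cong\mathbb Z/2\), generated by \(\partial(\eta_{4k+1})\), because \(\pi_{4k+1}(U(2k+1))\cong\mathbb Z\to\pi_{4k+1}(S^{4k+1})\) is injective. Drop the sentence about a factorization through \(\pi_{2n+1}(U(n+1)/U(n-1))\), and drop the diagram chase of your fourth paragraph; your final count never uses it.

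For the count, write \(B=\pi_{4k+1}(W_2(\mathbb C^{2k+1}))\). Let \(T\subset B\) be the image of \(\pi_{4k+1}(S^{4k-1})\), so \(T\cong\operatorname{coker}(\partial^{\mathrm{St}}_{\mathrm{top}})\). Let \(p:B\to\pi_{4k+1}(S^{4k+1})=\mathbb Z\) be the projection.

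In the exact sequence of \(U(2k-1)\to U(2k+1)\to W_2\), the map \(j:\pi_{4k+1}(U(2k+1))\cong\mathbb Z\to B\) is injective because \(\pi_{4k+1}(U(2k-1))\) is finite. Its cokernel is \(\pi_{4k}(U(2k-1))\cong\mathbb Z/((2k)!/2)\) because \(\pi_{4k}(U(2k+1))=0\). By Bott, \(pj\) has image \((2k)!\,\mathbb Z\).

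Since \(T\) is torsion, \(T\cap j(\mathbb Z)=0\). Also \(\ker p=T\), so \(p\) identifies \(B/(j(\mathbb Z)+T)\) with \(p(B)/(2k)!\,\mathbb Z\). Hence
\[
\frac{(2k)!}{2}=|T|\cdot\frac{(2k)!}{[\mathbb Z:p(B)]},\qquad\text{that is,}\qquad [\mathbb Z:p(B)]=2|T|.
\]
The step you left implicit is that \(p(B)\) is the kernel of \(\pi_{4k+1}(S^{4k+1})\to\pi_{4k}(S^{4k-1})\cong\mathbb Z/2\), so it has index at most \(2\). This forces \(|T|=1\), which is the surjectivity you wanted. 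As a by-product, that lower boundary is nonzero.

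Your argument uses only group orders and two exact sequences of \(W_2\), and never needs an element-level identification such as \(\eta\circ\eta\). The paper's proof is a one-line citation of that identification.

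The two proofs are not independent, though. In the exact sequence of \(U(2k-1)\to U(2k)\to S^{4k-1}\), Kervaire's order \((2k)!/2\) already forces \(q_*\) to be surjective on \(\pi_{4k+1}\). That surjectivity is recorded in the proof of Lemma~\ref{lem:Kervaire-maps}, and it is exactly the input the paper uses. So your count rests on the same piece of Kervaire's work, packaged through orders.
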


\begin{proof}
Write \(n=2k\).  Under
\[
W_2(\mathbb C^{2k+1})=U(2k+1)/U(2k-1),
\]
the Stiefel boundary is the composite of the boundary for
\[
U(2k)\longrightarrow U(2k+1)\longrightarrow S^{4k+1}
\]
and the projection \(q:U(2k)\to S^{4k-1}\).  Kervaire
\cite[Lemma~I.5]{Kervaire60} computes
\[
q_*\partial(\eta_{4k+1})
=\Theta_{4k-1}
=\eta_{4k-1}\circ\eta_{4k},
\]
which is the nonzero element of
\(\pi_{4k+1}(S^{4k-1})\cong\mathbb Z/2\).  Since
\(\eta_{4k+1}\) generates
\(\pi_{4k+2}(S^{4k+1})\cong\mathbb Z/2\), the boundary is an
isomorphism.
\end{proof}

\subsubsection*{Motivic results}

We will use the comparison theorem for $GL_n$ from
\cite[Theorem~5.5]{AF14}.

\begin{theorem}\label{thm:AF-realization-GLn}
For any integer $n\ge 3$, the homomorphisms induced by complex realization
\[
{\pi}^{\A^1}_{n-1,n}(GL_n)(\C)\ \longrightarrow\ \pi_{2n-1}\!\big(GL_n(\C)\big)\cong \pi_{2n-1}(U(n)),
\]
and
\[
{\pi}^{\A^1}_{n-1,n+1}(GL_n)(\C)\ \longrightarrow\ \pi_{2n}\!\big(GL_n(\C)\big)\cong \pi_{2n}(U(n)),
\]
are isomorphisms.
\end{theorem}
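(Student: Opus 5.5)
This statement is \cite[Theorem~5.5]{AF14}; here is how I would reprove it within the present setup. The approach is to compare the motivic fiber sequence
\[
GL_{n-1}\to GL_n\to\A^n\setminus0
\]
with its complex points
\[
U(n-1)\to U(n)\to S^{2n-1},
\]
one contraction level at a time. Complex realization is natural for the long exact homotopy sequences, so it suffices to show that realization is an isomorphism on the outer terms and then apply the five lemma. Alternatively, I would match the explicit finite cyclic answers directly.

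First, by Theorem~\ref{thm:stable-range}(1), \(\pi^{\A^1}_{n-1}(GL_n)\cong\pi^{\A^1}_{n}(BSL_n)\). Theorem~\ref{thm:first-nonstable-BSL} then gives a short exact sequence
\[
0\to\mathbf S_{n+1}\ (\text{or }\mathbf T_{n+1})\to\pi^{\A^1}_{n-1}(GL_n)\to\mathbf K^Q_n\to 0 .
\]
I would contract it \(n\) and \(n+1\) times using Lemma~\ref{lem:contraction-exact} and Theorem~\ref{thm:contraction-K}, and evaluate on \(\C\).

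In weight \(n\) the right term is \(\mathbf K^Q_0(\C)=\mathbb Z\). The left term becomes \(\operatorname{coker}(K^Q_1(\C)\to K^M_1(\C))=0\). In the \(\mathbf T\) case the \(\mathbf I^1\) part vanishes over \(\C\) as well. Hence the source group is \(\mathbb Z\), matching \(\pi_{2n-1}(U(n))\cong\mathbb Z\).

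In weight \(n+1\) the right term is \(\mathbf K^Q_{-1}(\C)=0\). The left term is \(\mathbb Z/n!\), because \(K^Q_0\to K^M_0\) is multiplication by \(n!\) via Remark~\ref{rem:suslin-hurewicz-limitation}, and this holds in degree zero. In the \(\mathbf T\) case we get the fiber product \(\mathbb Z/n!\times_{\mathbb Z/2}W(\C)\) with \(W(\C)=\mathbb Z/2\), which is again \(\mathbb Z/n!\). This matches Bott's \(\pi_{2n}(U(n))\cong\mathbb Z/n!\).

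Second, I would show that realization induces these isomorphisms rather than merely abstract ones. In weight \(n+1\) the group is generated by the image of the boundary
\[
\mathbf K^{MW}_{n+1}{}_{-(n+1)}(\C)=GW(\C)=\mathbb Z\to\pi_{n-1,n+1}(GL_n)(\C).
\]
This class realizes to the topological boundary of the fundamental class of \(S^{2n+1}\). That topological boundary generates \(\pi_{2n}(U(n))\) by Bott's computation, with the same order \(n!\). So realization is a surjection between cyclic groups of equal order, hence an isomorphism.

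In weight \(n\), realization on \(\mathbf K^Q_0(\C)=\mathbb Z\to\pi_{2n}(BU)\) is Bott periodicity. Composing with the map to \(\mathbf K^{MW}_n{}_{-n}(\C)\), the result must be compared with the degree of \(U(n)\to S^{2n-1}\) on \(\pi_{2n-1}\), which is \((n-1)!\). This is the Suslin-matrix degree computation.

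I expect the main obstacle to be this compatibility of realization with the connecting maps and with the Hurewicz/Suslin identifications. Concretely, one must know that the unit class in \(\pi_{n,n+1}(\A^{n+1}\setminus0)(\C)\) realizes to a generator of \(\pi_{2n+1}(S^{2n+1})\), which holds since \(\A^{n+1}\setminus0\) realizes to \(S^{2n+1}\). One must also know that realization of \(\mathbf K^Q\) is Bott's isomorphism. I would handle this by using the Suslin matrix representatives and checking their complex points map to the standard generators up to sign.
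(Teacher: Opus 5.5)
The paper does not prove this statement. It is quoted from \cite[Theorem~5.5]{AF14} and used as a black box in the proof of Theorem~\ref{thm:realization-SLn}, so there is no internal argument to compare with.

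Your reconstruction is correct. The contracted groups are as you say. At $\C$ the kernel $\mathbf S_{n+1}$ (or $\mathbf T_{n+1}$) becomes $0$ in weight $n$, and $\mathbb Z/n!$ (resp.\ $\mathbb Z/n!\times_{\mathbb Z/2}W(\C)\cong\mathbb Z/n!$) in weight $n+1$. The quotient becomes $K_0(\C)=\mathbb Z$ and $0$.

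One small point in the reasoning concerns weight $n$. The vanishing there holds because the contracted $\psi_{n+1}$ is multiplication by $n!$ on $\C^\times$, and $\C^\times$ is divisible. The factor $n!$ comes from the composite $\mathbf K^M_{n+1}\to\mathbf K^Q_{n+1}\to\mathbf K^M_{n+1}$ being $n!$, together with $\mathbf K^M_1\to\mathbf K^Q_1$ being an isomorphism on fields. It is not the degree-one map $\psi_1$ that appears here.

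The five-lemma plan in your first paragraph does not apply verbatim; it is your computational argument that carries the proof.

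\textbf{Weight $n$.} The outer term $\pi^{\A^1}_{n,n}(\A^{n+1}\setminus0)(\C)\cong K^{MW}_1(\C)=\C^\times$ realizes to $\pi_{2n}(S^{2n+1})=0$, so realization is not an isomorphism there. What you actually need is that its image in $\pi^{\A^1}_{n-1,n}(GL_n)(\C)$ is $\C^\times/n!=0$. Then both stabilization maps to rank $n+1$ are isomorphisms, and everything reduces to the stable comparison: the Bott class in $\widetilde K_0(\mathbb P^1)$ realizes to the topological Bott class.

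Given that, your Suslin-degree comparison is redundant. Alternatively, it can replace the Bott input. Both projections to the sphere have image of index $(n-1)!$ in $\mathbb Z$, and realization is $\pm1$ on $\pi^{\A^1}_{n-1,n}(\A^n\setminus0)(\C)=GW(\C)$. This forces realization on $\pi^{\A^1}_{n-1,n}(GL_n)(\C)\cong\mathbb Z$ to be $\pm1$.

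\textbf{Weight $n+1$.} The five lemma would work here, but only after feeding in the first assertion of the theorem with $n+1$ in place of $n$ on the leftmost term $\pi^{\A^1}_{n,n+1}(GL_{n+1})(\C)$. Your surjectivity-plus-order count avoids this.

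The only non-formal input throughout is that complex realization carries the connecting maps of $GL_n\to GL_{n+1}\to\A^{n+1}\setminus0$ to those of $U(n)\to U(n+1)\to S^{2n+1}$. The paper takes this for granted as well.
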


\medskip
\noindent
We now turn to the second non-stable range for $SL_n$.

\begin{proposition}\label{prop:deep-dn1-complex}
Let \(n\geq4\) be even.  After \((n+1)\)-fold contraction and evaluation at
\(\mathbb C\), the Stiefel differential is an isomorphism
\[
(d_{n,1})_{-(n+1)}(\mathbb C):
\pi^{\A^1}_{n+1,n+1}(\A^{n+1}\setminus0)(\mathbb C)
\xrightarrow{\ \sim\ }
\pi^{\A^1}_{n,n+1}(\A^n\setminus0)(\mathbb C).
\]
Under complex realization it is identified with
\(\partial^{\mathrm{St}}_{\mathrm{top}}\) from
Lemma~\ref{lem:Kervaire-two-frame-boundary}; in particular, it is the
nonzero homomorphism \(\mathbb Z/2\to\mathbb Z/2\).
\end{proposition}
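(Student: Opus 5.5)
The plan is to compute both groups, show that complex realization is an isomorphism on each, and then transport Kervaire's computation of the Stiefel boundary through naturality of realization.

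First, the source and target. By Theorem~\ref{thm:contraction-bigrading}, $\pi^{\A^1}_{n+1,n+1}(\A^{n+1}\setminus0)=(\pi_{n+1}^{\A^1}(X_{n+1}))_{-(n+1)}$. We apply the exact sequence \eqref{eq:ABH-exact-next} and contract $n+1$ times, using Lemma~\ref{lem:contraction-exact}. The kernel becomes $\mathbf K^M_2/24$. The quotient term becomes $\mathbf{GW}^{0}_{1}$, and surjectivity onto it follows from Proposition~\ref{prop:ABH-722}(3), since the contraction lies in the surjective range. For the target we contract \eqref{eq:ABH-exact-n} $n+1$ times, giving kernel $\mathbf K^M_1/24$ and quotient $\mathbf{GW}^{-1}_{0}$.

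Next we evaluate at $\mathbb C$. We have $K^M_1(\mathbb C)/24=0$ and $K^M_2(\mathbb C)/24=0$, because $\mathbb C^\times$ is divisible and $K^M_2(\mathbb C)$ is uniquely divisible modulo torsion, hence divisible. On the hermitian side, $\mathbf{GW}^0_1(\mathbb C)=KO_1(\mathbb C)$ and $\mathbf{GW}^{-1}_0(\mathbb C)$ should both be $\mathbb Z/2$, matching $\pi_1^s$. So both groups are the $\mathbb Z/2$ read off from the hermitian quotient. Equivalently, and more robustly, we can use the stable identification of Section~4 style, via Freudenthal as in Theorem~\ref{thm:Pn-second-stem}. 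That identification gives $\pi^s_{1,0}(\mathds1)(\mathbb C)$ for the target and the same stem for the source. The realization theorem for motivic spheres \cite{gant2026motivichomotopygroupsspheres} then says that complex realization sends these isomorphically to $\pi_{2n+2}(S^{2n+1})$ and $\pi_{2n+1}(S^{2n-1})$ respectively. Concretely, $(\A^{m}\setminus0)(\mathbb C)\simeq S^{2m-1}$, and the realization map $\pi_{i,j}\to\pi_{i+j}$ lands in exactly these groups.

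Then naturality. Complex realization is a functor on the pointed $\A^1$-homotopy category and carries the Stiefel fiber sequence $\A^n\setminus0\to\operatorname{St}_2(n+1)\to\A^{n+1}\setminus0$ to the topological fibration $S^{2n-1}\to W_2(\mathbb C^{n+1})\to S^{2n+1}$, up to homotopy equivalence, since $SL_{n+1}(\mathbb C)/SL_{n-1}(\mathbb C)\simeq U(n+1)/U(n-1)$. By Lemma~\ref{lem:stiefel-parity}, $d_{n,1}$ is the connecting map of this sequence. Its realization is therefore the topological connecting map, after identifying sections of the contracted sheaf over $\mathbb C$ with maps out of $S^{n}_s\wedge\G_m^{\wedge(n+1)}$, whose realization is $S^{2n+1}$. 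This yields a commutative square with the realization isomorphisms as verticals, $(d_{n,1})_{-(n+1)}(\mathbb C)$ on top, and $\partial^{\mathrm{St}}_{\mathrm{top}}$ on the bottom. Lemma~\ref{lem:Kervaire-two-frame-boundary} then finishes the proof.

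The main obstacle is to make the compatibility of realization with the connecting map precise. This needs a fiber sequence in $\mathpzc H_\bullet$ that realizes to a fiber sequence, together with the identification of bigraded homotopy classes with maps from motivic spheres. I would handle this by working with the actual pointed map $\widetilde d_n:\Omega_sX_{n+1}\to X_n$ and the desuspension $\alpha_n$ from the proof of Proposition~\ref{prop:dn2-stable-eta}. Since realization commutes with $\Omega_s$ on this connected range, $\widetilde d_n$ realizes to the topological boundary map. A cross-check is available as well: $\alpha_n=\eta$ realizes to $\eta_{\mathrm{top}}$, so the induced map on $\pi_1^s=\mathbb Z/2$ is multiplication by $\eta_{\mathrm{top}}$ from $\pi_1$ to $\pi_2$. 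Kervaire's $\eta\circ\eta\neq0$ confirms that this map is nonzero.
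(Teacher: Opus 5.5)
Your proposal is correct and follows the paper's proof: a realization square whose vertical maps are isomorphisms by the motivic-sphere comparison \cite[Proposition~4.3]{gant2026motivichomotopygroupsspheres}, whose bottom arrow is Kervaire's boundary from Lemma~\ref{lem:Kervaire-two-frame-boundary}, and whose top arrow is therefore an isomorphism; your preliminary computation of both groups via the ABH sequences is not needed for this. Two small corrections: after stabilization the source is $\pi^s_{1,0}$ and the target is $\pi^s_{2,1}$, not both $\pi^s_{1,0}$, and you should verify the range hypotheses of the cited proposition, which the paper does with $(x,y,d,e)=(n,n+1,n+1,n+1)$ for the source and $(n-1,n,n,n+1)$ for the target.
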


\begin{proof}
Put \(X_r=\A^r\setminus0\simeq
S_s^{r-1}\wedge\G_m^{\wedge r}\).  Naturality of complex realization and
Lemma~\ref{lem:stiefel-parity} give a commutative square
\[
\begin{tikzcd}[column sep=large]
\pi^{\A^1}_{n+1,n+1}(X_{n+1})(\mathbb C)
  \ar[r,"(d_{n,1})_{-(n+1)}"]
  \ar[d,"\sim"']
&
\pi^{\A^1}_{n,n+1}(X_n)(\mathbb C)
  \ar[d,"\sim"]
\\
\pi_{2n+2}(S^{2n+1})
  \ar[r,"\partial^{\mathrm{St}}_{\mathrm{top}}"']
&
\pi_{2n+1}(S^{2n-1}).
\end{tikzcd}
\]
For the left vertical map, apply
\cite[Proposition~4.3]{gant2026motivichomotopygroupsspheres} with
\[
(x,y,d,e)=(n,n+1,n+1,n+1).
\]
For the right vertical map use
\[
(x,y,d,e)=(n-1,n,n,n+1).
\]
The inequalities in that proposition hold for \(n\geq4\), and in both
cases \(y-1\leq e\), so realization is an isomorphism.  No Wood
compatibility is used.  The bottom horizontal morphism is an isomorphism by
Lemma~\ref{lem:Kervaire-two-frame-boundary}; hence so is the top horizontal
morphism.
\end{proof}

\begin{proposition}\label{prop:odd-rank-projection-surjective}
Let \(m\geq1\), and put \(n=2m+2\).  Then the
projection
\[
\bigl(q_{2m+1,1}\bigr)_{-(2m+3)}:
\pi^{\A^1}_{n,n+1}(SL_n)(\mathbb C)
\longrightarrow
\pi^{\A^1}_{n,n+1}(\A^n\setminus0)(\mathbb C)
\]
is surjective.
\end{proposition}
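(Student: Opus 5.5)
The plan is to compare with topology through complex realization and use that the realized projection \(U(n)\to S^{2n-1}\) is surjective on \(\pi_{2n+1}\). Put \(n=2m+2\), so \(n\) is even, \(n\geq4\), and the degree–weight pair \((n,n+1)\) realizes to \(\pi_{2n+1}\). Consider the commutative square
\[
\begin{tikzcd}[column sep=large]
\pi^{\A^1}_{n,n+1}(SL_n)(\mathbb C) \ar[r,"(q_{n-1,1})_{-(n+1)}"] \ar[d]&
\pi^{\A^1}_{n,n+1}(\A^n\setminus0)(\mathbb C) \ar[d,"\sim"]\\
\pi_{2n+1}(SU(n)) \ar[r,"q_*"']&
\pi_{2n+1}(S^{2n-1}),
\end{tikzcd}
\]
which exists by naturality of realization applied to \(SL_n\to SL_n/SL_{n-1}\simeq\A^n\setminus0\). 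The right vertical arrow is an isomorphism, as already used in the proof of Proposition~\ref{prop:deep-dn1-complex}; there it follows from \cite[Proposition~4.3]{gant2026motivichomotopygroupsspheres} with \((x,y,d,e)=(n-1,n,n,n+1)\). Both groups are \(\mathbb Z/2\).

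I will not try to show that the left vertical arrow is surjective. That would require the full comparison theorem for \(SL_n\) that the present proposition is meant to feed into, so the argument would be circular. Instead I route through the Stiefel differential. Proposition~\ref{prop:deep-dn1-complex} shows that \((d_{n,1})_{-(n+1)}(\mathbb C)\) is an isomorphism \(\mathbb Z/2\to\mathbb Z/2\), and \(n\) is even, so this proposition applies. By definition
\[
d_{n,1}=q_{n-1,1}\circ\delta_{n,1},
\]
and contraction is functorial (Lemma~\ref{lem:contraction-exact}, Theorem~\ref{thm:contraction-bigrading}), so
\[
(d_{n,1})_{-(n+1)}=(q_{n-1,1})_{-(n+1)}\circ(\delta_{n,1})_{-(n+1)}.
\]
Evaluating at \(\mathbb C\), the composite is surjective, and therefore its second factor \((q_{n-1,1})_{-(n+1)}(\mathbb C)\) is surjective. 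Since \(n-1=2m+1\) and \(n+1=2m+3\), this second factor is exactly the map in the statement.

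Thus the key steps are the following.
\begin{enumerate}
\item Identify the indices: the map \(q_{2m+1,1}\) after \(2m+3\) contractions is \(q_{n-1,1}\) contracted \(n+1\) times.
\item Check that sections over \(\mathbb C\) of contracted sheaves compose correctly, which is immediate from functoriality.
\item Invoke Proposition~\ref{prop:deep-dn1-complex}.
\end{enumerate}
The realization square above is then only a consistency check, matching Kervaire's surjection \(\pi_{4k+1}(U(2k))\to\pi_{4k+1}(S^{4k-1})\) from Lemma~\ref{lem:Kervaire-maps} with \(k=m+1\).

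The main obstacle is bookkeeping. The source of \(d_{n,1}\) is \(\pi_{n+1}^{\A^1}(\A^{n+1}\setminus0)\) and the factor \(\delta_{n,1}\) lands in \(\pi_n^{\A^1}(SL_n)\). I must confirm that after \(n+1\) contractions the intermediate term is precisely \(\pi^{\A^1}_{n,n+1}(SL_n)\), the source in the statement, rather than something with a shifted degree. This requires \(SL_n\) to be \(\A^1\)-connected so that Theorem~\ref{thm:contraction-bigrading} applies, which holds here. If that identification were off by a shift, the fallback would be to argue on the topological side: Kervaire's surjectivity together with surjectivity of realization on the source. That in turn would have to come from Theorem~\ref{thm:AF-realization-GLn} and the identification \(\pi_i(SL_n)\cong\pi_i(GL_n)\) for \(i\geq1\) given by Theorem~\ref{thm:stable-range}.
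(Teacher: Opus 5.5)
Your argument is correct and is exactly the paper's proof. After $n+1=2m+3$ contractions the cross-composite factors as $(d_{n,1})_{-(n+1)}(\mathbb C)=(q_{2m+1,1})_{-(2m+3)}\circ(\delta_{n,1})_{-(n+1)}(\mathbb C)$, Proposition~\ref{prop:deep-dn1-complex} makes the composite an isomorphism, and hence the second factor is surjective; your index bookkeeping is right, and the realization square is not needed.

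Your fallback, however, would not work. Theorem~\ref{thm:AF-realization-GLn} only controls $\pi^{\A^1}_{n-1,*}(GL_n)$, not $\pi^{\A^1}_{n,n+1}(SL_n)$. The remark following Theorem~\ref{thm:realization-SLn} notes that this missing comparison is precisely what the present proposition is designed to bypass.
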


\begin{proof}
By definition, the contracted differential of Section~3 factors as
\[
(d_{n,1})_{-(n+1)}(\mathbb C)
=\bigl(q_{2m+1,1}\bigr)_{-(2m+3)}
 \circ(\delta_{n,1})_{-(n+1)}(\mathbb C).
\]
Proposition~\ref{prop:deep-dn1-complex} says that the left-hand side is an
isomorphism.  Hence
\(\bigl(q_{2m+1,1}\bigr)_{-(2m+3)}\) is surjective.
\end{proof}

\begin{theorem}\label{thm:realization-SLn}
Complex realization has the following properties.
\begin{enumerate}
\item For \(m\geq2\), the homomorphism
\[
{\pi}^{\A^1}_{2m,\,2m+2}(SL_{2m})(\C)
\ \longrightarrow\
\pi_{4m+2}\!\big(SL_{2m}(\C)\big)\ \cong\ \pi_{4m+2}\!\big(SU(2m)\big),
\]
is an isomorphism, and
\[
{\pi}^{\A^1}_{2m,\,2m+2}(SL_{2m})(\C)
\cong \Z/2\oplus\Z/(2m+1)!.
\]
\item For \(m\geq1\), the homomorphism
\[
{\pi}^{\A^1}_{2m+1,\,2m+3}(SL_{2m+1})(\C)
\ \longrightarrow\
\pi_{4m+4}\!\big(SL_{2m+1}(\C)\big)\ \cong\ \pi_{4m+4}\!\big(SU(2m+1)\big),
\]
is an isomorphism, and the motivic group is
\(\mathbb Z/((2m+2)!/2)\).
\end{enumerate}
\end{theorem}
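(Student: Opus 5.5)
The plan is to compare, for each parity, a contracted segment of the long exact sequence of an $\A^1$-fiber sequence $SL_{r}\to SL_{r+1}\to\A^{r+1}\setminus0$ with the corresponding unitary fibration $U(r)\to U(r+1)\to S^{2r+1}$, and then apply the five lemma to the complex-realization map between them. The inputs are:
\begin{itemize}
\item Theorem~\ref{thm:AF-realization-GLn} for the first non-stable $SL_{r+1}$-term, which agrees with the $GL_{r+1}$-term by Theorem~\ref{thm:stable-range}(1);
\item the motivic-sphere realization isomorphism of \cite[Proposition~4.3]{gant2026motivichomotopygroupsspheres} for the punctured-affine-space terms, used as in Proposition~\ref{prop:deep-dn1-complex};
\item Kervaire's exact sequences (Lemma~\ref{lem:Kervaire-maps}) on the topological side.
\end{itemize}
The only genuinely non-formal point is to make the motivic rows short exact. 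I expect it to be the main obstacle, and I handle it differently in the two parities.

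For (1), put $r=2m$ and contract the segment
\[
\pi_{2m+1}^{\A^1}(SL_{2m+1})\to\pi_{2m+1}^{\A^1}(X_{2m+1})\to\pi_{2m}^{\A^1}(SL_{2m})\to\pi_{2m}^{\A^1}(SL_{2m+1})\to\pi_{2m}^{\A^1}(X_{2m+1})
\]
$(2m+2)$ times and evaluate at $\C$.
\begin{itemize}
\item The last term vanishes by $\A^1$-$(2m-1)$-connectivity of $X_{2m+1}$.
\item The $SL_{2m+1}$-term is $\pi^{\A^1}_{n-1,n+1}(GL_n)(\C)$ with $n=2m+1$. It realizes isomorphically onto $\pi_{4m+2}(U(2m+1))\cong\Z/(2m+1)!$.
\item The sphere term $\pi^{\A^1}_{2m+1,2m+2}(X_{2m+1})(\C)$ realizes isomorphically onto $\pi_{4m+3}(S^{4m+1})\cong\Z/2$.
\end{itemize}
To see that the boundary out of the sphere term is injective, look at the preceding map $\pi_{2m+1,2m+2}(SL_{2m+1})(\C)\to\pi_{2m+1,2m+2}(X_{2m+1})(\C)$. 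It suffices that this map is zero; I do not need to control the $SL_{2m+1}$-term itself. By naturality its composite with the (injective) realization of the sphere term equals realization followed by $\pi_{4m+3}(U(2m+1))\to\pi_{4m+3}(S^{4m+1})$. That topological map is zero by exactness of \eqref{eq:Kervaire-even-short-exact}, since the following map out of $\pi_{4m+3}(S^{4m+1})$ is injective there. Hence the motivic map is zero. We then get a short exact motivic row $0\to\Z/2\to\pi^{\A^1}_{2m,2m+2}(SL_{2m})(\C)\to\Z/(2m+1)!\to0$ mapping to \eqref{eq:Kervaire-even-short-exact}, with isomorphisms on both ends. The five lemma gives the isomorphism, and Theorem~\ref{thm:Kervaire} gives the group.

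For (2), put $r=2m+1$ and contract the segment for $SL_{2m+1}\to SL_{2m+2}\to X_{2m+2}$ $(2m+3)$ times:
\[
\pi_{2m+2,2m+3}(SL_{2m+2})\xrightarrow{q}\pi_{2m+2,2m+3}(X_{2m+2})\xrightarrow{\partial}\pi_{2m+1,2m+3}(SL_{2m+1})\to\pi_{2m+1,2m+3}(SL_{2m+2})\xrightarrow{q'}\pi_{2m+1,2m+3}(X_{2m+2}).
\]
I will check the terms as follows.
\begin{itemize}
\item The fourth term is $\pi^{\A^1}_{n-1,n+1}(GL_n)(\C)$ with $n=2m+2$. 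It realizes to $\pi_{4m+4}(U(2m+2))\cong\Z/(2m+2)!$.
\item The last term is $(\mathbf K^{MW}_{2m+2})_{-(2m+3)}(\C)=W(\C)=\Z/2$. It realizes isomorphically onto $\pi_{4m+4}(S^{4m+3})$.
\end{itemize}
The key extra input is Proposition~\ref{prop:odd-rank-projection-surjective} with $n=2m+2$. By exactly the same factorization $(d_{n,1})_{-(n+1)}=q\circ\delta$ used there, Proposition~\ref{prop:deep-dn1-complex} shows that the first map $q$ above is surjective, so $\partial=0$. Next, $q'$ must be surjective. Its topological counterpart is surjective by \eqref{eq:Kervaire-odd-short-exact}, and realization is an isomorphism on both its source and its target, so $q'$ is surjective. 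This yields a short exact row
\[
0\to\pi^{\A^1}_{2m+1,2m+3}(SL_{2m+1})(\C)\to\Z/(2m+2)!\to\Z/2\to0.
\]
It maps to \eqref{eq:Kervaire-odd-short-exact}, where $k=m+1$, with isomorphisms on the middle and right terms. The five lemma then gives the isomorphism onto $\pi_{4m+4}(U(2m+1))\cong\Z/((2m+2)!/2)$.

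For $m=1$ in (2), the statement of Lemma~\ref{lem:Kervaire-maps} requires $k>1$, but here $k=m+1=2$, so it applies. The realization ranges of \cite{gant2026motivichomotopygroupsspheres} need only be checked for these small indices. The $SU$ versus $U$ distinction is harmless in positive degrees.
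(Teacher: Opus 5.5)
Your strategy is the paper's. You use the same two contracted segments, Theorem~\ref{thm:AF-realization-GLn} for the $SL_{r+1}$-term, sphere realization for the punctured-affine-space terms, and Kervaire's sequences. In the odd case you also use Proposition~\ref{prop:odd-rank-projection-surjective} to kill the boundary $\partial$. Part~(2) is correct as written. You kill $\partial$ first and then restrict the realization isomorphism on $\pi^{\A^1}_{2m+1,2m+3}(SL_{2m+2})(\C)$ to the kernels of $q'$ and its topological counterpart. The paper instead first shows realization on the $SL_{2m+1}$-term is surjective with kernel $\operatorname{im}(\partial)$. These are the same argument.

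The gap is in part~(1): your claim that the last term of the segment vanishes is false. $X_{2m+1}=\A^{2m+1}\setminus0$ is $\A^1$-$(2m-1)$-connected, so $\pi^{\A^1}_{2m}(X_{2m+1})$ is its first \emph{nonvanishing} sheaf, namely $\mathbf K^{MW}_{2m+1}$. After $(2m+2)$ contractions this becomes $\mathbf K^{MW}_{-1}=\mathbf W$, so $\pi^{\A^1}_{2m,2m+2}(X_{2m+1})(\C)\cong W(\C)\cong\Z/2$. You compute exactly this kind of term correctly in part~(2). Consequently, surjectivity of $\pi^{\A^1}_{2m,2m+2}(SL_{2m})(\C)\to\pi^{\A^1}_{2m,2m+2}(SL_{2m+1})(\C)$ is not justified, and neither is the right end of your short exact row.

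The repair is the paper's argument. The generator $\eta$ of this $\Z/2$ realizes to $\eta_{\mathrm{top}}\neq0$, so realization is injective on this term. The topological map $\pi_{4m+2}(SU(2m+1))\to\pi_{4m+2}(S^{4m+1})$ is zero, because the preceding map in \eqref{eq:Kervaire-even-short-exact} is surjective. Naturality then forces the motivic map $\pi^{\A^1}_{2m,2m+2}(SL_{2m+1})(\C)\to\pi^{\A^1}_{2m,2m+2}(X_{2m+1})(\C)$ to be zero, and exactness gives the needed surjectivity.

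With this fix part~(1) goes through. Your injectivity argument at the left end, via the preceding map out of $\pi^{\A^1}_{2m+1,2m+2}(SL_{2m+1})(\C)$, is valid. The paper gets that injectivity more directly, from injectivity of the topological boundary together with injectivity of realization on the sphere term.
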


\begin{proof}
We treat the even and odd cases separately.  To shorten the exact sequences,
write \(X_r:=\A^r\setminus\{0\}\).  Throughout, complex realization
\(\mathfrak R_{\C}\) sends the motivic fiber sequence
\begin{equation}\label{eq:SL-fiber-motivic-2}
 SL_n \longrightarrow SL_{n+1} \longrightarrow SL_{n+1}/SL_n \simeq_{\A^1} X_{n+1}
\end{equation}
(\cite[\S2]{AffineRep2}) to the classical fiber sequence
\begin{equation}\label{eq:SU-fiber-top-2}
SU(n)\longrightarrow SU(n+1)\longrightarrow S^{2n+1}.
\end{equation}
Therefore, for any bidegree $(i,j)$, realization induces a morphism between the corresponding long
exact sequences of homotopy groups. Our argument is a diagram chase in the relevant segment of
these two long exact sequences, together with the identifications of the neighboring terms.
For $i\geq1$, the determinant fiber sequences identify the positive
homotopy sheaves of $SL_N$ with those of $GL_N$, and likewise identify
the positive homotopy groups of $SU(N)$ with those of $U(N)$.
Consequently, Theorem~\ref{thm:AF-realization-GLn} applies to the
special-linear terms used below.

\medskip\noindent
\textbf{Case 1: $n=2m$ (even).}
Consider the part of the long exact sequence associated with \eqref{eq:SL-fiber-motivic-2} in
bidegree $(2m,2m+2)$:
\begin{equation}\label{eq:LES-even-motivic-2}
\begin{aligned}
 \cdots \longrightarrow {\pi}^{\A^1}_{2m+1,\,2m+2}(X_{2m+1})(\C)
&\longrightarrow
{\pi}^{\A^1}_{2m,\,2m+2}\!\big(SL_{2m}\big)(\C)
\longrightarrow
{\pi}^{\A^1}_{2m,\,2m+2}\!\big(SL_{2m+1}\big)(\C)
\\
&\longrightarrow
 {\pi}^{\A^1}_{2m,\,2m+2}(X_{2m+1})(\C)  \longrightarrow \cdots
\end{aligned}
\end{equation}
and the corresponding topological segment of the long exact sequence for \eqref{eq:SU-fiber-top-2}:
\begin{equation}\label{eq:LES-even-top-2}
\cdots \longrightarrow \pi_{4m+3}(S^{4m+1})
\longrightarrow
\pi_{4m+2}(SU(2m))
\longrightarrow
\pi_{4m+2}(SU(2m+1))
\longrightarrow
\pi_{4m+2}(S^{4m+1}) \longrightarrow \cdots
\end{equation}
Realization yields a commutative diagram with rows \eqref{eq:LES-even-motivic-2} and
\eqref{eq:LES-even-top-2}.

Using \cite[Theorem~7.2.1]{ABH} with $d=2m+1$ and $j=2m+2$, and then evaluating after $(2m+2)$-fold contraction at $\C$, we obtain a short exact sequence
\[
0 \longrightarrow (\mathbf{K}^{M}_{2m+3}/24)_{-(2m+2)}(\C)
\longrightarrow
 \pi^{\A^1}_{2m+1,\,2m+2}(X_{2m+1})(\C)
\longrightarrow
(\mathbf{GW}^{2m+1}_{2m+2})_{-(2m+2)}(\C)
\longrightarrow 0.
\]
Here the sequence is short exact because \(j=2m+2 \ge d-3=2m-2\) by \cite[Theorem~7.2.1]{ABH}. Now
\[
(\mathbf{K}^{M}_{2m+3}/24)_{-(2m+2)}(\C)=0
\]
and
\[
(\mathbf{GW}^{2m+1}_{2m+2})_{-(2m+2)}(\C)\cong \mathbf{GW}^{3}_{0}(\C)\cong \Z/2.
\]
Hence, and using that the nonzero Grothendieck--Witt class realizes to
$\eta_{\mathrm{top}}^2$ as in \cite[Example~6.3]{Gant_Williams_2025},
\[
 \pi^{\A^1}_{2m+1,\,2m+2}(X_{2m+1})(\C)\cong \Z/2.
\] 
The resulting realization map to
$\pi_{4m+3}(S^{4m+1})\cong\Z/2$ is an isomorphism.  At the other end of the
displayed segment, Morel's identification of the first non-vanishing homotopy
sheaf of punctured affine space gives
\[
 \pi^{\A^1}_{2m,\,2m+2}(X_{2m+1})(\C)
\cong \mathbf K^{MW}_{-1}(\C)
\cong \Z/2.
\]
Its generator realizes to $\eta_{\mathrm{top}}$, so it maps isomorphically to
\[
\pi_{4m+2}(S^{4m+1})\cong\Z/2.
\]
Second, the term ${\pi}^{\A^1}_{2m,\,2m+2}(SL_{2m+1})(\C)$ lies one step into the stable
range; the realization map on this term is an isomorphism by \cite[Theorem~5.5]{AF14}. Finally, Kervaire's computation \cite[\S I]{Kervaire60} gives
\[
\pi_{4m+2}(SU(2m))\cong \Z/2 \oplus \Z/(2m+1)!,
\qquad
\pi_{4m+2}(SU(2m+1))\cong \Z/(2m+1)!.
\]
Since realization gives a morphism of long exact sequences, the diagram
commutes; in particular, the map on the middle term
\(
{\pi}^{\A^1}_{2m,\,2m+2}(SL_{2m})(\C)\to \pi_{4m+2}(SU(2m))
\)
fits into a commutative diagram in which the three adjacent realization maps
just identified are isomorphisms.  Lemma~\ref{lem:Kervaire-maps}, with
$k=m$, identifies the topological segment as the short exact sequence
\[
0\longrightarrow\Z/2\longrightarrow
\pi_{4m+2}(SU(2m))\longrightarrow
\Z/(2m+1)!\longrightarrow0.
\]
The topological left arrow is injective.  Since realization on its source is
an isomorphism, the motivic left arrow is injective as well.  The topological
map following $\mathbb Z/(2m+1)!$ is zero; since realization on its source and
target is an isomorphism, the corresponding motivic map is zero.  Exactness
therefore makes the motivic segment short exact.  The short five lemma now
shows that the middle realization map is an isomorphism.

\medskip\noindent
\textbf{Case 2: $n=2m+1$ (odd), with \(m\geq1\).}
Similarly, consider the segment in bidegree $(2m+1,2m+3)$:
\begin{equation}\label{eq:LES-odd-motivic-2}
\begin{aligned}
 \cdots \longrightarrow {\pi}^{\A^1}_{2m+2,\,2m+3}(X_{2m+2})(\C)
&\longrightarrow
{\pi}^{\A^1}_{2m+1,\,2m+3}\!\big(SL_{2m+1}\big)(\C)
\longrightarrow
{\pi}^{\A^1}_{2m+1,\,2m+3}\!\big(SL_{2m+2}\big)(\C)
\\
&\longrightarrow
 {\pi}^{\A^1}_{2m+1,\,2m+3}(X_{2m+2})(\C) \longrightarrow \cdots
\end{aligned}
\end{equation}
and the corresponding topological segment
\begin{equation}\label{eq:LES-odd-top-2}
\begin{aligned}
\cdots \longrightarrow \pi_{4m+5}(S^{4m+3})
\longrightarrow
\pi_{4m+4}(SU(2m+1))
&\longrightarrow
\pi_{4m+4}(SU(2m+2))
\\
&\longrightarrow
\pi_{4m+4}(S^{4m+3}) \longrightarrow \cdots
\end{aligned}
\end{equation}
Again realization gives a commutative diagram with rows \eqref{eq:LES-odd-motivic-2} and
\eqref{eq:LES-odd-top-2}.
Denote the first displayed motivic boundary by
\[
\beta_m:
\pi^{\A^1}_{2m+2,2m+3}(X_{2m+2})(\C)
\longrightarrow
\pi^{\A^1}_{2m+1,2m+3}(SL_{2m+1})(\C).
\]

Applying \cite[Theorem~7.2.1]{ABH} with \(d=2m+2\) and \(j=2m+3\), and then evaluating the \((2m+3)\)-fold contraction at \(\C\), yields a short exact sequence
\[
0 \longrightarrow (\mathbf{K}^{M}_{2m+4}/24)_{-(2m+3)}(\C)
\longrightarrow
 \pi^{\A^1}_{2m+2,\,2m+3}(X_{2m+2})(\C)
\longrightarrow
(\mathbf{GW}^{2m+2}_{2m+3})_{-(2m+3)}(\C)
\longrightarrow 0.
\]
The short exactness follows from the inequality \(j=2m+3\ge d-3=2m\). Now
\[
(\mathbf{K}^{M}_{2m+4}/24)_{-(2m+3)}(\C)=0
\]
and
\[
(\mathbf{GW}^{2m+2}_{2m+3})_{-(2m+3)}(\C)\cong \mathbf{GW}^{3}_{0}(\C)\cong \Z/2.
\]
Therefore, again using the realization of the Grothendieck--Witt generator as
$\eta_{\mathrm{top}}^2$,
\[
 \pi^{\A^1}_{2m+2,\,2m+3}(X_{2m+2})(\C)\cong \Z/2.
\]
The realization map from this group to
$\pi_{4m+5}(S^{4m+3})\cong\Z/2$ is an isomorphism.  The right-hand punctured
affine-space term is also
\[
 \pi^{\A^1}_{2m+1,\,2m+3}(X_{2m+2})(\C)
\cong \mathbf K^{MW}_{-1}(\C)\cong\Z/2,
\]
and maps isomorphically to
$\pi_{4m+4}(S^{4m+3})\cong\Z/2$.
Moreover, the realization map
\[
{\pi}^{\A^1}_{2m+1,\,2m+3}(SL_{2m+2})(\C)\longrightarrow \pi_{4m+4}(SU(2m+2))
\]
is an isomorphism by \cite[Theorem~5.5]{AF14}. Finally, Kervaire's computation
\cite[\S I]{Kervaire60} gives
\[
\pi_{4m+4}(SU(2m+1))\cong \Z/\big((2m+2)!/2\big),
\qquad
\pi_{4m+4}(SU(2m+2))\cong \Z/(2m+2)!.
\]

Lemma~\ref{lem:Kervaire-maps}, now with $k=m+1$, identifies the topological
part as
\[
0\longrightarrow
\pi_{4m+4}(SU(2m+1))
\longrightarrow
\pi_{4m+4}(SU(2m+2))
\longrightarrow\mathbb Z/2
\longrightarrow0.
\]
In particular, the stabilization map is injective with cokernel
$\mathbb Z/2$, and the preceding topological boundary
\[
\pi_{4m+5}(S^{4m+3})\longrightarrow
\pi_{4m+4}(SU(2m+1))
\]
is zero.  Notice that this last vanishing uses Kervaire's calculation of the
projection on the preceding homotopy group; it does not follow merely from
the orders of the displayed groups.

Write \(B_M,C_M,D_M\) for the three consecutive motivic
terms beginning with the desired $SL_{2m+1}$ term, and write
$B_T,C_T,D_T$ for their topological counterparts.  Since
$C_M\to C_T$ and $D_M\to D_T$ are isomorphisms, they identify
\[
\ker(C_M\longrightarrow D_M)
\quad\text{with}\quad
\ker(C_T\longrightarrow D_T)
=\operatorname{im}(B_T\longrightarrow C_T)\cong B_T.
\]
Exactness of the motivic row therefore makes $B_M\to B_T$ surjective.  If an
element of $B_M$ realizes to zero, its image in $C_M$ is zero because
$C_M\to C_T$ is injective; exactness then puts it in
$\operatorname{im}(\beta_m)$.  Conversely,
$\operatorname{im}(\beta_m)$ realizes to zero because the corresponding
topological boundary is zero.  Hence
\[
\ker(B_M\longrightarrow B_T)=\operatorname{im}(\beta_m).
\]
Exactness one term earlier gives
\[
\ker(\beta_m)=
\operatorname{im}\!\bigl((q_{2m+1,1})_{-(2m+3)}\bigr),
\]
where the source of \(\beta_m\) was identified above with \(\mathbb Z/2\).
Thus there is an exact sequence
\[
\mathbb Z/2\xrightarrow{\ \beta_m\ }B_M
\longrightarrow B_T\longrightarrow0.
\]
Proposition~\ref{prop:odd-rank-projection-surjective} makes the preceding
projection surjective.  Hence
\(\ker(\beta_m)=\mathbb Z/2\), so \(\beta_m=0\).  The realization map is
therefore injective as well as surjective, and the topological calculation
identifies the motivic group with
\(\mathbb Z/((2m+2)!/2)\).
\end{proof}

\begin{remark}
In even rank the topological sphere boundary is injective and the relevant
portion of the long exact sequence is short exact, so the two end
isomorphisms determine the middle term.  In odd rank the corresponding
topological boundary is zero.  Commutativity then gives only
$\mathfrak R_{\C}\circ\beta_m=0$ and does not by itself imply
\(\beta_m=0\).  A direct five-lemma argument would require control of the
preceding realization map
\[
\pi^{\A^1}_{2m+2,\,2m+3}(SL_{2m+2})(\C)
\longrightarrow \pi_{4m+5}(SU(2m+2)),
\]
for example its surjectivity.

Theorem~\ref{thm:AF-realization-GLn} alone does not control this preceding
group.  Likewise, the general Stiefel-variety comparison
of \cite[Theorem~1.5]{gant2026motivichomotopygroupsspheres} assumes
$r\leq n-2$ for $V_r(\A^n)$.  The identifications
\[
V_{n-1}(\A^n)\cong SL_n,
\qquad V_n(\A^n)=GL_n
\]
show that the special linear group is exactly the first excluded boundary.
The proof above bypasses that missing comparison: it applies the sphere
realization theorem only to the source and target of the adjacent even-stage
Stiefel differential, and then uses Kervaire's explicit computation of that
classical boundary.  Thus the odd-rank realization argument is independent
of Corollary~\ref{prop:suslin-stiefel-wood}, and supplies a separate
topological consistency check.
\end{remark}

The motivic-sphere comparison also applies directly to all three highly
contracted punctured affine-space groups calculated in
Section~\ref{sec:third-unstable}.

\begin{theorem}\label{thm:complex-realization-contracted-second-stem}
Let $n\geq5$ and let $\delta\in\{2,3,4\}$.  Complex realization induces an
isomorphism
\[
\pi^{\A^1}_{n+1,\,n+\delta}(\A^n\setminus0)(\C)
\xrightarrow{\ \sim\ }
\pi_{2n+\delta+1}(S^{2n-1}).
\]
Under the identifications of
Theorem~\ref{thm:Pn-second-stem}, these groups are
\[
\begin{array}{c|c|c}
\delta & \text{motivic group} & \text{stable topological group}\\ \hline
2 & \mathbf K^M_2(\C)/2=0 & \pi^S_4=0,\\
3 & \mathbf K^M_1(\C)/2=0 & \pi^S_5=0,\\
4 & \mathbf K^M_0(\C)/2\cong\mathbb Z/2 &
\pi^S_6\cong\mathbb Z/2.
\end{array}
\]
In particular, the terminal class for $\delta=4$ in the contracted
calculation realizes to the nonzero class $\nu^2\in\pi^S_6$.
\end{theorem}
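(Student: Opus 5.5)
The plan is to argue exactly as in Proposition~\ref{prop:deep-dn1-complex}, with the motivic-sphere realization theorem applied to each group separately, and then to read off the entries of the table from Theorem~\ref{thm:Pn-second-stem}. Write \(X_n=\A^n\setminus0\simeq S_s^{n-1}\wedge\G_m^{\wedge n}\), so that \(X_n(\C)\simeq S^{2n-1}\). By Theorem~\ref{thm:contraction-bigrading},
\[
\pi^{\A^1}_{n+1,n+\delta}(X_n)(\C)\cong(\mathcal P_n)_{-(n+\delta)}(\C),
\]
and realization sends this group to \(\pi_{2n+1+\delta}(S^{2n-1})\). For \(n\geq5\) this is the stable stem \(\pi^S_{\delta+2}\), since \(2n+1+\delta<2(2n-1)-1\).

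The first and main step is to check that \cite[Proposition~4.3]{gant2026motivichomotopygroupsspheres} applies. I would take
\[
(x,y,d,e)=(n-1,n,n+1,n+\delta)
\]
in the indexing used in the proof of Proposition~\ref{prop:deep-dn1-complex}, and verify the stated inequalities for \(n\geq5\) and \(\delta\in\{2,3,4\}\). The condition \(y-1\leq e\) clearly holds. The hard part is the stem and connectivity bound, because the simplicial degree is now \(n+1\) rather than \(n\). I expect this to need \(n\geq5\), which is the same threshold as in Theorem~\ref{thm:Pn-second-stem}. If the proposition only covers degrees up to the first stem, I would instead pass through the stable identification \(\mathcal P_n\cong\boldsymbol\pi^s_{2-n\alpha}(\mathds1)\) of Theorem~\ref{thm:Pn-second-stem}, which is compatible with realization. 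Then I would use the stable comparison of \cite{gant2026motivichomotopygroupsspheres}, or \cite[Example~6.6]{Gant_Williams_2025}, for \(\pi^s_{4,2},\pi^s_{5,3},\pi^s_{6,4}\) over \(\C\) mapping to \(\pi^S_4,\pi^S_5,\pi^S_6\).

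The table then follows by computation. Theorem~\ref{thm:Pn-second-stem} gives the motivic groups as \(\mathbf K^M_{4-\delta}(\C)/2\). Since \(\C^\times\) is divisible, \(K^M_j(\C)\) is divisible for \(j\geq1\), so the first two groups vanish, while \(K^M_0(\C)/2=\Z/2\). Classically \(\pi^S_4=\pi^S_5=0\) and \(\pi^S_6=\Z/2\langle\nu^2\rangle\). In the first two cases the isomorphism is then between zero groups.

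For \(\delta=4\), an isomorphism \(\Z/2\to\Z/2\) must send the generator to the generator, so the terminal class realizes to \(\nu^2\). This is consistent with that class being the \((n+4)\)-fold contraction of the motivic \(\nu^2\) that generates the Milnor summand \(\mathbf K^M_{n+4}/2\) in \eqref{eq:Pn-second-stem}, by \cite[Theorem~1.2]{RSO21}. Realization of the motivic \(\nu\) is the topological \(\nu\), which gives an independent check.
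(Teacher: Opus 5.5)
Your proposal is correct and matches the paper's proof. Like the paper, you apply \cite[Proposition~4.3]{gant2026motivichomotopygroupsspheres} with $(x,y,d,e)=(n-1,n,n+1,n+\delta)$; the required inequalities there reduce to $n+1\leq 2n-4$, $\delta\leq 4$ and $n-1\leq n+\delta$, so $n\geq5$ is exactly the needed threshold and your fallback through the stable stems is unnecessary. You then read off the table from Theorem~\ref{thm:Pn-second-stem} and the classical values $\pi^S_4=\pi^S_5=0$, $\pi^S_6\cong\mathbb Z/2\{\nu^2\}$, and for $\delta=4$ the isomorphism $\mathbb Z/2\to\mathbb Z/2$ carries the terminal class to $\nu^2$.
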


\begin{proof}
Write
\(\A^n\setminus0\simeq S_s^{n-1}\wedge\G_m^{\wedge n}\) and apply
\cite[Proposition~4.3]{gant2026motivichomotopygroupsspheres} with
\[
(x,y,d,e)=(n-1,n,n+1,n+\delta).
\]
The relevant inequalities reduce to
\[
n+1\leq2n-4,\qquad
\delta\leq4,\qquad
n-1\leq n+\delta,
\]
and hence hold for $n\geq5$ and $2\leq\delta\leq4$.  This proves that
realization is an isomorphism.  The motivic groups are the three contractions
computed in Theorem~\ref{thm:Pn-second-stem}; evaluating positive Milnor
$K$-theory modulo $2$ at $\C$ gives zero.  Finally, the indicated topological
groups are already stable in this range, and
$\pi^S_4=\pi^S_5=0$, $\pi^S_6\cong\mathbb Z/2$.
\end{proof}

\subsection{Real realization and collapse of the weight}\label{subsec:real-realization}

Let $k\hookrightarrow\mathbb R$ be a fixed real embedding.  Taking real points
defines a realization functor
\[
\mathfrak R_{\mathbb R}:\mathpzc H_\bullet(k)
\longrightarrow \mathpzc H_\bullet^{\mathrm{top}}.
\]
The crucial difference from complex realization is
\[
\mathfrak R_{\mathbb R}(S_s^i\wedge\G_m^{\wedge j})
\simeq S^i,
\]
because $\mathbb R^\times$, pointed at $1$, is homotopy equivalent to $S^0$.
Consequently, real realization induces maps
\begin{equation}\label{eq:real-realization-bigraded}
\pi_{i,j}^{\A^1}(X)(k)\longrightarrow
\pi_i\bigl(\mathfrak R_{\mathbb R}X\bigr),
\end{equation}
and all weights with a fixed simplicial degree have the same topological
target.  In particular,
\[
\mathfrak R_{\mathbb R}(\A^n\setminus\{0\})\simeq S^{n-1},
\qquad
\mathfrak R_{\mathbb R}(SL_n)\simeq SO(n)
\]
in positive homotopy degrees.

The weight collapse makes a general real comparison theorem more subtle than
Theorem~\ref{thm:realization-SLn}.  On the other hand, the highly contracted
punctured affine-space terms from Section~\ref{sec:third-unstable} admit a
particularly clean description.  Recall that $\rho=\{-1\}$ and that $\nu$
denotes the motivic Hopf element on the relevant stem.  Under real
realization,
\[
\mathfrak R_{\mathbb R}(\rho)=1,
\qquad
\mathfrak R_{\mathbb R}(\nu)=\eta_{\mathrm{top}}.
\]

The next comparison follows from \cite[Example~6.6]{Gant_Williams_2025}.

\begin{theorem}\label{thm:real-punctured-comparison}
Let \(k\) be a field equipped with an embedding \(k\hookrightarrow\mathbb R\),
let $n\geq 5$, let $d\in\{2,3,4\}$, and suppose
\[
\mathbf K^M_{4-d}(k)/2\cong\mathbb Z/2.
\]
Then real realization induces an isomorphism
\[
\pi_{n+1,n+d}^{\A^1}(\A^n\setminus\{0\})(k)
\xrightarrow{\ \sim\ }
\pi_{n+1}(S^{n-1})\cong\mathbb Z/2.
\]
The motivic generators $\rho^2\nu^2$, $\rho\nu^2$, and $\nu^2$ for
$d=2,3,4$, respectively, all realize to the nonzero class
$\eta_{\mathrm{top}}^2$.
\end{theorem}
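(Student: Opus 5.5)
The plan is to reduce the statement to the stable identification of Theorem~\ref{thm:Pn-second-stem} and then use naturality of real realization with respect to stabilization. First, by Theorem~\ref{thm:Pn-second-stem} and Theorem~\ref{thm:contraction-bigrading}, the contraction count is
\[
(\mathcal P_n)_{-(n+d)}(k)\cong \mathbf K^M_{4-d}(k)/2\qquad (d=2,3,4).
\]
The case $d=2$ is the boundary case $(\mathcal P_n)_{-(n+2)}\cong\mathbf K^M_2/2$, and the next two contractions give $\mathbf K^M_1/2$ and $\mathbf K^M_0/2$. Under the hypothesis, the source is therefore $\mathbb Z/2$. Following the description in \cite[Example~6.6]{Gant_Williams_2025}, the $\mathbf K^M_*/2$-summand of the $2$-line is the Milnor $K$-theory module generated by $\nu^2$. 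Its generators in these weights are $\rho^{4-d}\nu^2$, that is, $\rho^2\nu^2$, $\rho\nu^2$ and $\nu^2$. Since the group is $\mathbb Z/2$ and $\rho^{4-d}=\{-1\}^{4-d}$ is the only available class under the hypothesis, $\rho^{4-d}\nu^2$ is the generator provided it is nonzero. Nonvanishing is exactly what realization will detect.

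Second, I compare with topology. Stabilization $\A^n\setminus0\to QS^{2n-1,n}$ is compatible with real realization. Real realization sends $QS^{2n-1,n}$ to a space receiving $S^{n-1}$ stably, and $\pi_{n+1}(S^{n-1})\cong\pi^S_2\cong\mathbb Z/2$ once $n\geq5$. This gives a commutative square: the motivic stable-range isomorphism \eqref{eq:Pn-stable-identification} on top, the topological Freudenthal isomorphism at the bottom, and realization on both vertical sides. The question thus reduces to the stable real realization of $\rho^{4-d}\nu^2$. Real realization is symmetric monoidal, and $\mathfrak R_{\mathbb R}(\rho)=1$, $\mathfrak R_{\mathbb R}(\nu)=\eta_{\mathrm{top}}$. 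Hence $\rho^{4-d}\nu^2$ realizes to $\eta_{\mathrm{top}}^2$, which is the nonzero element of $\pi^S_2$. This simultaneously shows that the motivic class is nonzero, so it generates $\mathbb Z/2$, and that a generator maps to a generator. A map $\mathbb Z/2\to\mathbb Z/2$ taking generator to generator is an isomorphism.

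The main obstacle is the middle step: making sure the abstract identification in Theorem~\ref{thm:Pn-second-stem} carries the generator of $\mathbf K^M_{4-d}/2$ to the specific product $\rho^{4-d}\nu^2$, rather than merely some element of the right group. This requires the module structure over $\mathbf K^M_*$ (multiplication by $\rho$ raising the weight, compatible with contraction) on the $\nu^2$-summand of \cite[Theorem~1.2]{RSO21}. That is exactly what \cite[Example~6.6]{Gant_Williams_2025} records, so I will quote it there. The remaining concern is that the unstable-to-stable map commutes with real realization. This holds because realization is a functor and the stabilization morphism is a geometric map of spaces, so no commutation of loop spaces with suspension spectra is required.
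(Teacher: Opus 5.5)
Your proposal is correct and follows essentially the same route as the paper. You identify the source with \(\mathbf K^M_{4-d}(k)/2\) via Theorem~\ref{thm:Pn-second-stem}, take the generator \(\rho^{4-d}\nu^2\) from \cite[Example~6.6]{Gant_Williams_2025}, and compare with \(\pi_2^S\cong\mathbb Z/2\) generated by \(\eta_{\mathrm{top}}^2\). The only difference is minor: you get nonvanishing of \(\rho^{4-d}\nu^2\) from its nonzero realization, using monoidality with \(\mathfrak R_{\mathbb R}(\rho)=1\) and \(\mathfrak R_{\mathbb R}(\nu)=\eta_{\mathrm{top}}\), whereas the paper gets \(\rho^{4-d}\neq0\) from the ordering and cites Gant--Williams for the realization values.
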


\begin{proof}
Theorem~\ref{thm:Pn-second-stem} identifies the source with
$\mathbf K^M_{4-d}(k)/2$.  Under this identification,
\cite[Example~6.6]{Gant_Williams_2025} identifies the classes represented
by $\rho^{4-d}\nu^2$ and shows that their real realizations are
$\eta_{\mathrm{top}}^2$.  The fixed real embedding sends
$\rho^{4-d}$ to the nonzero element detected by the corresponding ordering.
Hence the hypothesis $\mathbf K^M_{4-d}(k)/2\cong\mathbb Z/2$ makes this
class a generator.  Since $n\geq5$, the target is in the stable range and
$\pi_{n+1}(S^{n-1})\cong\pi_2^S\cong\mathbb Z/2$, generated by
$\eta_{\mathrm{top}}^2$.  The realization homomorphism is therefore an
isomorphism.
\end{proof}

\begin{corollary}\label{cor:real-dn2-zero}
Let $n\geq5$ and $k=\mathbb R$.  After evaluating at $\mathbb R$, the maps
\[
(d_{n,2})_{-(n+3)}:\mathbf K^M_2/2\longrightarrow\mathbf K^M_1/2,
\qquad
(d_{n,2})_{-(n+4)}:\mathbf K^M_1/2\longrightarrow\mathbf K^M_0/2
\]
are zero maps between groups isomorphic to $\mathbb Z/2$.  Real realization
identifies their sources and targets with the corresponding groups
\[
\pi_{n+2}(S^n),\quad \pi_{n+1}(S^{n-1}),
\]
but the induced homomorphisms remain zero.
\end{corollary}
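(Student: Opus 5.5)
The vanishing is already available: Theorem~\ref{thm:dn2-contracted-zero} gives $(d_{n,2})_{-(n+3)}=0$ and $(d_{n,2})_{-(n+4)}=0$ over any characteristic-zero field, hence over $\mathbb R$. Alternatively, Lemma~\ref{lem:KM-degree-drop-zero} applies directly, since both maps lower the Milnor degree. So the plan is to invoke that theorem and then identify the groups and their realizations.

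\textbf{Sources and targets.} Under Theorem~\ref{thm:Pn-second-stem}, the source of $(d_{n,2})_{-(n+3)}$ is $(\mathcal P_{n+1})_{-(n+3)}$. This is the $(n+1)+2$ contraction of $\mathcal P_{n+1}$, so it is $\mathbf K^M_2/2$. Its target is $(\mathcal P_n)_{-(n+3)}\cong\mathbf K^M_1/2$. Similarly, $(d_{n,2})_{-(n+4)}$ goes from $\mathbf K^M_1/2$ to $\mathbf K^M_0/2$.

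\textbf{Evaluation at $\mathbb R$.} We have $K^M_j(\mathbb R)/2\cong\mathbb Z/2$ for $j=0,1,2$, generated by $1$, $\{-1\}$ and $\{-1,-1\}$ respectively. So every group involved is $\mathbb Z/2$.

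\textbf{Realization.} Apply Theorem~\ref{thm:real-punctured-comparison}, whose hypothesis $\mathbf K^M_{4-d}(\mathbb R)/2\cong\mathbb Z/2$ holds for every $d$.
\begin{enumerate}[label=(\roman*)]
\item For the targets, take $d=3$ and $d=4$ with $\A^n\setminus0$. This identifies them with $\pi_{n+1}(S^{n-1})$.
\item For the sources, take $\A^{n+1}\setminus0$, i.e.\ $n+1$ in place of $n$, with $d=2$ (weight $n+3$) and $d=3$ (weight $n+4$). This identifies them with $\pi_{n+2}(S^n)$.
\end{enumerate}
These identifications are isomorphisms. By naturality of real realization with respect to the Stiefel connecting map, the induced maps between the realized groups are the realizations of the zero maps, and so they remain zero.

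The one point needing care is the index bookkeeping: checking that the contraction index $n+3$ on $\mathcal P_{n+1}$ corresponds to the case $d=2$ of Theorem~\ref{thm:real-punctured-comparison}, rather than $d=3$.
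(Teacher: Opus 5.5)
Your proposal is correct and is essentially the paper's own argument. The vanishing comes from Theorem~\ref{thm:dn2-contracted-zero}, every group is $\mathbb Z/2$ because $K^M_q(\mathbb R)/2\cong\mathbb Z/2$ for $q=0,1,2$, and the identifications with $\pi_{n+2}(S^n)$ and $\pi_{n+1}(S^{n-1})$ come from Theorem~\ref{thm:real-punctured-comparison}. Your index bookkeeping is also right: the sources use that theorem for $\A^{n+1}\setminus0$ with $d=2,3$, and the targets use it for $\A^{n}\setminus0$ with $d=3,4$.
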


\begin{proof}
For $\mathbb R$, the groups $\mathbf K_q^M(\mathbb R)/2$ are
$\mathbb Z/2$ for $q=0,1,2$.  Apply
Theorems~\ref{thm:real-punctured-comparison}
and~\ref{thm:dn2-contracted-zero}.
\end{proof}

\begin{remark}\label{rem:real-SL-strategy}
For $SL_n$, one obtains a commutative diagram between the contracted exact
rows of Corollary~\ref{cor:third-unstable-contracted-rows} and the
long exact homotopy sequences of
\[
SO(n-1)\longrightarrow SO(n)\longrightarrow S^{n-1}.
\]
The punctured affine-space vertical maps are controlled by
Theorem~\ref{thm:real-punctured-comparison}.  Thus a real analogue of
Theorem~\ref{thm:realization-SLn} reduces to determining the neighboring
realization maps for the orthogonal stable and first non-stable terms.  We do
not assert an isomorphism in all weights: the collapse
\eqref{eq:real-realization-bigraded} means that such a statement cannot be
deduced by copying the complex argument weight for weight.
\end{remark}

\subsection{Vector bundles on split odd quadrics}\label{subsec:quadric-comparison}

Let $k$ be an infinite perfect field of characteristic different from $2$ and
let
\[
Q_{2n-1}=\left\{\sum_{i=1}^n x_i y_i=1\right\}.
\]
The projection onto the $x_i$-coordinates gives an $\A^1$-weak equivalence
\[
Q_{2n-1}\simeq_{\A^1}\A^n\setminus\{0\}
\simeq S_s^{n-1}\wedge\G_m^{\wedge n}.
\]
By affine representability and the fact that $BSL_r$ is
$\A^1$-simply connected, oriented rank $r$ vector bundles satisfy
\begin{equation}\label{eq:quadric-general-classification}
\begin{aligned}
\mathrm{Vect}^{\mathrm{alg},o}_r(Q_{2n-1})
&\cong [Q_{2n-1},BSL_r]_{\A^1}\\
&\cong \pi^{\A^1}_{n-1,n}(BSL_r)(k)
\cong \pi^{\A^1}_{n-2,n}(SL_r)(k).
\end{aligned}
\end{equation}
Thus decreasing the rank on a fixed quadric moves successively through the
non-stable layers:
\[
\begin{array}{c|c}
\text{rank on }Q_{2n-1} & \text{controlling sheaf}\\ \hline
n-1 & \pi^{\A^1}_{n-2,n}(SL_{n-1}),\\
n-2 & \pi^{\A^1}_{n-2,n}(SL_{n-2}),\\
n-3 & \pi^{\A^1}_{n-2,n}(SL_{n-3}).
\end{array}
\]
The three rows correspond to the first, second, and third non-stable layers,
respectively.

\subsubsection*{Critical rank: the Asok--Fasel classification}

For comparison, we recall the complete answer in rank $n-1$ from
\cite[Theorems~4.8 and~4.10]{AF14}.

\begin{theorem}\label{thm:AF-critical-rank-quadric}
There are canonical bijections
\[
\mathrm{Vect}^{\mathrm{alg},o}_{n-1}(Q_{2n-1})
\cong
\begin{cases}
\mathbb Z/(n-1)!, & n \text{ even},\\[3pt]
\mathbb Z/(n-1)!\times_{\mathbb Z/2}W(k), & n \text{ odd}.
\end{cases}
\]
In the odd case the two maps to $\mathbb Z/2$ are reduction modulo $2$ and
the rank homomorphism on the Witt group.
\end{theorem}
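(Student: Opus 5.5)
This statement is recalled from \cite[Theorems~4.8 and~4.10]{AF14}, so the plan follows the Asok--Fasel route using the tools already assembled here. By \eqref{eq:quadric-general-classification} with \(r=n-1\), the first step is to identify
\[
\mathrm{Vect}^{\mathrm{alg},o}_{n-1}(Q_{2n-1})\cong\pi^{\A^1}_{n-2,n}(SL_{n-1})(k)
=\bigl(\pi^{\A^1}_{n-1}(BSL_{n-1})\bigr)_{-n}(k).
\]
Then I would apply Theorem~\ref{thm:first-nonstable-BSL}, with \(n-1=2m\) or \(2m+1\), and contract the resulting short exact sequence \(n\) times, using Lemma~\ref{lem:contraction-exact}.

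The stable quotient term is \(\mathbf K^Q_{n-1}\), and Theorem~\ref{thm:contraction-K} gives \((\mathbf K^Q_{n-1})_{-n}=0\). So the answer is the \(n\)-fold contraction of the kernel term evaluated at \(k\).

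\textbf{Case \(n\) even, so \(n-1\) is odd.} The kernel is \(\mathbf S_n\), a quotient of \(\mathbf K^M_n/(n-1)!\) by the image of \(\psi_n\). Contracting \(n\) times gives \(\mathbf K^M_0/(n-1)!=\mathbb Z/(n-1)!\) modulo the image of \((\psi_n)_{-n}:\mathbf K^Q_0\to\mathbf K^M_0\). Two facts are needed here.
\begin{itemize}
\item The image of \((\psi_n)_{-n}\) is exactly \((n-1)!\mathbb Z\). This is the degree-zero Suslin--Hurewicz statement, and the composite formula of Remark~\ref{rem:suslin-hurewicz-limitation} gives the inclusion; rank considerations, namely the Chern class of the Bott element, give equality.
\item Contraction commutes with the cokernel, by Lemma~\ref{lem:contraction-exact}.
\end{itemize}
Together these give \(\mathbb Z/(n-1)!\).

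\textbf{Case \(n\) odd, so \(n-1\) is even.} The kernel is \(\mathbf T_n=\mathbf S_n\times_{\mathbf K^M_n/2}\mathbf I^n\). Fiber products are finite limits, and contraction is exact, so contraction preserves them. Contracting \(n\) times gives
\[
(\mathbf S_n)_{-n}\times_{\mathbf K^M_0/2}(\mathbf I^n)_{-n}=\mathbb Z/(n-1)!\times_{\mathbb Z/2}\mathbf W,
\]
using \((\mathbf I^n)_{-n}=\mathbf W\). The maps to \(\mathbb Z/2\) are the contractions of \(\mathbf S_n\to\mathbf K^M_n/2\) and \(\mathbf I^n\to\mathbf I^n/\mathbf I^{n+1}\cong\mathbf K^M_n/2\). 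After contraction these become reduction mod \(2\) and the rank map \(W(k)\to\mathbb Z/2\).

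The main obstacle is the last identification: checking that the contracted maps really are reduction and rank, with no twist. I would try first to verify this on the generators \([u_1]\cdots[u_n]\) through the cartesian square of Morel, pushed through Theorem~\ref{thm:contraction-K}. A further point to keep explicit is that the group structure and the bijection are compatible: the classifying set is a group here because \(Q_{2n-1}\) is a simplicial suspension.
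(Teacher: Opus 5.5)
Your argument is correct and is the standard derivation behind \cite[Theorems~4.8 and~4.10]{AF14}, which the paper recalls without proof. You identify the set with $\bigl(\pi^{\A^1}_{n-1}(BSL_{n-1})\bigr)_{-n}(k)$, contract the sequence of Theorem~\ref{thm:first-nonstable-BSL} $n$ times so that the $\mathbf K^Q_{n-1}$ term dies, and read off $(\mathbf S_n)_{-n}$ or $(\mathbf T_n)_{-n}$.

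The one step you justify only loosely is $\operatorname{im}\bigl((\psi_n)_{-n}\bigr)=(n-1)!\,\mathbb Z$. It is immediate once you note that the natural map $\mathbf K^M_*\to\mathbf K^Q_*$ is multiplicative and sends $\{t\}$ to $[t]$, so its $n$-fold contraction is the identity of $\mathbb Z$ and $(\psi_n)_{-n}$ is literally multiplication by $(n-1)!$. The same compatibility of ring maps with the contraction isomorphisms settles the step you flagged as the main obstacle: it identifies the contracted maps with reduction modulo $2$ and the rank map $W(k)\to\mathbb Z/2$.
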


This result explains two features that recur in our calculations.  The
factorial quotient is detected by complex realization and Bott periodicity,
while the Witt factor records information visible under real realization.

\subsubsection*{One rank lower: complex comparison}

Now assume $k=\mathbb C$.  Since
$Q_{2n-1}(\mathbb C)\simeq S^{2n-1}$, complex realization gives a comparison
\[
\Re_{\mathbb C}:
[Q_{2n-1},BSL_{n-2}]_{\A^1}
\longrightarrow
[S^{2n-1},BSU(n-2)]_{\mathrm{top}}.
\]

\begin{theorem}\label{thm:comparison-quadric}
Let $n\geq 5$.
The preceding realization map is an isomorphism.
\end{theorem}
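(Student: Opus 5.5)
The plan is to reduce the statement to Theorem~\ref{thm:realization-SLn} via the identification \eqref{eq:quadric-general-classification} and its topological analogue. By affine representability and $\A^1$-simple connectivity of $BSL_{n-2}$,
\[
[Q_{2n-1},BSL_{n-2}]_{\A^1}\cong\pi^{\A^1}_{n-1,n}(BSL_{n-2})(\C)\cong\pi^{\A^1}_{n-2,n}(SL_{n-2})(\C).
\]
On the topological side, since $BSU(n-2)$ is simply connected, free and pointed classes agree, so
\[
[S^{2n-1},BSU(n-2)]\cong\pi_{2n-1}(BSU(n-2))\cong\pi_{2n-2}(SU(n-2)).
\]
I will check that these identifications are compatible with complex realization. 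This should follow from naturality: realization sends $Q_{2n-1}\simeq S^{n-1}_s\wedge\G_m^{\wedge n}$ to $S^{2n-1}$, sends $BSL_r$ to $BSU(r)$, and commutes with the loop-space identification $\Omega_s BSL_r\simeq SL_r$. With this compatibility, the realization map of the statement becomes the map $\pi^{\A^1}_{n-2,n}(SL_{n-2})(\C)\to\pi_{2n-2}(SU(n-2))$.

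Next I will reindex by $N=n-2\geq3$, which turns the map into
\[
\pi^{\A^1}_{N,N+2}(SL_N)(\C)\to\pi_{2N+2}(SU(N)).
\]
For $N=2m$ with $m\geq2$ (that is, $n\geq6$ even), this is exactly Theorem~\ref{thm:realization-SLn}(1). For $N=2m+1$ with $m\geq1$ (that is, $n\geq5$ odd), the bidegree is $(2m+1,2m+3)$ and the target is $\pi_{4m+4}(SU(2m+1))$, which is Theorem~\ref{thm:realization-SLn}(2). Together these cover every $n\geq5$, and the parity restrictions on $m$ match those in the theorem.

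The step I expect to be the main obstacle is the first one: making sure that the bijection from affine representability, followed by the bigraded homotopy identification (Theorem~\ref{thm:contraction-bigrading} evaluated at $\C$), is carried by $\mathfrak R_\C$ to the classical adjunction isomorphism, and not to that isomorphism twisted by some automorphism. I would handle this by working with pointed maps throughout and using that $\mathfrak R_\C$ is symmetric monoidal and sends $\G_m$ to $S^1$ and $S^1_s$ to $S^1$. The unpointed/pointed comparison is harmless on both sides because of simple connectivity. Since an automorphism of the target would not affect bijectivity anyway, any residual sign ambiguity from sphere orientations can be ignored.
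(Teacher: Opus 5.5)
Your proposal is correct and follows the same route as the paper. Both identify the realization map, via \eqref{eq:quadric-general-classification} and clutching, with $\pi^{\A^1}_{n-2,n}(SL_{n-2})(\C)\to\pi_{2n-2}(SU(n-2))$, then set $r=n-2\geq3$ and invoke Theorem~\ref{thm:realization-SLn}(1) for even $r\geq4$ and part~(2) for odd $r\geq3$ (your check of compatibility with realization, and your remark that a residual automorphism cannot affect bijectivity, spell out what the paper leaves implicit).
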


\begin{proof}
Using \eqref{eq:quadric-general-classification} and topological clutching, the
comparison map is identified with
\[
\pi^{\A^1}_{n-2,n}(SL_{n-2})(\mathbb C)
\longrightarrow
\pi_{2n-2}(SU(n-2)).
\]
Set $r=n-2$.  Then the source is
$\pi^{\A^1}_{r,r+2}(SL_r)(\mathbb C)$, and $r\geq3$.  If $r=2m$, then
\(n\) is even, so \(r\geq4\), and
Theorem~\ref{thm:realization-SLn}\textup{(1)} applies.  If \(r=2m+1\),
part~\textup{(2)} applies, including the boundary case \(r=3\).
\end{proof}

\begin{corollary}\label{cor:rank-n-2-explicit}
For even \(n\geq6\),
\[
\mathrm{Vect}^{\mathrm{alg},o}_{n-2}(Q_{2n-1})
\cong\mathbb Z/2\oplus\mathbb Z/(n-1)!.
\]
For odd \(n\geq5\),
\[
\mathrm{Vect}^{\mathrm{alg},o}_{n-2}(Q_{2n-1})
\cong\mathbb Z/\big((n-1)!/2\big).
\]
\end{corollary}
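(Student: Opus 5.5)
The plan is to combine Theorem~\ref{thm:comparison-quadric} with the classical clutching description of complex bundles on spheres and Kervaire's calculation in Theorem~\ref{thm:Kervaire}. By \eqref{eq:quadric-general-classification},
\[
\mathrm{Vect}^{\mathrm{alg},o}_{n-2}(Q_{2n-1})\cong\pi^{\A^1}_{n-2,n}(SL_{n-2})(\mathbb C).
\]
Theorem~\ref{thm:comparison-quadric} identifies this group, via complex realization, with
\[
[S^{2n-1},BSU(n-2)]\cong\pi_{2n-2}(SU(n-2)).
\]
So the statement reduces to reading off a classical unstable unitary homotopy group.

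For even \(n\geq6\), write \(n-2=2m\) with \(m\geq2\). Then \(2n-2=4m+2\), and Theorem~\ref{thm:realization-SLn}\textup{(1)}, or directly Theorem~\ref{thm:Kervaire} with \(k=m>1\), gives
\[
\pi_{4m+2}(SU(2m))\cong\mathbb Z/2\oplus\mathbb Z/(2m+1)!.
\]
Since \(2m+1=n-1\), this is \(\mathbb Z/2\oplus\mathbb Z/(n-1)!\).

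For odd \(n\geq5\), write \(n-2=2m+1\) with \(m\geq1\). Then \(2n-2=4m+4\), and Theorem~\ref{thm:realization-SLn}\textup{(2)} gives
\[
\mathbb Z/\big((2m+2)!/2\big)=\mathbb Z/\big((n-1)!/2\big).
\]

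Two routine points need checking. First, the positive homotopy groups of \(SU\) and \(U\) agree, as already noted in the proof of Theorem~\ref{thm:realization-SLn}. Second, the bijection of pointed sets obtained from affine representability is compatible with the group structure; here it suffices to record the set-level bijection with these groups, as the statement intends.

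The only step I expect to need care is the index bookkeeping at the boundary cases. For \(n=5\) we have \(r=3\) and \(m=1\), which part~\textup{(2)} covers. For \(n=6\) we have \(m=2\), so the hypothesis \(k>1\) in Theorem~\ref{thm:Kervaire} holds. Once these are confirmed, the statement follows directly.
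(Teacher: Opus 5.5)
Your proposal is correct and follows the paper's proof: set \(r=n-2\) in Theorem~\ref{thm:realization-SLn}, so even \(n\) falls under part~(1) and odd \(n\) under part~(2). Your detour through Theorem~\ref{thm:comparison-quadric} and Kervaire's groups is harmless but redundant, since Theorem~\ref{thm:realization-SLn} already states the motivic groups \(\pi^{\A^1}_{r,r+2}(SL_r)(\mathbb C)\) explicitly.
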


\begin{proof}
Put \(r=n-2\) in Theorem~\ref{thm:realization-SLn}.  For even \(n\), the
integer \(r\) is even and part~\textup{(1)} applies.  For odd \(n\), it is
odd and part~\textup{(2)} gives the asserted cyclic group.
\end{proof}

\subsubsection*{Two ranks lower: an exact row and a cross-composite}
\label{subsec:quadric-lower-ranks}

The rank $n-3$ problem is the first direct geometric application of
Section~\ref{sec:third-unstable}.  Put $r=n-3$.  Then
\[
\mathrm{Vect}^{\mathrm{alg},o}_{n-3}(Q_{2n-1})
\cong \pi^{\A^1}_{r+1,r+3}(SL_r)(k).
\]
Corollary~\ref{cor:third-unstable-contracted-rows} therefore gives the
following concrete reduction.

\begin{proposition}\label{prop:rank-n-3-quadric-exact}
Let $k$ be a field of characteristic $0$ and let $n\geq8$.  With $r=n-3$,
there is an exact sequence
\begin{equation}\label{eq:rank-n-3-quadric-exact}
\begin{aligned}
\pi^{\A^1}_{r+1,r+3}(SL_{r-1})(k)
&\longrightarrow
\mathrm{Vect}^{\mathrm{alg},o}_{n-3}(Q_{2n-1})
\longrightarrow
\mathbf K^M_1(k)/2\\
&\longrightarrow
\pi^{\A^1}_{r,r+3}(SL_{r-1})(k)
\longrightarrow
\pi^{\A^1}_{r,r+3}(SL_r)(k).
\end{aligned}
\end{equation}
There is also a complex
\begin{equation}\label{eq:rank-n-3-quadric-cross-complex}
\mathbf K^M_2(k)/2
\longrightarrow
\mathrm{Vect}^{\mathrm{alg},o}_{n-3}(Q_{2n-1})
\longrightarrow
\mathbf K^M_1(k)/2,
\end{equation}
whose first arrow has image contained in the image of the first arrow of
\eqref{eq:rank-n-3-quadric-exact}.  Thus the punctured affine-space quotient
in this rank is controlled by the square-class group
$k^\times/(k^\times)^2$, but the left stabilization image remains part of the
classification problem.  If \(n\) is even, then \(r=n-3\) is odd and
\eqref{eq:rank-n-3-quadric-cross-complex} is the indicated contraction of an
already zero uncontracted cross-composite.
\end{proposition}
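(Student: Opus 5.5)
The plan is to deduce everything from Corollary~\ref{cor:third-unstable-contracted-rows}, specialized to the index \(r=n-3\) and evaluated on sections over \(k\). Since \(n\geq8\), we have \(r\geq5\), so Theorem~\ref{thm:dn2-contracted-zero} and Corollary~\ref{cor:third-unstable-contracted-rows} apply with \(r\) in place of \(n\).

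\emph{Step 1: reindex the quadric classification.} First I would use \eqref{eq:quadric-general-classification}:
\[
\mathrm{Vect}^{\mathrm{alg},o}_{n-3}(Q_{2n-1})\cong\pi^{\A^1}_{n-2,n}(SL_{n-3})(k).
\]
With \(n-2=r+1\) and \(n=r+3\), this group is \(\pi^{\A^1}_{r+1,r+3}(SL_r)(k)\).

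\emph{Step 2: the exact sequence.} Next I would evaluate \eqref{eq:third-lower-row-weight-next-three}, with \(n\) replaced by \(r\), at \(k\). Evaluating at a field preserves exactness: the rows are exact sequences of Nisnevich sheaves, and sections over \(\operatorname{Spec}k\) are stalks. The middle term \(\pi^{\A^1}_{r+1,r+3}(SL_r)(k)\) is identified with \(\mathrm{Vect}^{\mathrm{alg},o}_{n-3}(Q_{2n-1})\) by Step~1. The punctured affine-space term is \((\mathcal P_r)_{-(r+3)}(k)\cong\mathbf K^M_1(k)/2\) by \eqref{eq:Pn-high-contractions}. This gives \eqref{eq:rank-n-3-quadric-exact}. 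Since \(\mathbf K^M_1(k)/2=k^\times/(k^\times)^2\), the quotient statement is immediate.

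\emph{Step 3: the complex and the containment.} The complex \eqref{eq:rank-n-3-quadric-cross-complex} is the evaluated pair
\[
\mathbf K^M_2/2\xrightarrow{(\delta_{r,2})_{-(r+3)}}\pi^{\A^1}_{r+1,r+3}(SL_r)\xrightarrow{(q_{r-1,2})_{-(r+3)}}\mathbf K^M_1/2.
\]
Its composite is \((d_{r,2})_{-(r+3)}=0\) by Theorem~\ref{thm:dn2-contracted-zero}. The containment statement is the last assertion of Corollary~\ref{cor:third-unstable-contracted-rows}: exactness at the middle term of the lower row makes \(\ker (q_{r-1,2})_{-(r+3)}\) equal to the stabilization image, and the image of \(\delta_{r,2}\) lies in that kernel.

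\emph{Step 4: the parity remark.} If \(n\) is even, then \(r=n-3\) is odd. Proposition~\ref{prop:dn2-odd-zero} then says the uncontracted \(d_{r,2}\) is already zero, and contracting a zero morphism gives zero.

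The only point needing care, and the main obstacle, is the index bookkeeping. One must check that the weight \(r+3\) in Corollary~\ref{cor:third-unstable-contracted-rows} is exactly the weight \(n\) occurring in the quadric classification. One must also check that the hypotheses \(n\geq5\) of Theorem~\ref{thm:Pn-second-stem} hold for both \(\mathcal P_r\) and \(\mathcal P_{r+1}\), which is what forces \(n\geq8\).
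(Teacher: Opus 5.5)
Your proposal is correct and follows essentially the same route as the paper: specialize Corollary~\ref{cor:third-unstable-contracted-rows} to the index $r=n-3$ (allowed since $r\geq5$), evaluate at $k$, identify the middle term via \eqref{eq:quadric-general-classification}, and apply Proposition~\ref{prop:dn2-odd-zero} to $r$ for the parity remark. Your additional checks are exactly the details the paper's short proof leaves implicit: that evaluation at $\operatorname{Spec}k$ is exact on Nisnevich sheaves, and that the weight $r+3$ coincides with $n$.
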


\begin{proof}
Apply \eqref{eq:third-lower-row-weight-next-three} with its index $n$ replaced by
$r=n-3$, evaluate at $k$, and use
\eqref{eq:quadric-general-classification}.  The additional complex and its
factorization property are the corresponding assertions of
Corollary~\ref{cor:third-unstable-contracted-rows}.  The final assertion is
Proposition~\ref{prop:dn2-odd-zero} applied to \(r\).
\end{proof}

\begin{corollary}\label{cor:rank-n-3-quadric-splitting}
Let $k$ be a field of characteristic $0$ such that
$k^\times=(k^\times)^2$, and let $n\geq8$.  Stabilization induces a
surjection
\[
\mathrm{Vect}^{\mathrm{alg},o}_{n-4}(Q_{2n-1})
\longrightarrow
\mathrm{Vect}^{\mathrm{alg},o}_{n-3}(Q_{2n-1}),
\qquad [\mathcal E]\longmapsto[\mathcal E\oplus\mathcal O].
\]
Consequently, every oriented rank $n-3$ vector bundle on $Q_{2n-1}$ splits
off a trivial line bundle.  In particular, this holds over every
quadratically closed field of characteristic $0$.
\end{corollary}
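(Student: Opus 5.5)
The plan is to combine the exact sequence \eqref{eq:rank-n-3-quadric-exact} of Proposition~\ref{prop:rank-n-3-quadric-exact} with the hypothesis on square classes. Put \(r=n-3\geq5\). The exact sequence identifies \(\mathrm{Vect}^{\mathrm{alg},o}_{n-3}(Q_{2n-1})\cong\pi^{\A^1}_{r+1,r+3}(SL_r)(k)\) as the middle term of the segment
\[
\pi^{\A^1}_{r+1,r+3}(SL_{r-1})(k)\longrightarrow
\pi^{\A^1}_{r+1,r+3}(SL_r)(k)\longrightarrow
\mathbf K^M_1(k)/2 .
\]
Since \(\mathbf K^M_1(k)/2=k^\times/(k^\times)^2=0\), exactness at the middle term makes the first arrow surjective. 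It therefore suffices to identify this first arrow with the geometric stabilization map \([\mathcal E]\mapsto[\mathcal E\oplus\mathcal O]\), and to identify its source with \(\mathrm{Vect}^{\mathrm{alg},o}_{n-4}(Q_{2n-1})\).

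For the source, I would use \eqref{eq:quadric-general-classification} with \(r-1=n-4\) in place of \(r\):
\[
\mathrm{Vect}^{\mathrm{alg},o}_{n-4}(Q_{2n-1})\cong[Q_{2n-1},BSL_{n-4}]_{\A^1}\cong\pi^{\A^1}_{n-1,n}(BSL_{n-4})(k)\cong\pi^{\A^1}_{n-2,n}(SL_{n-4})(k).
\]
Since \(n-2=r+1\) and \(n=r+3\), this is \(\pi^{\A^1}_{r+1,r+3}(SL_{r-1})(k)\), which is the source in question. Affine representability requires \(BSL_{n-4}\) to be \(\A^1\)-simply connected, and this holds because \(n-4\geq4\).

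For the arrow, I would use that the first arrow in \eqref{eq:rank-n-3-quadric-exact} is \((\mathrm{inc}_{r-1})_*\) from \eqref{eq:third-unstable-lower-row}, contracted and evaluated at \(k\). Under affine representability and the identification \(\pi_{i+1}(BSL)\cong\pi_i(SL)\), this is the map induced by \(BSL_{r-1}\to BSL_r\), which classifies \(\mathcal E\mapsto\mathcal E\oplus\mathcal O\). The step needing the most care is checking that the loop/delooping identifications, and the passage from pointed to free homotopy classes, commute with stabilization. Free classes equal pointed ones because both classifying spaces are \(\A^1\)-simply connected, and the remaining commutativity is naturality of the looping isomorphism applied to the group homomorphism \(SL_{r-1}\to SL_r\).

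Surjectivity then means every oriented rank \(n-3\) bundle is isomorphic to \(\mathcal E\oplus\mathcal O\), and so splits off a trivial line bundle. Over a quadratically closed field every element of \(k^\times\) is a square, so the final assertion follows.
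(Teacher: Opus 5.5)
Your proposal is correct and follows the paper's proof: you identify the source of the first arrow in \eqref{eq:rank-n-3-quadric-exact} with $\mathrm{Vect}^{\mathrm{alg},o}_{n-4}(Q_{2n-1})$ via \eqref{eq:quadric-general-classification}, you recognize that arrow as the map induced by $SL_{n-4}\hookrightarrow SL_{n-3}$, that is, adding $\mathcal O$, and you deduce surjectivity from exactness because $\mathbf K^M_1(k)/2=k^\times/(k^\times)^2=0$. Your extra remarks on pointed versus free classes and on the looping isomorphism spell out what the paper leaves implicit. The bound $n-4\geq4$ is not actually needed for $\A^1$-simple connectivity, since $BSL_m$ is $\A^1$-simply connected for every $m\geq1$.
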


\begin{proof}
For $r=n-3$, the source of the first arrow in
\eqref{eq:rank-n-3-quadric-exact} is
\[
\pi^{\A^1}_{n-2,n}(SL_{n-4})(k)
\cong\mathrm{Vect}^{\mathrm{alg},o}_{n-4}(Q_{2n-1}),
\]
and its map to the middle term is induced by
$SL_{n-4}\hookrightarrow SL_{n-3}$, hence by adding a trivial line bundle.  The
next term in the exact sequence is
\[
\mathbf K^M_1(k)/2=k^\times/(k^\times)^2=0.
\]
Exactness therefore proves the asserted surjectivity.
\end{proof}

\begin{remark}
Proposition~\ref{prop:rank-n-3-quadric-exact} is a reduction, rather than a
complete classification: the source and the next term lie one rank lower in
the special linear tower.  A complete answer requires those stabilization
groups.  The adjacent sphere-level cross-composite is already determined by
Proposition~\ref{prop:dn2-stable-eta} and
Corollary~\ref{cor:dn2-kq}.
Corollary~\ref{cor:rank-n-3-quadric-splitting}
extracts the geometric consequence when the Milnor term vanishes.  Over real
closed fields that term is $\mathbb Z/2$ and is detected by
Theorem~\ref{thm:real-punctured-comparison}.
\end{remark}

\subsection{Stiefel varieties and efficient generation of projective modules}
\label{subsec:efficient-generation}

We now explain the connection with \cite{AOSS26}.  Let $X=\operatorname{Spec}R$
be smooth affine, let $M$ be a projective $R$-module of rank $r$, and write
\[
\operatorname{St}_r(N):=GL_N/GL_{N-r}.
\]
The fiber sequence
\begin{equation}\label{eq:Stiefel-Grassmann-fiber}
\operatorname{St}_r(N)\longrightarrow Gr_r(N)\longrightarrow BGL_r
\end{equation}
has the following concrete meaning: $M$ can be generated by $N$ elements if
and only if its classifying map $X\to BGL_r$ lifts to $Gr_r(N)$.  The
Moore--Postnikov tower of \eqref{eq:Stiefel-Grassmann-fiber} produces
obstructions
\begin{equation}\label{eq:generation-obstructions}
o_{i,N,r}(M)\in
H^{i+1}_{\mathrm{Nis}}\!\left(
X,\pi_i^{\A^1}(\operatorname{St}_r(N))(\det M)
\right).
\end{equation}
If the $\A^1$-cohomological dimension of $X$ is at most $d$, only the
coefficients with $i<d$ can contribute.

The first coefficient is Milnor--Witt or Milnor $K$-theory according to
parity.  Its characteristic class is a quadratic refinement of the top Segre
class.  Asok--Opie--Shin--Syed use this observation to recover the
Forster--Swan bound $r+d$ and to study the sharper bounds $r+d-1$ and
$r+d-2$.

\begin{theorem}\label{thm:AOSS-generation}
Let \(X=\operatorname{Spec}R\) be smooth affine over a perfect field $k$,
and let \(M\) be a projective \(R\)-module of rank \(r\geq2\).
\begin{enumerate}
\item If $k=\mathbb R$ and \(X\) has dimension \(d\geq3\), then \(M\)
can be generated by $r+d-1$ elements if and only if its top Segre class
$s_d(M)$ vanishes.
\item Suppose $X$ has $\A^1$-cohomological dimension at most $d\geq4$, $M$ is
generated by $r+d-1$ elements, and
\[
H^d_{\mathrm{Nis}}\!\left(X,
\pi_{d-1}^{\A^1}(\operatorname{St}_r(r+d-2))(L)\right)=0
\]
for every line bundle $L$.  If $d$ is odd, $M$ is generated by $r+d-2$
elements precisely when $s_{d-1}(M)=0$.  If $d$ is even, the corresponding
criterion is the vanishing of the Euler class of a rank $d-1$ complementary
module.
\item If $k$ is algebraically closed of characteristic $0$ and \(X\) has
dimension \(d\geq4\), then $M$ is generated by $r+d-2$
elements if and only if
\[
s_d(M)=s_{d-1}(M)=0.
\]
\end{enumerate}
\end{theorem}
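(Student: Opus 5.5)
The plan is to derive all three parts from the Moore--Postnikov tower of \eqref{eq:Stiefel-Grassmann-fiber}, following \cite{AOSS26}; the statement is essentially a citation, so the task is to check that the cited hypotheses match and to make the obstruction bookkeeping explicit. First, $\operatorname{St}_r(N)=GL_N/GL_{N-r}$ is $\A^1$-$(N-r-1)$-connected, and its first nonvanishing homotopy sheaf $\pi^{\A^1}_{N-r}$ is $\mathbf K^{MW}_{N-r+1}$ or $\mathbf K^{M}_{N-r+1}$ according to parity. This follows from the iterated fiber sequences $\A^{m}\setminus0\to GL_N/GL_{m-1}\to GL_N/GL_m$ together with Morel's computation of $\pi^{\A^1}_{m-1}(\A^m\setminus0)$. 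The parity dichotomy comes from the bottom Stiefel boundary, which is $\eta$ or $0$ by Theorem~\ref{thm:q-delta}. Since the $\A^1$-cohomological dimension of $X$ is at most $d$, only the obstructions \eqref{eq:generation-obstructions} with $i\leq d-1$ survive.

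For part~(1), take $N=r+d-1$. The fiber is then $(d-2)$-connected, so the single obstruction lies in $H^d_{\mathrm{Nis}}(X,\pi^{\A^1}_{d-1}(\operatorname{St}_r(r+d-1))(\det M))$. I would identify this obstruction, via the universal case over $BGL_r$, with the (quadratic) top Segre class $s_d(M)$ in Milnor--Witt or Milnor cohomology. Over $k=\mathbb R$ one needs the twisted Milnor--Witt refinement to be detected by $s_d(M)$ itself. I would invoke the real-closed analysis of \cite{AOSS26}, in which the $\mathbf I^{d}$-part is controlled by the Chow--Witt/real cycle class. This is the delicate input: the real place means the Witt part does not vanish automatically.

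For part~(2), take $N=r+d-2$. Lifting along the stabilization $Gr_r(r+d-2)\to Gr_r(r+d-1)$, we are given a lift to the larger Grassmannian because $M$ is generated by $r+d-1$ elements. The fiber is now $(d-3)$-connected, so there are two obstructions. The primary one lies in $H^{d-1}$ with coefficients $\pi_{d-2}$; its first homotopy sheaf is $\mathbf K^{MW}_{d-1}$ or $\mathbf K^M_{d-1}$, and it is identified with $s_{d-1}(M)$ for odd $d$. For even $d$ it is identified with the Euler class of a rank $d-1$ complementary module, obtained by passing to the complement $M\oplus P\cong R^{r+d-1}$ and using the duality between generation and splitting. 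The secondary obstruction lies in $H^d(X,\pi_{d-1}^{\A^1}(\operatorname{St}_r(r+d-2))(L))$, and it vanishes by hypothesis for every twist $L$. One point needs care here: the secondary obstruction depends on the choice of lift, but since its target group vanishes for every twist, no indeterminacy analysis is required.

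For part~(3), over an algebraically closed field of characteristic $0$, I would argue as follows. $H^d$ with coefficients in $\mathbf I^j$ vanishes, so the Milnor--Witt refinements collapse to Milnor $K$-cohomology, i.e.\ Chow groups. The groups $H^d_{\mathrm{Nis}}(X,\pi^{\A^1}_{d-1}(\operatorname{St}_r(r+d-2)))$ are killed by divisibility: I would show that their contraction to the top codimension is a torsion quotient of the form $\mathbf K^M_1/m$ or $\mathbf K^{MW}_{-1}$-type terms, and that such groups vanish on a codimension-$d$ Rost--Schmid complex over $\overline k$. This reduces the statement to the vanishing of the two primary classes $s_d(M)$ and $s_{d-1}(M)$ (noting $s_d(M)=0$ is exactly the first stage). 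I expect this coefficient vanishing in part~(3) to be the main obstacle, since it requires the precise structure of $\pi_{d-1}^{\A^1}$ of the Stiefel variety beyond the stable range. I would treat it as in \cite{AOSS26}, via the exact Stiefel segments combining $\pi^{\A^1}_{d-1}(\A^{d-1}\setminus0)$ with $\mathbf K^Q$ terms and the divisibility of $\overline k^\times$.
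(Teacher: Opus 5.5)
The paper gives no proof of this statement; it is quoted from \cite{AOSS26}. For (1) and (2) your outline is essentially that citation, and the bookkeeping is consistent with it. One cosmetic slip: the obstruction groups you use in (2) come from $Gr_r(r+d-2)\to BGL_r$ with fiber $\operatorname{St}_r(r+d-2)$, not from the map $Gr_r(r+d-2)\to Gr_r(r+d-1)$.

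Part (3), however, rests on a false step. You plan to show that $H^d_{\mathrm{Nis}}(X,\pi^{\A^1}_{d-1}(\operatorname{St}_r(r+d-2))(L))=0$ over $\overline{k}$, because its top contraction is torsion, and then apply (2). To see why this fails, factor stabilization as $\operatorname{St}_r(r+d-2)\to\operatorname{St}_{r+1}(r+d-1)\to\operatorname{St}_r(r+d-1)$.

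\begin{enumerate}
\item The first map is surjective on $\pi^{\A^1}_{d-1}$, since $\A^{r+d-1}\setminus0$ is $(r+d-3)$-connected and $r\geq2$.
\item The second map sits in the long exact sequence of $\A^{d-1}\setminus0\to\operatorname{St}_{r+1}(r+d-1)\to\operatorname{St}_r(r+d-1)$. Its boundary out of $\pi^{\A^1}_{d-1}(\operatorname{St}_r(r+d-1))$ is the bottom Stiefel boundary of Theorem~\ref{thm:q-delta}: it is $\eta$ on $\mathbf K^{MW}_d$ for odd $d$, and zero on $\mathbf K^M_d$ for even $d$.
\end{enumerate}

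Hence your secondary coefficient sheaf surjects onto $h\mathbf K^{MW}_d=\ker\eta$ for odd $d$, and onto $\mathbf K^M_d$ for even $d$. The $d$-fold contraction of either sheaf at $\overline{k}$ is $\mathbb Z$, which is not torsion. Since $H^{d+1}_{\mathrm{Nis}}$ vanishes, the secondary obstruction group therefore surjects onto $CH^d(X)$ for every $L$.

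Over $\mathbb C$, $CH^d(X)$ is divisible but often nonzero. For example, it is nonzero for the complement of a hyperplane section of a smooth hypersurface of degree $\geq d+2$ in $\mathbb P^{d+1}$, by Bloch--Srinivas. Divisibility kills $\overline{k}^\times/m$ and $CH^d(X)/m$, but it does not kill $CH^d(X)$ itself. So over $\overline{k}$ the hypothesis of (2) forces $CH^d(X)=0$, and (3) cannot be deduced from (2).

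What is missing is that $s_d(M)$ is a component of the secondary obstruction, not merely a preliminary stage. The Moore--Postnikov towers of $Gr_r(r+d-2)\to BGL_r$ and $Gr_r(r+d-1)\to BGL_r$ are compatible under stabilization. Consequently, for every primary lift, the image of the secondary obstruction in $H^d_{\mathrm{Nis}}(X,\pi^{\A^1}_{d-1}(\operatorname{St}_r(r+d-1))(\det M))\cong CH^d(X)$ is the primary obstruction to generation by $r+d-1$ elements. That is, it equals $s_d(M)$ up to sign, independently of the chosen lift.

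Once $s_d(M)=0$, only the contribution of the kernel of the coefficient map has to vanish. For $r\geq3$ this kernel is a quotient of the first non-stable sheaf $\pi^{\A^1}_{d-1}(\A^{d-1}\setminus0)$. Proving that its top-degree cohomology vanishes on smooth affine $d$-folds over $\overline{k}$ is the substantive input. That is also where divisibility over $\overline{k}$ can legitimately be used, rather than on the whole secondary group.
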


The last assertion gives a two-Segre-class criterion over algebraically
closed fields.  Our calculation naturally points one step
further, from $r+d-2$ to $r+d-3$ generators.  The following exact segments
make that relationship precise.

\begin{proposition}\label{prop:calculation-obstruction-dictionary}
Let $d\geq4$ and $r\geq2$.  Forgetting the last vector in a frame gives exact
segments
\[
\begin{aligned}
\pi_{d-1}^{\A^1}(\A^{d-1}\setminus0)
&\longrightarrow
\pi_{d-1}^{\A^1}(\operatorname{St}_r(r+d-2))\\
&\longrightarrow
\pi_{d-1}^{\A^1}(\operatorname{St}_{r-1}(r+d-2))
\longrightarrow \mathbf K^{MW}_{d-1}
\end{aligned}
\]
and
\[
\begin{aligned}
\mathcal P_{d-2}
=\pi_{d-1}^{\A^1}(\A^{d-2}\setminus0)
&\longrightarrow
\pi_{d-1}^{\A^1}(\operatorname{St}_r(r+d-3))\\
&\longrightarrow
\pi_{d-1}^{\A^1}(\operatorname{St}_{r-1}(r+d-3))
\longrightarrow
\pi_{d-2}^{\A^1}(\A^{d-2}\setminus0).
\end{aligned}
\]
The arrows are equivariant for the natural \(\G_m\)-actions coming from the
determinant, so both segments remain exact after twisting by a line bundle.
Thus the first non-stable punctured-affine-space sheaf studied in Section~3
enters the $r+d-2$ coefficient.  At the next stage, the second-stem sheaf
calculated in Section~\ref{sec:third-unstable} maps directly to the
$r+d-3$ coefficient, whose following boundary lands in the first non-stable
sheaf studied in Section~3.  Thus both adjacent non-stable layers occur in one
exact Stiefel segment.
For Stiefel ranks $2$ and $3$, the maps between adjacent punctured affine
spaces obtained by iterating these sequences are the linear spectral-sequence
differentials discussed in \cite[Lemmas~18--19 and Remark~20]{AOSS26}.
In the indexing of Remark~\ref{rem:dn2-linear-spectral-sequence}, the
\(q=0\) differential is \(d_{n,1}\); the \(q=1\) differential \(d_{n,2}\)
is the next map studied in Section~4.
\end{proposition}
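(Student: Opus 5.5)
The plan is to obtain both segments from a single frame-forgetting fiber sequence of Stiefel varieties, applied at two values of \(N\), and then read off the homotopy sheaves. Forgetting the last vector of an \(r\)-frame in \(\A^N\) gives a map \(\operatorname{St}_r(N)=GL_N/GL_{N-r}\to GL_N/GL_{N-r+1}=\operatorname{St}_{r-1}(N)\). Its fiber is \(GL_{N-r+1}/GL_{N-r}\simeq_{\A^1}\A^{N-r+1}\setminus0\). This fiber sequence is \cite[Definition~12]{AOSS26}, used already in Lemma~\ref{lem:stiefel-parity} with \(SL\) in place of \(GL\); the two versions agree because the quotients coincide. Taking \(N=r+d-2\) gives fiber \(\A^{d-1}\setminus0\), and taking \(N=r+d-3\) gives fiber \(\A^{d-2}\setminus0\). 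The long exact sequence of \(\A^1\)-homotopy sheaves in degree \(d-1\) then produces
\[
\pi_{d-1}(F)\to\pi_{d-1}(\operatorname{St}_r)\to\pi_{d-1}(\operatorname{St}_{r-1})\xrightarrow{\partial}\pi_{d-2}(F).
\]
In the first case \(\pi_{d-2}^{\A^1}(\A^{d-1}\setminus0)\cong\mathbf K^{MW}_{d-1}\) by \eqref{eq:pi-quotient}. In the second case the last term is \(\pi_{d-2}^{\A^1}(\A^{d-2}\setminus0)\), which is the first non-stable sheaf of Section~3 under the \(d_{n,1}\) indexing, and the first term is \(\mathcal P_{d-2}\) by definition.

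Two points need care here. First, the sheaves should be abelian and the sequences should be sequences of strictly \(\A^1\)-invariant sheaves. This holds because \(d\geq4\) makes the relevant degrees at least \(2\), and in degree \(d-2\geq2\) one already has homotopy sheaves of groups. Second, the fibration must be an \(\A^1\)-fiber sequence, and this follows from the affine representability and homogeneous-space results of \cite{AffineRep2} quoted in the introduction.

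For the twisting claim, I would let \(\G_m\) act through \(GL_r\) on the frames by scaling the last vector. This action realizes the determinant action on \(\operatorname{St}_r(N)\) and the induced actions on \(\operatorname{St}_{r-1}(N)\) and on the fiber. The frame-forgetting map is equivariant, so the long exact sequence is a sequence of \(\G_m\)-equivariant sheaves. Twisting by a line bundle \(L\), namely \(\mathcal M(L)=\mathcal M\times^{\G_m}L^\times\), is exact on such sequences, since it is computed locally after trivializing \(L\). I expect this equivariance bookkeeping to be the main obstacle. The \(\G_m\)-action on \(\operatorname{St}_{r-1}\) has to be chosen so that it matches the one in \cite{AOSS26}, for instance by acting through the last retained vector or through the complementary determinant. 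I would first check equivariance at the level of the quotient presentations before passing to homotopy sheaves.

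The remaining assertions are interpretive. The description as the cross-composite \(d_{n,1}\), respectively \(d_{n,2}\), comes from iterating the two sequences at Stiefel ranks \(2\) and \(3\). There the composite boundary \(\pi(\A^{N}\setminus0)\to\pi(\A^{N-1}\setminus0)\) is exactly the connecting map of Lemma~\ref{lem:stiefel-parity} and Proposition~\ref{prop:dn2-odd-zero}. Under Remark~\ref{rem:dn2-linear-spectral-sequence} this identifies the \(q=0\) and \(q=1\) linear differentials with \(d_{n,1}\) and \(d_{n,2}\), which I would cite together with \cite[Lemmas~18--19, Remark~20]{AOSS26}.
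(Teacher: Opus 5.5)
Your proposal is correct and follows the paper's proof. Both use the frame-forgetting fiber sequences $\A^{N-r+1}\setminus0\to\operatorname{St}_r(N)\to\operatorname{St}_{r-1}(N)$ at $N=r+d-2$ and $N=r+d-3$, Morel's connectivity theorem for the $\mathbf K^{MW}_{d-1}$ term, equivariance from the change-of-frame action (which the paper asserts in one line), and \cite[Lemmas~18--19 and Remark~20]{AOSS26} for the final identification. Your explicit choice of scaling the last vector cleanly gives an honestly equivariant map of fiber sequences. It induces the trivial action on $\operatorname{St}_{r-1}(N)$ and scalar multiplication on the fiber, so those are the twists your exact segments carry.
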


\begin{proof}
Use the fiber sequences
\[
\A^{j+1}\setminus\{0\}\longrightarrow
\operatorname{St}_{N-j}(N)\longrightarrow
\operatorname{St}_{N-j-1}(N)
\]
and their long exact homotopy sequences; compare
\cite[Propositions~2 and~15]{AOSS26}.  Taking
$(N,j)=(r+d-2,d-2)$ and $(r+d-3,d-3)$ gives the two displayed
segments.  Equivariance follows from the change-of-frame action; as in the
obstruction tower of \cite{AOSS26}, this action factors through the
determinant.  Morel's connectivity theorem identifies the right-hand
punctured-affine-space term in the first segment with
$\mathbf K^{MW}_{d-1}$; the right-hand term in the second segment lies one
degree beyond the connectivity range and is the first non-stable sheaf of
Section~3.  The final assertion is precisely the
description in \cite[Lemmas~18--19 and Remark~20]{AOSS26}.
\end{proof}

The calculation of Section~\ref{sec:third-unstable} gives more than a
dictionary of coefficient sheaves: it constrains the new top obstruction.

\begin{proposition}\label{prop:tertiary-two-primary}
Assume that $k$ has characteristic $0$, let $d\geq7$, and put
\[
\mathcal F_{r,d}:=
\pi_{d-1}^{\A^1}(\operatorname{St}_r(r+d-3)).
\]
For every smooth affine $d$-fold $X$ over $k$ and every line bundle $L$ on
$X$, the subgroup
\[
\operatorname{im}\!\left(
H^d_{\mathrm{Nis}}(X,\mathcal P_{d-2}(L))
\longrightarrow
H^d_{\mathrm{Nis}}(X,\mathcal F_{r,d}(L))
\right)
\]
is annihilated by $2$.  In the Rost--Schmid complex, its classes are
represented in codimension \(d\) by line-bundle-twisted Milnor symbols modulo \(2\) in
\[
\mathbf K^M_2(k(x))/2,
\qquad x\in X^{(d)}.
\]
Consequently, the part of the $r+d-3$ generator obstruction induced by the
punctured affine-space term in
Proposition~\ref{prop:calculation-obstruction-dictionary} is intrinsically
$2$-primary.
\end{proposition}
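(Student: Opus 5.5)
The plan is to compute the source group \(H^d_{\mathrm{Nis}}(X,\mathcal P_{d-2}(L))\) by its Rost--Schmid (Gersten) complex. Then both assertions follow by functoriality of the induced map on \(H^d\).

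\textbf{Step 1: reduce to the top contraction.} Since \(X\) is a smooth affine \(d\)-fold, it has Nisnevich cohomological dimension \(d\). The Rost--Schmid complex for the strictly \(\A^1\)-invariant sheaf \(\mathcal P_{d-2}(L)\) is concentrated in degrees \(0,\dots,d\), with degree-\(d\) term
\[
\bigoplus_{x\in X^{(d)}}(\mathcal P_{d-2})_{-d}\bigl(k(x);\omega_x\otimes L\bigr).
\]
Consequently \(H^d\) is a quotient of this direct sum.

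\textbf{Step 2: identify the coefficient.} Apply Theorem~\ref{thm:Pn-second-stem} with \(n=d-2\); this needs \(n\geq5\), i.e. \(d\geq7\), which is exactly the hypothesis. Then \(d=n+2\), and \eqref{eq:Pn-high-contractions} gives
\[
(\mathcal P_{d-2})_{-d}\cong\mathbf K^M_2/2.
\]
We must check that the twist is harmless here. The \(\G_m\)-action on a contraction factors through \(GW\), and it acts on \(\mathbf K^M_*/2\) through the rank modulo \(2\), exactly as in the unit argument of Proposition~\ref{prop:dn2-stable-eta}. So the line-bundle twist acts trivially, and the codimension-\(d\) term is \(\bigoplus_x \mathbf K^M_2(k(x))/2\). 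Hence \(H^d_{\mathrm{Nis}}(X,\mathcal P_{d-2}(L))\) is \(2\)-torsion, and every class is represented by twisted mod-\(2\) Milnor symbols at closed points.

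\textbf{Step 3: push forward.} The map \(\mathcal P_{d-2}\to\mathcal F_{r,d}\) of Proposition~\ref{prop:calculation-obstruction-dictionary} is \(\G_m\)-equivariant, so it induces a map of twisted sheaves and hence a homomorphism on \(H^d\). A homomorphic image of a group annihilated by \(2\) is annihilated by \(2\). The representatives in the image are the images of the Step~2 symbols under the degree-\(d\) map of Rost--Schmid complexes, by naturality of the Gersten complex in strictly \(\A^1\)-invariant sheaves. The final assertion is a restatement: the contribution of the punctured affine-space term to the top obstruction in \eqref{eq:generation-obstructions} lies in this image.

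\textbf{Main obstacle.} The expected difficulty is the twist compatibility in Step~2. We need the identification \eqref{eq:Pn-high-contractions} to be \(\G_m\)-equivariant, so that the twisted top term is really \(\mathbf K^M_2/2\) and not a nontrivially twisted form. I would argue this as follows. The identification comes from the stable comparison, which is natural in units. On the stable side the \(\G_m\)-action is multiplication by \(\langle u\rangle\in GW\), which acts on \(\mathbf K^M_*/2\) through the rank modulo \(2\). This is the same argument used to show that the even-stage unit \(u_n\) acts trivially after the boundary contraction.
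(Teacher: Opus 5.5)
Your proposal is correct and follows essentially the same route as the paper: apply Theorem~\ref{thm:Pn-second-stem} with $n=d-2$ to get $(\mathcal P_{d-2})_{-d}\cong\mathbf K^M_2/2$, note that $H^d_{\mathrm{Nis}}$ is a quotient of the codimension-$d$ Rost--Schmid term, and pass to the image. The twist compatibility you flag as the main obstacle is not needed. Any line-bundle twist of a coefficient group killed by $2$ is still killed by $2$, and the statement itself only speaks of twisted symbols, so the paper's proof omits that argument.
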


\begin{proof}
Apply Theorem~\ref{thm:Pn-second-stem} with $n=d-2$.  Since $d\geq7$, its
range hypothesis holds, and the first of its three contraction formulas gives
\[
(\mathcal P_{d-2})_{-d}\cong\mathbf K^M_2/2.
\]
The degree-$d$ term of the Rost--Schmid complex computing
$H^d_{\mathrm{Nis}}(X,\mathcal P_{d-2}(L))$ is the direct sum over
$x\in X^{(d)}$ of the corresponding line bundle-twisted contractions.  Every such
summand is annihilated by $2$, and top cohomology is a quotient of this
degree-$d$ term.  Hence $H^d_{\mathrm{Nis}}(X,\mathcal P_{d-2}(L))$, and
therefore its image in the displayed Stiefel-coefficient cohomology group, is
annihilated by $2$.
\end{proof}

The \(2\)-primary conclusion becomes an actual vanishing theorem when
\(\operatorname{cd}_2(k)\leq1\).

\begin{theorem}\label{thm:P-top-cd2-vanishing}
Let \(k\) be a field of characteristic \(0\) with
\(\operatorname{cd}_2(k)\leq1\), let \(d\geq7\), and let \(X\) be a smooth
pure \(d\)-dimensional \(k\)-scheme.  For every line bundle \(L\) on \(X\),
\[
H^d_{\mathrm{Nis}}\!\left(X,\mathcal P_{d-2}(L)\right)=0.
\]
Consequently, the punctured-affine-space contribution to the top
\(r+d-3\) Stiefel obstruction in
Proposition~\ref{prop:tertiary-two-primary} vanishes, rather than merely being
annihilated by \(2\).
\end{theorem}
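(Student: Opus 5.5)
The plan is to compute \(H^d_{\mathrm{Nis}}(X,\mathcal P_{d-2}(L))\) with the Rost--Schmid complex, as in the proof of Proposition~\ref{prop:tertiary-two-primary}, and to show that already the degree-\(d\) term vanishes. Since \(X\) has dimension \(d\), this group is the cokernel of the last Rost--Schmid differential. Its degree-\(d\) term is
\[
\bigoplus_{x\in X^{(d)}}(\mathcal P_{d-2})_{-d}\bigl(k(x);\omega_x\otimes L\bigr).
\]
So it suffices to prove that each summand is zero.

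The first step is to identify the summands. By Theorem~\ref{thm:Pn-second-stem} with \(n=d-2\ge5\), we have \((\mathcal P_{d-2})_{-d}\cong\mathbf K^M_2/2\). A line-bundle twist of a sheaf on which \(\mathbf{GW}\) acts through \(K^M_0/2=\mathbb Z/2\) is trivial, since units of \(GW\) have odd rank; this is the same reasoning as in the end of the proof of Proposition~\ref{prop:dn2-stable-eta}. Hence each summand is \(K^M_2(k(x))/2\). This is a more careful version of the twisted description stated in Proposition~\ref{prop:tertiary-two-primary}, and it is the only point needing care.

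The second step is the vanishing. Each \(x\in X^{(d)}\) is a closed point of a \(d\)-dimensional scheme of finite type over \(k\), so \(k(x)/k\) is finite. Therefore \(\operatorname{cd}_2(k(x))\le\operatorname{cd}_2(k)\le1\). The norm-residue isomorphism \cite{OVV07} gives
\[
K^M_2(k(x))/2\cong H^2_{\mathrm{\acute et}}(k(x),\boldsymbol\mu_2^{\otimes2})=0.
\]
So the degree-\(d\) term vanishes, and with it the top cohomology. The consequence about the obstruction then follows directly from the image statement in Proposition~\ref{prop:tertiary-two-primary}.

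The main obstacle I anticipate is justifying that the Rost--Schmid degree-\(d\) term really involves the \(d\)-fold contraction evaluated at the residue fields with the twist \(\omega_x\otimes L\), and that this twist is harmless. I would first try to argue through the module structure over \(\mathbf K^{MW}_0\), using that it factors through \(\mathbb Z/2\). If that is not enough, I would instead note that any twist is an automorphism of a sheaf of \(\mathbb F_2\)-vector spaces whose value at \(k(x)\) is already zero, so vanishing does not depend on the twist at all.
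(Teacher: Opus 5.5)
Your proposal is correct and is essentially the paper's own argument. You identify the degree-\(d\) Rost--Schmid term through \((\mathcal P_{d-2})_{-d}\cong\mathbf K^M_2/2\), note that each \(k(x)\) is finite over \(k\) so that \(\operatorname{cd}_2(k(x))\le1\), and use the norm-residue isomorphism to kill \(K^M_2(k(x))/2\), so the top cochain group and hence its cohomology quotient vanish. Your fallback observation about the twist (a twist of a zero group is zero) is all that step needs; the paper simply calls each summand a line-bundle twist of \(\mathbf K^M_2(k(x))/2\) and does not discuss it further.
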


\begin{proof}
As in the proof of Proposition~\ref{prop:tertiary-two-primary},
\[
(\mathcal P_{d-2})_{-d}\cong\mathbf K^M_2/2.
\]
The degree-\(d\) Rost--Schmid cochain group is a direct sum indexed by
\(x\in X^{(d)}\); each summand is a line bundle twist of
\(\mathbf K^M_2(k(x))/2\).  The field \(k(x)\) is finite over \(k\), hence
\(\operatorname{cd}_2(k(x))\leq1\).  The norm-residue isomorphism
\cite{OVV07} gives
\[
\mathbf K^M_2(k(x))/2
\cong H^2_{\mathrm{\acute et}}
   (k(x),\boldsymbol\mu_2^{\otimes2})=0.
\]
Thus the entire degree-\(d\) cochain group is zero, and so is its quotient
\(H^d_{\mathrm{Nis}}(X,\mathcal P_{d-2}(L))\).
\end{proof}

\subsection{Secondary obstruction calculations}\label{sec:secondary}

For $n\geq2$, set
\[
\mathcal Q_n:=\pi_n^{\A^1}(\A^n\setminus0).
\]
Thus the first non-stable coefficient in the corank-two problem is
$\mathcal Q_{d-2}$.

\begin{proposition}\label{prop:Q-contracted-Gersten-tail}
Let $k$ be a field of characteristic $0$ and let $n\geq4$.  There is a
natural short exact sequence
\begin{equation}\label{eq:Q-penultimate-contracted-extension}
0\longrightarrow\mathbf K^M_2/24
\longrightarrow(\mathcal Q_n)_{-n}
\longrightarrow\mathbf{GW}^0_1\longrightarrow0,
\end{equation}
a natural short exact sequence
\begin{equation}\label{eq:Q-first-contracted-extension}
0\longrightarrow\mathbf K^M_1/24
\longrightarrow(\mathcal Q_n)_{-(n+1)}
\longrightarrow\mathbf{GW}^3_0\longrightarrow0,
\end{equation}
and a natural isomorphism
\begin{equation}\label{eq:Q-terminal-contraction}
(\mathcal Q_n)_{-(n+2)}\cong\mathbf K^M_0/24
\cong\underline{\Z/24}.
\end{equation}
On fields, $\mathbf{GW}^3_0$ is constant with value $\Z/2$.
For every field extension \(F/k\), determinant and spinor norm identify
\begin{equation}\label{eq:GW10-field-explicit}
\mathbf{GW}^0_1(F)
\cong
\mathbf K^M_1(F)/2\oplus\mathbb Z/2.
\end{equation}

If $X$ is smooth and pure of dimension $n+2$ and $L$ is a line bundle on
$X$, the last three terms of the Rost--Schmid complex computing
$H^{n+1}_{\mathrm{Nis}}(X,\mathcal Q_n(L))$ have the form
\begin{equation}\label{eq:Q-Gersten-tail}
\begin{aligned}
\bigoplus_{x\in X^{(n)}}
  (\mathcal Q_n)_{-n}(k(x);L_x)
&\longrightarrow
\bigoplus_{x\in X^{(n+1)}}
  (\mathcal Q_n)_{-(n+1)}(k(x);L_x)\\
&\longrightarrow
\bigoplus_{x\in X^{(n+2)}}
  \underline{\Z/24}(k(x);L_x).
\end{aligned}
\end{equation}
Here $L_x$ includes the usual orientation line bundle factor in the Rost--Schmid
complex; after choosing a basis, every summand in the last direct sum is
abstractly $\Z/24$.
\end{proposition}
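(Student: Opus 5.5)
Contract the Asok--Bachmann--Hopkins sequence \eqref{eq:ABH-exact-n} for \(\mathcal Q_n=\pi_n^{\A^1}(\A^n\setminus0)\), using Proposition~\ref{prop:ABH-722}\textup{(3)} with \(d=n\) to get surjectivity on the right. By Lemma~\ref{lem:contraction-exact} and Theorem~\ref{thm:contraction-K}, the \((n+j)\)-fold contraction of
\[
0\to\mathbf K^M_{n+2}/24\to\mathcal Q_n\to\mathbf{GW}^n_{n+1}
\]
is
\[
0\to\mathbf K^M_{2-j}/24\to(\mathcal Q_n)_{-(n+j)}\to(\mathbf{GW}^n_{n+1})_{-(n+j)}.
\]
The number of contractions \(r=n+j\) satisfies \(r\geq n\geq d-3\), so Proposition~\ref{prop:ABH-722}\textup{(3)} makes the right-hand map surjective. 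Contraction of Grothendieck--Witt sheaves lowers both indices by one: \((\mathbf{GW}^a_b)_{-1}\cong\mathbf{GW}^{a-1}_{b-1}\), which is Bott periodicity \(\Omega_{\mathbb G_m}\)-adjusted, with the upper index read modulo \(4\). This gives \(\mathbf{GW}^0_1\) for \(j=0\), \(\mathbf{GW}^{-1}_0=\mathbf{GW}^3_0\) for \(j=1\), and \(\mathbf{GW}^{-2}_{-1}=\mathbf{GW}^2_{-1}\) for \(j=2\). These yield \eqref{eq:Q-penultimate-contracted-extension} and \eqref{eq:Q-first-contracted-extension}, naturality being inherited from the ABH sequence.

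For the terminal contraction, I must show \(\mathbf{GW}^2_{-1}=0\). In negative degrees Schlichting's groups are Witt groups, so \(\mathbf{GW}^2_{-1}\cong\mathbf W^{3}\). Over a field, \(W^{3}=W^{-1}\) of the symplectic-type shifted duality vanishes because odd shifted Witt groups of fields vanish. The surjection then becomes an isomorphism \((\mathcal Q_n)_{-(n+2)}\cong\mathbf K^M_0/24=\underline{\Z/24}\). I expect the bookkeeping of the upper index under contraction to be the main point needing care. As a check, \(\mathbf{GW}^3_0(\C)\cong\Z/2\) was already used in Theorem~\ref{thm:realization-SLn}.

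For the field values, \(\mathbf{GW}^3_0(F)=GW^3_0(F)=\Z/2\) is the standard Karoubi computation (\(GW^{-1}_0=KSp\)-type shifted: \(GW^3_0(F)\cong\Z/2\)), so it is constant. For \(\mathbf{GW}^0_1(F)=KO_1(F)\), I will use the classical description of \(KO_1\) of a field: the abelianization of the stable orthogonal group. Determinant and spinor norm give \(O(F)^{ab}\cong\Z/2\oplus F^\times/F^{\times2}\), and \(F^\times/F^{\times2}=K^M_1(F)/2\).

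Finally, the Rost--Schmid statement is formal. For a strictly \(\A^1\)-invariant sheaf \(\mathcal M\), the degree-\(i\) term of its Rost--Schmid complex is \(\bigoplus_{x\in X^{(i)}}\mathcal M_{-i}(k(x);\omega_x\otimes L_x)\) (Morel). Taking \(\mathcal M=\mathcal Q_n(L)\) and \(i=n,n+1,n+2\), then inserting the three contractions just computed, gives \eqref{eq:Q-Gersten-tail}. The twist by the orientation line only changes the \(GW(k(x))\)-module structure, which is trivial on \(\Z/24\), so every summand in the last sum is abstractly \(\Z/24\).
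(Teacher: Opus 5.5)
Your proposal is correct and is essentially the paper's argument: you contract the Asok--Bachmann--Hopkins sequence for \(\mathcal Q_n\), which is surjective on the right in this range, and identify the contracted Grothendieck--Witt terms via \((\mathbf{GW}^a_b)_{-1}\cong\mathbf{GW}^{a-1}_{b-1}\); you then use determinant and spinor norm for \(KO_1\) of a field and read off the last three Rost--Schmid degrees. The only difference is that you prove the terminal isomorphism directly from \(\mathbf{GW}^{2}_{-1}\cong\mathbf W^{3}=0\) (odd shifted Witt groups of fields vanish, which suffices for a strictly \(\mathbb A^1\)-invariant sheaf), whereas the paper cites Gant--Williams, Example~6.5, for it.
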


\begin{proof}
The exact sequence for $\mathcal Q_n$ in
\cite[Example~6.3]{Gant_Williams_2025}, obtained from the
$\mathbb P^1$-stabilization theorem of \cite{ABH} and the first stable stem,
becomes short exact after the indicated contractions.  Contracting \(n\)
times and using geometric Bott periodicity gives
\eqref{eq:Q-penultimate-contracted-extension}; one further contraction
gives \eqref{eq:Q-first-contracted-extension}.  The identification of
$\mathbf{GW}^3_0$ on fields is part of the same calculation.  One final
contraction gives \eqref{eq:Q-terminal-contraction}, as recorded in
\cite[Example~6.5]{Gant_Williams_2025}.  The description
\eqref{eq:Q-Gersten-tail} is then the degree-$n$, $n+1$, and $n+2$ part of
the Rost--Schmid complex.  Finally, the classical description of stable
orthogonal \(K_1\), recalled in \cite[Subsection~2.3]{AF17}, gives
\eqref{eq:GW10-field-explicit}: its two invariants are spinor norm and
determinant.
\end{proof}

\begin{proposition}\label{prop:Q-explicit-first-stem-residues}
Keep the hypotheses and notation of
Proposition~\ref{prop:Q-contracted-Gersten-tail}.  Under the stabilization
identifications used there, put
\[
E_i(F):=(\mathcal Q_n)_{-(n+i)}(F)
       \cong\pi_{i+1,i}\mathds1(F),
\qquad 0\leq i\leq2,
\]
for a field extension \(F/k\), and write
\[
\alpha:=\eta_{\mathrm{top}}\in E_0(F),\qquad
\beta:=\eta\eta_{\mathrm{top}}\in E_1(F),\qquad
\gamma:=\nu\in E_2(F).
\]
Then
\begin{equation}\label{eq:first-stem-E1-splitting}
E_2(F)=\mathbb Z/24\{\gamma\},
\qquad
E_1(F)=
\mathbf K^M_1(F)/24\{\gamma\}
\oplus\mathbb Z/2\{\beta\}.
\end{equation}
The first group \(E_0(F)\) is generated by
\[
\alpha,\qquad [u]\beta,
\qquad [u][w]\gamma
\quad (u,w\in F^\times).
\]
Normalize the spinor-norm and determinant coordinates in
\eqref{eq:GW10-field-explicit} by requiring that the quotient map send
\[
\alpha\longmapsto(0,1),
\qquad
[u]\beta\longmapsto(\{u\}\bmod2,0).
\]
The gluing in the generally non-split first extension is governed by
\begin{align}
[uv]\beta
 &= [u]\beta+[v]\beta+12[u][v]\gamma,
 \label{eq:first-stem-factor-set}\\
\langle a\rangle\alpha
 &=\alpha+[a]\beta,
&
\langle a\rangle\beta
 &=\beta+12[a]\gamma.
\label{eq:first-stem-line-bundle-action}
\end{align}

Let \(v\) be a geometric discrete valuation of \(F/k\), choose a
uniformizer \(\pi\), and use \(\pi\) to trivialize the normal line bundle factor in
the Rost--Schmid residue.  Denote the resulting coordinate residue by
\(\partial_v^\pi\).  If \(u=\pi^m a\), where
\(m\in\mathbb Z\) and \(a\in\mathcal O_v^\times\), then
\begin{align}
\partial_v^\pi(\alpha)
 &=\partial_v^\pi(\beta)
 =\partial_v^\pi(\gamma)=0,
 \label{eq:first-stem-unramified-residues}\\
\partial_v^\pi([u]\gamma)
 &=m\gamma,
 \label{eq:first-stem-K1-residue}\\
\partial_v^\pi([u][w]\gamma)
 &=\partial_v^M\{u,w\}\,\gamma,
 \label{eq:first-stem-K2-residue}\\
\partial_v^\pi([u]\beta)
 &=
 \begin{cases}
 0,&m\ \text{even},\\[2pt]
 \beta+12[\bar a]\gamma,&m\ \text{odd}.
 \end{cases}
 \label{eq:first-stem-hermitian-residue}
\end{align}
Here \(\partial_v^M\) is the usual Milnor \(K\)-theory residue.

The finite corestrictions that enter a Rost--Schmid differential are
explicit as well.  If \(L/K\) is a finite separable extension of degree
\(d\), let
\[
q_{L/K}:=\operatorname{Tr}^{GW}_{L/K}\langle1\rangle\in GW(K)
\]
be its trace form and let \(\Delta_{L/K}\in K^\times/K^{\times2}\) be the
determinant of that form.  Writing \(T_{L/K}\) for corestriction, one has
\begin{align}
T_{L/K}(\gamma_L)
 &=d\gamma_K,
&
T_{L/K}([u]\gamma_L)
 &=N^M_{L/K}(\{u\})\gamma_K,
\label{eq:first-stem-milnor-transfers}\\
T_{L/K}(\beta_L)
 &=d\beta_K+12[\Delta_{L/K}]\gamma_K.
\label{eq:first-stem-hermitian-transfer}
\end{align}
Thus the residue and corestriction formulas determine every differential
in the three-term tail \eqref{eq:Q-Gersten-tail}.  A change of uniformizer
or of a basis of the coefficient line bundle changes the displayed coordinates
through \eqref{eq:first-stem-line-bundle-action}; the resulting twisted
Rost--Schmid differential is independent of those choices.
\end{proposition}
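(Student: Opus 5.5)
The plan is to work entirely in the first stable stem of the motivic sphere over \(F\), via the stabilization identifications \(E_i(F)\cong\pi_{i+1,i}\mathds1(F)\) of Proposition~\ref{prop:Q-contracted-Gersten-tail}. From there we import the Röndigs--Spitzweck--Østvær description of \(\pi_{1+*}\mathds1\) as a \(\mathbf K^{MW}_*\)-module. Its generators are \(\eta_{\mathrm{top}}\), \(\eta\eta_{\mathrm{top}}\) and \(\nu\), subject to the relations
\[
\eta\nu=0,\qquad 24\nu=0,\qquad 2\eta_{\mathrm{top}}\eta=12\nu\cdot(\text{appropriate }\eta\text{-multiple}),
\]
and the identification of \(\eta_{\mathrm{top}}\eta^2\) with \(12\nu\) after contraction.

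The order of work is as follows.

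\emph{Step 1: the groups \(E_2\) and \(E_1\).} Contracting twice, \(E_2(F)=\mathbb Z/24\{\nu\}\) is immediate from \eqref{eq:Q-terminal-contraction}. For \(E_1\), the sequence \eqref{eq:Q-first-contracted-extension} splits. The class \(\beta\) is an order-two lift of the generator of \(\mathbf{GW}^3_0=\mathbb Z/2\), because \(2\beta=h\eta\eta_{\mathrm{top}}=0\) using \(h\eta=0\). The subgroup is \([u]\gamma\), giving \eqref{eq:first-stem-E1-splitting}.

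\emph{Step 2: the group \(E_0\) and its gluing.} The group \(E_0\) is generated by \(\alpha\), \([u]\beta\) and \([u][w]\gamma\), mapping onto \eqref{eq:GW10-field-explicit} as stated. To obtain \eqref{eq:first-stem-factor-set}, I would use the Milnor--Witt identity \([uv]=[u]+[v]+\eta[u][v]\) and then evaluate \(\eta\beta=\eta^2\eta_{\mathrm{top}}=12\gamma\) in this weight. The relation \(\eta\gamma=0\) kills all higher corrections. The action formulas \eqref{eq:first-stem-line-bundle-action} follow from \(\langle a\rangle=1+\eta[a]\) in the same way.

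\emph{Step 3: residues.} For the residues, Morel's residue on \(\mathbf K^{MW}_*\)-modules is \(\mathbf K^{MW}\)-semilinear, and it kills classes pulled back from the base field. This gives \eqref{eq:first-stem-unramified-residues}. I would write \([u]=[\pi^m a]\) and expand with the Step 2 formulas, using \(\partial^\pi_v([\pi]x)=x\) and \(\partial^\pi_v([a]x)=0\). Since \([\pi^m]=m_\epsilon[\pi]\), with \(m_\epsilon\) the hyperbolic integer, this yields \eqref{eq:first-stem-K1-residue} and \eqref{eq:first-stem-K2-residue}, because \(\gamma\) is \(\eta\)-torsion so \(m_\epsilon\) acts as \(m\). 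For \(\beta\), the element \(m_\epsilon\) acts by \(m\) for odd \(m\) up to a \(\langle -1\rangle\)-twist and by \(0\) for even \(m\) since \(h\beta=0\). The cross term \([\pi][a]\) contributes, via \(\eta[\pi][a]\beta\) and \(\partial([\pi]\,\cdot)\), the summand \(12[\bar a]\gamma\). This produces \eqref{eq:first-stem-hermitian-residue}.

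\emph{Step 4: transfers.} For transfers I would use the projection formula, together with the fact that Milnor--Witt transfers on \(\eta\)-torsion classes reduce to Milnor norms. This gives \eqref{eq:first-stem-milnor-transfers}. For \(\beta\), \(T(\beta_L)=\mathrm{Tr}^{GW}(\langle1\rangle)\beta_K=q_{L/K}\beta_K\). I would then write \(q_{L/K}=d+\eta[\Delta]\)-type in \(GW\) modulo \(I^2\), noting that \(I^2\) acts trivially since \(\eta^2\beta\) lands in \(\eta\gamma=0\). The formula \(\langle a\rangle\beta\) then gives \eqref{eq:first-stem-hermitian-transfer}.

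Independence of choices follows because a change of uniformizer acts through \(\langle a\rangle\), which is the twisting in the Rost--Schmid complex.

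The main obstacle I expect is the sign and normalization bookkeeping in Step 3. Specifically, one has to verify that \(\eta^2\eta_{\mathrm{top}}=12\nu\) holds exactly, not merely up to a unit of \(\mathbb Z/24\), in the chosen normalization of \(\gamma\). I would fix this by checking it under complex and real realization, where \(\eta_{\mathrm{top}}^3=12\nu_{\mathrm{top}}\) is classical.
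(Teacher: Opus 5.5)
Your proposal is correct and follows essentially the paper's route: it uses the RSO first-stem module structure with \(\eta\beta=12\gamma\) and \(\eta\gamma=0\), the Milnor--Witt identities for the gluing, the residue \(\partial_v^\pi([u]x)=m_\epsilon\langle\bar a\rangle\bar x\) together with \(h\beta=0\), and the projection formula \(T_{L/K}(\beta_L)=q_{L/K}\beta_K\) (your reduction of \(q_{L/K}\) modulo \(\hat I^2\) is equivalent to the paper's diagonalization). The differences are small. The paper simply cites RSO19 Remark~5.8 for \(\eta^2\eta_{\mathrm{top}}=12\nu\); your complex-realization check also works, and since \(2\eta^2\eta_{\mathrm{top}}=0\), only nonvanishing is at stake, so the ``up to a unit'' worry is moot. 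Two of your steps, the identity \(2\beta=h\beta\) and the removal of the \(\langle-1\rangle\)-twist for negative odd \(m\), rest on \(12[-1]\gamma=0\), and you should state this explicitly; alternatively, get \(2\beta=0\) directly from \(2\eta_{\mathrm{top}}=0\).
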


\begin{proof}
The \(\mathbb P^1\)-stabilization theorem
\cite[Theorem~7.2.1]{ABH} identifies the three indicated deep contractions
with the weights \(0,1,2\) of the stable first stem.  The first-stem theorem
of R\"ondigs--Spitzweck--\O stv\ae r then gives the
short exact sequences in
Proposition~\ref{prop:Q-contracted-Gersten-tail}, identifies their kernel as
the graded Milnor \(K\)-theory module generated by \(\nu\), and identifies
the last two filtration pieces of the hermitian \(K\)-theory image as the
graded Milnor--Witt module generated by \(\eta_{\mathrm{top}}\); see
\cite[Theorem~1.1 and Remark~5.8]{RSO19}.  In weights \(0,1,2\), this says
that the kernel generators are, respectively,
\([u][w]\gamma\), \([u]\gamma\), and \(\gamma\), while the hermitian
quotients are generated by \(\alpha,[u]\beta\) in weight \(0\) and by
\(\beta\) in weight \(1\).  Since \(\eta_{\mathrm{top}}\) is the image of
the classical Hopf map, \(2\alpha=2\beta=0\).  Moreover,
\cite[Remark~5.8]{RSO19} gives
\[
\eta\beta=\eta^2\eta_{\mathrm{top}}=12\gamma.
\]
The vanishing \(\eta\gamma=0\) follows from
\(\pi_{4,3}\mathds1=0\), which is the \(i=3\) case of the same
first-stem theorem; equivalently it is the kernel-annihilation statement
of \cite[Lemma~5.4]{RSO19}.  Together with the classical spinor-norm and
determinant description of
stable orthogonal \(K_1\), \cite[Remark~5.8]{RSO19} permits the stated
normalization of the quotient coordinates and gives the splitting in
\eqref{eq:first-stem-E1-splitting}.

The Milnor--Witt identities
\[
[uv]=[u]+[v]+\eta[u][v],
\qquad
\langle a\rangle=1+\eta[a]
\]
now give \eqref{eq:first-stem-factor-set} and
\eqref{eq:first-stem-line-bundle-action}.  Notice that
\eqref{eq:first-stem-factor-set} is precisely the factor set obstructing
the evident lifts of the spinor-norm factor from being additive.

It remains to calculate residues.  By
\cite[Theorem~4.0.1]{Feld21}, homotopy modules carry their canonical
Milnor--Witt cycle-module structure.  The product and projection formulas
used below are the rules of \cite[Definition~3.1 and the discussion
following it]{Feld20}.  In particular, if \(x\) is a
Milnor--Witt symbol and \(z\) is unramified, then, in the coordinates fixed
by \(\pi\),
\[
\partial_v^\pi(xz)=\partial_v^\pi(x)\,\bar z.
\]
The three universal Hopf classes are unramified for geometric valuations.
For \(u=\pi^m a\), the Milnor--Witt residue formula is
\[
\partial_v^\pi([u])=m_\epsilon\langle\bar a\rangle.
\]
Because \(\eta\gamma=0\), the Milnor--Witt action on the module generated
by \(\gamma\) factors through Milnor \(K\)-theory.  This proves
\eqref{eq:first-stem-K1-residue} and
\eqref{eq:first-stem-K2-residue}.

Finally, \(h\beta=0\), since \(h\eta=0\) in Milnor--Witt \(K\)-theory and
\(\beta=\eta\alpha\).  Hence \(m_\epsilon\beta\) is zero for \(m\) even
and is \(\beta\) for \(m\) odd.  The last identity in
\eqref{eq:first-stem-line-bundle-action} then yields
\eqref{eq:first-stem-hermitian-residue}; the same computation for negative
\(m\) uses \(m_\epsilon=\epsilon(-m)_\epsilon\) and
\(12[-1]\gamma=0\).

For the transfer formulas, the projection formula for Milnor--Witt cycle
modules gives
\[
T_{L/K}(\beta_L)=q_{L/K}\beta_K.
\]
Diagonalizing \(q_{L/K}\) and applying
\eqref{eq:first-stem-line-bundle-action} yields
\eqref{eq:first-stem-hermitian-transfer}.  This expression is independent
of the representative of \(\Delta_{L/K}\): replacing it by
\(b^2\Delta_{L/K}\) changes the correction by
\(24[b]\gamma=0\).  On the module generated by
\(\gamma\), the action factors through rank, and the kernel inclusion is a
morphism of cycle modules; this gives
\eqref{eq:first-stem-milnor-transfers}.  The coordinate-change assertion is
one of the Milnor--Witt residue axioms, and completes the proof.
\end{proof}

\begin{corollary}\label{cor:Q-two-primary-normal-form}
In the \(2\)-primary summand, write \(\gamma_2\) for the image of
\(\gamma\) in the terminal group \(\mathbb Z/8\).  The residue on a
spinor-norm lift is valuation parity together with the correction
\[
\partial_v^\pi([\pi^{2r+1}a]\beta)
=\beta+4[\bar a]\gamma_2.
\]
Likewise, the only non-additivity of the evident spinor-norm lifts is
\[
[uv]\beta-[u]\beta-[v]\beta
=4[u][v]\gamma_2.
\]
For a finite separable extension \(L/K\), the corresponding transfer
correction is
\[
T_{L/K}(\beta_L)-d\beta_K
=4[\Delta_{L/K}]\gamma_{2,K}.
\]
Thus all hermitian gluing lies in the image of multiplication by \(4\) on
the Milnor kernels modulo \(8\).  After pushing out those kernels along
\(\mathbf K^M_*/8\to\mathbf K^M_*/4\), all displayed corrections vanish.  In
particular, the previously implicit coefficient differential is reduced to
a single order-two gluing layer between the ordinary Milnor and
valuation-parity complexes.
\end{corollary}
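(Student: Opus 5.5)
The statement to prove is Corollary~\ref{cor:Q-two-primary-normal-form}. The plan is to reduce all formulas of Proposition~\ref{prop:Q-explicit-first-stem-residues} modulo the odd part. The key observation is that $24=8\cdot 3$, so the Chinese remainder theorem splits $\mathbb Z/24\{\gamma\}$ as $\mathbb Z/8\oplus\mathbb Z/3$. I write $\gamma_2$ for the component of $\gamma$ in the first summand.

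The \(2\)-primary summand of each group $E_i(F)$ is obtained by applying the idempotent projecting onto $\mathbb Z/8$. This idempotent is a natural endomorphism of the coefficient sheaves. It therefore commutes with residues, with corestrictions, and with the Milnor--Witt action, and so it preserves the formulas of that proposition term by term. Since $2\beta=0$ and $2\alpha=0$, the classes $\alpha$ and $\beta$ already lie in the \(2\)-primary summand. The only computation needed is the image of $12\gamma$. This image is $12\gamma_2=4\gamma_2$ in $\mathbb Z/8$, because $12\equiv4\pmod 8$; its \(3\)-primary component is $12\equiv0\pmod 3$.

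Next I substitute this into the residue formula \eqref{eq:first-stem-hermitian-residue} with $m=2r+1$, which gives $\beta+4[\bar a]\gamma_2$. For even $m$ the residue is $0$, which accounts for the valuation-parity part. In the same way, \eqref{eq:first-stem-factor-set} gives the non-additivity $4[u][v]\gamma_2$, and \eqref{eq:first-stem-hermitian-transfer} gives the transfer correction $4[\Delta_{L/K}]\gamma_{2,K}$.

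For the final assertions, each correction is visibly $4$ times a Milnor class in $\mathbf K^M_*/8\{\gamma_2\}$. Pushing out along $\mathbf K^M_*/8\to\mathbf K^M_*/4$ kills $4x$, so all three corrections vanish. The coefficient differential on the pushed-out complex is then the direct sum of the Milnor-mod-$4$ differential and the valuation-parity differential on $\beta$. Hence the original \(2\)-primary differential differs from this split differential only through the order-two layer $4\mathbf K^M_*/8\cong\mathbf K^M_*/2$. This is exactly the single gluing layer asserted in the statement.

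The main point to check is that the \(2\)-primary projection is compatible with the twisted Rost--Schmid residues and transfers, including the $\langle a\rangle$-action of \eqref{eq:first-stem-line-bundle-action}, and is not just a projection of groups. I would argue this by observing that the splitting is induced by multiplication by an integer idempotent ($9$ on $\mathbb Z/24$), and every structure map in a Milnor--Witt cycle module is additive.
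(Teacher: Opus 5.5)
Your proposal is correct and is essentially the paper's own proof. You project the formulas of Proposition~\ref{prop:Q-explicit-first-stem-residues} onto the \(2\)-primary summand, which is compatible with residues, transfers and the \(\langle a\rangle\)-action because it is multiplication by an integer idempotent, and you use \(12\gamma\mapsto 4\gamma_2\) under \(\mathbb Z/24\cong\mathbb Z/8\oplus\mathbb Z/3\).

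One inessential slip: \(4\mathbf K^M_*/8\) is in general only a quotient of \(\mathbf K^M_*/2\), not isomorphic to it. For example, \(\{-1\}\neq0\) in \(K^M_1(\mathbb Q)/2\), while \(4\{-1\}=0\). So drop that identification; the ``order-two'' claim only needs \(2\cdot 4x=0\).
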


\begin{proof}
Project the formulas of
Proposition~\ref{prop:Q-explicit-first-stem-residues} to their \(2\)-primary
summands.  Under \(\mathbb Z/24\cong\mathbb Z/8\oplus\mathbb Z/3\), the
class \(12\gamma\) has \(2\)-primary coordinate \(4\gamma_2\).  The final
assertions follow immediately.
\end{proof}

\begin{proposition}\label{prop:Q-two-primary-Steenrod-filtration}
Let \(k\) be a field of characteristic \(0\), let \(n\geq4\), let \(X\)
be a smooth pure \((n+2)\)-dimensional \(k\)-scheme, and let \(L\) be a
line bundle on \(X\).  Put
\[
\mathcal B_n:=\operatorname{im}\!\left(
 \mathcal Q_n\longrightarrow\mathbf{GW}_{n+1}^{n}
\right)
\]
and write
\(\operatorname{Ch}^i(X):=CH^i(X)/2\).  Then there is a natural
isomorphism
\begin{equation}\label{eq:B-cohomology-twisted-Sq2}
H^{n+1}_{\mathrm{Nis}}(X,\mathcal B_n(L))
\cong
\operatorname{coker}\!\left(
 \operatorname{Ch}^{n}(X)
 \xrightarrow{\ Sq_L^2\ }
 \operatorname{Ch}^{n+1}(X)
\right),
\end{equation}
where
\[
Sq_L^2(z):=Sq^2(z)+c_1(L)z.
\]
Moreover, the two-primary part of the secondary coefficient group occurs
in the natural exact sequence
\begin{equation}\label{eq:Q-two-primary-Steenrod-exact-sequence}
\begin{aligned}
H^{n+1}_{\mathrm{Nis}}(X,\mathbf K^M_{n+2}/8)
&\longrightarrow
H^{n+1}_{\mathrm{Nis}}(X,\mathcal Q_n(L))\{2\}\\
&\longrightarrow
\operatorname{coker}(Sq_L^2)
\longrightarrow
H^{n+2}_{\mathrm{Nis}}(X,\mathbf K^M_{n+2}/8).
\end{aligned}
\end{equation}
In particular, if \(Sq_L^2\) is surjective and
\[
H^{n+1}_{\mathrm{Nis}}(X,\mathbf K^M_{n+2}/8)=0,
\]
then
\[
H^{n+1}_{\mathrm{Nis}}(X,\mathcal Q_n(L))\{2\}=0.
\]
\end{proposition}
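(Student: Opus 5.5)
The proof splits into two parts. First, identify the Nisnevich cohomology of the hermitian quotient \(\mathcal B_n\) in degree \(n+1\) as a cokernel of \(Sq^2_L\). Second, feed this into the long exact sequence of the \(2\)-primary part of the extension \(0\to\mathbf K^M_{n+2}/24\to\mathcal Q_n\to\mathcal B_n\to0\) supplied by \eqref{eq:ABH-exact-n}.

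Since \(X\) has dimension \(n+2\), only the last three Rost--Schmid terms matter, and they are computed by the contractions \((\mathcal B_n)_{-n}\), \((\mathcal B_n)_{-(n+1)}\), \((\mathcal B_n)_{-(n+2)}\). By Proposition~\ref{prop:Q-contracted-Gersten-tail} these are \(\mathbf{GW}^0_1\), \(\mathbf{GW}^3_0\) and \(0\). So \(H^{n+1}(X,\mathcal B_n(L))\) is the cokernel of
\[
\bigoplus_{x\in X^{(n)}}\mathbf{GW}^0_1(k(x);L_x)\longrightarrow\bigoplus_{x\in X^{(n+1)}}\mathbb Z/2 .
\]
The target is the cycle group \(Z^{n+1}(X)/2\). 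Rather than compute this differential by hand, I would recognize \((\mathcal B_n)_{-n}\) near the bottom as the image of the hermitian term, i.e.\ the \(\eta_{\mathrm{top}}\)-module generated by \(\alpha\) and \([u]\beta\) in Proposition~\ref{prop:Q-explicit-first-stem-residues}.

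Use the residue formulas of that proposition with the \(\gamma\)-terms dropped (they lie in the kernel, not in \(\mathcal B_n\)):
\[
\partial(\alpha)=0,\qquad \partial([u]\beta)=(v(u)\bmod 2)\,\beta .
\]
Thus the image of the \([u]\beta\) summands is exactly the divisors mod \(2\) of rational functions on codimension-\(n\) points, i.e.\ rational equivalence mod \(2\). The \(\alpha\) summands have zero residue, but the twist enters through \(\langle a\rangle\alpha=\alpha+[a]\beta\): the \(\alpha\)-coordinate is not globally well defined, and the obstruction is measured by the pair \((\mathrm{Ch}^n,\mathrm{Ch}^{n+1})\) of mod-\(2\) Chow groups. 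The cleaner route is to identify \(H^{n+1}(X,\mathcal B_n(L))\) with the degree-\((n+1)\) cohomology of the known sheaf-theoretic model. By Bott periodicity and the Wood sequence, \(\mathcal B_n\) in this range is the \(\eta\)-image piece of \(\mathbf{GW}\), whose low-weight Gersten complex is Totaro's/Asok--Fasel's complex computing \(\mathbf I^j\)-cohomology. There, the differential from \(\mathbf I^{j}/\mathbf I^{j+1}=\mathbf K^M/2\) to the next layer is \(Sq^2_L\), as in \cite{AF16Euler} and Totaro's identification of the Bockstein-type differential with \(Sq^2+c_1(L)\). Concretely, I would show that the extension \(0\to\mathbf K^M_1/2\to\mathbf{GW}^0_1\to\mathbb Z/2\to0\) (from \eqref{eq:GW10-field-explicit}) gives a filtration of the tail complex whose associated spectral sequence has \(E_1\)-differential \(Sq^2_L:\mathrm{Ch}^n\to\mathrm{Ch}^{n+1}\). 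Computing that connecting map is the main obstacle. I would try to reduce it to the known case \(\mathbf I^{j}\to\mathbf I^{j}/\mathbf I^{j+1}\) via the comparison \(\mathbf{GW}^3_0\simeq\mathbf W/\mathbf I\) and \(\mathbf{GW}^0_1\to\mathbf I/\mathbf I^2\).

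With \eqref{eq:B-cohomology-twisted-Sq2} in hand, take the \(2\)-primary part of the long exact cohomology sequence of
\[
0\to\mathbf K^M_{n+2}/24\to\mathcal Q_n(L)\to\mathcal B_n(L)\to0 .
\]
The kernel's \(2\)-primary summand is \(\mathbf K^M_{n+2}/8\), and its twist is trivial because \(\langle a\rangle\) acts through rank on the \(\gamma\)-module. Localization at \(2\) is exact, and \(\mathcal B_n\) is \(2\)-torsion, so this yields \eqref{eq:Q-two-primary-Steenrod-exact-sequence}. The final vanishing statement is then immediate from exactness: the outer left term vanishes by hypothesis, and surjectivity of \(Sq^2_L\) kills the cokernel term.
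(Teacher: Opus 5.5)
Your architecture matches the paper's. Only the codimension $n$, $n+1$, $n+2$ Rost--Schmid terms matter, and \eqref{eq:Q-two-primary-Steenrod-exact-sequence} is the $2$-primary part of the long exact sequence for $0\to\mathbf K^M_{n+2}/24\to\mathcal Q_n\to\mathcal B_n\to0$. That second half is fine. What is really needed there is only that $H^{n+1}_{\mathrm{Nis}}(X,\mathcal B_n(L))$ is killed by $2$, and that every group in the sequence is $\{2,3\}$-primary torsion, so that passing to $2$-primary parts is exact.

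The gap is the isomorphism \eqref{eq:B-cohomology-twisted-Sq2} itself, which you never establish. Your residue formulas for the $[u]\beta$ summands only show that $H^{n+1}_{\mathrm{Nis}}(X,\mathcal B_n(L))$ is a quotient of $\operatorname{Ch}^{n+1}(X)$. The remaining relations come from the twisted determinant classes $\alpha_x$, and their boundaries are exactly where $Sq_L^2$ lives. For $Z=\overline{\{x\}}$ you would need $\partial(\alpha_x)=Sq_L^2([Z])$ modulo rational equivalence. That is a Wu/Totaro-type computation depending on the normal and twisting line bundles along $Z$. It is the step you explicitly leave as ``the main obstacle,'' and the appeals to ``the $\eta$-image piece of $\mathbf{GW}$'' and to Totaro's complex are too vague to supply it.

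The missing idea is a comparison rather than a computation. The ABH sequence says that $(\mathcal Q_n)_{-j}\to(\mathbf{GW}^n_{n+1})_{-j}$ is surjective for $j\geq n-3$. By exactness of contraction, the inclusion $\mathcal B_n\hookrightarrow\mathbf{GW}^n_{n+1}$ is therefore an isomorphism after $j\geq n-3$ contractions. Indeed, the sheaves you found, $\mathbf{GW}^0_1$, $\mathbf{GW}^3_0$ and $0$, are precisely the $n$-th, $(n+1)$-st and $(n+2)$-nd contractions of $\mathbf{GW}^n_{n+1}$.

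Rost--Schmid complexes are functorial for morphisms of strictly $\A^1$-invariant sheaves, compatibly with residues, transfers and the determinant twist. So this gives $H^{n+1}_{\mathrm{Nis}}(X,\mathcal B_n(L))\cong H^{n+1}_{\mathrm{Nis}}(X,\mathbf{GW}^n_{n+1}(L))$. The latter is identified with $\operatorname{coker}(Sq_L^2)$ by \cite[Theorem~3.7.1]{AF15b}. That is the paper's proof. Without this citation you would have to carry out the twisted boundary computation for $\alpha_x$ in full.
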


\begin{proof}
The stable comparison of \cite[Theorem~7.2.1]{ABH} gives an exact sequence
\[
0\longrightarrow\mathbf K^M_{n+2}/24
\longrightarrow\mathcal Q_n
\longrightarrow\mathcal B_n\longrightarrow0.
\]
It also says that the morphism from the \(j\)-fold contraction of
\(\mathcal Q_n\) to the corresponding contraction of
\(\mathbf{GW}_{n+1}^{n}\) is surjective for \(j\geq n-3\).  Contraction is
exact on strictly \(\mathbb A^1\)-invariant sheaves.  Consequently
\[
(\mathcal B_n)_{-j}
\xrightarrow{\ \cong\ }
(\mathbf{GW}_{n+1}^{n})_{-j}
\qquad (j\geq n-3).
\]

Only the codimension-\(n\), \(n+1\), and \(n+2\) terms of a Rost--Schmid
complex enter its cohomology in degree \(n+1\).  The preceding
identifications, including the actions of the determinant line bundle and the
residue maps,
therefore give
\[
H^{n+1}_{\mathrm{Nis}}(X,\mathcal B_n(L))
\cong
H^{n+1}_{\mathrm{Nis}}
 (X,\mathbf{GW}_{n+1}^{n}(L)).
\]
The computation of \cite[Theorem~3.7.1]{AF15b} identifies the group on the
right with the
cokernel of the twisted Steenrod square in
\eqref{eq:B-cohomology-twisted-Sq2}.

Apply cohomology to the displayed short exact sequence.  The
\(\mathbf K^M_{n+2}/24\)-kernel has trivial line-bundle action: equivalently,
the first-stem relation \(\eta\nu=0\) makes the Grothendieck--Witt action on
this summand factor through rank.  Its canonical two-primary summand is
\(\mathbf K^M_{n+2}/8\), while the group in
\eqref{eq:B-cohomology-twisted-Sq2} is annihilated by \(2\).  Taking the
two-primary part of the resulting long exact sequence gives
\eqref{eq:Q-two-primary-Steenrod-exact-sequence}.  The final assertion
follows from exactness.
\end{proof}

\begin{definition}\label{def:linear-secondary-action}
Let \(E\) be a rank-\(n\) vector bundle on \(X\), put \(L=\det E\), and
suppose that its Euler obstruction is zero.  Write \(\Lambda(E)\) for the
set of homotopy classes of lifts to the primary Moore--Postnikov stage.  The
cohomology group
\[
\mathcal A_n^{X,L}:=
H^{n-1}_{\mathrm{Nis}}(X,\mathbf K_n^{MW}(L))
\]
acts on \(\Lambda(E)\), but this action need not be free.  We say that
\((X,E)\) has the \emph{linear secondary-action property} if there is a
homomorphism
\[
\Delta_E:\mathcal A_n^{X,L}
\longrightarrow H^{n+1}_{\mathrm{Nis}}(X,\mathcal Q_n(L))
\]
such that, whenever \(\lambda'=a\cdot\lambda\),
\begin{equation}\label{eq:linear-secondary-action-property}
o_2(E,\lambda')-o_2(E,\lambda)=\Delta_E(a).
\end{equation}
Thus the definition includes independence of the chosen representative
\(a\) and of the base lift \(\lambda\).  These properties are not a formal
consequence of the existence of the Moore--Postnikov tower.
\end{definition}

\begin{theorem}\label{thm:Q-two-primary-action-normal-form}
Retain the hypotheses and notation of
Proposition~\ref{prop:Q-two-primary-Steenrod-filtration}.  Let \(E\) be a
rank-\(n\) vector bundle on \(X\), put \(L=\det E\), suppose that its Euler
obstruction is zero, and let \(\Delta_E\) be the change homomorphism
supplied by Theorem~\ref{thm:parameterized-two-stage-comparison}.
Put
\[
\mathcal A_n^{X,L}:=
H^{n-1}_{\mathrm{Nis}}(X,\mathbf K_n^{MW}(L)).
\]
Let
\[
\Delta_{E,2}:\mathcal A_n^{X,L}
\longrightarrow
H^{n+1}_{\mathrm{Nis}}(X,\mathcal Q_n(L))\{2\}
\]
be the two-primary component of \(\Delta_E\).  Write
\[
\begin{aligned}
\mathcal C_{n,2}^{X,L}
&:=\operatorname{coker}\!\left(
 \operatorname{Ch}^{n}(X)
 \xrightarrow{\ Sq_L^2\ }
 \operatorname{Ch}^{n+1}(X)
 \right),\\
\mathcal R_{n,2}^{X,L}
&:=\operatorname{coker}\!\left(
 H^n_{\mathrm{Nis}}(X,\mathcal B_n(L))
 \xrightarrow{\ \partial_{L,2}\ }
 H^{n+1}_{\mathrm{Nis}}(X,\mathbf K^M_{n+2}/8)
 \right),
\end{aligned}
\]
where \(\partial_{L,2}\) is the two-primary component of the connecting
morphism for the coefficient extension.  Then the exact sequence
\eqref{eq:Q-two-primary-Steenrod-exact-sequence} induces a natural
isomorphism
\begin{equation}\label{eq:two-primary-residual-identification}
\iota_L:\mathcal R_{n,2}^{X,L}
\xrightarrow{\ \cong\ }
\ker\!\left(
 H^{n+1}_{\mathrm{Nis}}(X,\mathcal Q_n(L))\{2\}
 \xrightarrow{\ p_L\ }
 \mathcal C_{n,2}^{X,L}
\right).
\end{equation}
Consequently the two-primary action has two successive, natural pieces
\[
\Delta_{E,2}^{\mathrm h}:=p_L\Delta_{E,2}:
\mathcal A_n^{X,L}\longrightarrow\mathcal C_{n,2}^{X,L}
\]
and
\begin{equation}\label{eq:two-primary-residual-action}
\Delta_{E,2}^{\nu}:=
\iota_L^{-1}\Delta_{E,2}\big|_{\ker(\Delta_{E,2}^{\mathrm h})}:
\ker(\Delta_{E,2}^{\mathrm h})
\longrightarrow\mathcal R_{n,2}^{X,L}.
\end{equation}

For an Euler nullhomotopy \(\lambda\), denote the two-primary secondary obstruction by
\(o_{2,2}(E,\lambda)\).  The class
\begin{equation}\label{eq:canonical-hermitian-secondary-class}
\mathfrak h_2(E):=
\big[p_L(o_{2,2}(E,\lambda))\big]
\in\operatorname{coker}(\Delta_{E,2}^{\mathrm h})
\end{equation}
is independent of \(\lambda\).  It vanishes if and only if \(\lambda\)
can be chosen so that the hermitian projection of the secondary obstruction
is zero.  If \(\mathfrak h_2(E)=0\), choose such a \(\lambda\).  Then
\begin{equation}\label{eq:canonical-residual-secondary-class}
\mathfrak n_2(E):=
\big[\iota_L^{-1}(o_{2,2}(E,\lambda))\big]
\in\operatorname{coker}(\Delta_{E,2}^{\nu})
\end{equation}
is independent of that choice.  It vanishes if and only if an Euler
nullhomotopy can be chosen for which the entire two-primary secondary
obstruction is zero.
\end{theorem}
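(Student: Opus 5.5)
The argument is pure homological algebra on the exact sequence \eqref{eq:Q-two-primary-Steenrod-exact-sequence}, combined with the linearity of \(\Delta_E\) supplied by Theorem~\ref{thm:parameterized-two-stage-comparison}. We take that theorem to provide the linear secondary-action property of Definition~\ref{def:linear-secondary-action}. In particular, \(\Delta_E\) is a homomorphism, and \(o_2(E,a\cdot\lambda)-o_2(E,\lambda)=\Delta_E(a)\) holds independently of representatives. Passing to two-primary components gives the same formula for \(o_{2,2}\) and \(\Delta_{E,2}\).

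\textbf{Step 1: the isomorphism \(\iota_L\).} Take the long exact cohomology sequence of the two-primary part of \(0\to\mathbf K^M_{n+2}/24\to\mathcal Q_n\to\mathcal B_n\to0\), twisted by \(L\). As in the proof of Proposition~\ref{prop:Q-two-primary-Steenrod-filtration}, the kernel carries the trivial action and its two-primary summand is \(\mathbf K^M_{n+2}/8\). The three-primary summand is split off canonically because all groups are torsion of bounded exponent. Exactness at \(H^{n+1}(X,\mathbf K^M_{n+2}/8)\) gives
\[
\ker\bigl(H^{n+1}(X,\mathbf K^M_{n+2}/8)\to H^{n+1}(X,\mathcal Q_n(L))\{2\}\bigr)=\operatorname{im}(\partial_{L,2}).
\]
Exactness at the middle term identifies the image of \(H^{n+1}(X,\mathbf K^M_{n+2}/8)\) with \(\ker(p_L)\). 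Here \(p_L\) is the composite \(H^{n+1}(\mathcal Q_n(L))\{2\}\to H^{n+1}(\mathcal B_n(L))\cong\mathcal C_{n,2}^{X,L}\), using \eqref{eq:B-cohomology-twisted-Sq2}. The induced map \(\iota_L\) on \(\mathcal R_{n,2}^{X,L}\) is therefore an isomorphism. Naturality follows from naturality of connecting maps and of the identification in \eqref{eq:B-cohomology-twisted-Sq2}.

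\textbf{Step 2: the two action pieces.} The map \(\Delta^{\mathrm h}_{E,2}=p_L\Delta_{E,2}\) is a homomorphism. If \(a\in\ker\Delta^{\mathrm h}_{E,2}\), then \(\Delta_{E,2}(a)\in\ker p_L\), so \(\Delta^\nu_{E,2}\) is well defined through \(\iota_L^{-1}\) and is a homomorphism.

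\textbf{Step 3: the invariants \(\mathfrak h_2(E)\) and \(\mathfrak n_2(E)\).} Any two Euler nullhomotopies are related by the \(\mathcal A_n^{X,L}\)-action, since the lifts form a torsor-like set over \(\mathcal A_n^{X,L}\) (transitivity is part of the Moore--Postnikov action). Then
\[
p_L(o_{2,2}(E,a\cdot\lambda))=p_L(o_{2,2}(E,\lambda))+\Delta^{\mathrm h}_{E,2}(a),
\]
so the class \(\mathfrak h_2(E)\) in the cokernel of \(\Delta^{\mathrm h}_{E,2}\) is independent of \(\lambda\). The same formula shows that it vanishes if and only if some \(a\cdot\lambda\) has zero hermitian projection.

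Now suppose \(\mathfrak h_2(E)=0\), and let \(\lambda\) and \(\lambda'=a\cdot\lambda\) both have zero hermitian projection. Then \(\Delta^{\mathrm h}_{E,2}(a)=0\), so \(a\in\ker\Delta^{\mathrm h}_{E,2}\). Applying \(\iota_L^{-1}\) to the difference formula gives
\[
\iota_L^{-1}o_{2,2}(\lambda')=\iota_L^{-1}o_{2,2}(\lambda)+\Delta^\nu_{E,2}(a),
\]
which proves that \(\mathfrak n_2(E)\) is well defined. For the vanishing criterion, \(o_{2,2}\) is zero exactly when its hermitian projection is zero and \(\iota_L^{-1}o_{2,2}=0\). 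Any lift with zero total obstruction is among the admissible choices, and starting from any admissible \(\lambda\), the lifts with zero hermitian projection are precisely those \(a\cdot\lambda\) with \(a\in\ker\Delta^{\mathrm h}_{E,2}\). Hence \(\mathfrak n_2(E)=0\) if and only if \(\iota_L^{-1}o_{2,2}(\lambda)\in\operatorname{im}\Delta^\nu_{E,2}\), which is the stated criterion.

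The main obstacle is not the algebra but the input from Theorem~\ref{thm:parameterized-two-stage-comparison}. We need \(\Delta_E\) to be a genuine homomorphism, independent of base lift and representative, and the action on lifts of a fixed bundle to be transitive. If transitivity held only on an orbit, \(\mathfrak h_2\) and \(\mathfrak n_2\) would be defined per orbit and the statement would need adjusting. We would check this first against the Moore--Postnikov description of lifts.
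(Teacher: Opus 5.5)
Your proposal is correct and follows the paper's proof essentially step for step. The isomorphism \(\iota_L\) comes from exactness of the two-primary part of the coefficient long exact sequence together with \eqref{eq:B-cohomology-twisted-Sq2}. The statements about \(\mathfrak h_2(E)\) and \(\mathfrak n_2(E)\) follow from the difference formula \(o_{2,2}(E,a\cdot\lambda)-o_{2,2}(E,\lambda)=\Delta_{E,2}(a)\), read first through \(p_L\) and then through \(\iota_L^{-1}\) on \(\ker\Delta^{\mathrm h}_{E,2}\).

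The transitivity you flag is used implicitly in the paper as well. The primary Moore--Postnikov action on lifts is transitive, though possibly not free, as Definition~\ref{def:linear-secondary-action} notes. It is therefore an input shared with the paper rather than a gap in your argument.
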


\begin{proof}
Take the two-primary component of the long exact
cohomology sequence associated with
\[
0\longrightarrow\mathbf K^M_{n+2}/24
\longrightarrow\mathcal Q_n
\longrightarrow\mathcal B_n\longrightarrow0.
\]
Its relevant part is
\[
\begin{aligned}
H^n_{\mathrm{Nis}}(X,\mathcal B_n(L))
&\xrightarrow{\partial_{L,2}}
H^{n+1}_{\mathrm{Nis}}(X,\mathbf K^M_{n+2}/8)\\
&\longrightarrow
H^{n+1}_{\mathrm{Nis}}(X,\mathcal Q_n(L))\{2\}
\xrightarrow{p_L}
H^{n+1}_{\mathrm{Nis}}(X,\mathcal B_n(L)).
\end{aligned}
\]
After using \eqref{eq:B-cohomology-twisted-Sq2} on the last group,
exactness gives \eqref{eq:two-primary-residual-identification}.  If
\(a\in\ker(\Delta_{E,2}^{\mathrm h})\), then
\(\Delta_{E,2}(a)\in\ker(p_L)\), so
\eqref{eq:two-primary-residual-action} is well defined.

Changing \(\lambda\) by \(a\in\mathcal A_n^{X,L}\) changes
\(o_{2,2}(E,\lambda)\) by \(\Delta_{E,2}(a)\), by
\eqref{eq:linear-secondary-action-property}.  Applying \(p_L\)
proves the independence and the vanishing criterion for
\(\mathfrak h_2(E)\).  Once the hermitian projection is zero, two choices
with this property differ by an element of
\(\ker(\Delta_{E,2}^{\mathrm h})\).  Transport through \(\iota_L\) then
shows that their residual classes differ by
\(\Delta_{E,2}^{\nu}\).  This proves the independence and the final
vanishing criterion for \(\mathfrak n_2(E)\).

At cochain level, \(\partial_{L,2}\) is exactly the two-primary component
of the connecting morphism
determined by the gluing correction calculated in
Corollary~\ref{cor:Q-two-primary-normal-form}.  Hence
\(\mathcal R_{n,2}^{X,L}\) records precisely the residual Milnor class
after the already determined coefficient gluing has been divided out.
\end{proof}

\begin{proposition}\label{prop:Q-Gersten-three-primary}
Under the hypotheses of
Proposition~\ref{prop:Q-contracted-Gersten-tail}, let \(X\) be smooth and
pure of dimension \(n+2\), and let \(L\) be a line bundle on \(X\).
The \(3\)-primary direct summand of the last three terms of the
Rost--Schmid complex for \(\mathcal Q_n(L)\) is canonically the Milnor
\(K\)-theory complex
\begin{equation}\label{eq:Q-Gersten-three-primary}
\bigoplus_{x\in X^{(n)}}\mathbf K^M_2(k(x))/3
\longrightarrow
\bigoplus_{x\in X^{(n+1)}}\mathbf K^M_1(k(x))/3
\longrightarrow
\bigoplus_{x\in X^{(n+2)}}\mathbb Z/3.
\end{equation}
Its arrows are the usual residue, or tame-symbol, morphisms.  Consequently,
writing \(\{3\}\) for the \(3\)-primary subgroup, there is a natural
isomorphism
\begin{equation}\label{eq:Q-cohomology-three-primary}
H^{n+1}_{\mathrm{Nis}}(X,\mathcal Q_n(L))\{3\}
\cong
H^{n+1}_{\mathrm{Nis}}
 \left(X,\mathbf K^M_{n+2}/3\right).
\end{equation}
More precisely, there is a natural primary decomposition
\begin{equation}\label{eq:Q-cohomology-primary-splitting}
H^{n+1}_{\mathrm{Nis}}(X,\mathcal Q_n(L))
\cong
H^{n+1}_{\mathrm{Nis}}(X,\mathcal Q_n(L))\{2\}
\oplus
H^{n+1}_{\mathrm{Nis}}
 \left(X,\mathbf K^M_{n+2}/3\right).
\end{equation}
In particular, the line-bundle twist is invisible on the second summand.
\end{proposition}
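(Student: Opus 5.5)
The plan is to split the coefficient sheaves themselves into primary parts, before taking Rost--Schmid complexes. All three contracted sheaves in Proposition~\ref{prop:Q-contracted-Gersten-tail} are torsion of exponent dividing \(24\). This holds on fields for the kernels \(\mathbf K^M_{2-i}/24\), for \(\mathbf{GW}^3_0=\Z/2\), and for \(\mathbf{GW}^0_1(F)\cong\mathbf K^M_1(F)/2\oplus\Z/2\). Hence each \((\mathcal Q_n)_{-(n+i)}\), \(0\leq i\leq2\), is annihilated by \(24\). It therefore splits canonically, by idempotents in \(\Z/24\cong\Z/8\times\Z/3\), into a \(2\)-primary and a \(3\)-primary summand.

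\textbf{Step 1: the \(3\)-primary part of the contracted sheaves.} The hermitian quotients \(\mathbf{GW}^0_1\) and \(\mathbf{GW}^3_0\) are killed by \(2\). So the \(3\)-primary summand of each \((\mathcal Q_n)_{-(n+i)}\) is the \(3\)-primary summand of its Milnor kernel, namely \(\mathbf K^M_{2-i}/3\). Concretely, in the basis of Proposition~\ref{prop:Q-explicit-first-stem-residues} it is spanned by \(\gamma\), \([u]\gamma\) and \([u][w]\gamma\).

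\textbf{Step 2: the restricted differentials.} The Rost--Schmid differential is built from residues and corestrictions. These are morphisms of cycle modules, so they commute with multiplication by the idempotents and preserve the splitting. The whole tail \eqref{eq:Q-Gersten-tail} is therefore the direct sum of its \(2\)- and \(3\)-primary pieces. On the \(3\)-primary piece I read off the differentials from \eqref{eq:first-stem-K1-residue}, \eqref{eq:first-stem-K2-residue} and \eqref{eq:first-stem-milnor-transfers}: they are the Milnor residue and Milnor norm. The hermitian corrections all carry a coefficient \(12\), as in \eqref{eq:first-stem-hermitian-residue} and \eqref{eq:first-stem-hermitian-transfer}. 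The coefficient \(12\) is divisible by \(3\), so these corrections vanish in the \(3\)-primary part.

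\textbf{Step 3: independence of the twist.} By \eqref{eq:first-stem-line-bundle-action} and \(\eta\gamma=0\), \(\langle a\rangle\) acts trivially on the \(\gamma\)-module, so the twist by \(L_x\) does not affect this summand. Together with Step~2, this identifies the \(3\)-primary tail canonically with \eqref{eq:Q-Gersten-three-primary}, with its tame-symbol differentials.

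\textbf{Step 4: the cohomology statements.} Cohomology in degree \(n+1\) uses only these three terms. The complex \eqref{eq:Q-Gersten-three-primary} is the corresponding tail of the Rost--Schmid (Gersten) complex for \(\mathbf K^M_{n+2}/3\), computed after the same contractions. This gives \eqref{eq:Q-cohomology-three-primary}. Cohomology commutes with the direct-sum splitting of complexes, which yields \eqref{eq:Q-cohomology-primary-splitting}.

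The main obstacle I expect is the compatibility behind Step~4. One must check that the \(3\)-primary component of the kernel inclusion \(\mathbf K^M_{n+2}/24\to\mathcal Q_n\) is an isomorphism of sheaves onto the \(3\)-primary summand of \(\mathcal Q_n\), and not merely after contraction. I would get this from the exact sequence \(0\to\mathbf K^M_{n+2}/24\to\mathcal Q_n\to\mathcal B_n\to0\) used in Proposition~\ref{prop:Q-two-primary-Steenrod-filtration}. There I would argue that \(\mathcal B_n\) has no \(3\)-torsion, because in the relevant contracted range it agrees with \(\mathbf{GW}\)-sheaves that are \(2\)-torsion. Then the identification is induced by a map of sheaves, and the comparison of Rost--Schmid complexes is natural.
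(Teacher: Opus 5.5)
Your proposal is correct and follows essentially the paper's argument. You split the bounded-torsion coefficient sheaves into primary parts, use that the hermitian quotients are $2$-torsion so the $3$-primary part is the Milnor kernel modulo $3$, and identify the differentials with Milnor residues; you read these off Proposition~\ref{prop:Q-explicit-first-stem-residues}, while the paper invokes naturality of the Rost--Schmid construction for the kernel inclusion, which is where those formulas come from anyway. Finally, the twist dies because $GW$ acts through rank.

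Two small points. The uncontracted isomorphism you flag as the main obstacle is not needed, and your contracted argument would not prove it anyway. What suffices is the sheaf morphism $\mathbf K^M_{n+2}/24\to\mathcal Q_n$ together with the isomorphisms on $3$-primary parts in the three relevant contractions, which is exactly what your argument gives. Also, your ``hence'' for exponent $24$ only yields exponent dividing $48$ from the extension alone; any bound of the form $2^a3^b$ serves equally well.
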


\begin{proof}
On fields, \(\mathbf{GW}^0_1\) has exponent \(2\).  Indeed,
\cite[Subsection~2.3]{AF17} identifies it with the stable orthogonal
abelianization, and the Cartan--Dieudonn\'e theorem shows that this
abelianization is generated by classes of reflections.  The determinant and
spinor norm give its familiar explicit invariants.  The sheaf
\(\mathbf{GW}^3_0\) is constant with value
\(\mathbb Z/2\).  Therefore the \(3\)-primary subgroups in
\eqref{eq:Q-penultimate-contracted-extension} and
\eqref{eq:Q-first-contracted-extension} are, respectively,
\[
\mathbf K^M_2/3
\qquad\text{and}\qquad
\mathbf K^M_1/3,
\]
while the \(3\)-primary subgroup of
\eqref{eq:Q-terminal-contraction} is \(\mathbf K^M_0/3\).
The same short exact sequences show that all three coefficient sheaves are
annihilated by an integer whose only prime divisors are \(2\) and \(3\).
Their primary decompositions are therefore canonical and functorial, so every
Rost--Schmid residue preserves them.

The inclusion of the Milnor \(K\)-theory kernel in the stabilized
first-stem sequence is a morphism of strictly
\(\mathbb A^1\)-invariant sheaves and commutes with contraction.
Naturality of the Rost--Schmid construction therefore identifies its
codimension boundary maps with the standard Milnor \(K\)-theory residues.
This proves \eqref{eq:Q-Gersten-three-primary}.

Cohomology in degree \(n+1\) depends only on the terms in codimensions
\(n,n+1,n+2\).  Taking the canonical \(3\)-primary direct summands thus
identifies it with the degree-\((n+1)\) cohomology of the Rost--Schmid
complex for \(\mathbf K^M_{n+2}/3\), proving
\eqref{eq:Q-cohomology-three-primary}.  The same functorial primary
decomposition gives \eqref{eq:Q-cohomology-primary-splitting}.  Finally, the action of
\(\mathbf K^{MW}_0=GW\) on Milnor \(K\)-theory modulo \(3\) factors through
the rank morphism \(GW\to\mathbb Z\): the relation
\(\langle a\rangle=1+\eta[a]\) becomes \(\langle a\rangle=1\) after passing
from Milnor--Witt to Milnor \(K\)-theory.  Hence twisting by the determinant
line bundle acts trivially on this summand.
\end{proof}

\begin{lemma}\label{lem:two-stage-action-naturality}
Let \(B\) be a motivic space and let
\(\mathcal T\to\mathcal T'\) be a morphism of two-stage relative
Moore--Postnikov towers over \(B\).  Suppose that the primary and secondary
coefficient local systems are, respectively,
\[
(\mathcal A,\mathcal Q)
\qquad\text{and}\qquad
(\mathcal A',\mathcal Q'),
\]
and write \(u:\mathcal A\to\mathcal A'\) and
\(v:\mathcal Q\to\mathcal Q'\) for the induced morphisms.  For every
\(x:X\to B\), choose a primary lift \(\ell\), and let
\(a\in H^{n-1}_{\mathrm{Nis}}(X,\mathcal A_x)\) act on it.  If \(F\ell\)
is the induced lift in \(\mathcal T'\), then
\begin{equation}\label{eq:based-change-naturality}
v_*\bigl(\kappa_{\mathcal T}(a\cdot\ell)
          -\kappa_{\mathcal T}(\ell)\bigr)
=
\kappa_{\mathcal T'}(u_*a\cdot F\ell)
          -\kappa_{\mathcal T'}(F\ell).
\end{equation}
In particular, if in a given lifting problem these differences are
independent of the base lift and define homomorphisms, then those
homomorphisms form the commutative square obtained from
\eqref{eq:based-change-naturality}.  Without that additional hypothesis,
the lemma makes no assertion of additivity or lift independence.
\end{lemma}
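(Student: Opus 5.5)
The identity \eqref{eq:based-change-naturality} is a naturality statement about the secondary obstruction. I would prove it by reducing to two facts about how a morphism of towers acts on lifts and on obstructions. Write the two towers as $E_2\to E_1\to B$ and $E_2'\to E_1'\to B$. Let $F_1:E_1\to E_1'$ and $F_2:E_2\to E_2'$ be the stage maps of $\mathcal T\to\mathcal T'$ over $B$. The secondary $k$-invariants are $\kappa:E_1\to K(\mathcal Q,n+2)_B$ and $\kappa':E_1'\to K(\mathcal Q',n+2)_B$, twisted Eilenberg--MacLane objects over $B$, or over $B\mathbb G_m$ after the determinant twist. For a lift $\ell:X\to E_1$ of $x$, the obstruction $\kappa_{\mathcal T}(\ell)$ is the class of $\kappa\circ\ell$ in $H^{n+1}_{\mathrm{Nis}}(X,\mathcal Q_x)$.

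The first step is to record what a morphism of towers means. Part of the data is a homotopy $\kappa'\circ F_1\simeq v_*\circ\kappa$ of maps $E_1\to K(\mathcal Q',n+2)_B$ over $B$. This is exactly the condition that $F_1$ lifts to $F_2$ on homotopy fibres; if one prefers to define a morphism of towers only by the stage maps, one instead takes the induced map of homotopy-fibre squares, and this homotopy is the naturality of the classifying $k$-invariant. Precomposing with any lift $\ell$ gives
\[
\kappa_{\mathcal T'}(F\ell)=v_*\kappa_{\mathcal T}(\ell),
\]
where $F\ell:=F_1\circ\ell$. This is the unbased naturality of obstructions.

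The second step concerns the action. The principal fibration $E_1\to E_0$ carries an action map $K(\mathcal A,n)_B\times_B E_1\to E_1$; here $E_0$ is the base stage, or $B$ itself when the primary stage is over $B$. The action of $a\in H^{n-1}_{\mathrm{Nis}}(X,\mathcal A_x)$ on $\ell$, which I would realise as the class of $a$ in degree $n$ after the usual loop shift, is the composite of $(a,\ell)$ with this action map. Since $F_1$ is a map of principal fibrations covering $u$, the action squares commute up to homotopy over $B$. Hence
\[
F(a\cdot\ell)=u_*a\cdot F\ell .
\]
Applying the first step to both $a\cdot\ell$ and $\ell$ and subtracting in the abelian group $H^{n+1}_{\mathrm{Nis}}(X,\mathcal Q'_x)$ gives \eqref{eq:based-change-naturality}, since $v_*$ is a homomorphism.

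The ``in particular'' part is then formal. If $\Delta_{\mathcal T}(a):=\kappa_{\mathcal T}(a\cdot\ell)-\kappa_{\mathcal T}(\ell)$ and its primed analogue are independent of the base lift and are homomorphisms, the displayed identity reads $v_*\Delta_{\mathcal T}=\Delta_{\mathcal T'}u_*$. I would note explicitly that no additivity is used anywhere, matching the caveat in the statement.

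The main obstacle is the first step: making the $k$-invariant compatibility a genuine homotopy over $B$ in the twisted (determinant-equivariant) setting, rather than only an equality of classes on $E_1$. I would handle it by working throughout in the model category of spaces over $B$ (equivalently $\mathbb G_m$-equivariant spaces over $B\mathbb G_m$), and by constructing $\kappa,\kappa'$ as classifying maps of the homotopy fibres $E_2\to E_1$, $E_2'\to E_1'$. The homotopy $\kappa'\circ F_1\simeq v_*\circ\kappa$ then comes from functoriality of the classification of principal fibrations with given fibre, applied to the map of fibres $F_2$, which restricts to $v$ on fibre homotopy sheaves.
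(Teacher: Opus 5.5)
Your proof is correct and is essentially the paper's argument: both deduce \eqref{eq:based-change-naturality} by precomposing with \(\ell\) and with \(a\cdot\ell\) and subtracting, no additivity being used, from two facts about a morphism of two-stage towers, namely that it intertwines the relative \(k\)-invariants via \(v\) and the principal actions on primary lifts via \(u\). One indexing fix: the secondary \(k\)-invariant should take values in \(K(\mathcal Q,n+1)_B\), matching your obstruction group \(H^{n+1}_{\mathrm{Nis}}(X,\mathcal Q_x)\), and the group object acting on \(E_1\) is \(\Omega K(\mathcal A,n)_B\simeq K(\mathcal A,n-1)_B\), so \([X,-]\) gives \(H^{n-1}_{\mathrm{Nis}}(X,\mathcal A_x)\) without any loop shift.
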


\begin{proof}
Relative Postnikov truncation is functorial in the motivic
\(\infty\)-category over \(B\); this is the relative form of the
Moore--Postnikov construction used in
\cite[Section~6.1]{AF15b}.  The cohomology group
\[
[X,K(\mathcal A_x,n-1)]
=H^{n-1}_{\mathrm{Nis}}(X,\mathcal A_x)
\]
acts on primary lifts; on homotopy classes the resulting set of lifts can
be a quotient of this group, as explicitly noted in
\cite[Section~6.1]{AF15b}.  The secondary obstruction is the pullback of
the relative \(k\)-invariant with values in
\(K(\mathcal Q_x,n+1)\).  A morphism of the two-stage towers intertwines
the actions by \(u\)
and the relative \(k\)-invariants by \(v\).  Applying it to the displayed
two differences gives \eqref{eq:based-change-naturality}.
The construction takes place in the category of local systems on \(B\);
hence the same calculation is valid with monodromy and after pullback along
\(x\).
\end{proof}

\begin{lemma}\label{lem:Q-three-primary-coniveau-comparison}
Let \(k\) be a field of characteristic \(0\), let \(n\geq4\), let \(X\)
be a smooth pure \((n+2)\)-dimensional \(k\)-scheme, and let \(L\) be a
line bundle on \(X\).  In the contractions that occur in codimensions
\(n\), \(n+1\), and \(n+2\), the suspension-spectrum unit for
\(\mathbb A^n\setminus0\) identifies the three-primary part of the
non-stable first-stem coefficient complex with the Milnor \(K\)-theory
complex modulo \(3\) in the stable first stem.  Consequently it induces
the identity under the identification
\[
H^{n+1}_{\mathrm{Nis}}(X,\mathcal Q_n(L))\{3\}
\cong
H^{n+1}_{\mathrm{Nis}}(X,\mathbf K^M_{n+2}/3).
\]
This is a coefficient-complex comparison; it does not by itself compare
the relative \(k\)-invariants or the change of secondary obstruction.
\end{lemma}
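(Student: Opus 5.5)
I would argue directly at the level of coefficient sheaves, contraction by contraction. The unit \(\mathbb A^n\setminus0\to QS^{2n-1,n}\) induces, on \(\pi_n^{\A^1}\), a morphism \(\mathcal Q_n\to\boldsymbol\pi^s_{1-n\alpha}(\mathds1)\). By the \(\mathbb P^1\)-stabilization theorem \cite[Theorem~7.2.1]{ABH}, in the range used in Proposition~\ref{prop:Q-contracted-Gersten-tail}, this morphism is compatible with the kernel inclusions \(\mathbf K^M_{n+2}/24\hookrightarrow\mathcal Q_n\) and \(\mathbf K^M_{n+2}/24\hookrightarrow\boldsymbol\pi^s_{1-n\alpha}(\mathds1)\); the latter is the \(\nu\)-kernel of the first stem \cite{RSO19}. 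I would first check that the unit map restricts to the identity on these kernels. Both kernels are generated by the image of \(\nu\) applied to Milnor--Witt symbols, and the unit map is a morphism of \(\mathbf K^{MW}_*\)-modules that sends the unstable \(\nu_n\) to the stable \(\nu\). Hence it is the identity on the generating class. Module linearity then gives the identity on the whole kernel, up to a unit of \(\mathbb Z/24\) coming from the normalization of \(\nu_n\). Restricted to the \(3\)-primary part, that unit acts as \(\pm1\) on \(\mathbf K^M_*/3\).

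Next I would contract \(n\), \(n+1\), and \(n+2\) times, using Lemma~\ref{lem:contraction-exact}. The unit map then gives morphisms between the short exact sequences \eqref{eq:Q-penultimate-contracted-extension}, \eqref{eq:Q-first-contracted-extension}, \eqref{eq:Q-terminal-contraction} and their stable counterparts \(\pi_{i+1,i}\mathds1\) for \(i=0,1,2\). On the quotients the comparison map lands in the \(2\)-torsion sheaves \(\mathbf{GW}^0_1\) and \(\mathbf{GW}^3_0\). So after taking canonical \(3\)-primary summands, exactly as in the proof of Proposition~\ref{prop:Q-Gersten-three-primary}, the contracted unit maps become the kernel maps \(\mathbf K^M_{2-i}/3\to\mathbf K^M_{2-i}/3\), which are \(\pm\)identity. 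Since the unit is a morphism of strictly \(\A^1\)-invariant sheaves, naturality of the Rost--Schmid complex shows that it commutes with residues and transfers. The line-bundle twist is harmless because \(GW\) acts on this summand through rank, as noted in Proposition~\ref{prop:Q-Gersten-three-primary}.

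Finally I would take cohomology in degree \(n+1\). Only the three codimensions \(n,n+1,n+2\) contribute, so the induced map on \(H^{n+1}_{\mathrm{Nis}}(X,-)\{3\}\) is the map induced on the Milnor complex modulo \(3\). Under \eqref{eq:Q-cohomology-three-primary}, both sides are identified through the same kernel inclusion. The main obstacle is removing the sign, or \((\mathbb Z/24)^\times\)-unit, ambiguity so that the map is literally the identity. My first approach is to fix the kernel identification on the stable side by transporting the unstable generator along the unit; the identification in \eqref{eq:Q-cohomology-three-primary} then becomes identity by construction. As a check, I would verify consistency with the weight-\(2\) terminal class \(\gamma=\nu\) of Proposition~\ref{prop:Q-explicit-first-stem-residues}.
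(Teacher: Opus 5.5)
Your proposal is correct and follows essentially the same route as the paper. The paper argues in the same order: the $\mathbb P^1$-stabilization identifications of the three contractions are induced by the unit, the unit commutes with contractions, Rost--Schmid residues and corestrictions, the determinant twist is trivial on the $3$-primary summands, and Proposition~\ref{prop:Q-Gersten-three-primary} reduces everything to the identity of the Milnor complex modulo $3$. Your kernel check is a slightly more explicit form of that same step, and the $(\mathbb Z/24)^\times$ sign issue you flag disappears once both kernel inclusions are taken to be induced by $\nu_n$ and by its stabilization $\nu$, as in \eqref{eq:ABH-exact-n}.
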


\begin{proof}
Let \(\mathcal Q_n^{\mathrm{st}}\) denote the stable first-stem coefficient
of the suspension spectrum of \(\mathbb A^n\setminus0\).  The
degree-\((n+1)\) target is computed by the codimension \(n\), \(n+1\),
and \(n+2\) terms of the Rost--Schmid complex.  The
\(\mathbb P^1\)-stabilization theorem
\cite[Theorem~7.2.1]{ABH} identifies the contractions
\((\mathcal Q_n)_{-n}\), \((\mathcal Q_n)_{-(n+1)}\), and
\((\mathcal Q_n)_{-(n+2)}\) occurring there with the corresponding weights
of the stable first stem.  These identifications are induced by the
suspension-spectrum unit.  Since the unit is a morphism of homotopy modules,
they commute with contractions, Rost--Schmid residues, and finite
corestrictions.  On the three-primary direct summands the determinant
action is trivial, so the same comparison applies after twisting by \(L\).
Proposition~\ref{prop:Q-Gersten-three-primary} identifies the induced map,
in the three codimensions that compute degree \(n+1\), with the identity of
the Milnor \(K\)-theory complex modulo \(3\).  Passing to cohomology proves
the assertion.  Nothing in this argument constructs a morphism of the
relative two-stage towers over \(BGL_n\).
\end{proof}

\smallskip
\noindent\textbf{The mod-\(3\) Thom correction.}
For any vector bundle \(E\) on a smooth scheme \(X\), define its first
mod-\(3\) motivic Wu class by
\[
\omega_3(E):=
\operatorname{red}_3\!\left(c_1(E)^2-2c_2(E)\right)
=
\operatorname{red}_3\!\left(c_1(E)^2+c_2(E)\right)
\in H^{4,2}(X,\mathbb Z/3)
\]
and the Thom-corrected reduced power by
\[
\mathcal P_E^1(x):=P^1(x)+\omega_3(E)\mathbin{\smile}x.
\]

\smallskip
\noindent\textbf{The parameterized stable section torsor.}
Put \(G=GL_n\) and \(F_n=\mathbb A^n\setminus0\), with its natural
\(G\)-action.  The augmentation of the suspension spectrum is
\(G\)-equivariant:
\[
\epsilon:\Sigma^\infty_+F_n\longrightarrow\mathds1.
\]
Let
\[
Q_1(F_n):=
\operatorname{hofib}_{1}\!\left(
 \Omega^\infty\Sigma^\infty_+F_n
 \xrightarrow{\Omega^\infty\epsilon}
 \Omega^\infty\mathds1
\right),
\]
where the fiber is taken over the section represented by the identity of
\(\mathds1\).  The suspension-spectrum unit sends every point of \(F_n\)
to that fiber and therefore gives a canonical \(G\)-equivariant map
\begin{equation}\label{eq:augmentation-one-unit}
\iota_n:F_n\longrightarrow Q_1(F_n).
\end{equation}
The use of the fiber over \(1\), rather than a reduced suspension spectrum
formed from a chosen point of \(F_n\), is what makes
\eqref{eq:augmentation-one-unit} equivariant.
All Borel constructions below are homotopy Borel constructions in
motivic spaces.

\begin{lemma}\label{lem:augmentation-one-connectivity}
Let \(k\) be a field of characteristic \(0\) and let \(n\geq4\).  The
homotopy fiber of \eqref{eq:augmentation-one-unit} is
\((2n-4)\)-\(\mathbb A^1\)-connected.  If
\[
\mathscr S_n:=EG\times^GQ_1(F_n)\longrightarrow BG,
\]
then \eqref{eq:augmentation-one-unit} induces a morphism over \(BG\)
\begin{equation}\label{eq:relative-stabilization-map}
BGL_{n-1}\simeq EG\times^GF_n
\longrightarrow \mathscr S_n
\end{equation}
and an equivalence of relative Moore--Postnikov truncations
\begin{equation}\label{eq:relative-two-stage-equivalence}
\tau^{BG}_{\leq n}(BGL_{n-1})
\xrightarrow{\ \simeq\ }
\tau^{BG}_{\leq n}(\mathscr S_n).
\end{equation}
In particular, this equivalence contains both the primary and secondary
relative stages for the fiber \(F_n\).
\end{lemma}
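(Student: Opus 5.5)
We prove the three assertions of the lemma in order: the connectivity of the homotopy fiber of \(\iota_n\), the construction of the map \eqref{eq:relative-stabilization-map}, and the truncation equivalence \eqref{eq:relative-two-stage-equivalence}.

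\textbf{Connectivity of the fiber.} We reduce it to the pointed stabilization estimate already used in the proof of Theorem~\ref{thm:Pn-second-stem}. Choose any rational point \(x_0\in F_n(k)\). Then \(\Sigma^\infty_+F_n\simeq\mathds1\vee\Sigma^\infty F_n\), where the splitting is determined by \(x_0\) and the augmentation. It identifies \(Q_1(F_n)\) nonequivariantly with \(QF_n=\Omega^\infty\Sigma^\infty F_n\), and under this identification \(\iota_n\) becomes the pointed stable unit \(F_n\to QF_n\). Equivariance plays no role in a connectivity statement, and connectivity can be checked after this identification; the identification itself may be made Nisnevich locally. Now \(F_n\simeq S^{2n-1,n}\). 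Theorem~6.3.3 of \cite{ABH} shows that the fiber of the unit is weakly \(S^{4n-3,2n}\)-cellular, and \cite[Corollary~3.1.27]{ABH} then gives \((2n-4)\)-\(\A^1\)-connectivity. This holds for \(n\geq4\), the range in which the cited results apply. The only point to watch is that the ABH statements are applied to the pointed unit and not to \(\iota_n\) directly; I expect no obstruction here.

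\textbf{The map over \(BG\).} The map \(\iota_n\) is \(G\)-equivariant, so the homotopy Borel construction yields a map \(EG\times^GF_n\to EG\times^GQ_1(F_n)=\mathscr S_n\) over \(BG\). It remains to identify \(EG\times^GF_n\simeq BGL_{n-1}\). Since \(GL_n\) acts transitively on \(F_n\), the Borel construction is \(B\) of the stabilizer of a nonzero vector, namely \(GL_{n-1}\ltimes\A^{n-1}\). Its unipotent radical is \(\A^1\)-contractible, as in the proof of Lemma~\ref{lem:KO-euler-transgression}, so this stabilizer classifying space is \(BGL_{n-1}\). This gives \eqref{eq:relative-stabilization-map}.

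\textbf{The truncation equivalence.} Both \(BGL_{n-1}\to BG\) and \(\mathscr S_n\to BG\) are fibrations with fibers \(F_n\) and \(Q_1(F_n)\), and the fibers are compared by \(\iota_n\). By the first step, \(\iota_n\) is an isomorphism on \(\pi_i^{\A^1}\) for \(i\leq 2n-4\) and an epimorphism for \(i=2n-3\). Since \(n\leq2n-4\) when \(n\geq4\), the map is an isomorphism on fiber homotopy sheaves through degree \(n\). Because \(BG\) is \(\A^1\)-connected, the relative Postnikov truncation \(\tau^{BG}_{\leq n}\) is determined by the fiberwise truncations together with the \(\pi_1(BG)\)-local systems. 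The fiber map is equivariant, so the local systems match. Hence the induced map of relative truncations is a fiberwise equivalence over \(BG\), and therefore an equivalence, which proves \eqref{eq:relative-two-stage-equivalence}.

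\textbf{Main obstacle.} The key subtlety is this last step: one must justify that relative Moore--Postnikov truncation over a non-simply-connected base is detected on fibers, including the \(\G_m\)-determinant monodromy. I would use functoriality of relative truncation in the slice \(\infty\)-category over \(BG\), as in \cite[Section~6.1]{AF15b}. The primary and secondary stages, with coefficients \(\pi_{n-1}\) and \(\pi_n\), then lie in the isomorphism range, and this gives the final clause of the lemma.
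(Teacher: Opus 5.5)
Your proposal is correct and follows the paper's proof essentially step by step. Like the paper, you split \(\Sigma^\infty_+F_n\) at a rational point to identify \(\iota_n\) with the pointed stable unit, apply \cite[Theorem~6.3.3 and Corollary~3.1.27]{ABH}, identify the Borel construction with \(BP_n\simeq BGL_{n-1}\) using the \(\A^1\)-contractible unipotent radical, and conclude by a relative Whitehead argument using \(n\leq 2n-4\).

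The one difference is in the last step. The paper checks the relative homotopy sheaves Nisnevich-locally on \(BG\). It also records, via \cite[Proposition~3.1.23]{ABH}, that the connectivity estimate is universal under base change, which it needs later in Theorem~\ref{thm:parameterized-two-stage-comparison}. You instead check on the fiber over the base point of the connected base, and for the lemma as stated this suffices.
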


\begin{proof}
Choose a point \(v\in F_n(k)\).  It splits the augmentation and gives an
equivalence of spectra
\[
\Sigma^\infty_+F_n\simeq
\mathds1\vee\Sigma^\infty(F_n,v)
\]
under which \(\epsilon\) is projection onto the first factor.  Translation
by the section represented by \(v\) consequently identifies \(Q_1(F_n)\)
with \(\Omega^\infty\Sigma^\infty(F_n,v)\), and it identifies \(\iota_n\)
with the pointed stable unit.  This identification is used only to compute
the homotopy fiber; the map \(\iota_n\) itself is independent of \(v\).

Since \(F_n\simeq S^{2n-1,n}\),
\cite[Theorem~6.3.3]{ABH} places the fiber of the stable unit in
\(O(S^{4n-3,2n})\).  By \cite[Corollary~3.1.27]{ABH} it is therefore
\((2n-4)\)-\(\mathbb A^1\)-connected.
\(F_n\) and \(Q_1(F_n)\) are connected, so
\cite[Proposition~3.1.23]{ABH} moreover shows that this cellular
equivalence is universal
under base change.

The Borel construction applied to the \(G\)-equivariant map \(\iota_n\)
gives \eqref{eq:relative-stabilization-map}; the identification of its
source uses the stabilizer
\[
P_n=\operatorname{Stab}_{GL_n}(e_n)
\cong GL_{n-1}\ltimes\mathbb A^{n-1}.
\]
Indeed, \(F_n=G/P_n\), so \(EG\times^GF_n\simeq BP_n\), and the
\(\mathbb A^1\)-contractibility of the unipotent radical gives
\(BP_n\simeq_{\mathbb A^1}BGL_{n-1}\); this is the homogeneous-space
model of \cite[Theorem~2.2.4]{AffineRep2}.  Under this equivalence the map
to \(BG=BGL_n\) is the standard block stabilization.  Nisnevich-locally
on \(BG\), the two
associated fibrations are the products with \(F_n\) and \(Q_1(F_n)\), and
their relative homotopy sheaves are the corresponding homotopy sheaves
with the descended monodromy action.  The preceding universal connectivity
estimate therefore shows that \eqref{eq:relative-stabilization-map} induces
isomorphisms on relative homotopy sheaves through degree \(2n-4\).
Relative Postnikov truncation is functorial and is determined by these
relative homotopy sheaves and their \(k\)-invariants.  Since
\(n\leq2n-4\), the relative Whitehead theorem gives
\eqref{eq:relative-two-stage-equivalence}.
\end{proof}

\begin{lemma}\label{lem:stable-sections-euler-nullhomotopies}
Let \(E\to X\) be a rank-\(n\) vector bundle and let
\(E^\times=E\setminus X\) be the complement of its zero section.  Pulling
back \(\mathscr S_n\) along the classifying map of \(E\) gives a stable
section torsor \(\mathscr S(E)\to X\).  Its space of sections is naturally
the space of stable nullhomotopies of the Euler map of \(E\).

More precisely, in the stable motivic category over \(X\) there is a
cofiber sequence
\begin{equation}\label{eq:fiberwise-euler-cofiber}
\Sigma^\infty_{X,+}E^\times
\xrightarrow{\epsilon_E}\mathds1_X
\xrightarrow{e^{\mathrm{st}}(E)}S^E,
\end{equation}
where \(S^E\) is the fiberwise Thom sphere, and
\begin{equation}\label{eq:stable-section-nullhomotopy-equivalence}
\Gamma_X\mathscr S(E)
\simeq
\operatorname{Null}\bigl(e^{\mathrm{st}}(E)\bigr).
\end{equation}
The difference of two such nullhomotopies belongs naturally to the
infinite loop space associated with \(\Sigma^{-1}S^E\).  Consequently the
change of every stable Postnikov obstruction is independent of the base
nullhomotopy and is a homomorphism in that difference.
\end{lemma}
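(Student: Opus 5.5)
We work fiberwise over \(X\), in the parameterized stable motivic category \(\mathbf{SH}(X)\), and first construct the cofiber sequence \eqref{eq:fiberwise-euler-cofiber}.

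\textbf{Step 1: the cofiber sequence.} Homotopy purity for the zero section \(s:X\to E\) gives a cofiber sequence of pointed spaces over \(X\),
\[
E^\times_+\longrightarrow E_+\longrightarrow E/E^\times\simeq \operatorname{Th}(E),
\]
and \(E\simeq_{\mathbb A^1}X\) identifies the middle term with the fiberwise unit. After applying \(\Sigma^\infty_X\), the first map becomes the augmentation \(\epsilon_E\), induced by the projection \(E^\times\to X\). The second map is by definition the stable Euler map \(e^{\mathrm{st}}(E)=s^*\) of the Thom class, namely the map \(\mathds1_X\to S^E\). This is the parameterized form of the universal fiber--cofiber model already used in the proof of Lemma~\ref{lem:KO-euler-transgression}, so no new input is needed.

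\textbf{Step 2: sections of \(\mathscr S(E)\).} By construction, pulling the Borel construction \(EG\times^GQ_1(F_n)\) back along the classifying map of \(E\) identifies \(\mathscr S(E)\) with the fiberwise space
\[
\operatorname{hofib}_1\!\left(\Omega^\infty_X\Sigma^\infty_{X,+}E^\times\xrightarrow{\Omega^\infty_X\epsilon_E}\Omega^\infty_X\mathds1_X\right),
\]
because \(E^\times=P\times^GF_n\) for the frame torsor \(P\). A section is therefore a stable map \(\sigma:\mathds1_X\to\Sigma^\infty_{X,+}E^\times\) together with a homotopy \(\epsilon_E\sigma\simeq\mathrm{id}\); this is the space of stable splittings of \(\epsilon_E\).

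\textbf{Step 3: splittings versus nullhomotopies.} In the stable, hence additive, category, the fiber sequence \(\Sigma^{-1}S^E\to\Sigma^\infty_{X,+}E^\times\to\mathds1_X\) gives a fiber sequence of mapping spaces
\[
\operatorname{Map}(\mathds1_X,\Sigma^\infty_{X,+}E^\times)\to\operatorname{Map}(\mathds1_X,\mathds1_X)\to\operatorname{Map}(\mathds1_X,S^E).
\]
The fiber over \(\mathrm{id}\) of the first map is equivalent to the space of paths from \(e^{\mathrm{st}}(E)\circ\mathrm{id}\) to \(0\), which is \(\operatorname{Null}(e^{\mathrm{st}}(E))\). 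This gives \eqref{eq:stable-section-nullhomotopy-equivalence}, naturally in \(X\).

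\textbf{Step 4: the torsor structure.} \(\operatorname{Null}(e^{\mathrm{st}}(E))\) is either empty or a torsor under the infinite loop space \(\Omega\operatorname{Map}(\mathds1_X,S^E)=\Omega^\infty\Gamma_X\Sigma^{-1}S^E\), so any two nullhomotopies differ by a well-defined element of that infinite loop space. A stable Postnikov obstruction is obtained by applying a fixed map of spectra, a stable \(k\)-invariant, to the difference class. Hence its change is given by a map of infinite loop spaces applied to the difference. This makes the change additive in the difference and independent of the base nullhomotopy, since translating both endpoints by a common element does not alter their difference.

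\textbf{Main obstacle.} The hardest point is the last assertion of Step 4: making precise that the obstructions are computed by maps of spectra, rather than merely by space-level \(k\)-invariants. I would handle this by defining the stable obstructions as the pullbacks, along sections, of the Postnikov tower of the parameterized spectrum \(\Sigma^{-1}S^E\), using the connectivity from Lemma~\ref{lem:augmentation-one-connectivity}. With that definition, linearity follows from the stability of that tower.
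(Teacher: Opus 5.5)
Your proposal is correct and follows the paper's proof essentially step for step. The steps are: the purity cofiber sequence; the identification of \(\mathscr S(E)\) with the augmentation-one fiber of \(\Omega^\infty_X\Sigma^\infty_{X,+}E^\times\to\Omega^\infty_X\mathds1_X\) via the frame torsor; applying \(\operatorname{Map}(\mathds1_X,-)\) to turn splittings of \(\epsilon_E\) into nullhomotopies of \(e^{\mathrm{st}}(E)\); and the torsor structure under the infinite loop space of \(\Sigma^{-1}S^E\).

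The only place the paper says more is your Step 2. There it justifies the identification by Nisnevich descent for the augmentation-preserving equivalence \(P\times^G\Sigma^\infty_+F_n\simeq\Sigma^\infty_{X,+}E^\times\), and by the compatibility of \(\Omega^\infty_X\) and homotopy fibers with that descent. The connectivity lemma you invoke at the end is not needed for this statement.
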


\begin{proof}
Homotopy purity applied over \(X\) gives the pointed cofiber sequence
\[
E^\times_+\longrightarrow E_+\longrightarrow S^E.
\]
Fiberwise scalar contraction identifies \(E_+\) with the unit over \(X\).
Stabilization gives \eqref{eq:fiberwise-euler-cofiber}; this is the
fiberwise form of the Euler cofiber construction used in
\cite[Lemma~5.2.1 and Proposition~5.3.1]{AF16Euler}.

We first identify the pullback of the Borel construction with the
fiberwise stable construction.  Let
\(P=\operatorname{Fr}(E)\to X\) be the frame torsor.  It is Nisnevich
locally trivial, and pullback of the universal homotopy Borel construction
along the classifying map identifies
\(\mathscr S(E)\) with \(P\times^GQ_1(F_n)\).  On a trivializing morphism
\(U\to X\), the two objects
\[
 \left.\Sigma^\infty_{X,+}E^\times\right|_U
 \qquad\text{and}\qquad
 P|_U\times^G\Sigma^\infty_+F_n
\]
are canonically identified with
\(\Sigma^\infty_{U,+}(U\times F_n)\), and the two augmentations are both
induced by the projection \(U\times F_n\to U\).  On an overlap, these
identifications differ by the given \(G\)-action, which is precisely the
descent datum defining the associated object.  Nisnevich descent in
\(\mathbf{SH}(-)\) therefore gives an augmentation-preserving equivalence
\begin{equation}\label{eq:associated-spectrum-punctured-bundle}
 P\times^G\Sigma^\infty_+F_n
 \xrightarrow{\ \simeq\ }
 \Sigma^\infty_{X,+}E^\times.
\end{equation}

The functor \(\Omega_X^\infty\) preserves homotopy limits.  Homotopy
fibers of motivic spaces and their section spaces also satisfy Nisnevich
descent.  Taking the fiber over the unit section in
\eqref{eq:associated-spectrum-punctured-bundle} consequently gives a
canonical equivalence over \(X\)
\begin{equation}\label{eq:associated-Q1-fiber-identification}
 \mathscr S(E)
 \simeq
 \operatorname{hofib}_{1}\!\left(
  \Omega_X^\infty\Sigma^\infty_{X,+}E^\times
  \longrightarrow\Omega_X^\infty\mathds1_X
 \right).
\end{equation}
This construction is independent of the chosen trivializing cover, since
both sides are obtained from the same \(G\)-equivariant augmentation by
effective Nisnevich descent.

By the suspension-spectrum adjunction and
\eqref{eq:associated-Q1-fiber-identification}, a section of
\(\mathscr S(E)\) is equivalently a morphism
\(\mathds1_X\to\Sigma^\infty_{X,+}E^\times\) whose composite with
\(\epsilon_E\) is the identity.  Applying
\(\operatorname{Map}(\mathds1_X,-)\) to
\eqref{eq:fiberwise-euler-cofiber} identifies the space of these splittings
with the space of nullhomotopies of \(e^{\mathrm{st}}(E)\), proving
\eqref{eq:stable-section-nullhomotopy-equivalence}.  Exactness also
identifies the difference space with
\(\operatorname{Map}(\mathds1_X,\Sigma^{-1}S^E)\).  This is an infinite
loop space.  Its Postnikov \(k\)-invariants are maps of spectra, so their
actions on cohomology are homomorphisms and are invariant under translation
of the chosen nullhomotopy.
\end{proof}

\smallskip
\noindent\textbf{Three-primary sign convention.}
Fix Voevodsky's standard generator of \(\mathbb Z/3\) and the corresponding
reduced power \(P^1\).  Let
\[
\nu_{\mathrm{top}}^{(3)}\in(\pi_3^s)_{(3)}
\]
be the unique generator for which the first three-local \(k\)-invariant of
the classical sphere is \(+P^1\operatorname{red}_3\).  The universal motivic
Hopf class is defined over \(\mathbb Z\), and complex realization is an
isomorphism on the three-primary part of
\(\pi_{3,2}(\mathds1_{\mathbb C})\)
\cite[Theorem~1.1 and Remark~5.8]{RSO19}.  There is consequently a unique
choice of sign for its three-primary component over \(\mathbb Q\) whose
complex realization is \(\nu_{\mathrm{top}}^{(3)}\); denote this class and
all its characteristic-zero base changes by \(\nu_{(3)}\).  Every
identification below of the three-primary first-stem summand with
\(\mathbf K_*^M/3\) sends \(\nu_{(3)}\) to \(1\).  This convention fixes the
sign that realization alone would determine only up to a unit of
\(\mathbb Z/3\).

\begin{lemma}\label{lem:degree-42-integral-mod3-operations}
Let \(k\) be a field of characteristic zero.  The group of bistable
motivic cohomology operations of bidegree \((4,2)\) from
\(\mathbb Z_{(3)}\)-cohomology to \(\mathbb Z/3\)-cohomology is
\[
 [H\mathbb Z_{(3)},\Sigma^{4,2}H\mathbb Z/3]
 \cong
 \mathbb Z/3\{P^1\operatorname{red}_3\}.
\]
This identification is compatible with extension of the ground field.
\end{lemma}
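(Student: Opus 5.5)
The plan is to compute the group of operations through the Bockstein cofiber sequence, which reduces it to the mod-\(3\) motivic Steenrod algebra in two bidegrees. Write \(\mathcal A^{p,q}:=[H\mathbb Z/3,\Sigma^{p,q}H\mathbb Z/3]\). Over a field of characteristic zero, Voevodsky's computation identifies \(\mathcal A^{*,*}\) as a free left module over the coefficients \(H^{*,*}(k,\mathbb Z/3)\). A basis is given by admissible monomials in \(\beta\) (bidegree \((1,0)\)) and the reduced powers \(P^i\) (bidegree \((4i,2i)\)). This description is natural in the base field, which will give the base-change compatibility at the end.

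\emph{Step 1: the long exact sequence.} Start from the cofiber sequence
\[
H\mathbb Z_{(3)}\xrightarrow{\ 3\ }H\mathbb Z_{(3)}\xrightarrow{\ \operatorname{red}_3\ }H\mathbb Z/3\xrightarrow{\ \tilde\beta\ }\Sigma^{1,0}H\mathbb Z_{(3)}
\]
and apply \([-,\Sigma^{4,2}H\mathbb Z/3]\). Multiplication by \(3\) is zero on \(\mathbb Z/3\)-valued operations. The relevant portion of the long exact sequence is therefore
\[
[H\mathbb Z_{(3)},\Sigma^{3,2}H\mathbb Z/3]\xrightarrow{\ \tilde\beta^*\ }\mathcal A^{4,2}\xrightarrow{\ \operatorname{red}_3^*\ }[H\mathbb Z_{(3)},\Sigma^{4,2}H\mathbb Z/3]\longrightarrow0.
\]
Thus the target is \(\mathcal A^{4,2}\) modulo the image of \(\tilde\beta^*\). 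Each element of that image is of the form \(\theta'\circ\beta\), since \(\operatorname{red}_3\circ\tilde\beta=\beta\). Here \(\theta'\) ranges over operations of bidegree \((3,2)\), each of which is the restriction of some \(\theta\in\mathcal A^{3,2}\) along \(\operatorname{red}_3\); this uses the same exact sequence one degree lower.

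\emph{Step 2: weight bookkeeping.} I use \(H^{p,q}(k,\mathbb Z/3)=0\) for \(q<0\), for \(p>q\), and for \(p<0\).

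For \(\mathcal A^{4,2}\), a basis element \(h\cdot\theta\) needs \(h\in H^{4-a,2-b}(k)\), where \((a,b)\) is the bidegree of \(\theta\). The monomials of weight at most \(2\) are \(1,\beta,P^1,\beta P^1,P^1\beta,\beta P^1\beta\).
\begin{itemize}
\item The weight-zero monomials \(1\) and \(\beta\) would need coefficients in \(H^{4,2}(k)\) or \(H^{3,2}(k)\), both zero.
\item The monomials \(\beta P^1\), \(P^1\beta\) and \(\beta P^1\beta\) have topological degree larger than \(4\), so they would need coefficients in negative degree.
\end{itemize}
Hence \(\mathcal A^{4,2}=\mathbb Z/3\{P^1\}\).

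Similarly, \(\mathcal A^{3,2}\) is spanned by elements \(h\beta\) with \(h\in H^{2,2}(k)=K^M_2(k)/3\); the other candidates vanish for the same reasons. Consequently every element of the image of \(\tilde\beta^*\) is a sum of terms \((h\beta)\circ\beta=h\,\beta\beta=0\), using \(\beta^2=0\). In forming this composite the coefficient \(h\) must be moved past \(\beta\) via the Cartan formula. The extra term involves \(\beta(h)\in H^{3,2}(k)=0\), so no correction appears.

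The image of \(\tilde\beta^*\) is therefore zero, and \(\operatorname{red}_3^*\) is an isomorphism \(\mathbb Z/3\{P^1\}\cong[H\mathbb Z_{(3)},\Sigma^{4,2}H\mathbb Z/3]\). It sends \(P^1\) to \(P^1\operatorname{red}_3\), which proves the displayed identification.

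\emph{Step 3: base change.} For an extension \(k\subset k'\), the cofiber sequence, \(\operatorname{red}_3\), \(\beta\) and \(P^1\) are all pulled back from \(\operatorname{Spec}\mathbb Q\) (or from \(\mathbb Z\)). Pullback therefore carries \(P^1\operatorname{red}_3\) to \(P^1\operatorname{red}_3\). Since both groups are \(\mathbb Z/3\) generated by this class, the identification is compatible with extension of the ground field.

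The main obstacle is Step 2. It requires a precise citation for the structure of the mod-\(3\) motivic Steenrod algebra, specifically that the left \(H^{*,*}\)-module structure has the stated admissible basis in characteristic zero. It also requires care with the commutation of coefficients past \(\beta\). I would handle the latter by the explicit Cartan formula for \(\beta\) together with the vanishing \(H^{3,2}(k,\mathbb Z/3)=0\), rather than appealing to any general commutation rule.
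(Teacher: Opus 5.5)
Your proposal is correct, and it follows the paper's skeleton: the coefficient long exact sequence with $3^*=0$ makes $\operatorname{red}_3^*$ surjective, and Voevodsky's admissible basis together with $H^{4,2}(k,\mathbb Z/3)=H^{3,2}(k,\mathbb Z/3)=0$ gives $\mathcal A^{4,2}=\mathbb Z/3\{P^1\}$.

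The routes differ only in how injectivity is proved.

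\emph{The paper's route.} It never looks at the next term of the sequence. It shows that the surjection $\mathbb Z/3\{P^1\}\to[H\mathbb Z_{(3)},\Sigma^{4,2}H\mathbb Z/3]$ is nonzero by a test class: $P^1\operatorname{red}_3$ sends the integral first Chern class on $\mathbb P^\infty$ to its nonzero cube in $H^{6,3}(\mathbb P^\infty,\mathbb Z/3)$.

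\emph{Your route.} You compute the kernel as the image of $\tilde\beta^*$, using $\mathcal A^{3,2}=H^{2,2}(k,\mathbb Z/3)\cdot\beta$ and $\beta^2=0$. In fact your argument gives $[H\mathbb Z_{(3)},\Sigma^{3,2}H\mathbb Z/3]=0$ outright, because $\beta\circ\operatorname{red}_3=0$.

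\emph{What each buys.} Your route stays inside the Steenrod algebra and costs one extra bidegree computation. The paper's route avoids analyzing $\mathcal A^{3,2}$ but needs the instability identity $P^1(c)=c^3$ for classes of bidegree $(2,1)$ and the nonvanishing of $c^3$ on $\mathbb P^\infty$. In exchange it gives an explicit geometric witness of nontriviality over every field.

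\emph{A small correction.} No Cartan-formula commutation is needed in your Step 2. The coefficient sits on the outer operation, so $(h\beta)\circ\beta=h\cdot(\beta\circ\beta)=0$ directly from the definition of the left module structure. Your remark is harmless anyway, since $\beta(h)\in H^{3,2}(k,\mathbb Z/3)=0$.
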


\begin{proof}
Apply \([- ,\Sigma^{4,2}H\mathbb Z/3]\) to the coefficient cofiber
sequence
\[
 H\mathbb Z_{(3)}\xrightarrow{\ 3\ }H\mathbb Z_{(3)}
 \longrightarrow H\mathbb Z/3.
\]
Multiplication by \(3\) on the target is zero, so every operation from
\(H\mathbb Z_{(3)}\) is the precomposition with
\(\operatorname{red}_3\) of a mod-
\(3\) operation.  Voevodsky's classification of bistable operations at an
odd prime \cite{Voevodsky03,Voevodsky10} expresses the latter as a module
over the coefficient ring, with basis the admissible reduced-power and
Bockstein monomials.  In total bidegree \((4,2)\), the only positive
Steenrod monomial is \(P^1\).  The only possible additional terms are
multiplication of the identity, or of the Bockstein, by coefficient classes
in \(H^{4,2}(k,\mathbb Z/3)\), respectively
\(H^{3,2}(k,\mathbb Z/3)\).  Both groups vanish because motivic cohomology
of a field vanishes in bidegrees \((p,q)\) with \(p>q\).  Thus the mod-
\(3\) source group is \(\mathbb Z/3\{P^1\}\).  Its image under
precomposition is nonzero over every such field: on motivic projective space,
\(P^1\operatorname{red}_3\) sends the reduction of the integral first Chern
class in \(H^{2,1}(\mathbb P^\infty,\mathbb Z_{(3)})\) to its nonzero third
power in \(H^{6,3}(\mathbb P^\infty,\mathbb Z/3)\).  Hence the surjection
above is an isomorphism and proves the displayed
identification.  Reduced powers, reduction, and the coefficient generator
are preserved by base change, which proves the last assertion.
\end{proof}

\begin{lemma}\label{lem:three-primary-Postnikov-base-change}
Let \(L/k\) be an extension of characteristic-zero fields.  Extension of
scalars carries the first three-primary homotopy-\(t\)-structure Postnikov
invariant of the sphere over \(k\) to the corresponding invariant over
\(L\).  Under the identification of
Lemma~\ref{lem:degree-42-integral-mod3-operations}, the scalar multiplying
\(P^1\operatorname{red}_3\) is therefore unchanged by extension of the
ground field.
\end{lemma}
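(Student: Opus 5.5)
The plan is to treat the three-primary Postnikov invariant as a map of motivic spectra and to use that extension of scalars along \(f:\operatorname{Spec}L\to\operatorname{Spec}k\) is exact and symmetric monoidal. Write \(\mathds1_{(3)}\) for the three-localized sphere. Let \(\tau_{\geq0}\) and \(\tau_{\leq1}\) refer to the first two layers of its homotopy \(t\)-structure Postnikov tower in the relevant range. In that range the zeroth layer is \(H\mathbb Z_{(3)}\), via the unit to motivic cohomology, and the first three-primary homotopy module is \(\mathbf K^M_*/3\{\nu_{(3)}\}\). The first invariant is then the connecting map of the two-stage truncation,
\[
k_{k}: H\mathbb Z_{(3)}\longrightarrow \Sigma^{4,2}H\mathbb Z/3 .
\]
Here the target has been identified using the sign convention that sends \(\nu_{(3)}\) to \(1\).

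The first step is to show that \(f^*\) commutes with the relevant truncations. Over fields, \(f^*\) is \(t\)-exact for the homotopy \(t\)-structure, because homotopy sheaves of \(f^*E\) on finitely generated extensions of \(L\) are computed by restriction from sheaves over \(k\) (essentially smooth base change). This gives \(f^*\tau_{\leq1}\mathds1_{(3)}\simeq\tau_{\leq1}\mathds1_{L,(3)}\) compatibly with the unit maps. Exactness of \(f^*\) then carries the cofiber sequence defining \(k_k\) to the one defining \(k_L\), so \(f^*k_k=k_L\) as maps of spectra.

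The second step is to check that both endpoints are carried to themselves. \(f^*H\mathbb Z_{(3)}\simeq H\mathbb Z_{(3)}\) and \(f^*H\mathbb Z/3\simeq H\mathbb Z/3\) by base-change stability of Voevodsky's motivic cohomology in characteristic zero. The identification of the first homotopy module with \(\mathbf K^M_*/3\) is compatible with restriction, because \(\nu_{(3)}\) was defined over \(\mathbb Q\) and then base changed.

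Finally, combine this with Lemma~\ref{lem:degree-42-integral-mod3-operations}. That lemma gives \(k_k=c_k P^1\operatorname{red}_3\), and its base-change compatibility gives \(f^*(P^1\operatorname{red}_3)=P^1\operatorname{red}_3\). Hence \(k_L=c_kP^1\operatorname{red}_3\), so \(c_L=c_k\).

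The main obstacle is the \(t\)-exactness and truncation compatibility of \(f^*\) for arbitrary, possibly non-finitely-generated, extensions. I would handle it by writing \(L\) as a filtered colimit of smooth \(k\)-algebras and using continuity of \(\mathbf{SH}\) together with the fact that homotopy sheaves commute with such colimits.
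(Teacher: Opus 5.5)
Your proposal is correct and follows essentially the same route as the paper: $t$-exactness of extension of scalars along the essentially smooth morphism $\operatorname{Spec}L\to\operatorname{Spec}k$ (via continuity), hence compatibility with the truncation triangle and its connecting map, then Lemma~\ref{lem:degree-42-integral-mod3-operations} to compare the scalars. The only difference is bookkeeping. The paper gets right $t$-exactness from the generators $\Sigma^{p,q}\Sigma^\infty X_+$ and left $t$-exactness from continuity, while you obtain both at once from the colimit description of the homotopy sheaves of $f^*E$.
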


\begin{proof}
Since characteristic-zero fields are perfect, the morphism
\(\operatorname{Spec}L\to\operatorname{Spec}k\) is essentially smooth in
the sense of \cite[Paragraph~2.1.5]{ABH}.  Stable motivic extension of
scalars and its continuity for such morphisms are recalled in
\cite[Paragraph~2.2.10]{ABH}.  The nonnegative part of the homotopy
\(t\)-structure is generated under colimits and extensions by
\(\Sigma^{p,q}\Sigma^\infty X_+\), where \(X\) is smooth and \(p-q\geq0\)
\cite[Paragraph~2.2.19]{ABH}; extension of scalars preserves these
generators.  For the coconnective part, a smooth \(L\)-scheme and the finite
diagram defining any section of a homotopy sheaf descend to a finitely
generated smooth subextension of \(L/k\).  Mapping-space continuity
\cite[Paragraphs~2.1.5 and~2.2.10]{ABH} then expresses the corresponding
section after base change as a filtered colimit of sections in the same
homotopy degree over smooth \(k\)-schemes, using smooth adjunction at each
finite stage.  Hence positive homotopy sheaves
of a coconnective spectrum remain zero.  Extension of scalars is therefore
\(t\)-exact and commutes with homotopy truncations.

It follows that extension of scalars carries the truncation triangle, its
two homotopy modules, and the connecting Postnikov map to their counterparts
over \(L\).  The low sphere homotopy modules and the quotient generated by
\(\nu_{(3)}\) are themselves compatible with extension of fields by the
natural first-stem sequence of \cite[Theorem~1.1]{RSO19}.  Hence the
resulting map
\[
H\mathbb Z_{(3)}\longrightarrow\Sigma^{4,2}H\mathbb Z/3
\]
is the scalar extension of the map over \(k\).  Finally,
Lemma~\ref{lem:degree-42-integral-mod3-operations} identifies the operation
group with the constant cyclic group generated by
\(P^1\operatorname{red}_3\), compatibly with base change.  The coefficient
is therefore unchanged.
\end{proof}

\begin{proposition}\label{prop:P1-Thom-Wu}
Let \(X\) be a smooth scheme over a field of characteristic \(0\), and let
\(E\) be a vector bundle of rank \(n\) on \(X\).  If
\[
\Phi_E:
H^{p,q}(X,\mathbb Z/3)
\xrightarrow{\ \cong\ }
\widetilde H^{p+2n,q+n}(\operatorname{Th}(E),\mathbb Z/3)
\]
is the positive Thom isomorphism, then
\[
\Phi_E^{-1}P^1\Phi_E(x)
=
P^1(x)+\omega_3(E)\mathbin{\smile}x
=
\mathcal P_E^1(x).
\]
The same correction is obtained from \(E^\vee\).
\end{proposition}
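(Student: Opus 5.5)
The plan is to compute the Thom-conjugated operation by the Cartan formula, using the splitting principle to reduce to line bundles. First observe that \(x\mapsto\Phi_E^{-1}P^1\Phi_E(x)-P^1(x)\) is determined by a single characteristic class. Write \(\Phi_E(x)=\pi^*x\smile u_E\), where \(u_E\) is the Thom class and \(\pi\) is the projection. The motivic Cartan formula at an odd prime (Voevodsky) gives
\[
P^1(\pi^*x\smile u_E)=\pi^*P^1(x)\smile u_E+\pi^*x\smile P^1(u_E).
\]
There are no \(\tau\) or \(\rho\) correction terms, since those occur only at \(p=2\). Define \(w(E)\) by \(P^1(u_E)=\Phi_E(w(E))\), with \(w(E)\in H^{4,2}(X,\mathbb Z/3)\). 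The statement then reduces to the identity \(w(E)=\omega_3(E)\). This characteristic class is natural under pullback and multiplicative: for \(E=E_1\oplus E_2\) we have \(u_E=u_{E_1}\smile u_{E_2}\) (exterior product), and the Cartan formula gives \(w(E)=w(E_1)+w(E_2)\). The class is additive because \(P^1\) has degree one in the total reduced power \(P=1+P^1+\cdots\).

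Second, I would apply the splitting principle (pass to the flag bundle, which is injective on mod-\(3\) motivic cohomology by the projective bundle formula) to reduce to a line bundle \(L\) with first Chern class \(t=c_1(L)\). For a line bundle, \(\operatorname{Th}(L)\) is identified with \(\mathbb P(L\oplus\mathcal O)/\mathbb P(\mathcal O)\), and \(u_L\) restricts to a class of bidegree \((2,1)\) behaving like the tautological \(c_1\). The instability axiom \(P^1(y)=y^3\) for \(y\) of bidegree \((2,1)\) then gives \(P^1(u_L)=u_L^3=u_L\smile\pi^*t^2\), using \(u_L^2=u_L\smile\pi^*t\). Hence \(w(L)=t^2\). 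By additivity, \(w(E)=\sum t_i^2=c_1^2-2c_2\) reduced mod \(3\), which equals \(\operatorname{red}_3(c_1^2+c_2)\) since \(-2\equiv1\). This gives the displayed formula.

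For \(E^\vee\), the Chern roots change to \(-t_i\), and \(\sum(-t_i)^2=\sum t_i^2\), so the correction is the same. Equivalently, \(c_1(E^\vee)^2=c_1^2\) and \(c_2(E^\vee)=c_2\).

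The main obstacle I expect is the sign and normalization of the identity \(u_L^2=u_L\smile\pi^*t\) together with the ``positive'' Thom isomorphism convention. A different choice of Thom class orientation could replace \(t\) by \(-t\); this is harmless here because only \(t^2\) enters, which is why the answer is orientation-insensitive. I would also verify that the Cartan formula holds in this form on the reduced cohomology of the Thom space, by working on \(E\) relative to \(E^\times\).
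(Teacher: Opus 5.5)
Your proposal is correct and follows the paper's route. The Cartan formula reduces everything to computing \(P^1\) of the Thom class, the splitting principle identifies the resulting class with \(\sum_i\operatorname{red}_3(x_i)^2=\operatorname{red}_3(c_1^2-2c_2)\), and the dual case follows from \(c_1(E^\vee)=-c_1(E)\) and \(c_2(E^\vee)=c_2(E)\). The only difference is that the paper cites Voevodsky's Thom formula \(P^1(t_E)=c_{1,2}(E)\smile t_E\) (his Corollary~14.4), whereas you rederive it from Whitney-sum additivity and the line-bundle identity \(P^1(u_L)=u_L^3=u_L\smile\pi^*c_1(L)^2\).
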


\begin{proof}
Let \(t_E\) denote the mod-\(3\) Thom class, so that
\(\Phi_E(x)=x\smile t_E\).  Voevodsky's Thom formula
\cite[Corollary~14.4]{Voevodsky03} gives
\[
P^1(t_E)=c_{1,2}(E)\smile t_E.
\]
After pullback to a splitting space with Chern roots \(x_1,\ldots,x_n\),
the characteristic class on the right is
\[
c_{1,2}(E)
=\sum_i\operatorname{red}_3(x_i)^2
=\operatorname{red}_3\!\left(c_1(E)^2-2c_2(E)\right).
\]
The splitting principle therefore gives this identity on \(X\).  The
Cartan formula now yields
\[
P^1(x\smile t_E)
=
\bigl(P^1(x)+
\operatorname{red}_3(c_1(E)^2-2c_2(E))\smile x\bigr)\smile t_E.
\]
Applying \(\Phi_E^{-1}\) proves the formula.  Since
\(c_1(E^\vee)=-c_1(E)\) and \(c_2(E^\vee)=c_2(E)\), one has
\(\omega_3(E^\vee)=\omega_3(E)\).
\end{proof}

\begin{remark}\label{rem:positive-Thom-convention}
The convention in Proposition~\ref{prop:P1-Thom-Wu} and in the comparison
below is the positive Thom cofiber
\[
E^\times_+\longrightarrow E_+\simeq X_+
\longrightarrow\operatorname{Th}(E).
\]
Using instead the inverse Thom class of the virtual bundle \(-E\) changes
the sign of the first Wu term.  All geometric statements below use the
positive Euler--Gysin convention displayed here.
\end{remark}

\begin{lemma}\label{lem:Thom-twisted-Postnikov}
Let \(p:X\to\operatorname{Spec}k\) be smooth, let \(E\) be a rank-
\(n\) vector bundle on \(X\), and let
\[
 \kappa:H\mathbb Z_{(3)}\longrightarrow
 \Sigma^{4,2}H\mathbb Z/3
\]
be the first three-primary homotopy-
\(t\)-structure \(k\)-invariant of the sphere.  The corresponding two-layer
Postnikov invariant of the difference spectrum \(\Sigma^{-1}S^E\), after
projection to the three-primary Milnor summand, is the
Thom twist
\begin{equation}\label{eq:Thom-twisted-Postnikov-map}
 S^E\mathbin{\wedge}p^*H\mathbb Z_{(3)}
 \xrightarrow{\ \mathrm{id}_{S^E}\wedge p^*\kappa\ }
 \Sigma^{4,2}
 \bigl(S^E\mathbin{\wedge}p^*H\mathbb Z/3\bigr),
\end{equation}
with the common suspension coming from \(\Sigma^{-1}S^E\) suppressed.
Under the positive mod-
\(3\) Thom isomorphism, the operation induced on the base is
\begin{equation}\label{eq:Thom-conjugated-Postnikov-operation}
 \Phi_E^{-1}\circ\kappa\circ\Phi_E.
\end{equation}
The construction is compatible with Nisnevich base change and with the
determinant local system on the integral bottom layer.
\end{lemma}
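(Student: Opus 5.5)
The proof will work with the cofiber sequence \eqref{eq:fiberwise-euler-cofiber} over \(X\) and never leave the stable category. By Lemma~\ref{lem:stable-sections-euler-nullhomotopies}, the difference space of two stable Euler nullhomotopies is \(\operatorname{Map}(\mathds1_X,\Sigma^{-1}S^E)\). So the relevant two-layer Postnikov invariant is the first \(k\)-invariant of \(\Sigma^{-1}S^E\) for the homotopy \(t\)-structure over \(X\), read in the three-primary range. The plan has two steps. First, show that this \(k\)-invariant is obtained by smashing the sphere's truncation triangle with \(S^E\). Second, transport it to the base through the Thom isomorphism.

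\textbf{Step 1: the Postnikov invariant is the Thom twist.} Write \(\tau_{\leq 1}p^*\mathds1\to H_0\) with fiber \(H_1\), where the three-primary parts are \(p^*H\mathbb Z_{(3)}\) and the \(\nu_{(3)}\)-summand \(\Sigma^{3,2}p^*H\mathbb Z/3\) (up to the indexing of the first stem). This gives a connecting map \(p^*\kappa\); its base change from \(k\) is justified by Lemma~\ref{lem:three-primary-Postnikov-base-change} together with smooth pullback \(p^*\). Now smash the triangle with \(S^E\). Since \(S^E\) is Nisnevich locally \(\Sigma^{2n,n}\mathds1_U\), and suspension by \(\Sigma^{2n,n}\) is \(t\)-exact up to the shift by the weight \(n\), the functor \(S^E\wedge(-)\) is locally a shift. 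Hence it carries the truncation triangle of \(p^*\mathds1\) to the truncation triangle of \(S^E\) at the shifted degrees. Uniqueness of Postnikov truncations then identifies the first \(k\)-invariant of \(S^E\) with \(\mathrm{id}_{S^E}\wedge p^*\kappa\), which is \eqref{eq:Thom-twisted-Postnikov-map}. Desuspending by \(\Sigma^{-1}\) is harmless. Compatibility with Nisnevich base change follows from the same descent argument as in \eqref{eq:associated-spectrum-punctured-bundle}.

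\textbf{Step 2: reading off the operation on the base.} Mapping \(\Sigma^\infty_+\) of the base into the two sides of \eqref{eq:Thom-twisted-Postnikov-map} computes \(H\mathbb Z_{(3)}\)- and \(H\mathbb Z/3\)-cohomology of \(\operatorname{Th}(E)\). On the three-primary bottom layer we then apply the positive Thom isomorphisms \(\Phi_E\), using the convention of Remark~\ref{rem:positive-Thom-convention}. Naturality of composition shows that the induced operation is \(\Phi_E^{-1}\circ\kappa\circ\Phi_E\), which is \eqref{eq:Thom-conjugated-Postnikov-operation}.

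\textbf{Twisting and the main obstacle.} On the integral bottom layer the Thom isomorphism for \(\mathbf K^{MW}\) is twisted by \(\det E\), so the determinant local system appears there. After projecting to the three-primary Milnor summand, the twist becomes invisible, because \(GW\) acts through rank, as in Proposition~\ref{prop:Q-Gersten-three-primary}. The delicate point is the \(t\)-exactness claim in Step 1. It requires that smashing with the Thom spectrum, which is only locally a sphere, commutes with truncation globally over \(X\). I would prove this by checking connectivity and coconnectivity on Nisnevich stalks, where \(S^E\) trivializes. I would also record that the sign is controlled by the positive Thom class, and not by its inverse on the virtual bundle \(-E\).
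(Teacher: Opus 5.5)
\textbf{Step 1} is correct and takes a genuinely different route from the paper's.

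The paper's route is local. It trivializes \(E\) on a Nisnevich cover and identifies the local \(k\)-invariant of \(\Sigma^{-1}S^{E|_U}\) with the \(S^{2n-1,n}\)-shift of \(p^*\kappa\). It then glues these local maps by checking that the Thom twist commutes with frame changes on overlaps.

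You instead show that \(S^E\wedge(-)\) is exact and \(t\)-exact up to a shift by \(n\). It therefore carries the truncation triangles of \(\mathds1_X\) to those of \(S^E\), and the \(k\)-invariant is then literally \(\mathrm{id}_{S^E}\wedge p^*\kappa\). This buys something. Only a \emph{property} (connectivity and coconnectivity) has to be checked locally. By contrast, deducing equality of two global morphisms from local agreement needs coherent descent data, not just agreement on each \(U_i\).

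Base change is also automatic, since smooth pullback is \(t\)-exact and symmetric monoidal. That is the right justification for passing along \(p\), rather than the field-extension lemma.

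One repair: check \(t\)-exactness on a trivializing cover \(\{j_i\colon U_i\to X\}\), not on stalks. Connectivity descends via \(F\simeq|\Sigma^\infty_{X,+}\check C_\bullet\wedge F|\), because each \(j_{i\sharp}\) preserves connectivity. Coconnectivity descends by orthogonality.

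The paper's route does have one advantage: it exhibits the frame-change action explicitly. Your separate treatment of the determinant twist is fine.

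\textbf{Step 2} has a genuine gap, independent of Step 1. Maps \(\mathds1_X\to\Sigma^{a,b}S^E\wedge p^*H\) are not cohomology classes of \(\operatorname{Th}(E)\). Since \(S^E\) is invertible with inverse \(S^{-E}\), there are natural isomorphisms
\[
[\mathds1_X,\Sigma^{a,b}S^E\wedge p^*H]\cong[S^{-E},\Sigma^{a,b}p^*H]\cong\widetilde H^{a,b}\bigl(\operatorname{Th}(-E)\bigr),\qquad \operatorname{Th}(-E):=p_\sharp S^{-E}.
\]
Under these, \((\mathrm{id}_{S^E}\wedge p^*\kappa)_*\) is just \(\kappa\). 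Naturality therefore yields \(\Phi_{-E}^{-1}\circ\kappa\circ\Phi_{-E}\), not \(\Phi_E^{-1}\circ\kappa\circ\Phi_E\).

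These genuinely differ. Since \(c_{1,2}\) is additive, \(P^1(t_{-E})=-\omega_3(E)\smile t_{-E}\). So for \(\kappa=P^1\operatorname{red}_3\) you get \(P^1-\omega_3(E)\smile(-)\). Proposition~\ref{prop:P1-Thom-Wu} instead gives \(\Phi_E^{-1}P^1\Phi_E=P^1+\omega_3(E)\smile(-)\).

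You can see the same thing using only the positive class. Put \(\theta=\mu\circ(t_E\wedge\mathrm{id})\colon S^E\wedge p^*H\simeq\Sigma^{2n,n}p^*H\). The Cartan formula then gives \(\theta\circ(\mathrm{id}_{S^E}\wedge P^1)\circ\theta^{-1}=P^1-\omega_3(E)\smile(-)\).

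The paper's own proof makes the same move when it ``applies'' the Thom twist to \(x\smile t_E\). But \(x\smile t_E\) is a map \emph{out of} \(S^E\), whereas \(\mathrm{id}_{S^E}\wedge\kappa\) acts on maps \emph{into} \(S^E\wedge p^*H\). So you are faithful to the source, but neither argument establishes \eqref{eq:Thom-conjugated-Postnikov-operation}. Conjugation by the positive \(\Phi_E\) is what the \(S^{-E}\)-twist, i.e.\ the groups \([S^E,p^*H]\), would produce.

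Your closing remark about signs therefore points the wrong way. The positive Thom class does enter, through \(\theta\), but the Wu term comes out with the sign belonging to \(-E\).
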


\begin{proof}
Choose a Nisnevich cover \(U\to X\) on which \(E\) is trivialized by an
ordered frame.  On \(U\) there is an identification
\[
 S^{E|_U}\simeq S^{2n,n}\mathbin{\wedge}\mathds1_U.
\]
Thus the two relevant homotopy-
\(t\)-structure layers of \(\Sigma^{-1}S^{E|_U}\) are the corresponding
layers of the sphere shifted by \(S^{2n-1,n}\), and their \(k\)-invariant
is the same shift of \(p^*\kappa\).  This is exactly the restriction of
\eqref{eq:Thom-twisted-Postnikov-map} to \(U\).

On a double overlap, a change of frame acts on the Thom sphere and on both
Postnikov layers by the same Thom-coordinate automorphism.  Since
\eqref{eq:Thom-twisted-Postnikov-map} is obtained by smashing a map of
spectra with that Thom sphere, it commutes with this automorphism.  The
local shifted Postnikov maps therefore carry the descent datum of
\(S^E\), and Nisnevich descent for \(\mathbf{SH}(-)\), its homotopy sheaves,
and mapping spaces glues them uniquely to
\eqref{eq:Thom-twisted-Postnikov-map}.  Before reduction, the same frame
change is the determinant action on the Milnor--Witt bottom layer.  After
reduction modulo \(3\), it becomes trivial because
\(\langle u\rangle=1+\eta[u]\) and \(\eta=0\) on Milnor
\(K\)-theory.  This proves the asserted compatibility with the local
system.

The positive Thom class \(t_E\) identifies a class \(x\) on \(X\) with
\(\Phi_E(x)=x\smile t_E\) on \(\operatorname{Th}(E)\).  Applying
\eqref{eq:Thom-twisted-Postnikov-map} to this class and then applying the
inverse Thom isomorphism gives
\(\Phi_E^{-1}\kappa\Phi_E(x)\).  This proves
\eqref{eq:Thom-conjugated-Postnikov-operation} and the lemma.
\end{proof}

\begin{theorem}\label{thm:parameterized-two-stage-comparison}
Let \(k\) be a field of characteristic \(0\), let \(n\geq4\), let \(X\)
be a smooth \(k\)-scheme, and let \(E\) be a rank-\(n\) vector bundle on
\(X\).  Pullback of \eqref{eq:relative-stabilization-map} along the
classifying map of \(E\) identifies the first two relative
Moore--Postnikov stages of the punctured-bundle section problem with the
first two stages of its stable Euler-nullhomotopy problem.

Suppose that the Euler obstruction of \(E\) vanishes and put \(L=\det E\).
Then \((X,E)\) has the linear secondary-action property of
Definition~\ref{def:linear-secondary-action}.  If \(X\) is pure of
dimension \(n+2\), then, under the
coefficient comparison of
Lemma~\ref{lem:Q-three-primary-coniveau-comparison}, the three-primary
component of its change homomorphism is the Thom conjugate of the first
three-primary stable sphere \(k\)-invariant \(\kappa^{\mathrm{st}}_3\):
\begin{equation}\label{eq:relative-Thom-comparison}
\Delta_E\{3\}
=
\Phi_E^{-1}\circ\kappa^{\mathrm{st}}_3\circ\Phi_E
\circ\operatorname{red}_3^L.
\end{equation}
Here \(\Phi_E\) is the positive mod-\(3\) Thom isomorphism, with the
bidegree determined by the argument of the operation.  Thus
\eqref{eq:relative-Thom-comparison} is an equality of
change homomorphisms, not merely an identification of their coefficient
sheaves.
\end{theorem}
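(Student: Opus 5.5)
The plan is to build the comparison at the level of relative towers over \(BGL_n\), pull it back along the classifying map of \(E\), and then read off both the linear secondary-action property and the three-primary formula from the stable side. Concretely, I would argue in four steps.

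\textbf{Step 1: the unstable and stable towers agree through two stages.} Lemma~\ref{lem:augmentation-one-connectivity} gives a map \eqref{eq:relative-stabilization-map} over \(BG\). It induces an equivalence of relative Moore--Postnikov truncations through degree \(n\), which covers both the primary stage (coefficient \(\mathbf K^{MW}_n\)) and the secondary stage (coefficient \(\mathcal Q_n\)). Relative truncation is functorial and stable under base change, so pulling back along \(X\to BGL_n\) identifies two towers over \(X\):
\begin{itemize}
\item the two-stage tower of the punctured-bundle section problem, whose total space is \(E^\times\to X\), since \(EG\times^G F_n\) pulls back to \(E^\times\);
\item the two-stage tower of \(\mathscr S(E)\to X\).
\end{itemize}
Lemma~\ref{lem:stable-sections-euler-nullhomotopies} identifies the sections of \(\mathscr S(E)\) with stable Euler nullhomotopies. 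Under this identification the tower morphism is an equivalence, so the maps \(u,v\) of Lemma~\ref{lem:two-stage-action-naturality} are isomorphisms. That lemma then matches the differences \(o_2(E,a\cdot\lambda)-o_2(E,\lambda)\) with the corresponding differences on the stable side, and this proves the first assertion.

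\textbf{Step 2: the linear secondary-action property.} On the stable side, Lemma~\ref{lem:stable-sections-euler-nullhomotopies} says that the sections form a torsor over the infinite loop space \(\operatorname{Map}(\mathds1_X,\Sigma^{-1}S^E)\). The change in the secondary obstruction is therefore the effect of the first Postnikov \(k\)-invariant of \(\Sigma^{-1}S^E\) on the difference class. Because this \(k\)-invariant is a map of spectra, the change is a homomorphism and does not depend on the base nullhomotopy.

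Transporting back through the isomorphisms \(u,v\) of Step 1 produces \(\Delta_E\), and \eqref{eq:linear-secondary-action-property} holds. One point needs checking here: the primary action on unstable lifts is by \(H^{n-1}(X,\mathbf K^{MW}_n(L))\), and this group must correspond to the bottom homotopy of the difference space. This follows from the identification of the bottom sheaf of \(\Sigma^{-1}S^E\) with the Thom-twisted \(\mathbf K^{MW}_n(L)\), which uses the determinant local system appearing in Lemma~\ref{lem:Thom-twisted-Postnikov}.

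\textbf{Step 3: the three-primary formula.} Now assume \(\dim X=n+2\) and project onto three-primary summands. The splitting \eqref{eq:Q-cohomology-primary-splitting} and Lemma~\ref{lem:Q-three-primary-coniveau-comparison} identify the three-primary target with \(H^{n+1}(X,\mathbf K^M_{n+2}/3)\), compatibly with the stable unit. Lemma~\ref{lem:Thom-twisted-Postnikov} identifies the three-primary two-layer invariant of \(\Sigma^{-1}S^E\) with \(\mathrm{id}_{S^E}\wedge p^*\kappa\). Under \(\Phi_E\) this induces \(\Phi_E^{-1}\kappa^{\mathrm{st}}_3\Phi_E\) on the base.

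Next I need to get from the Milnor--Witt bottom class \(a\) to an input for this operation. The bottom layer \(\mathbf K^{MW}_n(L)\) maps to its integral \(H\mathbb Z_{(3)}\) truncation and then reduces mod \(3\). This composite is \(\operatorname{red}_3^L\), and it becomes untwisted modulo \(3\) because \(\langle u\rangle=1\) in Milnor \(K\)-theory. Composing gives \eqref{eq:relative-Thom-comparison}.

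\textbf{Main obstacle.} I expect the hardest step to be the compatibility of normalizations, i.e. checking that two identifications of the same group agree exactly rather than up to a unit of \(\mathbb Z/3\). The coefficient comparison of Lemma~\ref{lem:Q-three-primary-coniveau-comparison} and the Thom-twisted Postnikov description have to use the same generator. Concretely, the identification \(\nu_{(3)}\mapsto1\) and the positive Thom convention of Remark~\ref{rem:positive-Thom-convention} must both be respected, so that no stray sign of \(\mathbb Z/3\) and no \(\Phi_E\) versus \(\Phi_{E^\vee}\) discrepancy enters.

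My first approach would be to verify this locally on a Nisnevich cover trivializing \(E\). There the Thom twist is a plain shift, and Lemma~\ref{lem:three-primary-Postnikov-base-change} pins the scalar of \(\kappa\) to the one realizing \(+P^1\operatorname{red}_3\). Descent would then globalize the local verification, exactly as in the proof of Lemma~\ref{lem:Thom-twisted-Postnikov}.
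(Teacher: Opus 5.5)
Your proposal is correct and follows essentially the same route as the paper's proof:
\begin{itemize}
\item pull back the two-stage equivalence of Lemma~\ref{lem:augmentation-one-connectivity} along the classifying map;
\item transport differences of secondary obstructions by functoriality of the relative \(k\)-invariants, which is what Lemma~\ref{lem:two-stage-action-naturality} packages;
\item obtain additivity and lift-independence from the torsor structure under \(\operatorname{Map}(\mathds1_X,\Sigma^{-1}S^E)\) given by Lemma~\ref{lem:stable-sections-euler-nullhomotopies};
\item read off the three-primary formula from Lemmas~\ref{lem:Thom-twisted-Postnikov} and~\ref{lem:Q-three-primary-coniveau-comparison}.
\end{itemize}
The normalization issue you flag as the main obstacle does not arise for this statement, because \eqref{eq:relative-Thom-comparison} is phrased in terms of \(\kappa^{\mathrm{st}}_3\) itself. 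The scalar is pinned to \(+P^1\operatorname{red}_3\) only later, in Theorem~\ref{thm:Q-three-primary-Postnikov-operation}, by complex realization and the fixed sign convention; Lemma~\ref{lem:three-primary-Postnikov-base-change} only shows that this scalar is unchanged under field extension, so it does not pin the scalar as you suggest.
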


\begin{proof}
Let \(x_E:X\to BGL_n\) classify \(E\), and form the homotopy pullback of
\eqref{eq:relative-stabilization-map} along \(x_E\).
The universal base-change estimate in
Lemma~\ref{lem:augmentation-one-connectivity} shows that the induced map
over \(X\) is still an isomorphism on relative homotopy sheaves through
degree \(n\).  Applying the functorial relative Moore--Postnikov
construction after this pullback and using the relative Whitehead theorem
therefore gives an equivalence between the primary and secondary stages
of the two section problems over \(X\).  The primary class on the geometric side is
the Euler obstruction.  Functoriality of the primary relative
\(k\)-invariant carries it to the primary class of the stable section
problem, which Lemma~\ref{lem:stable-sections-euler-nullhomotopies}
identifies with the second map in
\eqref{eq:fiberwise-euler-cofiber}.  Thus the comparison itself carries
Euler nullhomotopies to stable Euler nullhomotopies; no independent choice
of a Thom or Chow--Witt generator enters this assertion.

Let \(\lambda\) be a primary lift and let
\[
a\in H^{n-1}_{\mathrm{Nis}}
 (X,\mathbf K_n^{MW}(L)).
\]
Functoriality of the relative \(k\)-invariant gives
\[
o_2(E,a\cdot\lambda)-o_2(E,\lambda)
=
\Delta_E^{\mathrm{st}}(a),
\]
where the right-hand side is the change in the corresponding stable
section problem.  By
Lemma~\ref{lem:stable-sections-euler-nullhomotopies}, stable
nullhomotopies form a torsor under an infinite loop space and their
Postnikov invariants are maps of spectra.  Hence the right-hand side is
independent of \(\lambda\), depends only on the class of \(a\), and is
additive in \(a\).  If \(a\) fixes a geometric primary lift, its image
fixes the corresponding stable lift, so the stable \(k\)-invariant sends
\(a\) to zero.  Thus the formula also descends through the possible
stabilizers in the action on primary lifts.  Transport through the
two-stage equivalence proves
the linear secondary-action property and identifies \(\Delta_E\) with
this stable change homomorphism.

It remains to identify the parameterized stable invariant.  The cofiber
sequence \eqref{eq:fiberwise-euler-cofiber} identifies the spectrum of
differences of stable nullhomotopies with \(\Sigma^{-1}S^E\).
Lemma~\ref{lem:Thom-twisted-Postnikov} identifies its first relevant
three-primary \(k\)-invariant with the Thom twist of the sphere invariant
and identifies the induced operation on \(X\) with its conjugate by
\(\Phi_E\).  The suspension-spectrum unit induces the
coefficient identification in
Lemma~\ref{lem:Q-three-primary-coniveau-comparison}, including its
Rost--Schmid residues and determinant monodromy.  This proves
\eqref{eq:relative-Thom-comparison}.
\end{proof}

\begin{theorem}\label{thm:Q-three-primary-Postnikov-operation}
Let \(k\) be a field of characteristic \(0\), let \(n\geq4\), let \(X\)
be a smooth pure \((n+2)\)-dimensional \(k\)-scheme, and let \(L\) be a
line bundle on \(X\).  The first three-primary stable \(k\)-invariant of
the sphere induces a natural homomorphism
\[
\Delta^{\mathrm{st},X,L}_{n,3}:
H^{n-1}_{\mathrm{Nis}}(X,\mathbf K_n^{MW}(L))
\longrightarrow
H^{n+1}_{\mathrm{Nis}}(X,\mathbf K^M_{n+2}/3).
\]
With the three-primary generator normalized above, one has

\begin{equation}\label{eq:three-primary-k-invariant-P1}
\Delta^{\mathrm{st},X,L}_{n,3}
=P^1\circ\operatorname{red}_3,
\end{equation}

where the determinant twist becomes trivial after
\(\operatorname{red}_3:\mathbf K_n^{MW}(L)\to\mathbf K_n^M/3\), and
\(P^1\) is Voevodsky's first reduced power, viewed as the bistable operation

\[
P^1:
H^{2n-1,n}(-,\mathbb Z/3)
\longrightarrow
H^{2n+3,n+2}(-,\mathbb Z/3).
\]

Let \(E\) be a rank \(n\) vector bundle on \(X\), put \(L=\det E\), and
suppose that its primary Euler obstruction vanishes.  If \(\lambda\)
and \(\lambda'\) are two nullhomotopies of that obstruction and

\[
a(\lambda,\lambda')\in
H^{n-1}_{\mathrm{Nis}}(X,\mathbf K_n^{MW}(L))
\]

is a difference carrying \(\lambda\) to \(\lambda'\), then the
corresponding secondary obstructions satisfy

\begin{equation}\label{eq:three-primary-secondary-difference}
o_2(E,\lambda')\{3\}-o_2(E,\lambda)\{3\}
=\mathcal P_E^1\!\left(\operatorname{red}_3^L
 a(\lambda,\lambda')\right)
\end{equation}

in
\(H^{n+1}_{\mathrm{Nis}}(X,\mathbf K^M_{n+2}/3)\).  Consequently the
class

\begin{equation}\label{eq:canonical-three-primary-secondary-class}
\overline o_{2,3}(E)\in
\operatorname{coker}\!\left(
H^{n-1}_{\mathrm{Nis}}(X,\mathbf K_n^{MW}(L))
\xrightarrow{\ \mathcal P_E^1\operatorname{red}_3^L\ }
H^{n+1}_{\mathrm{Nis}}(X,\mathbf K^M_{n+2}/3)
\right)
\end{equation}

is independent of \(\lambda\).  It vanishes if and only if the
three-primary component of the secondary obstruction can be killed by a
change of Euler nullhomotopy.  If \(\omega_3(E)=0\), the corrected
operation reduces to \(P^1\operatorname{red}_3^L\).
\end{theorem}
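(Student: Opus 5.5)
The proof falls into three parts: build the stable homomorphism and identify it with \(P^1\operatorname{red}_3\), combine it with the parameterized comparison and the Thom--Wu correction, and then deduce the cokernel statements formally.

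\textbf{Step 1: the stable homomorphism.} The first three-primary homotopy-\(t\)-structure \(k\)-invariant of the sphere is a map
\[
\kappa^{\mathrm{st}}_3:H\mathbb Z_{(3)}\longrightarrow\Sigma^{4,2}H\mathbb Z/3.
\]
Its source layer is the bottom layer \(\mathbf K^{MW}_0\), projected to the three-primary Milnor part, and its target layer is the three-primary summand \(\mathbf K^M_*/3\{\nu_{(3)}\}\) of the first stem. Applying \(\kappa^{\mathrm{st}}_3\) to cohomology in the indicated bidegrees gives \(\Delta^{\mathrm{st},X,L}_{n,3}\). Two facts make this well defined on the twisted group:
\begin{itemize}
\item the identification of Milnor--Witt cohomology in degree \(n-1\) with the motivic cohomology group \(H^{2n-1,n}\) after \(\operatorname{red}_3\);
\item the triviality of the determinant twist modulo \(3\), since \(\langle u\rangle=1+\eta[u]\) and \(\eta=0\) on Milnor \(K\)-theory.
\end{itemize}
On the target side, Proposition~\ref{prop:Q-Gersten-three-primary} and Lemma~\ref{lem:Q-three-primary-coniveau-comparison} identify the group with \(H^{n+1}_{\mathrm{Nis}}(X,\mathbf K^M_{n+2}/3)\).

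\textbf{Step 2: identifying the stable operation.} By Lemma~\ref{lem:degree-42-integral-mod3-operations}, \(\kappa^{\mathrm{st}}_3=c\,P^1\operatorname{red}_3\) for a unique \(c\in\mathbb Z/3\). This coefficient is the main obstacle, because it must be pinned down exactly, not just up to a unit.

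By Lemma~\ref{lem:three-primary-Postnikov-base-change}, \(c\) does not change under extension of the ground field. So I would pass first to \(\mathbb Q\) and then to \(\mathbb C\). Over \(\mathbb C\), complex realization carries the homotopy-\(t\)-structure truncation to the classical Postnikov truncation of the three-local sphere in the relevant range. It also sends \(P^1\) to the topological \(P^1\) and \(\nu_{(3)}\) to \(\nu^{(3)}_{\mathrm{top}}\). This compatibility of the two truncations is the delicate point; I would check it on the first two homotopy modules using the realization isomorphism of \cite{RSO19}.

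The sign convention fixed earlier says the classical three-local \(k\)-invariant is \(+P^1\operatorname{red}_3\) with respect to \(\nu^{(3)}_{\mathrm{top}}\). Hence \(c=1\), which proves \eqref{eq:three-primary-k-invariant-P1}.

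\textbf{Step 3: the geometric difference formula.} Theorem~\ref{thm:parameterized-two-stage-comparison} gives the linear secondary-action property. It also identifies \(\Delta_E\{3\}\) with \(\Phi_E^{-1}\kappa^{\mathrm{st}}_3\Phi_E\operatorname{red}_3^L\). Substituting Step 2 and Proposition~\ref{prop:P1-Thom-Wu} yields
\[
\Delta_E\{3\}=\mathcal P^1_E\operatorname{red}_3^L.
\]
Applying this to \(a(\lambda,\lambda')\) via \eqref{eq:linear-secondary-action-property} gives \eqref{eq:three-primary-secondary-difference}. The positive Thom convention of Remark~\ref{rem:positive-Thom-convention} fixes the sign of the Wu term.

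\textbf{Step 4: the cokernel class and the remaining statements.}
\begin{itemize}
\item \emph{Independence.} Any two Euler nullhomotopies differ by some \(a\). By Step 3 their three-primary obstructions differ by an element of the image of \(\mathcal P^1_E\operatorname{red}_3^L\). Hence \(\overline o_{2,3}(E)\) in \eqref{eq:canonical-three-primary-secondary-class} is independent of \(\lambda\).
\item \emph{Vanishing criterion.} If \(o_2(E,\lambda)\{3\}=\mathcal P^1_E\operatorname{red}^L_3(b)\), replace \(\lambda\) by \((-b)\cdot\lambda\). Additivity then makes the new three-primary component zero. Conversely, if some \(\lambda'\) has vanishing three-primary component, Step 3 shows \(o_2(E,\lambda)\{3\}\) lies in the image, so the class is zero.
\item \emph{The case \(\omega_3(E)=0\).} The correction term \(\omega_3(E)\smile x\) drops out, and the corrected operation reduces to \(P^1\operatorname{red}_3^L\).
\end{itemize}
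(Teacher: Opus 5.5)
Your proposal is correct and follows the paper's proof essentially step by step. The steps are the same: you take the unique scalar from Lemma~\ref{lem:degree-42-integral-mod3-operations}, use its invariance under base change to reduce to \(\mathbb Q\) and then \(\mathbb C\), use complex realization with the fixed sign of \(\nu_{(3)}\) to pin the scalar to the classical \(P^1\operatorname{red}_3\), and finish with Theorem~\ref{thm:parameterized-two-stage-comparison} and Proposition~\ref{prop:P1-Thom-Wu}. The paper settles the ``delicate point'' you flag by realizing the two-stage object \(T^{\mathrm{mot}}\) (the fiber of \(\Delta_{\mathbb C}\)) and identifying it with \(\tau_{\leq3}\mathds1_{(3)}\), using \((\pi_1^s)_{(3)}=(\pi_2^s)_{(3)}=0\) and the RSO19 isomorphism on \(\pi_{3,2}\); it also justifies the factorization through \(\mathbf K^M_*/3\) via the idempotent splitting after inverting \(2\) and \(\eta\nu_{(3)}=0\).
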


\begin{proof}
Under the first-stem sequence of
\cite[Theorem~1.1]{RSO19}, the three-primary terms form the homotopy
module generated by the stabilized Hopf class \(\nu_{(3)}\), and the
residue maps are the Milnor residues.  Consequently
\(\Delta^{\mathrm{st},X,L}_{n,3}\) is the operation induced, in this
coniveau range, by the \(\nu_{(3)}\)-component of the first stable
\(k\)-invariant of the motivic sphere.

After inverting \(2\), the idempotent splitting of the motivic sphere
separates the \(\eta\)-trivial Milnor component from the Witt component
\cite{MorelA1}.  The relation \(\eta\nu_{(3)}=0\) in the stable first stem and
the \(\eta\)-periodicity of the Witt component show that the homotopy module
generated by \(\nu_{(3)}\) lies in the former
component.  Thus the
operation factors through
\(\mathbf K_*^{MW}/(\eta,3)=\mathbf K_*^M/3\), and its universal stable
representative is a map

\[
H\mathbb Z_{(3)}
\longrightarrow
\Sigma^{4,2}H\mathbb Z/3.
\]

Lemma~\ref{lem:degree-42-integral-mod3-operations} shows that this map is a
unique scalar multiple of \(P^1\operatorname{red}_3\).  It remains only to
determine whether the scalar is zero.  We make the realization
argument at the level of the two-stage Postnikov object, rather than merely
at the level of an element.  Work first over \(\mathbb Q\), base change to
\(\mathbb C\), and let \(T^{\mathrm{mot}}\) be the three-primary two-stage
object defined by the fiber sequence
\[
 \Sigma^{3,2}H\mathbb Z/3\longrightarrow T^{\mathrm{mot}}
 \longrightarrow H\mathbb Z_{(3)}
 \xrightarrow{\ \Delta_{\mathbb C}\ }
 \Sigma^{4,2}H\mathbb Z/3.
\]
By construction there is a map
\(\mathds1_{(3)}\to T^{\mathrm{mot}}\) inducing the identity on the bottom
homotopy module and the quotient onto the summand generated by
\(\nu_{(3)}\).

Complex Betti realization is exact and symmetric monoidal.  It sends motivic
Eilenberg--MacLane spectra to the corresponding classical spectra and sends
\(\Sigma^{4,2}\) to \(\Sigma^4\).  Therefore
\(\operatorname{Re}_{\mathbb C}(T^{\mathrm{mot}})\) is a classical
three-local spectrum with homotopy \(\mathbb Z_{(3)}\) in degree zero,
\(\mathbb Z/3\) in degree three, and zero in the intervening degrees.  The
realized sphere map is an isomorphism in degree zero.  It is also an
isomorphism in degree three: complex realization
\[
 \pi_{3,2}(\mathds1_{\mathbb C})
 \longrightarrow \pi_3^s
\]
is an isomorphism.  Indeed,
\cite[Theorem~1.1 and Remark~5.8]{RSO19} give
\(\pi_{3,2}(\mathds1_{\mathbb C})\cong\mathbb Z/24\), generated by the
motivic Hopf element \(\nu\), and naturality of the Hopf construction sends
\(\nu\) to the classical Hopf element \(\nu_{\mathrm{top}}\), a generator of
\(\pi_3^s\cong\mathbb Z/24\).  After localization at \(3\), the displayed map
therefore carries \(\nu_{(3)}\) to the normalized generator
\(\nu_{\mathrm{top}}^{(3)}\)
of \((\pi_3^s)_{(3)}\cong\mathbb Z/3\).  Since
\((\pi_1^s)_{(3)}=(\pi_2^s)_{(3)}=0\), the realized map exhibits
\(\operatorname{Re}_{\mathbb C}(T^{\mathrm{mot}})\) as
\(\tau_{\leq3}\mathds1_{(3)}\).

It follows that the realization of \(\Delta_{\mathbb C}\) is the first
three-local \(k\)-invariant of the classical sphere spectrum.  This is the
nonzero operation \(P^1\operatorname{red}_3\), equivalently the first
three-primary differential in the stable cohomotopy Atiyah--Hirzebruch
spectral sequence \cite{AdamsStable}.  Hence the motivic coefficient is
nonzero.  By the three-primary sign convention fixed before the theorem,
this coefficient is \(1\), which proves
\eqref{eq:three-primary-k-invariant-P1}.  This normalization is fixed over
\(\mathbb Q\).  Lemma~\ref{lem:three-primary-Postnikov-base-change}
shows that the Postnikov invariant itself, and not only the abstract
generator of the operation group, is preserved by extension of fields.
Hence the conclusion
holds over every characteristic-zero field without choosing an embedding
of \(k\) into \(\mathbb C\).

Theorem~\ref{thm:parameterized-two-stage-comparison} identifies the
three-primary component of the geometric change with the Thom conjugate of
the stable operation just computed.  Proposition~\ref{prop:P1-Thom-Wu}
identifies that conjugate with \(\mathcal P_E^1\).  Applying it to the
difference \(a(\lambda,\lambda')\) gives
\eqref{eq:three-primary-secondary-difference}.  The quotient in
\eqref{eq:canonical-three-primary-secondary-class} is precisely the orbit
quotient for that action, which proves the final assertions.
\end{proof}

\begin{corollary}\label{cor:complete-secondary-orbit-criterion}
Let \(k\) be a field of characteristic \(0\) with
\(\operatorname{cd}_2(k)\leq1\), let \(d\geq7\), let \(X\) be a smooth
affine pure \(d\)-dimensional \(k\)-scheme, and let \(E\) be a vector
bundle of rank \(d-2\) on \(X\).  Put \(n=d-2\) and \(L=\det E\).
Assume that \(e(E)=0\).  Then \(E\) splits
off a trivial line bundle if and only if the following successive classes
vanish:
\[
\overline o_{2,3}(E),\qquad
\mathfrak h_2(E),\qquad
\mathfrak n_2(E).
\]
Here the first class is
\eqref{eq:canonical-three-primary-secondary-class}, the second is
\eqref{eq:canonical-hermitian-secondary-class}, and the third is defined by
\eqref{eq:canonical-residual-secondary-class} once
\(\mathfrak h_2(E)=0\).  Thus this criterion does not require the whole
secondary coefficient group to vanish.  It is the exact orbit criterion
for the three-primary
Thom-corrected reduced-power class, the two-primary hermitian class, and the
residual Milnor class modulo \(8\).
\end{corollary}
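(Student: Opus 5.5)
The plan is to run obstruction theory for the fiber sequence \(\A^n\setminus0\to BSL_{n-1}\to BSL_n\), or more precisely its \(GL\)-version \(BGL_{n-1}\to BGL_n\), whose fiber is \(F_n=\A^n\setminus0\) with \(n=d-2\). A bundle \(E\) of rank \(n\) splits off a trivial line bundle exactly when its classifying map \(X\to BGL_n\) lifts to \(BGL_{n-1}\). Since \(X\) is affine of dimension \(d=n+2\), affine representability reduces this lifting problem to the relative Moore--Postnikov tower of the fibration. We then only need the stages whose obstruction groups \(H^{i+1}_{\mathrm{Nis}}(X,\pi_i^{\A^1}(F_n)(L))\) can be nonzero, namely \(i\le d-1=n+1\).

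The fiber is \((n-2)\)-\(\A^1\)-connected, so the relevant coefficients are:
\begin{itemize}
\item \(\pi_{n-1}=\mathbf K^{MW}_n\), carrying the primary (Euler) obstruction \(e(E)\in H^n(X,\mathbf K^{MW}_n(L))\);
\item \(\pi_n=\mathcal Q_n\), carrying the secondary obstruction in \(H^{n+1}(X,\mathcal Q_n(L))\);
\item \(\pi_{n+1}=\mathcal P_n\), carrying a tertiary obstruction in \(H^{n+2}(X,\mathcal P_n(L))\).
\end{itemize}

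\textbf{Step 1 (tertiary stage).} I will show the tertiary group vanishes identically. Its top Rost--Schmid term involves \((\mathcal P_n)_{-(n+2)}=(\mathcal P_{d-2})_{-d}\cong\mathbf K^M_2/2\). Theorem~\ref{thm:P-top-cd2-vanishing} applies because \(d\ge7\) and \(\operatorname{cd}_2(k)\le1\), and it gives \(H^{d}_{\mathrm{Nis}}(X,\mathcal P_{d-2}(L))=0\). Hence every lift through the second stage extends further. The same cohomological-dimension bound makes all higher stages irrelevant, so a lift exists if and only if, for some Euler nullhomotopy \(\lambda\), the secondary obstruction \(o_2(E,\lambda)\) is zero.

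\textbf{Step 2 (primary stage).} The hypothesis \(e(E)=0\) provides nullhomotopies \(\lambda\). They form an orbit under \(\mathcal A_n^{X,L}=H^{n-1}(X,\mathbf K^{MW}_n(L))\). By Theorem~\ref{thm:parameterized-two-stage-comparison}, the linear secondary-action property of Definition~\ref{def:linear-secondary-action} holds with a change homomorphism \(\Delta_E\). The set \(\{o_2(E,\lambda)\}\) is therefore a single coset of \(\operatorname{im}\Delta_E\). So \(E\) splits if and only if \(o_2(E,\lambda)\in\operatorname{im}\Delta_E\) for one, hence every, \(\lambda\).

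\textbf{Step 3 (primary decomposition).} Using the splitting \eqref{eq:Q-cohomology-primary-splitting} of Proposition~\ref{prop:Q-Gersten-three-primary}, and naturality of \(\Delta_E\) with respect to it, the condition separates into a \(3\)-primary and a \(2\)-primary condition.

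At the prime \(3\), Theorem~\ref{thm:Q-three-primary-Postnikov-operation} identifies \(\Delta_E\{3\}\) with \(\mathcal P^1_E\operatorname{red}_3^L\). The \(3\)-primary condition is thus exactly \(\overline o_{2,3}(E)=0\).

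At the prime \(2\), Theorem~\ref{thm:Q-two-primary-action-normal-form} shows that \(o_{2,2}\) can be killed if and only if \(\mathfrak h_2(E)=0\) and then \(\mathfrak n_2(E)=0\).

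\textbf{Main obstacle.} The delicate point is that the \(2\)- and \(3\)-primary components must be killed by a \emph{common} \(a\in\mathcal A_n^{X,L}\), not by independent choices. To handle this, I will decompose \(a\) itself using the Chinese remainder theorem. The target is annihilated by \(24\), so \(\Delta_E\) factors through \(\mathcal A/24\). Writing \(a=8b+3c\) with suitable \(b,c\in\mathcal A\), we get \(\Delta_E(8b)\) purely \(3\)-primary and \(\Delta_E(3c)\) purely \(2\)-primary, with \(3\) and \(8\) acting invertibly on the respective summands. The two conditions can therefore be met independently and then combined.
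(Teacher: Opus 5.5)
Your proposal is correct and is essentially the paper's proof. It uses the vanishing of the tertiary group from Theorem~\ref{thm:P-top-cd2-vanishing}, the primewise orbit criteria from Theorems~\ref{thm:Q-three-primary-Postnikov-operation} and~\ref{thm:Q-two-primary-action-normal-form}, and a Chinese-remainder combination of the two changes of Euler nullhomotopy. The only difference is the exponent you rely on: the paper uses that the two-primary group is killed by \(16\) (read off from \eqref{eq:Q-two-primary-Steenrod-exact-sequence}) and takes \(16a_3+33a_2\). Your claim that the whole target is killed by \(24\) is true but needs a word of justification: the degree-\((n+1)\) cochains have coefficients \(E_1(F)\), which \eqref{eq:first-stem-E1-splitting} shows are killed by \(24\), whereas \eqref{eq:Q-first-contracted-extension} alone gives only \(48\). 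In any case, any exponent of the form \(2^a\cdot 3\) suffices for your argument.
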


\begin{proof}
Theorem~\ref{thm:Q-three-primary-Postnikov-operation} says that the first
class vanishes exactly when a change of Euler nullhomotopy can kill the
three-primary secondary obstruction.  Theorem~\ref{thm:Q-two-primary-action-normal-form}
gives the corresponding
two-stage assertion at the prime \(2\).

These primewise changes can be made simultaneously.  Indeed, the
three-primary target is annihilated by \(3\).  By
\eqref{eq:Q-two-primary-Steenrod-exact-sequence}, the relevant
two-primary cohomology group is an extension of a subgroup of
\(\mathcal C_{n,2}^{X,L}\), which is annihilated by \(2\), by a quotient
of \(H^{n+1}_{\mathrm{Nis}}(X,\mathbf K^M_{n+2}/8)\), which is annihilated
by \(8\).  It is therefore annihilated by \(16\).  If
\(a_3\) kills the three-primary class, then \(16a_3\) has the same effect
at \(3\) and has no effect at \(2\).  If \(a_2\) kills the two-primary
class, then \(33a_2\) has the same effect at \(2\) and has no effect at
\(3\).  Adding these two changes kills the full secondary obstruction.

The next obstruction lies in
\(H^d_{\mathrm{Nis}}(X,\mathcal P_{d-2}(L))\), which vanishes by
Theorem~\ref{thm:P-top-cd2-vanishing}; all later obstruction groups vanish
by dimension.  Hence the classifying map lifts to \(BGL_{d-3}\), which is
equivalent to splitting a trivial line bundle from \(E\).  Conversely, such a
splitting supplies a lift through the whole Moore--Postnikov tower and
forces all three orbit classes to vanish.
\end{proof}

\begin{corollary}\label{cor:action-surjectivity-corank-two}
Use the notation of
Corollary~\ref{cor:complete-secondary-orbit-criterion}.  Keep its
assumptions on \(k,d,X\), and \(E\), but do not assume that \(e(E)=0\).
Suppose that, if \(e(E)=0\), each of the three homomorphisms
\[
\begin{aligned}
\mathcal P_E^1\operatorname{red}_3^L&:\mathcal A_n^{X,L}
 \longrightarrow
 H^{n+1}_{\mathrm{Nis}}(X,\mathbf K^M_{n+2}/3),\\
\Delta_{E,2}^{\mathrm h}&:\mathcal A_n^{X,L}
 \longrightarrow\mathcal C_{n,2}^{X,L},\\
\Delta_{E,2}^{\nu}&:\ker(\Delta_{E,2}^{\mathrm h})
 \longrightarrow\mathcal R_{n,2}^{X,L}
\end{aligned}
\]
are surjective.  Then \(E\) splits off a trivial line bundle if and only if
\(e(E)=0\).
\end{corollary}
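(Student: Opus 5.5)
The plan is to reduce directly to Corollary~\ref{cor:complete-secondary-orbit-criterion}. One direction is formal. If \(E\cong E'\oplus\mathcal O\), then the distinguished nowhere-zero section nullhomotopes the Euler class; this is exactly the nullhomotopy \(h_n\) used in the proof of Lemma~\ref{lem:KO-euler-transgression}, now applied after pulling back along the classifying map. Hence \(e(E)=0\).

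For the converse, assume \(e(E)=0\). By Corollary~\ref{cor:complete-secondary-orbit-criterion}, it suffices to show that the three classes \(\overline o_{2,3}(E)\), \(\mathfrak h_2(E)\) and \(\mathfrak n_2(E)\) vanish. Each lives in a cokernel of one of the three homomorphisms assumed surjective.

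\begin{itemize}
\item The class \(\overline o_{2,3}(E)\) lies in the cokernel \eqref{eq:canonical-three-primary-secondary-class} of \(\mathcal P_E^1\operatorname{red}_3^L\). Surjectivity makes this cokernel zero.
\item The class \(\mathfrak h_2(E)\) lies in \(\operatorname{coker}(\Delta_{E,2}^{\mathrm h})\), which is also zero by hypothesis.
\item Since \(\mathfrak h_2(E)=0\), Theorem~\ref{thm:Q-two-primary-action-normal-form} lets us choose an Euler nullhomotopy \(\lambda\) whose hermitian projection vanishes. Then \(\mathfrak n_2(E)\) is defined and lies in \(\operatorname{coker}(\Delta_{E,2}^{\nu})\), which vanishes by the third surjectivity hypothesis.
\end{itemize}

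Corollary~\ref{cor:complete-secondary-orbit-criterion} then gives the splitting.

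The only point needing care is the bookkeeping of which nullhomotopy is used at each stage. The third class depends on having first killed the hermitian projection. Its cokernel is formed with the restricted action on \(\ker(\Delta_{E,2}^{\mathrm h})\), and the surjectivity hypothesis is stated for exactly that restricted map. The simultaneous adjustment at the primes \(2\) and \(3\) is already carried out in the proof of Corollary~\ref{cor:complete-secondary-orbit-criterion}. That proof also handles the vanishing of the tertiary group \(H^d_{\mathrm{Nis}}(X,\mathcal P_{d-2}(L))\) via Theorem~\ref{thm:P-top-cd2-vanishing}.

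So no new obstacle arises. The step I would check most carefully is that the hypotheses \(\operatorname{cd}_2(k)\leq1\), \(d\geq7\) and affineness are carried over verbatim, so that the cited corollary applies without modification.
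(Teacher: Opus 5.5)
Your proposal is correct and follows the paper's proof: each of $\overline o_{2,3}(E)$, $\mathfrak h_2(E)$ and $\mathfrak n_2(E)$ lies in the cokernel of one of the three maps assumed surjective, so Corollary~\ref{cor:complete-secondary-orbit-criterion} gives the splitting, and necessity is naturality of the Euler class. For necessity, the cleaner justification is that a splitting lifts the classifying map to $BGL_{d-3}$, so the primary Moore--Postnikov obstruction vanishes; the nullhomotopy $h_n$ you cite from Lemma~\ref{lem:KO-euler-transgression} is a nullhomotopy of the $KO$-Euler class, not of the obstruction-theoretic Euler class $e(E)$ used here.
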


\begin{proof}
If the Euler class vanishes, the three successive orbit groups containing
\(\overline o_{2,3}(E)\), \(\mathfrak h_2(E)\), and
\(\mathfrak n_2(E)\) vanish by the three surjectivity assumptions.
Corollary~\ref{cor:complete-secondary-orbit-criterion} therefore gives the
splitting.  Necessity follows from naturality of the Euler class.
\end{proof}

\begin{corollary}\label{cor:Q-secondary-three-primary-killable}
Let \(k,n,X,E\), and \(L=\det E\) be as in the geometric part of
Theorem~\ref{thm:Q-three-primary-Postnikov-operation}; in particular,
assume that the Euler obstruction vanishes.  Suppose that

\[
\mathcal P_E^1\operatorname{red}_3^L:
H^{n-1}_{\mathrm{Nis}}(X,\mathbf K_n^{MW}(L))
\longrightarrow
H^{n+1}_{\mathrm{Nis}}(X,\mathbf K^M_{n+2}/3)
\]

is surjective.  Once the Euler obstruction vanishes, the Euler
nullhomotopy can be chosen so that the three-primary component of the
secondary obstruction is zero.  Thus only its two-primary component remains.
\end{corollary}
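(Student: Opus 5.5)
The plan is to deduce this directly from Theorem~\ref{thm:Q-three-primary-Postnikov-operation} as an orbit statement. Assume the Euler obstruction vanishes, and fix any Euler nullhomotopy \(\lambda\). Let \(o_2(E,\lambda)\{3\}\in H^{n+1}_{\mathrm{Nis}}(X,\mathbf K^M_{n+2}/3)\) be the three-primary component of its secondary obstruction. This uses the primary decomposition \eqref{eq:Q-cohomology-primary-splitting} of Proposition~\ref{prop:Q-Gersten-three-primary}.

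\emph{Step 1: solve for the change.} By the assumed surjectivity of \(\mathcal P_E^1\operatorname{red}_3^L\), choose
\[
a\in H^{n-1}_{\mathrm{Nis}}(X,\mathbf K_n^{MW}(L))
\quad\text{with}\quad
\mathcal P_E^1\operatorname{red}_3^L(a)=-\,o_2(E,\lambda)\{3\}.
\]

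\emph{Step 2: apply the change.} Put \(\lambda'=a\cdot\lambda\), which is again an Euler nullhomotopy, since the group acts on primary lifts. Then \(a\) is a difference carrying \(\lambda\) to \(\lambda'\). Formula \eqref{eq:three-primary-secondary-difference} gives
\[
o_2(E,\lambda')\{3\}=o_2(E,\lambda)\{3\}+\mathcal P_E^1\operatorname{red}_3^L(a)=0.
\]
Equivalently, the cokernel in \eqref{eq:canonical-three-primary-secondary-class} is zero, so \(\overline o_{2,3}(E)=0\), and the criterion stated in that theorem produces the required nullhomotopy.

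\emph{Step 3: the residual obstruction.} It follows that \(o_2(E,\lambda')\) lies in the summand \(H^{n+1}_{\mathrm{Nis}}(X,\mathcal Q_n(L))\{2\}\), again by \eqref{eq:Q-cohomology-primary-splitting}. This gives the final sentence of the statement.

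The only point needing care is whether the difference formula holds for an arbitrary \(a\), and not merely for some preferred representative. This is ensured by the linear secondary-action property of Theorem~\ref{thm:parameterized-two-stage-comparison}, which guarantees independence of the base lift and of the representative \(a\). I expect no genuine obstacle beyond citing that property.
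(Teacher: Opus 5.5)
Your proposal is correct and follows the paper's own argument: the paper also proves this directly from \eqref{eq:three-primary-secondary-difference}, changing the initial Euler nullhomotopy by a preimage under \(\mathcal P_E^1\operatorname{red}_3^L\) of the negative of its three-primary secondary obstruction. Your Step~3 spells out the last sentence via the primary splitting \eqref{eq:Q-cohomology-primary-splitting}, which the paper leaves implicit.
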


\begin{proof}
This is immediate from
\eqref{eq:three-primary-secondary-difference}: change the initial
nullhomotopy by a preimage of the negative of its three-primary secondary
obstruction.
\end{proof}

\begin{corollary}\label{cor:Q-secondary-two-primary}
In the situation of Proposition~\ref{prop:Q-Gersten-three-primary}, if
\[
H^{n+1}_{\mathrm{Nis}}
 \left(X,\mathbf K^M_{n+2}/3\right)=0,
\]
then \(H^{n+1}_{\mathrm{Nis}}(X,\mathcal Q_n(L))\) is \(2\)-primary.
\end{corollary}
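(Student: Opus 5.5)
This is an immediate consequence of the primary decomposition \eqref{eq:Q-cohomology-primary-splitting} in Proposition~\ref{prop:Q-Gersten-three-primary}. The plan is to check that the decomposition accounts for all torsion, so that killing the three-primary summand leaves only two-primary classes.

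First I would recall why the cohomology group is torsion with only the primes $2$ and $3$ occurring. The relevant contractions of $\mathcal Q_n$ are \eqref{eq:Q-penultimate-contracted-extension}, \eqref{eq:Q-first-contracted-extension} and \eqref{eq:Q-terminal-contraction}. Each is an extension of a sheaf of exponent dividing $2$ ($\mathbf{GW}^0_1$ on fields, and $\mathbf{GW}^3_0$) by a Milnor $K$-theory sheaf modulo $24$. Hence every coefficient in the three codimensions $n$, $n+1$, $n+2$ is annihilated by $48$. The degree-$(n+1)$ Rost--Schmid cohomology is a subquotient of the codimension-$(n+1)$ term, so $H^{n+1}_{\mathrm{Nis}}(X,\mathcal Q_n(L))$ is also annihilated by $48$.

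An abelian group killed by $48=2^4\cdot3$ splits canonically as the direct sum of its $2$-primary and $3$-primary subgroups. This is precisely \eqref{eq:Q-cohomology-primary-splitting}, and it identifies the $3$-primary subgroup with $H^{n+1}_{\mathrm{Nis}}(X,\mathbf K^M_{n+2}/3)$ via \eqref{eq:Q-cohomology-three-primary}. Under the stated hypothesis this summand vanishes. So the group equals its $2$-primary subgroup, which is the claim.

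There is no real obstacle. The only point to check is that the cited splitting really covers all of the group, meaning no primes other than $2$ and $3$ appear. The exponent bound above settles this.
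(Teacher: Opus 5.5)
Your proof is correct and is the same as the paper's: the paper also observes that the Rost--Schmid terms in the three relevant codimensions have no primary components other than $2$ and $3$, and then uses the primary splitting of Proposition~\ref{prop:Q-Gersten-three-primary} to kill the $3$-primary summand. Your exponent bound by $48$, together with the fact that degree-$(n+1)$ cohomology is a subquotient of the codimension-$(n+1)$ term, just makes the paper's first sentence explicit.
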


\begin{proof}
The Rost--Schmid terms in the three relevant degrees have no primary
components other than \(2\) and \(3\).  Proposition~\ref{prop:Q-Gersten-three-primary}
kills the \(3\)-primary component under the displayed hypothesis.
\end{proof}

\begin{remark}\label{rem:Q-secondary-true-boundary}
Proposition~\ref{prop:Q-contracted-Gersten-tail} makes the remaining secondary
problem explicit, but it does not force its vanishing.  If
$\dim X=n+2$, then a point of codimension $n+1$ has residue field of
transcendence degree $1$ over $k$, rather than a finite extension of $k$.
Consequently, even when $\operatorname{cd}_2(k)\leq1$, the middle term in
\eqref{eq:Q-Gersten-tail} need not vanish.  The \(3\)-primary coefficient
portion is explicit: Proposition~\ref{prop:Q-Gersten-three-primary}
identifies it with ordinary Milnor \(K\)-theory cohomology, and
Theorem~\ref{thm:Q-three-primary-Postnikov-operation} identifies the
corresponding stable \(k\)-invariant with
\(P^1\operatorname{red}_3\), and
Theorem~\ref{thm:parameterized-two-stage-comparison} identifies its Thom
conjugate with the geometric change homomorphism.  The resulting cokernel
class need not vanish and is not controlled by a bound on
\(2\)-cohomological dimension.

The genuinely hermitian part of the differential is \(2\)-primary.
Writing \(\{2\}\) for the \(2\)-primary subgroup, in the last three degrees
its coefficient extensions have the form
\[
\begin{aligned}
0&\longrightarrow\mathbf K^M_2/8
\longrightarrow(\mathcal Q_n)_{-n}\{2\}
\longrightarrow\mathbf{GW}^0_1\longrightarrow0,\\
0&\longrightarrow\mathbf K^M_1/8
\longrightarrow(\mathcal Q_n)_{-(n+1)}\{2\}
\longrightarrow\underline{\mathbb Z/2}\longrightarrow0,
\end{aligned}
\]
followed by the terminal coefficient \(\mathbb Z/8\).  On every residue
field, the quotient in the first line is explicitly
\(\mathbf K^M_1/2\oplus\mathbb Z/2\) by
\eqref{eq:GW10-field-explicit}.  The corresponding first-stem extension is
not split in general
\cite[Theorem~1.1 and the discussion following it]{RSO19}.  Nevertheless,
Proposition~\ref{prop:Q-explicit-first-stem-residues} computes the residue
without choosing a splitting.  The relation
\(12\nu=\eta^2\eta_{\mathrm{top}}\) turns the non-additivity of the
spinor-norm lifts and their cross-residue into multiplication by \(4\) on
the Milnor kernels modulo \(8\); see
Corollary~\ref{cor:Q-two-primary-normal-form}.  Hence the coefficient-level
differential in \eqref{eq:Q-Gersten-tail} is now completely determined.

This is different from the top-degree Gersten argument in
\cite[Theorem~7.1.1]{ABH}: there the final residue fields are algebraically
closed and the relevant last contraction vanishes.  Here the unresolved
object is the cohomology of the explicit \(2\)-primary complex, and then the
two-primary secondary Moore--Postnikov class represented in that cohomology.
Neither the displayed residue formula nor the hypothesis
\(\operatorname{cd}_2(k)\leq1\) forces this class to vanish on a general
smooth affine scheme.
\end{remark}

\begin{proposition}\label{prop:Q-vector-bundle-vanishing}
Let \(k\) be a perfect field, let \(n\geq2\), and let \(S\) be a smooth
\(k\)-scheme of dimension at most \(n\).  If
\[
p:X=\operatorname{Tot}(V)\longrightarrow S
\]
is the total space of a vector bundle and \(L_0\) is a line bundle on
\(S\), then
\[
H^{n+1}_{\mathrm{Nis}}\!
 \left(X,\mathcal Q_n(p^*L_0)\right)=0.
\]
\end{proposition}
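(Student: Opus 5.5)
The natural route is homotopy invariance.  The projection \(p:X=\operatorname{Tot}(V)\to S\) is an \(\A^1\)-weak equivalence, with the zero section \(s:S\to X\) as a homotopy inverse.  The coefficient sheaf \(\mathcal Q_n(p^*L_0)\) is the twist of a strictly \(\A^1\)-invariant sheaf by a line bundle pulled back from \(S\).  Nisnevich cohomology with coefficients in a strictly \(\A^1\)-invariant sheaf is \(\A^1\)-invariant; for twisted coefficients I would use the Borel-construction/Thom-space description of twisted cohomology, compatible with pullback along \(p\).  This gives
\[
p^*:H^{n+1}_{\mathrm{Nis}}(S,\mathcal Q_n(L_0))\xrightarrow{\ \cong\ }H^{n+1}_{\mathrm{Nis}}(X,\mathcal Q_n(p^*L_0)).
\]
Concretely, \(s^*p^*=\mathrm{id}\), and \(p^*s^*=\mathrm{id}\) follows from the \(\A^1\)-homotopy \((v,t)\mapsto tv\) between \(\mathrm{id}_X\) and \(sp\).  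This homotopy is compatible with the line bundle \(p^*L_0\), since that bundle is pulled back from \(S\).

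Next comes vanishing on \(S\).  Since \(\dim S\le n<n+1\), Nisnevich cohomological dimension kills \(H^{n+1}_{\mathrm{Nis}}(S,-)\) for every sheaf of abelian groups.  The Rost--Schmid complex makes this visible: it has no terms in codimension \(>\dim S\).  Hence the source is zero, and so is the target.

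I expect the main obstacle to be justifying \(\A^1\)-invariance for twisted coefficients with \(\mathcal Q_n\) rather than an unramified Milnor--Witt sheaf.  I would handle it in one of two ways.  One option is to pass to a Nisnevich cover of \(S\) trivializing \(L_0\) and \(V\), use Morel's theorem that strictly \(\A^1\)-invariant sheaves have \(\A^1\)-invariant cohomology, and glue by Mayer--Vietoris with naturality of \(p^*\).  The other is to argue directly with the twisted Rost--Schmid complex, which is functorial for flat pullback and satisfies homotopy invariance for vector bundle torsors (Morel, Ch.~5).

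Should the Gersten route be chosen, perfectness of \(k\) is used exactly there.  The hypothesis \(n\ge2\) only ensures that \(\mathcal Q_n\) is a sheaf of abelian groups and hence strictly \(\A^1\)-invariant.
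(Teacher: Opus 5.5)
Your proposal is correct and follows the paper's proof essentially verbatim: the scalar-multiplication homotopy to the zero section preserves \(p^*L_0\), so homotopy invariance identifies the group with \(H^{n+1}_{\mathrm{Nis}}(S,\mathcal Q_n(L_0))\), and that group vanishes because the Nisnevich cohomological dimension of \(S\) is at most \(\dim S\leq n\). Your local-trivialization argument for twisted coefficients fills in a step the paper asserts in one line; one small correction is that perfectness of \(k\) is not used only in the Gersten route, because Morel's theorem that the abelian sheaf \(\mathcal Q_n\) is strictly \(\A^1\)-invariant already requires it.
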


\begin{proof}
The sheaf \(\mathcal Q_n\) is an abelian higher
\(\mathbb A^1\)-homotopy sheaf, hence is strictly
\(\mathbb A^1\)-invariant.  The same is true after any line bundle twist.
Let \(s:S\to X\) be the zero section.  Fiberwise scalar
multiplication defines an \(\mathbb A^1\)-homotopy from
\(\operatorname{id}_X\) to \(sp\), and it preserves the pullback
\(p^*L_0\).  Strict homotopy invariance therefore gives mutually inverse
isomorphisms
\[
p^*:H^i_{\mathrm{Nis}}\!
 \left(S,\mathcal Q_n(L_0)\right)
\xrightarrow{\ \cong\ }
H^i_{\mathrm{Nis}}\!
 \left(X,\mathcal Q_n(p^*L_0)\right)
\]
for every \(i\).  The left-hand group is zero for \(i=n+1\), since the
Nisnevich cohomological dimension of \(S\) is at most
\(\dim S\leq n\).
\end{proof}

Proposition~\ref{prop:Q-explicit-first-stem-residues} determines the last
coefficient-level input in the general corank-two splitting problem.
Proposition~\ref{prop:Q-vector-bundle-vanishing} forces the resulting
secondary cohomology group to vanish for vector-bundle total spaces over
sufficiently low-dimensional bases.  The next statement records the
geometric consequence whenever that cohomology group vanishes.

Our twist notation follows the determinant-action convention in
\cite[Section~6.1]{AF15b}: $\mathcal F(\det E)$ denotes the local-system twist
induced by the determinant action.  Under the alternative Chow--Witt
convention using dual orientation line bundles, the same coefficient is written
with $(\det E)^{-1}$; this is a change of convention, not a different
obstruction group.

\begin{theorem}\label{thm:corank-two-splitting}
Let \(k\), \(d\), and \(X=\operatorname{Spec}R\) satisfy the hypotheses of
Theorem~\ref{thm:P-top-cd2-vanishing}, with \(X\) smooth affine.  Let \(E\) be
a vector bundle of rank \(d-2\) on \(X\), and assume
\begin{equation}\label{eq:secondary-Q-vanishing}
H^{d-1}_{\mathrm{Nis}}\!\left(
X,\mathcal Q_{d-2}(\det E)\right)=0.
\end{equation}
Then \(E\) splits off a trivial line bundle if and only if its Euler class \(e(E)\)
vanishes.
\end{theorem}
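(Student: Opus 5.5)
The plan is to run the Moore--Postnikov tower of the stabilization map \(BGL_{d-3}\to BGL_{d-2}\), whose homotopy fiber is \(F_{d-2}=\A^{d-2}\setminus0\), and to kill the obstructions one stage at a time. Put \(n=d-2\) and \(L=\det E\). Since \(X\) is smooth affine, affine representability identifies splitting off a trivial line bundle with lifting the classifying map \(X\to BGL_n\) to \(BGL_{n-1}\). The obstructions live in \(H^{i+1}_{\mathrm{Nis}}(X,\pi_i^{\A^1}(F_n)(L))\). The fiber is \(\A^1\)-\((n-2)\)-connected, and \(X\) has Nisnevich cohomological dimension \(d=n+2\). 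So only \(i=n-1,n,n+1\) can contribute:
\[
H^{n}(X,\mathbf K^{MW}_n(L)),\qquad H^{n+1}(X,\mathcal Q_n(L)),\qquad H^{n+2}(X,\mathcal P_n(L)).
\]

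For necessity, the primary obstruction is Morel's Euler class \(e(E)\). If \(E\cong E'\oplus\mathcal O\), the nowhere-vanishing section shows \(e(E)=0\) by naturality.

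For sufficiency, assume \(e(E)=0\) and choose any primary lift. The secondary obstruction lies in \(H^{d-1}_{\mathrm{Nis}}(X,\mathcal Q_{d-2}(\det E))\), which vanishes by \eqref{eq:secondary-Q-vanishing}, so it is automatically zero for every choice of Euler nullhomotopy. No use of \(\Delta_E\) or of the orbit classes is needed here. The tertiary obstruction lies in \(H^d_{\mathrm{Nis}}(X,\mathcal P_{d-2}(L))\), which is zero by Theorem~\ref{thm:P-top-cd2-vanishing}; this is exactly where \(d\geq7\), characteristic \(0\) and \(\operatorname{cd}_2(k)\leq1\) enter. All higher obstruction groups vanish for dimension reasons, so the lift exists. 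Alternatively, one could deduce the whole statement from Corollary~\ref{cor:complete-secondary-orbit-criterion}: when the entire secondary group vanishes, the three orbit classes lie in zero groups.

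The main point needing care is bookkeeping of the twists. The coefficient local system on \(\pi_i^{\A^1}(F_n)\) over \(BGL_n\) is the determinant action, so after pulling back along \(E\) the coefficients are \(\mathcal Q_n(\det E)\) and \(\mathcal P_n(\det E)\) in the convention of \cite[Section~6.1]{AF15b}. I would check that the hypothesis is stated with the same twist, noting the remark preceding the theorem about dual conventions. I would also confirm that the tower for \(GL\) rather than \(SL\) has fiber \(F_n\) with these coefficients, using the homogeneous-space model \(BP_n\simeq BGL_{n-1}\) from Lemma~\ref{lem:augmentation-one-connectivity}.
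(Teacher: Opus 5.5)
Your proof is correct and is essentially the paper's own argument: the Moore--Postnikov tower of \(\A^{d-2}\setminus0\to BGL_{d-3}\to BGL_{d-2}\), with the Euler class as the primary obstruction, the secondary group killed by \eqref{eq:secondary-Q-vanishing}, the tertiary group killed by Theorem~\ref{thm:P-top-cd2-vanishing}, and all later groups killed by dimension. One small imprecision is in your optional alternative via Corollary~\ref{cor:complete-secondary-orbit-criterion}: the hermitian class \(\mathfrak h_2(E)\) lives in a quotient of \(\operatorname{coker}(Sq_L^2)\), and \eqref{eq:secondary-Q-vanishing} does not force that group to vanish, so \(\mathfrak h_2(E)\) is zero because the obstruction \(o_{2,2}(E,\lambda)\) itself lies in a zero group, not because the target group of \(\mathfrak h_2(E)\) is zero.
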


\begin{proof}
Apply the Moore--Postnikov obstruction theory of \cite[Section~6.1]{AF15b}
to
\[
\A^{d-2}\setminus0\longrightarrow BGL_{d-3}\longrightarrow BGL_{d-2}.
\]
Because the fiber is \(\A^1\)-\((d-4)\)-connected and \(X\) has Nisnevich
cohomological dimension at most \(d\), the only potentially nonzero
obstructions to lifting the classifying map of \(E\) lie successively in
\[
\begin{aligned}
&H^{d-2}_{\mathrm{Nis}}
  (X,\mathbf K^{MW}_{d-2}(\det E)),\\
&H^{d-1}_{\mathrm{Nis}}
  (X,\mathcal Q_{d-2}(\det E)),\\
&H^d_{\mathrm{Nis}}
  (X,\mathcal P_{d-2}(\det E)).
\end{aligned}
\]
The first obstruction is the Euler class.  The second group is zero by
\eqref{eq:secondary-Q-vanishing}, and the third is zero by
Theorem~\ref{thm:P-top-cd2-vanishing}.  All subsequent obstruction groups
vanish by dimension.  Hence \(e(E)=0\) is sufficient for the lift to
\(BGL_{d-3}\), and such a lift is equivalent to splitting a trivial line bundle.
Necessity is immediate.
\end{proof}

\begin{theorem}\label{thm:corank-two-splitting-reduced-power}
Let \(k\), \(d\), \(X\), and \(E\) satisfy the hypotheses of
Theorem~\ref{thm:corank-two-splitting}, except that
\eqref{eq:secondary-Q-vanishing} is replaced by the following two
conditions:

\begin{equation}\label{eq:secondary-Q-two-primary-vanishing}
H^{d-1}_{\mathrm{Nis}}\!\left(
X,\mathcal Q_{d-2}(\det E)\right)\{2\}=0,
\end{equation}

and the homomorphism

\begin{equation}\label{eq:corank-two-P1-surjectivity}
\begin{aligned}
\mathcal P_E^1\operatorname{red}_3^{\det E}:\quad
H^{d-3}_{\mathrm{Nis}}\!\left(
X,\mathbf K_{d-2}^{MW}(\det E)\right)
\longrightarrow
H^{d-1}_{\mathrm{Nis}}\!\left(
X,\mathbf K_d^M/3\right)
\end{aligned}
\end{equation}

is surjective.  Then \(E\) splits
off a trivial line bundle if and only if its Euler class vanishes.
\end{theorem}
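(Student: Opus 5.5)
The proof will follow that of Theorem~\ref{thm:corank-two-splitting}. The only change is in the secondary obstruction group. We put \(n=d-2\) and \(L=\det E\), and run the Moore--Postnikov obstruction theory of \cite[Section~6.1]{AF15b} for
\[
\A^{n}\setminus0\longrightarrow BGL_{n-1}\longrightarrow BGL_{n}.
\]
Since \(X\) has Nisnevich cohomological dimension at most \(d\), there are only three potentially nonzero obstructions. The first is the Euler class in \(H^{n}_{\mathrm{Nis}}(X,\mathbf K^{MW}_n(L))\). The second is a secondary class \(o_2(E,\lambda)\in H^{n+1}_{\mathrm{Nis}}(X,\mathcal Q_n(L))\), which depends on the chosen Euler nullhomotopy \(\lambda\). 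The third is a tertiary class in \(H^{d}_{\mathrm{Nis}}(X,\mathcal P_{n}(L))\), and this group vanishes by Theorem~\ref{thm:P-top-cd2-vanishing}. Necessity of \(e(E)=0\) is immediate from naturality of the Euler class, so we assume \(e(E)=0\) and fix a nullhomotopy \(\lambda\).

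\textbf{Splitting the secondary obstruction by primes.} By Proposition~\ref{prop:Q-Gersten-three-primary} there is a natural primary decomposition
\[
H^{n+1}_{\mathrm{Nis}}(X,\mathcal Q_n(L))\cong H^{n+1}_{\mathrm{Nis}}(X,\mathcal Q_n(L))\{2\}\oplus H^{n+1}_{\mathrm{Nis}}(X,\mathbf K^M_{n+2}/3),
\]
with no other primes involved, since the coefficients are annihilated by \(24\). The hypothesis \eqref{eq:secondary-Q-two-primary-vanishing} says that the first summand is zero. Hence \(o_2(E,\lambda)\) equals its \(3\)-primary component \(o_2(E,\lambda)\{3\}\). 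For \(n=d-2\), the indices in \eqref{eq:corank-two-P1-surjectivity} match those of Theorem~\ref{thm:Q-three-primary-Postnikov-operation}, namely \(\mathcal A_n^{X,L}=H^{d-3}_{\mathrm{Nis}}(X,\mathbf K^{MW}_{d-2}(L))\) and target \(H^{d-1}_{\mathrm{Nis}}(X,\mathbf K^M_d/3)\). That theorem is stated for pure dimension \(n+2=d\), which holds here.

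\textbf{Killing the \(3\)-primary part.} By \eqref{eq:three-primary-secondary-difference}, changing \(\lambda\) by \(a\) changes \(o_2\{3\}\) by \(\mathcal P_E^1\operatorname{red}_3^L(a)\). By the surjectivity hypothesis we can choose \(a\) with \(\mathcal P_E^1\operatorname{red}_3^L(a)=-o_2(E,\lambda)\{3\}\); this is exactly Corollary~\ref{cor:Q-secondary-three-primary-killable}. The new nullhomotopy \(\lambda'=a\cdot\lambda\) then has vanishing \(3\)-primary secondary obstruction. Its \(2\)-primary component also vanishes, because it lies in the zero group. So \(o_2(E,\lambda')=0\) by the decomposition above, and \(\lambda\) lifts to the secondary Moore--Postnikov stage.

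\textbf{Completing the lift.} The tertiary obstruction lies in \(H^{d}_{\mathrm{Nis}}(X,\mathcal P_{d-2}(L))=0\), and all later groups vanish by dimension. Therefore the classifying map of \(E\) lifts to \(BGL_{d-3}\), which is equivalent to \(E\) splitting off a trivial line bundle. Alternatively, one can cite Corollary~\ref{cor:complete-secondary-orbit-criterion}: here \(\mathfrak h_2(E)\) and \(\mathfrak n_2(E)\) live in subquotients of the zero \(2\)-primary group, and \(\overline o_{2,3}(E)\) lies in a zero cokernel.

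\textbf{Main obstacle.} The delicate point is that the change formula is applied to the actual geometric obstruction rather than only to the coefficient groups. This rests on the linear secondary-action property and on the identification of the change homomorphism in Theorem~\ref{thm:parameterized-two-stage-comparison}. Both are assumed here, so the argument reduces to the bookkeeping above. One must also check that the \(2\)-primary component is unaffected by the change of \(\lambda\). It is, because it lies in the zero group.
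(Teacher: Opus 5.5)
Your proof is correct and follows the paper's argument: you change the Euler nullhomotopy using Theorem~\ref{thm:Q-three-primary-Postnikov-operation} and the surjectivity hypothesis to kill the three-primary part of the secondary obstruction, the two-primary part vanishes by hypothesis, and Theorem~\ref{thm:P-top-cd2-vanishing} disposes of the tertiary obstruction. One small correction concerns your alternative route via Corollary~\ref{cor:complete-secondary-orbit-criterion}: \(\mathfrak h_2(E)\) lives in a quotient of \(\mathcal C_{n,2}^{X,L}\), which need not vanish, but the class itself is zero because it is the image of the two-primary obstruction, and that obstruction lies in the zero group.
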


\begin{proof}
Put \(n=d-2\).  If the Euler class vanishes, choose a nullhomotopy of the
primary obstruction.  By
Theorem~\ref{thm:Q-three-primary-Postnikov-operation} and the surjectivity
in \eqref{eq:corank-two-P1-surjectivity}, that nullhomotopy may be changed
so that the three-primary component of the secondary obstruction vanishes.
Its two-primary component vanishes by
\eqref{eq:secondary-Q-two-primary-vanishing}.  The next obstruction lies in

\[
H^d_{\mathrm{Nis}}
 \left(X,\mathcal P_{d-2}(\det E)\right),
\]

which is zero by Theorem~\ref{thm:P-top-cd2-vanishing}; all later
obstruction groups vanish by dimension.  Thus the classifying map lifts to
\(BGL_{d-3}\), which is equivalent to splitting a trivial line bundle from \(E\).
The converse follows from naturality of the Euler class.
\end{proof}

\begin{corollary}\label{cor:corank-two-Steenrod-reduced-power}
Let \(k\) be a field of characteristic \(0\) with
\(\operatorname{cd}_2(k)\leq1\), let \(d\geq7\), let \(X\) be a smooth
affine pure \(d\)-dimensional \(k\)-scheme, and let \(E\) be a vector bundle
of rank \(d-2\) on \(X\).  Put \(L=\det E\).  Assume that
\begin{enumerate}
\item the twisted Steenrod square
\[
Sq_L^2:\operatorname{Ch}^{d-2}(X)
\longrightarrow\operatorname{Ch}^{d-1}(X)
\]
is surjective;
\item
\[
H^{d-1}_{\mathrm{Nis}}(X,\mathbf K_d^M/8)=0;
\]
\item the Thom-corrected reduced-power homomorphism
\[
\mathcal P_E^1\operatorname{red}_3^L:
H^{d-3}_{\mathrm{Nis}}
 (X,\mathbf K_{d-2}^{MW}(L))
\longrightarrow
H^{d-1}_{\mathrm{Nis}}(X,\mathbf K_d^M/3)
\]
is surjective.
\end{enumerate}
Then \(E\) splits off a trivial line bundle if and only if \(e(E)=0\).
\end{corollary}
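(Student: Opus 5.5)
The plan is to deduce the corollary from Theorem~\ref{thm:corank-two-splitting-reduced-power}, with \(n=d-2\) and \(L=\det E\). Hypothesis (3) is exactly the surjectivity condition \eqref{eq:corank-two-P1-surjectivity}. The hypotheses on \(k\), \(d\) and \(X\) are those of Theorem~\ref{thm:P-top-cd2-vanishing}, so that theorem supplies the vanishing of the tertiary group \(H^d_{\mathrm{Nis}}(X,\mathcal P_{d-2}(L))\). It therefore suffices to verify the two-primary vanishing \eqref{eq:secondary-Q-two-primary-vanishing}, namely
\[
H^{d-1}_{\mathrm{Nis}}(X,\mathcal Q_{d-2}(L))\{2\}=0 .
\]
Once this is established, the theorem gives both directions at once. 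Necessity of \(e(E)=0\) is just naturality of the Euler class.

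To obtain the two-primary vanishing, apply Proposition~\ref{prop:Q-two-primary-Steenrod-filtration} with \(n=d-2\). This requires \(n\geq4\), which holds since \(d\geq7\), and it requires \(X\) to be smooth pure of dimension \(n+2=d\), which holds by assumption. The exact sequence \eqref{eq:Q-two-primary-Steenrod-exact-sequence} reads
\[
H^{d-1}_{\mathrm{Nis}}(X,\mathbf K^M_d/8)\longrightarrow H^{d-1}_{\mathrm{Nis}}(X,\mathcal Q_{d-2}(L))\{2\}\longrightarrow\operatorname{coker}(Sq^2_L).
\]
Hypothesis (2) kills the left term. Hypothesis (1) kills the cokernel of \(Sq_L^2:\operatorname{Ch}^{d-2}(X)\to\operatorname{Ch}^{d-1}(X)\). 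By exactness the middle term vanishes. This is precisely the final assertion of that proposition.

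The only points needing care are index matching. First, \(\mathbf K^M_{n+2}=\mathbf K^M_d\) and \(\operatorname{Ch}^{n}\to\operatorname{Ch}^{n+1}\) is \(\operatorname{Ch}^{d-2}\to\operatorname{Ch}^{d-1}\). Second, the twist \(L\) in Proposition~\ref{prop:Q-two-primary-Steenrod-filtration} must be the determinant twist \(\det E\) used in the obstruction groups, under the determinant-action convention of \cite[Section~6.1]{AF15b} fixed just before Theorem~\ref{thm:corank-two-splitting}. I expect no real obstacle here: all of the substantive input (the \(P^1\)-orbit argument at \(3\) and the Steenrod/Milnor filtration at \(2\)) is already packaged in the cited results, and the corollary is a direct assembly of them.
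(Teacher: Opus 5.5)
Your proposal is correct and matches the paper's proof. It applies Proposition~\ref{prop:Q-two-primary-Steenrod-filtration} with \(n=d-2\), so that conditions (1) and (2) give \(H^{d-1}_{\mathrm{Nis}}(X,\mathcal Q_{d-2}(L))\{2\}=0\); it observes that condition (3) is exactly \eqref{eq:corank-two-P1-surjectivity}; and it concludes from Theorem~\ref{thm:corank-two-splitting-reduced-power}, with your index and twist checks (\(\mathbf K^M_{n+2}=\mathbf K^M_d\), \(\operatorname{Ch}^{d-2}\to\operatorname{Ch}^{d-1}\), \(L=\det E\)) being the only bookkeeping required.
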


\begin{proof}
Apply Proposition~\ref{prop:Q-two-primary-Steenrod-filtration} with
\(n=d-2\).  Conditions (1) and (2) give
\[
H^{d-1}_{\mathrm{Nis}}
 (X,\mathcal Q_{d-2}(L))\{2\}=0.
\]
Condition (3) is precisely
\eqref{eq:corank-two-P1-surjectivity}.  The result now follows from
Theorem~\ref{thm:corank-two-splitting-reduced-power}.
\end{proof}

This gives a generator theorem in the sense of \cite{AOSS26}.
Proposition~\ref{prop:Q-two-primary-Steenrod-filtration} and the preceding
corollary replace its abstract two-primary vanishing hypothesis by standard
Chow and Milnor \(K\)-cohomology conditions.

\begin{theorem}\label{thm:corank-two-vector-bundle-total-space}
Let \(k\) be a field of characteristic \(0\).  Let \(S\) be a smooth affine
pure \(s\)-dimensional \(k\)-scheme, let \(V\) be a vector bundle of rank
\(t\geq2\) on \(S\), and put
\[
X=\operatorname{Tot}(V),\qquad d=s+t.
\]
Assume \(d\geq4\).  Then every rank-\((d-2)\) vector bundle \(E\) on \(X\)
splits off a trivial line bundle if and only if \(e(E)=0\).  If \(t\geq3\), every
such \(E\) splits off a trivial line bundle.
\end{theorem}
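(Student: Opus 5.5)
The plan is to avoid the arithmetic hypotheses used earlier: no condition on \(\operatorname{cd}_2(k)\), no \(d\geq7\), and no surjectivity of \(Sq^2_L\) or \(\mathcal P^1_E\). Instead, every obstruction group beyond the Euler class should vanish for a single geometric reason: homotopy invariance along \(p:X=\operatorname{Tot}(V)\to S\), which reduces each group to Nisnevich cohomology of \(S\) in degree \(>s\).

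\textbf{Step 1: the twist comes from \(S\).} Since \(X\) is smooth and \(p\) is an \(\A^1\)-weak equivalence with inverse the zero section \(z\), we have \(\operatorname{Pic}(X)\cong\operatorname{Pic}(S)\). Hence \(\det E\cong p^*L_0\) with \(L_0:=z^*\det E\). As in the proof of Proposition~\ref{prop:Q-vector-bundle-vanishing}, fiberwise scalar multiplication is an \(\A^1\)-homotopy from \(\operatorname{id}_X\) to \(zp\) that preserves \(p^*L_0\). So for every strictly \(\A^1\)-invariant sheaf \(\mathcal M\) with its \(\G_m\)-action we get
\[
H^j_{\mathrm{Nis}}(X,\mathcal M(\det E))\cong H^j_{\mathrm{Nis}}(S,\mathcal M(L_0)),
\]
and this vanishes for \(j>s\). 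This is exactly the argument of Proposition~\ref{prop:Q-vector-bundle-vanishing}, stated for an arbitrary coefficient sheaf.

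\textbf{Step 2: obstruction theory.} Apply the Moore--Postnikov obstruction theory of \cite[Section~6.1]{AF15b} to
\[
\A^{d-2}\setminus0\to BGL_{d-3}\to BGL_{d-2}.
\]
The hypothesis \(d\geq4\) makes the fiber \(\A^1\)-\((d-4)\)-connected, with abelian homotopy sheaves in the relevant degrees, so the twisted coefficients are strictly \(\A^1\)-invariant. The obstructions lie in
\[
H^{i+1}_{\mathrm{Nis}}\bigl(X,\pi^{\A^1}_i(\A^{d-2}\setminus0)(\det E)\bigr),\qquad i\geq d-3.
\]
\begin{itemize}
\item The primary obstruction, \(i=d-3\), is the Euler class in \(H^{d-2}(X,\mathbf K^{MW}_{d-2}(\det E))\).
\item The secondary obstruction, \(i=d-2\), lies in \(H^{d-1}(X,\mathcal Q_{d-2}(\det E))\). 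Since \(t\geq2\) gives \(d-1>s\), this vanishes by Proposition~\ref{prop:Q-vector-bundle-vanishing} with \(n=d-2\geq s\).
\item The tertiary obstruction, \(i=d-1\), lies in \(H^d(X,\mathcal P_{d-2}(\det E))\); this is the group Theorem~\ref{thm:P-top-cd2-vanishing} handled using arithmetic hypotheses. Here it vanishes by Step 1 because \(d>s\).
\item All later groups sit in degrees \(>d>s\) and vanish for the same reason.
\end{itemize}
Hence \(e(E)=0\) suffices for a lift to \(BGL_{d-3}\), which is equivalent to splitting off a trivial line bundle. Necessity follows from naturality of the Euler class. If \(t\geq3\), then \(d-2>s\), so the Euler group itself is zero and every such \(E\) splits.

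\textbf{Main obstacle.} The routine-looking step that needs the most care is Step 1. One must check that the determinant twist is genuinely pulled back from \(S\), and that the \(\A^1\)-homotopy \(\operatorname{id}_X\simeq zp\) acts compatibly on the twisted sheaves, so that the isomorphism holds for twisted coefficients and not merely for untwisted ones. I would handle this exactly as in Proposition~\ref{prop:Q-vector-bundle-vanishing}: the scaling homotopy is linear, so it lifts to the pulled-back line bundle and preserves \(p^*L_0\). A second point to verify is that the obstruction theory of \cite{AF15b} applies for \(d\) as small as \(4\), where the fiber is only \(\A^1\)-simply connected. I expect this to go through because \(BGL_{d-3}\to BGL_{d-2}\) has \(\A^1\)-simply connected fiber for \(d\geq4\).
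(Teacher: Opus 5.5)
Your proposal is correct and follows essentially the same route as the paper. You identify \(\det E\) with \(p^*L_0\) via homotopy invariance of \(\operatorname{Pic}\), then use the scalar-multiplication \(\A^1\)-homotopy to transport every twisted obstruction group of the Moore--Postnikov tower for \(\A^{d-2}\setminus0\to BGL_{d-3}\to BGL_{d-2}\) to \(S\). There all groups in degree \(\geq d-1>s\) vanish, and for \(t\geq3\) so does the Euler group in degree \(d-2>s\).

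One small slip is in your closing remark. For \(d=4\) the fiber \(\A^2\setminus0\) is only \(\A^1\)-connected, with abelian \(\pi_1^{\A^1}=\mathbf K^{MW}_2\); it is not \(\A^1\)-simply connected. The justification should therefore rest on the abelian, twisted-coefficient form of the obstruction theory you already invoke in Step 2, which is what the paper implicitly relies on.
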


\begin{proof}
The scheme \(X\) is smooth affine and pure of dimension \(d\).  Homotopy
invariance of the Picard group gives
\[
p^*:\operatorname{Pic}(S)\xrightarrow{\ \cong\ }\operatorname{Pic}(X),
\]
so \(\det E=p^*L_0\) for a line bundle \(L_0\) on \(S\).

Apply the Moore--Postnikov obstruction theory of \cite[Section~6.1]{AF15b}
to the fiber sequence
\[
\A^{d-2}\setminus0\longrightarrow BGL_{d-3}\longrightarrow BGL_{d-2}.
\]
Its primary obstruction is
\[
e(E)\in
H^{d-2}_{\mathrm{Nis}}\!
 \left(X,\mathbf K^{MW}_{d-2}(\det E)\right);
\]
every subsequent obstruction belongs to a twisted cohomology group in
degree at least \(d-1\) with strictly \(\mathbb A^1\)-invariant
coefficients.  Each such coefficient is a homotopy sheaf of the fiber, with
the local-system twist induced by \(\det E=p^*L_0\).  Fiberwise scalar
multiplication gives an \(\mathbb A^1\)-homotopy from
\(\operatorname{id}_X\) to the zero-section retraction and preserves this
pullback twist.  Strict homotopy invariance therefore identifies, in every
degree, each obstruction group on \(X\) with the corresponding group on
\(S\); this is the same argument as
Proposition~\ref{prop:Q-vector-bundle-vanishing}, now applied to every
coefficient sheaf in the tower.  Since
\[
s=d-t\leq d-2,
\]
all groups in degree at least \(d-1\) vanish.  Thus the vanishing of
\(e(E)\) is sufficient, and it is necessary by naturality of the Euler
class.

If \(t\geq3\), then \(s\leq d-3\), so the same argument also makes the
primary obstruction group in degree \(d-2\) vanish.  Hence every \(E\)
splits in that case.
\end{proof}

\begin{corollary}\label{cor:r-plus-d-minus-three-generation}
Let \(X=\operatorname{Spec}R\) be as in
Theorem~\ref{thm:corank-two-splitting}, and let \(M\) be a projective
\(R\)-module of rank \(r\geq4\) that is generated by \(r+d-2\) elements.
Assume
\begin{equation}\label{eq:generation-secondary-vanishing}
H^{d-1}_{\mathrm{Nis}}\!\left(
X,\mathcal Q_{d-2}((\det M)^{-1})\right)=0.
\end{equation}
Then the following conditions are equivalent.
\begin{enumerate}
\item The module \(M\) is generated by \(r+d-3\) elements.
\item There is a surjection
\(R^{r+d-2}\twoheadrightarrow M\) whose rank-\(d-2\) kernel \(E\) has
\(e(E)=0\).
\end{enumerate}
\end{corollary}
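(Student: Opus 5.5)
The plan is to reduce generation to splitting of the complementary kernel bundle, and then apply Theorem~\ref{thm:corank-two-splitting}.

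Fix a surjection \(\phi:R^{r+d-2}\twoheadrightarrow M\). Its kernel \(E\) is projective of rank \(d-2\), and the sequence splits, so \(E\oplus M\cong R^{r+d-2}\). Hence \(\det E\cong(\det M)^{-1}\), and hypothesis \eqref{eq:generation-secondary-vanishing} is exactly \eqref{eq:secondary-Q-vanishing} for \(E\). Theorem~\ref{thm:corank-two-splitting} then says that \(E\) splits off a free summand \(R\) if and only if \(e(E)=0\). The statement is reduced to the following claim: \(M\) is generated by \(r+d-3\) elements if and only if some kernel \(E\) of a surjection \(R^{r+d-2}\twoheadrightarrow M\) splits off a trivial line bundle.

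\textbf{(2)\(\Rightarrow\)(1).} Suppose \(E=E'\oplus R\). Then \(E'\oplus R\oplus M\cong R^{r+d-2}\). I would arrange that the trivial summand of \(E\) is a unimodular element of \(R^{r+d-2}\) lying in \(\ker\phi\). Quotienting by it gives a surjection \(R^{r+d-2}/R\cdot v\cong R^{r+d-3}\twoheadrightarrow M\); the quotient is free because \(v\) is unimodular and lies in \(E\), a direct summand.

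\textbf{(1)\(\Rightarrow\)(2).} Given \(\psi:R^{r+d-3}\twoheadrightarrow M\), extend it by zero to \(R^{r+d-2}=R^{r+d-3}\oplus R\twoheadrightarrow M\). Its kernel is \(\ker\psi\oplus R\), which splits off \(R\). The Euler class of a bundle with a nowhere vanishing section is zero, so \(e(E)=0\).

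The only real subtlety is the unimodularity step in (2)\(\Rightarrow\)(1). An abstract isomorphism \(E\cong E'\oplus R\) gives a section of \(E\) that is unimodular in \(E\). Since \(E\) is a direct summand of \(R^{r+d-2}\), that section is also unimodular in the free module, so the argument goes through without needing any cancellation.

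The hypothesis \(r\geq4\) is not used in this argument. Its role is presumably to match the stable-range conventions of \cite{AOSS26} for identifying the primary obstruction.
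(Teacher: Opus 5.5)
The gap is in (2)\(\Rightarrow\)(1), at the sentence claiming that \(R^{r+d-2}/Rv\) is free because \(v\) is unimodular. The rest of your argument is correct and matches the paper: the reduction via \(\det E\cong(\det M)^{-1}\) to Theorem~\ref{thm:corank-two-splitting}, the direction (1)\(\Rightarrow\)(2) by adjoining a zero generator, and the observation that a generator \(v\) of the trivial summand of \(E\) is unimodular in \(R^{r+d-2}\).

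Unimodularity of \(v\) only gives a splitting \(R^{r+d-2}\cong Rv\oplus P\), with \(P\cong E'\oplus M\). So \(P\) is \emph{stably} free of rank \(r+d-3\). It is free exactly when \(v\) can be completed to a basis, and that fails in general. For example, over \(R=\mathbb R[x,y,z]/(x^2+y^2+z^2-1)\) the column \(v=(x,y,z)^t\) is unimodular. But \(R^3/Rv\cong\ker\bigl((x,y,z):R^3\to R\bigr)\) is the tangent module of the \(2\)-sphere, which is stably free and not free. You also cannot do without freeness. A projective module of constant rank \(m\) is generated by \(m\) elements only if it is free, so the surjection \(P\twoheadrightarrow M\) alone does not give \(r+d-3\) generators.

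The missing ingredient is Bass cancellation, and this is exactly where \(r\geq4\) is used, contrary to your closing remark. The inequality \(\operatorname{rk}P=r+d-3>d=\dim R\) holds precisely when \(r\geq4\). In that case \(P\oplus R\cong R^{r+d-3}\oplus R\) cancels to \(P\cong R^{r+d-3}\). Equivalently, by Bass's stable-range theorem, unimodular elements of \(R^{N}\) can be completed to a basis once \(N\geq d+2\), and \(N=r+d-2\) here. The paper's proof inserts exactly this cancellation step; with it added, your argument coincides with the paper's.
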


\begin{proof}
Suppose first that \(M\) is generated by \(r+d-3\) elements.  Adjoining a
zero generator to such a surjection produces a presentation with kernel
\(E'\oplus R\); its Euler class is zero.

Conversely, choose the presentation in the second condition.  Its kernel
\(E\) has determinant \((\det M)^{-1}\), so
Theorem~\ref{thm:corank-two-splitting} and
\eqref{eq:generation-secondary-vanishing} give \(E\cong E'\oplus R\).  The
copy of \(R\) is a direct summand of \(R^{r+d-2}\).  Its quotient \(P\) is
stably free of rank \(r+d-3>d\), and Bass cancellation makes \(P\) free; this
is the same cancellation step used in the proof of
\cite[Proposition~21]{AOSS26}.  Quotienting the original presentation by that
copy of \(R\) therefore yields a surjection
\(R^{r+d-3}\twoheadrightarrow M\).
\end{proof}

\begin{corollary}\label{cor:generation-Steenrod-reduced-power}
Let \(k\) be a field of characteristic \(0\) with
\(\operatorname{cd}_2(k)\leq1\), let \(X=\operatorname{Spec}R\) be smooth
affine and pure of dimension \(d\geq7\), and let \(M\) be a projective
\(R\)-module of rank \(r\geq4\) generated by \(r+d-2\) elements.  Put
\(L=(\det M)^{-1}\), and assume that
\[
Sq_L^2:\operatorname{Ch}^{d-2}(X)
\longrightarrow\operatorname{Ch}^{d-1}(X)
\]
is surjective and that
\[
H^{d-1}_{\mathrm{Nis}}(X,\mathbf K_d^M/8)=0.
\]
Assume also that, for every rank-\((d-2)\) presentation kernel \(E\) with
zero Euler class, the homomorphism
\[
\mathcal P_E^1\operatorname{red}_3^L:
H^{d-3}_{\mathrm{Nis}}
 (X,\mathbf K_{d-2}^{MW}(L))
\longrightarrow
H^{d-1}_{\mathrm{Nis}}(X,\mathbf K_d^M/3)
\]
is surjective.
Then the following conditions are equivalent.
\begin{enumerate}
\item The module \(M\) is generated by \(r+d-3\) elements.
\item There is a surjection
\(R^{r+d-2}\twoheadrightarrow M\) whose rank-\((d-2)\) kernel has
vanishing Euler class.
\end{enumerate}
\end{corollary}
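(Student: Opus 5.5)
The plan is to reduce to Corollary~\ref{cor:corank-two-Steenrod-reduced-power} through the presentation-kernel argument already used in Corollary~\ref{cor:r-plus-d-minus-three-generation}.

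\textbf{(1) implies (2).} Suppose \(R^{r+d-3}\twoheadrightarrow M\) is a surjection with kernel \(E'\). Adjoining a zero generator gives a surjection \(R^{r+d-2}\twoheadrightarrow M\) whose kernel is \(E'\oplus R\). This kernel has a nowhere-vanishing section, so its Euler class is zero by naturality.

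\textbf{(2) implies (1).} Fix a surjection \(R^{r+d-2}\twoheadrightarrow M\) whose kernel \(E\) has rank \(d-2\) and \(e(E)=0\). From the short exact sequence \(0\to E\to R^{r+d-2}\to M\to0\) we get \(\det E\cong(\det M)^{-1}=L\). Hence the twist \(L\) in the hypotheses coincides with \(\det E\), up to the determinant-action versus dual-line-bundle convention recorded before Theorem~\ref{thm:corank-two-splitting}.

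Next we check the hypotheses of Corollary~\ref{cor:corank-two-Steenrod-reduced-power} for \(E\).
\begin{enumerate}
\item The conditions \(\operatorname{char}k=0\), \(\operatorname{cd}_2(k)\leq1\), \(d\geq7\), and \(X\) smooth affine pure of dimension \(d\) are given.
\item Surjectivity of \(Sq^2_L\) and the vanishing \(H^{d-1}_{\mathrm{Nis}}(X,\mathbf K^M_d/8)=0\) are assumed directly.
\item The assumed reduced-power surjectivity holds for every presentation kernel with zero Euler class, so it applies to this \(E\).
\end{enumerate}
That corollary then gives \(E\cong E'\oplus R\).

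Finally we remove one generator. The copy of \(R\) is a direct summand of \(R^{r+d-2}\), and the quotient \(P\) is stably free of rank \(r+d-3\). Since \(r\geq4\), we have \(r+d-3>d\), so Bass cancellation makes \(P\) free of rank \(r+d-3\). The original surjection descends to \(P\twoheadrightarrow M\), because the split copy of \(R\) lies inside the kernel.

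The only step needing care is matching the twist \(L=(\det M)^{-1}\) with \(\det E\) under the paper's convention. It is handled by the determinant computation from the short exact sequence together with the convention remark; everything else is formal.
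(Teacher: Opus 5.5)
Your proposal is correct and is essentially the paper's proof: identify \(\det E\cong(\det M)^{-1}=L\) for the presentation kernel, split \(E\) using Corollary~\ref{cor:corank-two-Steenrod-reduced-power}, then run the Bass cancellation step of Corollary~\ref{cor:r-plus-d-minus-three-generation} (which needs \(r+d-3>d\), i.e.\ \(r\geq4\)), with the converse given by adjoining a zero generator. Your worry about the twist convention is harmless: the twisted sheaves and \(Sq_L^2\) depend only on \(L\) modulo squares, so \(L\) and \(L^{-1}\) give the same hypotheses.
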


\begin{proof}
The kernel of a presentation in (2) has determinant \(L\), so
Corollary~\ref{cor:corank-two-Steenrod-reduced-power} splits it.  The proof
of Corollary~\ref{cor:r-plus-d-minus-three-generation}, including the same
Bass cancellation step, then applies verbatim.  The reverse implication is
obtained by adjoining a zero generator.
\end{proof}

\begin{corollary}\label{cor:action-surjectivity-generation}
Let \(k\) be a field of characteristic \(0\) with
\(\operatorname{cd}_2(k)\leq1\), let \(X=\operatorname{Spec}R\) be smooth
affine and pure of dimension \(d\geq7\), and let \(M\) be a projective
\(R\)-module of rank \(r\geq4\) generated by \(r+d-2\) elements.  Put
\(n=d-2\) and \(L=(\det M)^{-1}\).  Assume that for every presentation
whose rank-\((d-2)\) kernel \(E\) has zero Euler class, all three maps in
Corollary~\ref{cor:action-surjectivity-corank-two}, formed with \(E\),
are surjective.  Then the following conditions are equivalent.
\begin{enumerate}
\item The module \(M\) is generated by \(r+d-3\) elements.
\item There is a surjection \(R^{r+d-2}\twoheadrightarrow M\) whose
rank-\((d-2)\) kernel has vanishing Euler class.
\end{enumerate}
\end{corollary}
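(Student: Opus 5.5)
The plan is to follow the proof of Corollary~\ref{cor:generation-Steenrod-reduced-power}, replacing Corollary~\ref{cor:corank-two-Steenrod-reduced-power} by Corollary~\ref{cor:action-surjectivity-corank-two}. The implication (1)$\Rightarrow$(2) is formal. Given a surjection $R^{r+d-3}\twoheadrightarrow M$ with kernel $E'$ of rank $d-3$, I will adjoin a zero generator. This produces a surjection $R^{r+d-2}\twoheadrightarrow M$ whose kernel is $E'\oplus R$. That kernel has a nowhere-vanishing section, so its Euler class vanishes by naturality, exactly as in Corollary~\ref{cor:r-plus-d-minus-three-generation}.

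For (2)$\Rightarrow$(1), I will fix a presentation $R^{r+d-2}\twoheadrightarrow M$ whose kernel $E$ has rank $d-2$ and $e(E)=0$. The exact sequence $0\to E\to R^{r+d-2}\to M\to 0$ gives $\det E\cong(\det M)^{-1}=L$, so $E$ and $L$ are in the setting of Corollary~\ref{cor:complete-secondary-orbit-criterion}, with $n=d-2$, $\operatorname{cd}_2(k)\leq1$, $d\geq7$ and $X$ smooth affine of pure dimension $d$. By hypothesis the three maps $\mathcal P_E^1\operatorname{red}_3^L$, $\Delta_{E,2}^{\mathrm h}$ and $\Delta_{E,2}^{\nu}$, formed with this $E$, are surjective. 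Corollary~\ref{cor:action-surjectivity-corank-two} then gives $E\cong E'\oplus R$.

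It remains to descend to $r+d-3$ generators. The summand $R\subset E\subset R^{r+d-2}$ is a direct summand of $R^{r+d-2}$, since $E$ is a direct summand (because $M$ is projective) and $R$ is a summand of $E$. The quotient $P=R^{r+d-2}/R$ is therefore stably free of rank $r+d-3$. Since $r\geq4$, this rank exceeds $d$, so Bass cancellation makes $P$ free. The surjection onto $M$ factors through $P\cong R^{r+d-3}$ because the killed copy of $R$ lies in the kernel. This is the cancellation step already used in Corollary~\ref{cor:r-plus-d-minus-three-generation}.

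The main point needing care is that the surjectivity hypotheses in Corollary~\ref{cor:action-surjectivity-corank-two} depend on $E$ through $\Delta_E$ and the Wu class $\omega_3(E)$. So I must apply them to the specific presentation kernel chosen in (2), not to some other kernel. The hypothesis is stated for every presentation kernel with zero Euler class, and this covers exactly the kernel we fixed.
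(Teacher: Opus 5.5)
Your proposal is correct and follows essentially the same route as the paper: you identify $\det E\cong(\det M)^{-1}=L$, split the kernel with Corollary~\ref{cor:action-surjectivity-corank-two}, finish with the Bass cancellation step from Corollary~\ref{cor:r-plus-d-minus-three-generation}, and obtain the converse by adjoining a zero generator. You also make explicit two details the paper leaves implicit: that $r\geq4$ is exactly what gives $r+d-3>d$, and that the split copy of $R$ is a direct summand of $R^{r+d-2}$.
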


\begin{proof}
For a presentation in (2), the kernel \(E\) has determinant \(L\).
Corollary~\ref{cor:action-surjectivity-corank-two} splits it.  The Bass
cancellation argument in
Corollary~\ref{cor:r-plus-d-minus-three-generation} then produces
\(r+d-3\) generators of \(M\).  Conversely, adjoining a zero generator to
a presentation with \(r+d-3\) generators gives a kernel with a trivial
summand and hence with zero Euler class.
\end{proof}

\begin{corollary}\label{cor:generation-vector-bundle-total-space}
Retain the hypotheses and notation of
Theorem~\ref{thm:corank-two-vector-bundle-total-space}, write
\(X=\operatorname{Spec}R\), and let \(M\) be a projective \(R\)-module of
rank \(\rho\geq4\) generated by \(\rho+d-2\) elements.  Then the following
conditions are equivalent.
\begin{enumerate}
\item The module \(M\) is generated by \(\rho+d-3\) elements.
\item There is a surjection
\(R^{\rho+d-2}\twoheadrightarrow M\) whose rank-\((d-2)\) kernel has
vanishing Euler class.
\end{enumerate}
If \(t\geq3\), these conditions always hold.
\end{corollary}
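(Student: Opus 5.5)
The plan is to deduce this corollary from Theorem~\ref{thm:corank-two-vector-bundle-total-space}. The Bass cancellation step is then reused exactly as in Corollary~\ref{cor:r-plus-d-minus-three-generation}. No new obstruction theory is needed. The total-space theorem already gives the corank-two splitting criterion on \(X=\operatorname{Tot}(V)\) for every rank-\((d-2)\) bundle, with no twist or cohomological-dimension hypothesis.

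For (1)\(\Rightarrow\)(2), take a surjection \(R^{\rho+d-3}\twoheadrightarrow M\) and adjoin a zero generator. This gives a surjection \(R^{\rho+d-2}\twoheadrightarrow M\) whose kernel is \(E'\oplus R\). That kernel has a nowhere-vanishing section, so its Euler class is zero by naturality. Here \(E'\) has rank \(d-3\), since \(M\) is projective and the kernel is projective of rank \((\rho+d-3)-\rho\).

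For (2)\(\Rightarrow\)(1), let \(E\) be the rank-\((d-2)\) kernel with \(e(E)=0\). Theorem~\ref{thm:corank-two-vector-bundle-total-space} gives \(E\cong E'\oplus R\). The summand \(R\subset E\subset R^{\rho+d-2}\) is a direct summand of \(R^{\rho+d-2}\), because \(E\) is a summand (the sequence splits, \(M\) being projective). The quotient \(P=R^{\rho+d-2}/R\) is therefore stably free of rank \(\rho+d-3\). Since \(\rho\geq4\), this rank exceeds \(d=\dim X\), so Bass cancellation makes \(P\) free. The original surjection kills this copy of \(R\), so it factors through \(P\cong R^{\rho+d-3}\twoheadrightarrow M\), and \(M\) is generated by \(\rho+d-3\) elements.

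For the last assertion, suppose \(t\geq3\). Start from any presentation \(R^{\rho+d-2}\twoheadrightarrow M\), which exists by hypothesis. Its kernel splits off a trivial summand by the second part of Theorem~\ref{thm:corank-two-vector-bundle-total-space}, so the cancellation argument above yields (1). Then (2) holds by the first direction.

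The only point needing care is the rank bound for Bass cancellation and the identification of the kernel's rank. Both are routine: \(\rho+d-3\geq d+1>\dim X\), and the theorem applies to any bundle of rank \(d-2\) on \(X\).
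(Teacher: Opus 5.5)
Your proposal is correct and follows the paper's own proof. The paper likewise reruns the argument of Corollary~\ref{cor:r-plus-d-minus-three-generation}, using Theorem~\ref{thm:corank-two-vector-bundle-total-space} in place of the secondary-cohomology hypothesis to split the rank-\((d-2)\) kernel, then applies the same Bass cancellation (with \(\rho+d-3>d\)), and handles the case \(t\geq3\) through that theorem's unconditional splitting.
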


\begin{proof}
The proof of Corollary~\ref{cor:r-plus-d-minus-three-generation} uses its
secondary-cohomology hypothesis only to split the rank-\((d-2)\) kernel
when its Euler class vanishes.  That implication is supplied here by
Theorem~\ref{thm:corank-two-vector-bundle-total-space}.  If \(t\geq3\), the
same theorem says that the kernel of any presentation
\(R^{\rho+d-2}\twoheadrightarrow M\) splits.  Quotienting by its trivial
summand and applying the same Bass cancellation argument gives a generating
set of cardinality \(\rho+d-3\).
\end{proof}

Theorems~\ref{thm:P-top-cd2-vanishing}
and~\ref{thm:corank-two-splitting} are uniform in \(d\geq7\).  Together they
give a family of corank-two splitting and efficient-generation statements
conditional on the \(Q\)-type cohomology vanishing in
\eqref{eq:secondary-Q-vanishing}.  Theorem~\ref{thm:corank-two-vector-bundle-total-space}
and Corollary~\ref{cor:generation-vector-bundle-total-space} remove that condition for
vector-bundle total spaces of relative rank at least two, without the
cohomological-dimension hypothesis or the restriction \(d\geq7\).  The
coefficient sheaves and the full residue--corestriction differential in the
last two Rost--Schmid degrees are explicit by
Propositions~\ref{prop:Q-contracted-Gersten-tail}
and~\ref{prop:Q-explicit-first-stem-residues}.  On the \(3\)-primary side,
Theorem~\ref{thm:Q-three-primary-Postnikov-operation} computes the stable
\(k\)-invariant as \(P^1\operatorname{red}_3\), while
Proposition~\ref{prop:P1-Thom-Wu} computes its conjugate for \(E\) as
\(\mathcal P_E^1\operatorname{red}_3\).
Theorem~\ref{thm:parameterized-two-stage-comparison} identifies this
conjugate with the geometric change and packages the obstruction as the
cokernel class \eqref{eq:canonical-three-primary-secondary-class}.
At the prime \(2\), Theorem~\ref{thm:Q-two-primary-action-normal-form}
separates the two coefficient layers of the change homomorphism.  Thus
Corollary~\ref{cor:complete-secondary-orbit-criterion} gives the resulting
exact splitting criterion without assuming that the secondary coefficient
group vanishes.  The remaining limitation is an explicit description of
the stable two-primary \(k\)-invariant on the hermitian and residual Milnor
quotients.  The coefficient-level cross-term in the residual target is the
order-two correction of
Corollary~\ref{cor:Q-two-primary-normal-form}.

\bibliographystyle{plain}
\bibliography{refs}

@article{Bott57,
  author  = {Bott, Raoul},
  title   = {The stable homotopy of classical groups},
  journal = {Proceedings of the National Academy of Sciences of the United States of America},
  volume  = {43},
  number  = {10},
  year    = {1957},
  pages   = {933--935},
  doi     = {10.1073/pnas.43.10.933},
}

@article{Kervaire60,
  author  = {Kervaire, Michel A.},
  title   = {Some nonstable homotopy groups of Lie groups},
  journal = {Illinois Journal of Mathematics},
  volume  = {4},
  number  = {2},
  year    = {1960},
  pages   = {161--169},
  doi     = {10.1215/ijm/1255455861},
}

@article{MV99,
  author  = {Morel, Fabien and Voevodsky, Vladimir},
  title   = {$\mathbb{A}^1$-homotopy theory of schemes},
  journal = {Publications Math{\'e}matiques de l'IH{\'E}S},
  volume  = {90},
  year    = {1999},
  pages   = {45--143},
  doi     = {10.1007/BF02698831},
}

@book{MorelA1,
  author    = {Morel, Fabien},
  title     = {$\mathbb{A}^1$-Algebraic Topology over a Field},
  publisher = {Springer},
  year      = {2012},
  doi       = {10.1007/978-3-642-29514-0},
}

@article{AF14,
  author  = {Asok, Aravind and Fasel, Jean},
  title   = {Algebraic vector bundles on spheres},
  journal = {Journal of Topology},
  volume  = {7},
  number  = {3},
  year    = {2014},
  pages   = {894--926},
  doi     = {10.1112/jtopol/jtt046},
}

@article{CalmesHornbostel11,
  author  = {Calm\`es, Baptiste and Hornbostel, Jens},
  title   = {Push-forwards for {Witt} groups of schemes},
  journal = {Commentarii Mathematici Helvetici},
  volume  = {86},
  number  = {2},
  year    = {2011},
  pages   = {437--468},
  doi     = {10.4171/CMH/230},
}

@article{AABD,
  author  = {Asok, Aravind and Doran, Brent},
  title   = {$\mathbb{A}^1$-homotopy groups, excision, and solvable quotients},
  journal = {Advances in Mathematics},
  volume  = {221},
  year    = {2009},
  pages   = {1144--1190},
  note    = {Available as arXiv:0902.1564},
}

@article{faseldegree,
  author  = {Fasel, Jean},
  title   = {A degree map on unimodular rows},
  journal = {Journal of the Ramanujan Mathematical Society},
  volume  = {27},
  number  = {1},
  year    = {2012},
  pages   = {21--40},
  note    = {Available as arXiv:1103.4780},
}

@article{AffineRep2,
  author  = {Asok, Aravind and Hoyois, Marc and Wendt, Matthias},
  title   = {Affine representability results in $\mathbb{A}^1$-homotopy theory, II: Principal bundles and homogeneous spaces},
  journal = {Geometry \& Topology},
  volume  = {22},
  number  = {2},
  year    = {2018},
  pages   = {1181--1225},
  doi     = {10.2140/gt.2018.22.1181},
}

@article{OVV07,
  author  = {Orlov, Dmitri and Vishik, Alexander and Voevodsky, Vladimir},
  title   = {An exact sequence for $K_*^M/2$ with applications to quadratic forms},
  journal = {Annals of Mathematics},
  volume  = {165},
  number  = {1},
  year    = {2007},
  pages   = {1--13},
  doi     = {10.4007/annals.2007.165.1},
}

@article{Voevodsky03,
  author  = {Voevodsky, Vladimir},
  title   = {Reduced power operations in motivic cohomology},
  journal = {Publications Math{\'e}matiques de l'IH{\'E}S},
  volume  = {98},
  year    = {2003},
  pages   = {1--57},
  doi     = {10.1007/s10240-003-0009-z},
}

@article{Voevodsky10,
  author  = {Voevodsky, Vladimir},
  title   = {Motivic {Eilenberg--MacLane} spaces},
  journal = {Publications Math{\'e}matiques de l'IH{\'E}S},
  volume  = {112},
  year    = {2010},
  pages   = {1--99},
  doi     = {10.1007/s10240-010-0024-9},
}

@book{AdamsStable,
  author    = {Adams, J. Frank},
  title     = {Stable Homotopy and Generalised Homology},
  series    = {Chicago Lectures in Mathematics},
  publisher = {University of Chicago Press},
  address   = {Chicago},
  year      = {1974},
}

@article{Morel04,
  author  = {Morel, Fabien},
  title   = {Sur les puissances de l'id{\'e}al fondamental de l'anneau de Witt},
  journal = {Commentarii Mathematici Helvetici},
  volume  = {79},
  year    = {2004},
  pages   = {689--703},
  doi     = {10.1007/s00014-004-0815-z},
}

@article{Morel05,
  author  = {Morel, Fabien},
  title   = {Milnor's conjecture on quadratic forms and mod $2$ motivic complexes},
  journal = {Rendiconti del Seminario Matematico della Universit\`a di Padova},
  volume  = {114},
  year    = {2005},
  pages   = {63--101},
}

@article{Sch2010b,
  author  = {Schlichting, Marco},
  title   = {The {Mayer--Vietoris} principle for {Grothendieck--Witt} groups of schemes},
  journal = {Inventiones mathematicae},
  volume  = {179},
  number  = {2},
  year    = {2010},
  pages   = {349--433},
  doi     = {10.1007/s00222-009-0219-1},
}

@article{Sch17,
  author  = {Schlichting, Marco},
  title   = {Hermitian $K$-theory, derived equivalences and {Karoubi}'s fundamental theorem},
  journal = {Journal of Pure and Applied Algebra},
  volume  = {221},
  number  = {7},
  year    = {2017},
  pages   = {1729--1844},
  doi     = {10.1016/j.jpaa.2016.12.026},
}

@article{AWW17,
  author  = {Asok, Aravind and Wickelgren, Kirsten and Williams, Ben},
  title   = {The simplicial suspension sequence in $\mathbb{A}^1$-homotopy},
  journal = {Geometry \& Topology},
  volume  = {21},
  number  = {4},
  year    = {2017},
  pages   = {2093--2160},
  doi     = {10.2140/gt.2017.21.2093},
}

@misc{ABH,
  author        = {Asok, Aravind and Bachmann, Tom and Hopkins, Michael J.},
  title         = {On $\mathbb{P}^1$--stabilization in unstable motivic homotopy theory},
  year          = {2026},
  eprint        = {2306.04631},
  archivePrefix = {arXiv},
  primaryClass  = {math.AG},
  note          = {Version 3; final version before page proofs, accepted for publication in the Annals of Mathematics},
  url           = {https://arxiv.org/abs/2306.04631},
}

@article{AF17,
  author  = {Asok, Aravind and Fasel, Jean},
  title   = {An explicit {KO}-degree map and applications},
  journal = {Journal of Topology},
  volume  = {10},
  number  = {1},
  year    = {2017},
  pages   = {268--300},
  doi     = {10.1112/topo.12007},
}

@article{AF17Spheres,
  author  = {Asok, Aravind and Fasel, Jean},
  title   = {Algebraic vs. topological vector bundles on spheres},
  journal = {Journal of the Ramanujan Mathematical Society},
  volume  = {32},
  number  = {3},
  year    = {2017},
  pages   = {201--216},
  eprint  = {1402.4156},
  archivePrefix = {arXiv},
}

@article{AF15b,
  author  = {Asok, Aravind and Fasel, Jean},
  title   = {Splitting vector bundles outside the stable range and $\mathbb{A}^1$-homotopy sheaves of punctured affine spaces},
  journal = {Journal of the American Mathematical Society},
  volume  = {28},
  number  = {4},
  year    = {2015},
  pages   = {1031--1062},
}

@article{AF14b,
  author  = {Asok, Aravind and Fasel, Jean},
  title   = {A cohomological classification of vector bundles on smooth affine threefolds},
  journal = {Duke Mathematical Journal},
  volume  = {163},
  number  = {14},
  year    = {2014},
  pages   = {2561--2601},
}

@article{AFW20,
  author  = {Asok, Aravind and Fasel, Jean and Williams, Ben},
  title   = {Motivic spheres and the image of the {Suslin--Hurewicz} map},
  journal = {Inventiones mathematicae},
  volume  = {219},
  year    = {2020},
  pages   = {39--73},
  doi     = {10.1007/s00222-019-00907-z},
}

@article{Rondigs23,
  author  = {R{\"o}ndigs, Oliver},
  title   = {Endomorphisms of the projective plane and the image of the {Suslin--Hurewicz} map},
  journal = {Inventiones mathematicae},
  volume  = {232},
  year    = {2023},
  pages   = {1161--1194},
  doi     = {10.1007/s00222-023-01179-4},
}

@article{Gant_Williams_2025, 
        title={Free summands of stably free modules}, 
        volume={13}, DOI={10.1017/fms.2025.39}, journal={Forum of Mathematics, Sigma}, 
        author={Gant, Sebastian and Williams, Ben}, 
        year={2025}, 
        pages={e85}
        }

@misc{gant2026motivichomotopygroupsspheres,
      title={Motivic Homotopy Groups of Spheres and Free Summands of Stably Free Modules}, 
      author={Sebastian Gant and Ben Williams},
      year={2026},
      eprint={2510.10033},
      archivePrefix={arXiv},
      primaryClass={math.AT},
      note={Version 3, 6 March 2026},
      url={https://arxiv.org/abs/2510.10033}, 
}

@article{RSO21,
author = {Oliver R{\"o}ndigs and Markus Spitzweck and Paul Arne {\O}stv{\ae}r},
title = {{The second stable homotopy groups of motivic spheres}},
volume = {173},
journal = {Duke Mathematical Journal},
number = {6},
publisher = {Duke University Press},
pages = {1017 -- 1084},
year = {2024},
doi = {10.1215/00127094-2023-0023},
URL = {https://doi.org/10.1215/00127094-2023-0023}
}

@article{RSO19,
  author  = {R{\"o}ndigs, Oliver and Spitzweck, Markus and {\O}stv{\ae}r, Paul Arne},
  title   = {The first stable homotopy groups of motivic spheres},
  journal = {Annals of Mathematics},
  volume  = {189},
  number  = {1},
  year    = {2019},
  pages   = {1--74},
  doi     = {10.4007/annals.2019.189.1.1},
}

@incollection{RSO18Cellular,
  author    = {R{\"o}ndigs, Oliver and Spitzweck, Markus and {\O}stv{\ae}r, Paul Arne},
  title     = {Cellularity of hermitian \(K\)-theory and Witt-theory},
  booktitle = {\(K\)-Theory---Proceedings of the International Colloquium, Mumbai, 2016},
  series    = {Tata Institute of Fundamental Research Studies in Mathematics},
  volume    = {19},
  pages     = {35--40},
  publisher = {Hindustan Book Agency},
  address   = {New Delhi},
  year      = {2018},
  eprint    = {1603.05139},
  archivePrefix = {arXiv},
  primaryClass  = {math.AT},
}

@article{Feld20,
  author  = {Feld, Niels},
  title   = {Milnor--Witt cycle modules},
  journal = {Journal of Pure and Applied Algebra},
  volume  = {224},
  number  = {7},
  year    = {2020},
  pages   = {106298},
  doi     = {10.1016/j.jpaa.2019.106298},
}

@article{Feld21,
  author  = {Feld, Niels},
  title   = {Morel homotopy modules and Milnor--Witt cycle modules},
  journal = {Documenta Mathematica},
  volume  = {26},
  year    = {2021},
  pages   = {617--659},
  doi     = {10.4171/DM/824},
}

@article{BO22,
  author  = {Bachmann, Tom and {\O}stv{\ae}r, Paul Arne},
  title   = {Topological models for stable motivic invariants of regular number rings},
  journal = {Forum of Mathematics, Sigma},
  volume  = {10},
  year    = {2022},
  pages   = {e1},
  doi     = {10.1017/fms.2021.76},
}

@book{MilneCFT,
  author    = {Milne, James S.},
  title     = {Class Field Theory},
  publisher = {Available from the author's website},
  year      = {2020},
  note      = {Version 4.03},
  url       = {https://www.jmilne.org/math/CourseNotes/CFT.pdf},
}

@misc{AOSS26,
  author        = {Asok, Aravind and Opie, Morgan and Shin, Brian and Syed, Tariq},
  title         = {Efficient generation of projective modules: a motivic view},
  year          = {2026},
  eprint        = {2510.13687},
  archivePrefix = {arXiv},
  primaryClass  = {math.AG},
  note          = {Version 2, 1 March 2026},
  url           = {https://arxiv.org/abs/2510.13687},
}

@article{AF16Euler,
  author  = {Asok, Aravind and Fasel, Jean},
  title   = {Comparing {Euler} classes},
  journal = {The Quarterly Journal of Mathematics},
  volume  = {67},
  number  = {4},
  year    = {2016},
  pages   = {603--635},
  doi     = {10.1093/qmath/haw033},
}
\end{document}